\providecommand{\sm}[4]{{\Big[\begin{smallmatrix}#1&#2\\#3&#4\end{smallmatrix}\Big]}}
\DeclareSymbolFontAlphabet{\amsmathbb}{AMSb}%
\renewcommand{\norm}[1]{\left\lVert#1\right\rVert}
\def \N{\mathbb N}
\def \Z{\mathbb Z}
\def \R{\mathbb R}
\def \C{\mathbb C}
\def \P{{\mathbb P}}
\def \pa{{\partial}}
\def \O{\mathcal{O}}
\def \T{\mathbb{T}}
\def \E{\mathbb{E}}
\newcommand\independent{\protect\mathpalette{\protect\independenT}{\perp}}
\def\independenT#1#2{\mathrel{\rlap{$#1#2$}\mkern2mu{#1#2}}}
\newcommand{\eps}{\varepsilon}
\newcommand{\Ac}{\mathcal A}
\newcommand{\Cc}{\mathcal C}
\newcommand{\Dc}{\mathcal D}
\newcommand{\Fc}{\mathcal F}
\newcommand{\Gc}{\mathcal{G}}
\newcommand{\Nc}{\mathcal N}
\newcommand{\Es}{\mathscr E}
\renewcommand{\sign}{{\textup{sign}}}
\def\beq{\begin{equation}}   \def\eeq{\end{equation}}
\def\bea{\begin{eqnarray}}  \def\eea{\end{eqnarray}}
\newcommand{\vr}{\varrho}
\newcommand{\bsi}{{\boldsymbol{\upsigma}}}
\newcommand{\sR}{\mathscr{R}}
\newcommand{\sU}{\mathscr{U}}
\renewcommand{\bar}{\overline}
\renewcommand{\vet}[2]{\begin{bmatrix}#1 \\ #2 \end{bmatrix}}
\providecommand{\vect}[2]{{\begin{psmallmatrix}#1\\#2\end{psmallmatrix}}}
\newcommand{\Opbw}[1]{{\rm Op}^{BW}\!\left(#1\right)}
\renewcommand{\be}{\begin{equation}}
\newcommand{\ee}{\end{equation}}
\numberwithin{equation}{section}
\theoremstyle{plain}
\newtheorem{thm}{Theorem}[section]
\newtheorem{lem}[thm]{Lemma}
\newtheorem{prop}[thm]{Proposition}
\newtheorem{cor}[thm]{Corollary}
\theoremstyle{definition}
\newtheorem{defn}[thm]{Definition}
\theoremstyle{remark}
\newtheorem{rk}[thm]{Remark}
\theoremstyle{plain}
\theoremstyle{remark}
\title{ \bf Rogue waves and large deviations \\
for 2D pure gravity deep water waves}
\date{}
\author{M. Berti\footnote{International School for Advanced Studies (SISSA), Via Bonomea 265, 34136, Trieste, Italy. \newline
	\textit{Emails:} \texttt{berti@sissa.it}, \texttt{rgrandei@sissa.it}, \texttt{amaspero@sissa.it}}, \  R. Grande$^*$, \ A. Maspero$^*$, \  G. Staffilani\footnote{	Massachusetts Institute of Technology,
USA \newline 
\textit{Emails:} \texttt{gigliola@math.mit.edu}} }
\begin{document}

	\maketitle
	\begin{abstract}

Rogue waves are extreme ocean events characterized by the sudden formation of anomalously large crests, and remain an important subject of investigation in oceanography and mathematics. 
A central problem is to quantify the probability of their formation under random Gaussian sea initial data.
In this work, we rigorously 
characterize the
tail-probability for the formation of rogue waves of the pure gravity water wave equations in deep water, the most accurate quasilinear PDE modeling waves in open ocean. 
This large deviation result rigorously proves 
various conjectures from the oceanography literature
in the weakly nonlinear regime. 
Moreover, the result holds up to the optimal timescales allowed by deterministic well-posedness theory.
The proof shows that rogue waves most likely arise through ``dispersive focusing", where phase quasi-synchronization produces constructive amplification of the water crest.  
The main difficulty in justifying this mechanism is propagating statistical information over such long timescales, which we overcome by combining normal forms and probabilistic methods.
Unlike prior work, this  
novel approach  does not require approximate solutions to be Gaussian.
Our general method tracks the tail probability of solutions to Hamiltonian PDEs with an integrable normal form and random Gaussian initial data over very long times, even in the absence of (quasi-)invariant measures.

\end{abstract}
\setcounter{tocdepth}{2}	
\tableofcontents

\section{Introduction}
Rogue waves are extreme ocean phenomena characterized by waves of anomalous height.
They are nowadays recognized as serious threats to ships and sea structures: it is estimated that, between 1969 and 1994, 22 super-carriers were lost and 525 fatalities occurred due to collisions with rogue waves in the Pacific and Atlantic Oceans \cite{Kharif0}. 
 The first rogue wave ever recorded was the Draupner wave, which reached a height of 25.6 meters when the significant wave height --defined as four times the standard deviation of the surface elevation-- was at most 12 meters \cite{Draupner}. This event illustrates the textbook definition of a rogue wave: a water wave whose crest-to-trough height exceeds twice the significant wave height. 
 In the past decade, datasets of measured rogue waves have become available, see e.g.~\cite{Christou,Ardhuin}, providing valuable information for statistical analysis.

 \smallskip
 
 Explaining the formation of rogue waves remains a challenging and debated problem in oceanography and physics, see e.g.~\cite{Dysthe,O-mech}.
 Open questions include whether rogue waves can be forecast \cite{Hafner,Breunung,fedele3} and  whether certain sea conditions make their occurrence more likely, see for instance \cite{Toffoli}.
Two prominent theories proposed to explain the generation of rogue waves are {\em nonlinear focusing} and {\em dispersive focusing}.

According to the nonlinear focusing theory, rogue waves form as a late stage of modulational instability, see, for example, \cite{Osborne2000,Dyachenko2005,Zakharov2006} for theoretical analyses and \cite{Chabchoub2011,Chabchoub2012} for experimental evidence in water tanks. Theoretically, this mechanism is described by the one-dimensional focusing semilinear Schr\"odinger equation (NLS), which is a modulational asymptotic model for pure gravity deep water waves \cite{Zakharov1972,Totz2012}. The NLS equation
possesses explicit families of  solutions that exhibit rogue-wave-like features
like the Peregrine soliton \cite{Peregrine1983},
or more complicated breathers constructed via algebraic-integrable techniques, see e.g. \cite{Bertola,Bilman}, and which tend asymptotically to plane waves for both large positive and negative times. 
Such structures have been  observed in wave tank experiments \cite{Chabchoub2011}.

The second theory --dispersive focusing--  predicts that rogue waves form through the superposition of weakly interacting waves whose phases nearly align at a given location, producing constructive interference, which has also been reproduced in laboratory wave tank experiments \cite{Brown2001,Dematteis2019}. 


A common approach to study dispersive focusing is 
to model the water surface profile $\eta(t,x)$ as a superposition of independent Gaussian waves and then compute the probability of forming a large-amplitude crest. Several theoretical and numerical studies (see e.g. \cite{Longuet-Higgins,Janssen2003,Mori2011,O-mech,fedele} and references therein) predict that {\em if} the  free surface profile $\eta(t,x)$ follows a Gaussian distribution $\mathcal{N}_\mathbb{R}(0,\sigma^2)$, then
\begin{equation}\label{prob}
\mathbb{P} \left(2 \sup_{x\in \T}\eta(t,x)>\tH_0\right) = \exp\left(-\frac{\tH_0^2}{8 \sigma^2}\right) \, . 
\end{equation}
Dematteis, Grafke, and Vanden-Eijnden \cite{Dematteis2018} recently developed a novel probabilistic framework  for the study of rogue waves, using large deviation principles, akin to \eqref{prob}, to identify the most likely extreme profiles.
The model was experimentally validated in \cite{Dematteis2019} (and further verified in long wave tanks or through oceanic time series, as referenced in \cite{Hafner,Knobler2022,Gemmrich2022}).

\smallskip 
The goal of this paper is to rigorously justify  the statistic \eqref{prob} 
 for the full {\em  pure gravity water waves equations} in deep water --the most accurate quasilinear nonlocal free boundary  problem modeling the surface motion-- 
 when the initial datum is a  random Gaussian 
 with small  variance $\sigma\ll 1$,  over very long times. We will achieve this through a large deviation principle for PDEs, thereby rigorously  justifying the formal analysis of \cite{Dematteis2018} in the weakly nonlinear setting.
Roughly, our main result  
states the following (see \Cref{thm:main_intro} for a precise version):

\medskip
$\bullet$ {\em 
Let $\eta(t,x)$ be the free water waves surface with a smooth random initial datum $ \eta^\omega(0,x) $  
distributed as   a  Gaussian random variable with zero average and standard deviation $\sigma\ll 1$. Then, 
for any $\sigma\ll \tH_0 \ll 1$ and any time $|t| \leq 
\tH_0^{-3+}$, the probability} 
\begin{equation}\label{main_provv}
     \P \left( 2\sup_{x \in \T} \eta(t,x) > \tH_0  \right) \sim \exp\left( -\frac{\tH_0^2}{8 \sigma^2}\right) \  \quad \mbox{ as } \tH_0 \to 0^+ \, . 
\end{equation}
This statement will follow directly from 
\Cref{thm:main_intro} by setting  $\tH_0 \approx   \eps^{1-\delta}$, where  $0<\delta \ll 1$ 
and by considering the variance of
 $\eta^\omega(0,x)$ to be  approximately $ \sigma^2 \approx \eps^2$, see  the first comment below \Cref{thm:main_intro}.
Note that a wave $\eta(t,x)$ with crest $ \tH_0/ 2  $ is 
considered a rogue wave because,  
the average initial value
has much smaller size $\sigma \ll \tH_0$. 
The condition $\tH_0\ll 1$ corresponds to the weakly nonlinear regime of the water waves dynamics. The time interval  $|t| \leq  
\tH_0^{-3+}$ 
is the optimal one for which the existence of water wave solutions is guaranteed by the deterministic well-posedness theory 
\cite{BFP,DIP,Wu3},  starting from 
sufficiently small and smooth initial data of size 
$ \tH_0 $ in some 
high  Sobolev norm.  
Deterministic energy estimates imply that 
such  Sobolev norm  
remains of size $ \cO (\tH_0) $ over  
the entire time interval of existence. 
This does not contradict the statement \eqref{main_provv} 
that the 
$ L^\infty (\T) $ norm of the water waves profile 
increases with time.
The timescale $|t| \leq   \tH_0^{-3+}$ 
is likely optimal because of the quintic wave 
resonant interactions identified by
 Craig-Worfolk \cite{CW} and Dyachenko-Lvov-Zakharov \cite{DZ},  
which break the Birkhoff  integrability
of the water waves equation, 
and may  lead to  unstable dynamics.  
We emphasize that our goal is not to track the time evolution of the $L^{\infty}$-norm along a single water-wave trajectory which, 
heuristically, it is expected to vary of only a $ \cO(1) $-factor (in $\eps$) over times of validity of the integrable normal form. Rather, following the viewpoint of the physics and oceanography communities, we aim to quantify the probability of extreme events: in a sea where the typical wave height is $\sigma$, what is the likelihood of encountering a wave of height $\tH_0\gg \sigma$?

\smallskip

The proof of \eqref{main_provv} is highly nontrivial. The main difficulty is that, even if the initial free surface $\eta^\omega(0,x)$ is Gaussian (together with the initial velocity potential $\psi^\omega(0,x)$ in \eqref{eq:init_data}), the surface profile $\eta(t,x)$ 
does {\it not} necessarily remain Gaussian at later times. In fact, no such property is expected, since $\eta(t,x)$ depends in a highly nonlinear way on the initial data. Tracking the evolution of a probability measure under a Hamiltonian PDE is a well-known challenge in PDEs and probability. In particular, invariant Gibbs measures have only been rigorously constructed for some semilinear PDEs (see Bourgain \cite{Bourgain} and the recent breakthrough papers by Deng-Nahmod-Yue \cite{DNY,DNY2}, and the references therein), while the water waves equations are quasilinear. The theory of quasi-invariant measures offers an alternative approach based on studying the absolute continuity of the probability measure with respect to the initial Gaussian measure \cite{Tzvetkov, Oh-Tzvetkov}. 
These techniques can sometimes yield upper bounds on the tails of the probability measure \cite{GLT,PTV}, but they are not developed for quasilinear systems and do not yet yield sharp upper/lower bounds such as \eqref{main_provv}. 
Our main result shows that it is possible to propagate the precise tail of the initial distribution --which is Gaussian--, for very long nonlinear times, even in the absence of quasi-invariant/invariant measures.

Considerable progress has also been achieved in the context of wave turbulence theory, where the probability distribution of the solution to semilinear dispersive equations on large tori is tracked via its moments, which are proved to satisfy kinetic equations, see for instance \cite{BGHS,DH,DH2} in the random data and \cite{GH,ST} in the stochastic setting. These results are based on a Picard iteration scheme which is only convergent (without loss of derivatives) in the context of semilinear PDEs. Progress in the wave turbulence theory of the water waves system on large tori has been recently obtained by Deng-Ionescu-Pusateri in \cite{DIP,DIP2} -- while they prove that, up to a set of exponentially small probability, the $L^{\infty}$-norm remains small over long timescales, our work explores the complementary event. 
The approach of \cite{DIP2} could in principle yield upper bounds on the tail distributions, but not with the sharp constants provided by our large deviation principle -- thanks also to the corresponding sharp lower bounds. Such sharp constants are essential to establish the precise conjectures in the physics community (e.g. \eqref{main_provv} and \Cref{thm:df}).
We further comment on this work in point \ref{comp_WT} under \Cref{thm:main_intro} below.

\smallskip

In this article, we address the issue of the evolution of the  tails of the probability measure under the water waves flow  using different methods depending on the timescales. 
\begin{itemize}
\item 
For very long times $ |t | \lesssim  \eps^{-5/2+}$, we construct a sufficiently accurate approximate solution of   
the water waves flow which 
propagates the Gaussianity 
with time-dependent phase corrections, cf. 
\Cref{sec:52}.
\end{itemize}
This construction exploits the famous integrability of the pure gravity water waves Birkhoff normal form proved formally by
Zakharov-Dyachenko \cite{ZakD}, and rigorously in 
Berti-Feola-Pusateri \cite{BFP}. 
This strategy is inspired by the approach Garrido, Grande, Kurianski, and Staffilani \cite{GGKS} developed 
for the cubic 1d-NLS, with various key differences 
commented in point \ref{comp44} below Theorem \ref{thm:main_intro}.

\smallskip

The main novelty lies in the 
dynamical and 
probabilistic 
mechanism 
developed to reach the timescale $|t|\lesssim \varepsilon^{-3+}$,
which is very different from the approach in  \cite{GGKS} for NLS. 
\begin{itemize}
\item 
Up to optimal times  
$ |t| \lesssim \eps^{-3+}$, we do not propagate Gaussianity of the Fourier coefficients of some approximate solution,  but  we are still able to propagate the Gaussianity of the tails of the $L^{\infty}$-norm of the solution, and compute the probability that the nonlinear phases quasi-synchronize at a given time. 
\end{itemize}
In particular, we do not attempt to track the probability distribution of the approximate solution's nonlinear phases: this seems an 
impossible task given the complex, nonlinear relation between the initial datum and the phases' time-evolution.
Instead, we exploit this dependence to compute the probability that the initial phases will lead to quasi-synchronization (at any  time), creating a constructive superposition of waves that produces a rogue wave, cf. \Cref{sec:LDPIII}. 
We achieve this in two steps, as detailed in Section  \ref{sec:ideas}.  
Exact synchronization is 
accomplished using a random version of the celebrated Brouwer fixed point theorem.
However, this represents a zero-probability event. 
Instead quasi-synchronization, on a set of exponentially small probability, is achieved by proving quantitative Lipschitz estimates for the deterministic flow. 
A final key step is to prove a weak version of statistical independence between the random amplitudes of the initial Fourier coefficients and the quasi-synchronization event (factorization property).
All in all, the formation of a rogue wave requires the quasi-synchronization of the phases of the first 
$\tN_\eps$   Fourier modes. Here,
$\tN_\eps \rightarrow \infty$, as slowly as desired, as $\eps\rightarrow 0$.
Naturally this occurs with
an exponentially small probability. 

In this result dispersive focusing 
is identified as the dominant formation mechanism of rogue waves up to the timescale $|t|\lesssim \eps^{-3+}$, establishing the lower bound 
in \eqref{prob}.
\Cref{thm:df} complements this result  by showing  a large deviation principle for the quasi-synchronization of arbitrarily finitely many phases of the full water wave solution, not just the approximate one. 

This method provides a new paradigm  to quantifying  the tail probability of solutions of  
Hamiltonian PDEs with an integrable normal form and random Gaussian initial data, especially  in the context of  quasilinear PDEs for which invariant or quasi-invariant measures are not available (out-of-equilibrium). 

\subsection{Main results}

To precisely state the main results, we first present the water waves equations in detail. 
We consider the Euler equations of hydrodynamics for a 2-dimensional 
incompressible and irrotational fluid  under the action of 
{\it pure gravity}.
The fluid 
occupies the time dependent region
\begin{equation}
\label{domain}
\Dc_{\eta} := \big\{ (x,y)\in \T\times \R \ : \   y<\eta(t,x) \big\} \, , 
\quad \T := \T_x :=\R/ (2\pi\Z) \, ,
\end{equation} 
with infinite  depth   and 
 space periodic boundary conditions.
The unknowns of the problem are the free surface  $ y = \eta (t, x)$
of the domain $\Dc_{\eta} $ and the 
divergence free  velocity field $\begin{pmatrix} 
u(t,x,y) \\ 
v(t,x,y)
\end{pmatrix} $.
The velocity field is irrotational and therefore is 
 the gradient of a harmonic function $\Phi (t,x,y) $, called the generalized velocity potential. 
 We study the water waves equations 
 in the Hamiltonian
 formulation developed by  Zakharov \cite{Zak1}
 and Craig-Sulem \cite{CS}. 
Denoting $ \psi (t,x) := \Phi (t,x, \eta(t,x)) $ 
 the evaluation of the generalized velocity potential at the free interface, 
the velocity potential $ \Phi $ 
is recovered 
by solving the elliptic problem
\begin{equation}
\label{dir}
\Delta \Phi = 0  \ \mbox{ in } \Dc_{\eta} \, , \quad
\Phi = \psi \  \mbox{ at } y = \eta(t,x) \, , \quad
\Phi_y \to  0  \  \mbox{ as } y \to  -\infty \, .
\end{equation}
Inside $ \cD_{\eta}$ the velocity field of the fluid evolves according to
the Euler equations and the 
dynamics  is  
determined by 
two boundary conditions at the free surface. 
We impose 
that the fluid particles at the  time dependent 
profile  remain on it along the evolution
(kinematic boundary condition), and that
the pressure of the fluid 
 is equal to the constant 
atmospheric pressure  at the free surface (dynamic boundary condition). Thus the  
time evolution of the fluid is determined by the following 
system of quasi-linear equations 
\begin{equation}
\label{ww}
\begin{cases}
\pa_t \eta = G(\eta)\psi  \\
\displaystyle{\pa_t \psi = - g\eta  - \frac{\psi_x^2}{2} + 
	\frac{(  \eta_x \psi_x + G(\eta)\psi)^2}{2(1+\eta_x^2)} }  
\end{cases}
\end{equation}
where $ g > 0 $ is the gravity constant which, 
 without loss of generality, we set equal to $ g = 1 $,  and 
$G(\eta)$ is the  Dirichlet-Neumann operator  
\begin{equation}
\label{DN}
G(\eta)\psi :=  \sqrt{1+\eta_x^2} \, (\partial_{\vec n} \Phi )\vert_{y = \eta(t,x)} = (- \Phi_x \eta_x + \Phi_y)\vert_{y = \eta(t,x)} \, , 
\end{equation}
here $ \vec n $ denotes the outer unit normal of $ \cD_\eta $.
The name refers to the fact that  $G(\eta) $ 
transforms the Dirichlet datum  $ \psi$ 
of the  velocity potential $ \Phi $ into its Neumann derivative $G(\eta)  \psi $ at the free surface. 
Since $ G(\eta) \psi  $ has zero 
space average also 
$$ 
\langle \eta \rangle := 
\frac{1}{2\pi} \int_\T \eta(x) \, \di x 
$$ 
is a first integral of \eqref{ww} 
and  
we may  consider $ \eta (t, \cdot)  $  to evolve within a Sobolev space
\begin{equation}\label{hs}
H^s_0(\T,\R)  := \left\lbrace \eta(x) = \frac{1}{\sqrt{\pi}} \sum_{n \in \N} \big ( \alpha_n \cos(nx) + \beta_n \sin(nx) \big ) \ \colon
\quad 
\norm{\eta}_{H^s_0}^2:= \sum_{n \in \N} n^{2s} (\alpha_n^2 + \beta_n^2)  <
+ \infty \right\rbrace
\end{equation}
of zero average functions. 
In addition, the water waves vector field $ X(\eta,\psi ) = (X^{(\eta)}, X^{(\psi)})  $ 
defined by the right hand of \eqref{ww} depends only on 
$ \eta $ and  the mean free component 
$  \psi - \frac{1}{2 \pi}\int_\T \psi \, \di x  $. Thus 
we may consider $ \psi $ to evolve within a
homogeneous Sobolev space $ \dot H^s(\T,\R)$   obtained identifying  
functions which differ  by a constant.

As observed by Zakharov \cite{Zak1} 
the water waves equations \eqref{ww} are the Hamiltonian system 
\begin{equation}\label{ham_ww}
	\pa_t \eta = \nabla_{\psi} H(\eta,\psi)\,, \quad \pa_t \psi = -\nabla_{\eta} H(\eta,\psi)\,,
\end{equation}
where $\nabla_\eta, \nabla_\psi $ denote the $L^2$-gradient, and the Hamiltonian
\begin{equation}\label{ham1}
	H(\eta,\psi) = \frac12 \int_{\T} \Big( \psi \,G(\eta) \psi +  \eta^2 \Big) \wrt x 
\end{equation}
is the sum of the kinetic energy of the fluid 
plus the potential energy due to gravity.  

In addition to being  Hamiltonian,
 the water wave system \eqref{ww} 
is time-reversible with respect to the involution
$$
\rho\vet{\eta(x)}{\psi(x)} := \vet{\eta(-x)}{-\psi(-x)} \, , \quad \text{i.e.} \  X \circ \rho = - \rho \circ X \, ,  
$$
where $ X  
= (X^{(\eta)}, X^{(\psi)}) = (\nabla_\psi H, - \nabla_\eta H )$
is the water waves vector field. 
Equivalently, the solution map  $\Psi^t $  of \eqref{ww} (whenever defined)
satisfies $ \Psi^{-t} = \rho \circ \Psi^t \circ \rho $ (the flow $ \Psi^t (\eta_0, \psi_0) $ is defined as the unique solution of
\eqref{ww} with initial condition 
$ (\eta_0, \psi_0) $). In other words the solutions of \eqref{ww} for $ t < 0 $ are obtained from solutions for positive times 
by conjugation  under $ \rho $.    
Furthermore, 
the water waves  system 
\eqref{ww} 
is translation invariant, namely 
\be\label{X.tra0}
X \circ \tau_\varsigma = \tau_\varsigma \circ X 
\, , \quad \forall \varsigma \in \R \, ,
\qquad \text{where} \qquad 
\tau_\varsigma \colon f(x) \mapsto f(x + \varsigma) 
\ee
is the translation operator. 
This implies, by the Noether theorem, 
the existence of the conserved quantity
$ \int_{\T} \eta_x(x) \psi (x) \, \di x $, called momentum.  

\smallskip

Let us also discuss  regularity  
properties of the water waves system \eqref{ww}, which we shall use. 
Several results about the analyticity of the Dirichlet-Neumann  operator $ G(\eta) $ acting
between Sobolev spaces of functions, with respect to the free 
boundary $ \eta (x) $, have been
proved in \cite{CN,LannesLivre}, 
to which  we refer for an extended
bibliography. These results mainly concern the case of localized profiles and velocity potentials in $ \R^d $. 
In this paper we use the analyticity 
result  in \cite{BMV_dn} valid 
under periodic boundary conditions in
 infinite depth.
Let $B^\ts (r) $
denote the open ball in $ H^\ts  (\T,\R) $ of center $ 0 $ and radius $ r > 0 $.
For  $ \ts > \frac72 $, $ \ts + \frac12 \in \N $, there is $ r > 0 $ small enough, such that  the Dirichlet-Neumann operator mapping    
\be\label{eq:DNan}
\eta \mapsto G(\eta) \, , \quad 
H^\ts  (\T, \R) \cap B^{\frac72} 
(r) \to {\cal L}(H^\ts (\T, \R),H^{\ts-1} (\T,\R)) \, , \quad \text{is analytic} \,  .
\ee
Actually the result in \cite{BMV_dn} 
also holds 
when $ \eta $ is analytic, and $ G(\eta) $ acts between spaces of  analytic functions.
In view of  \eqref{eq:DNan} and the algebra properties of Sobolev spaces, for $ \ts > \frac72 $, 
the water waves vector field   
in \eqref{ww} 
\be\label{wwana}
(\eta, \psi) \mapsto X(\eta, \psi) 
= (X^{(\eta)}, X^{(\psi)}) \, , \quad 
B^{\ts}(r)\times  H^{\ts}(\T,\R) \to 
H^{\ts-1}_0(\T,\R)\times {\dot H}^{\ts-1}(\T,\R) \, , 
\quad \text{is analytic} \, , 
\ee
possibly with a smaller $ r > 0 $.

\smallskip

The local well-posedness theory for these equations is a challenging problem. The major difficulties stem from the quasilinear and nonlocal nature of the vector field in \eqref{ww}, and a ``hidden" hyperbolic structure within the equations.
The first existence results 
with initial data in Sobolev spaces 
were  established by S. Wu  
in the pioneering work \cite{Wu0}.  
In the Eulerian formulation \eqref{ww}, local well posedness  was later achieved by Lannes 
\cite{Lannes} and Alazard-Burq-Zuily \cite{ABZ1}, 
 through the introduction of the 
 ``good unknown" of Alinhac
 $ \upomega $, instead of $ \psi$, and 
a paralinearization of the Dirichlet-Neumann operator.
 This change of variables 
 reveals the hyperbolic nature of the water 
 waves equations. 
The paradifferential approach \cite{ABZ1}
enables to prove the existence of water waves solutions in  
 $$
 (\eta,\psi) \in C^0\left([0,t], H^{\ts}_0(\T, \R)\times \dot H^\ts(\T, \R)\right) \, , \quad {\ts} > \tfrac32 \, , 
 $$
under weaker regularity assumptions on the Cauchy data than in previous works.
We shall present 
the deterministic water waves 
theory in \Cref{sec:normalform_results}. 
These results, 
when specialized to initial data of size $ \varepsilon $,   
 imply  a time of existence  larger than $ c \varepsilon^{-1} $. 
Actually the water waves  exist 
up to times $ \varepsilon^{-3}$,
provided the initial datum is sufficiently regular, as first proved by the paradifferential Birkhoff normal form in \cite{BFP}, see also \cite{Wu3,DIP}. 
These works ultimately rely on the celebrated integrability 
of the pure gravity water waves equations in deep water up to quartic vector fields, 
as formally guessed by 
Zakharov and Dyachenko \cite{ZakD}.
The  approach in  \cite{BFP}   
 to  rigorously   justify the formal computations in  \cite{ZakD} 
is  based on a uniqueness argument of the normal form: 
in view of non-resonance properties satisfied by the linear frequencies, 
 the ``paradifferential" normal and the formal one 
 in  \cite{ZakD}
  have to coincide up to cubic degree.
Other deterministic  long time existence  results  
for periodic water waves are in 
\cite{Wu3,IP,HarIT,BFF,BD,BMM2}
and references therein.

\paragraph{\bf Random initial data.}
In this paper we  study the evolution of the water waves equations \eqref{ww} starting from 
small amplitude {\it random} initial data  
\begin{equation}\label{eq:init_data}
\begin{aligned}
&\begin{cases}
\eta_0^\omega (x)= \frac{\varepsilon}{\sqrt\pi}\, \sum_{n\in\N} c_n \left( \alpha_n^\omega \cos (nx) + \beta_n^\omega \sin (nx)\right) \\
 \psi_0^\omega (x)=\frac{\varepsilon}{\sqrt\pi}\, \sum_{n\in\N} d_n \left( \gamma_n^\omega \cos (nx) + \delta_n^\omega \sin (nx)\right)
\end{cases}\\
&\mbox{ where } (\alpha^\omega_n)_{n\in\N}, 
(\beta_n^\omega)_{n\in\N},
(\gamma_n^\omega)_{n\in\N}, 
(\delta_n^\omega)_{n\in\N} 
\mbox{ are i.i.d.  real Gaussian r.v.   } \sim  
\cN_\R\left(0, 1\right) \, , 
\end{aligned}
\end{equation}
in the probability space $(\mathbb{\Omega}, \cF, \P)$, and
where the deterministic  
coefficients $c_n, d_n$  are  
\begin{equation}
\label{ckdk}
c_j :=   e^{- |j| \tb} \, , \ j \in \Z \setminus \{0\} \, ,  \quad \mbox{for some } \tb > 0 \, ,  
\quad d_n := n^{-1/2} c_n = 
n^{-\frac12} e^{- n \tb} \ \ \  \,  \forall n \in \N   \, .
\end{equation}
The specific choice of $c_j $ in \eqref{ckdk} is introduced for simplicity  to 
guarantee that $\eta_0^{\omega}$ belongs to any $   H_0^s(\T,\R)$ almost surely for any $ s > 0 $ 
with $s$ sufficiently large so that the deterministic normal form theory is available.

For any  $  x\in\T $, $\eta_0^{\omega}(x)$ is a Gaussian random variable {$\sim \mathcal{N}_{\R}(0, \varepsilon^2 \pi^{-1} \sum_{n\in\N} c_n^2)$, 
while $\psi_0^{\omega}(x)$ is a Gaussian random variable
$\sim \mathcal{N}_{\R}(0, \varepsilon^2\pi^{-1} \sum_{n\in\N} d_n^2)$.}
Indeed, $ \E \eta_0^\omega (x) = \E \psi_0^\omega (x)=0 $, which means that the average initial profile of the fluid is flat and the ocean is, on average, initially at rest.
 Furthermore, the independence of the random Gaussian variables implies that 
 the variances of the initial wave 
 surface and velocity potential are 
\be\label{eta0psi0Sobo}
\E (\eta_0^\omega (x))^2  = \frac{\varepsilon^2 }{\pi} \sum_{n\in\N} c_n^2 
\, , 
\qquad 
\E (\psi_0^\omega(x))^2  = \frac{\varepsilon^2 }{\pi}\sum_{n\in\N} d_n^2 
\, , \qquad \forall x\in\T \, ,
\ee
so the  typical size of the 
initial data is  $\O(\varepsilon)$.  It is convenient to define 
 \be\label{upsigma}
 \bsi^2   := \frac{1 }{\pi} \sum_{n\in\N} c_n^2 = 
 \frac{1 }{2\pi} \sum_{j \in \Z \setminus \{0\} } c_j^2 \ , 
 \qquad \mbox{ so that } 
 \eta_0^\omega(x) \sim 
 \mathcal{N}_{\R}(0, \varepsilon^2 \bsi^2) \, . 
 \ee
Furthermore, 
in view of the exponential decay of the $ c_n $ in \eqref{ckdk} and the assumption 
$ d_n =  n^{-\frac12} c_n $, 
the initial data belong almost surely 
to any Sobolev space and
\[
\E \norm{\eta_0^\omega}_{H_0^{s}}^2  =\varepsilon^2 \sum_{n\in\N}  c_n^2 \, n^{2s} =
\varepsilon^2 \sum_{n\in\N} d_n^2 n^{2s+1} = 
\E \norm{\psi_0^\omega}_{\dot H^{s+\frac12}}^2 
\, , \quad \text{for any} \  s \geq 0  \, .
\]
{\bf Linear water waves.} 
 The linearized water waves system 
 \eqref{ww} at the equilibrium $(\eta, \psi) = (0,0)$ is 
\begin{equation}
\label{lin.ww1}
\begin{cases}
\partial_t \eta & = |D| \psi \\
\partial_t \psi & = - \eta  \, ,
\end{cases} 
\end{equation}
where the Dirichlet-Neumann operator $ G(0) $
at the flat surface $\eta = 0$ is
 the Fourier multiplier $  G(0) = |D| $.    
In the linear complex variable 
\begin{equation}\label{zeta0p}
\zeta:=  \frac{1}{\sqrt{2}}|D|^{-\frac{1}{4}}\eta+\frac{\im}{\sqrt{2}}|D|^{\frac{1}{4}}\psi  \, , 
\quad  \quad 
\begin{pmatrix}
\eta \\ 
\psi 
\end{pmatrix}
=
 \begin{pmatrix}
\sqrt{2} \, \Re \, |D|^{\frac14}\zeta \\ 
\sqrt{2} \, \Im \, |D|^{-\frac14}\zeta 
\end{pmatrix} \, , 
\end{equation}
the real system \eqref{lin.ww1} 
is transformed into the equation
\begin{eqnarray}\label{eq:lin00_ww_C}
\pa_t \zeta = - \im \Omega(D) \zeta \, , 
 \qquad \Omega(D):= |D|^\frac12  \, ,
\end{eqnarray}
which, written in Fourier, amounts to 
infinitely many 
decoupled harmonic oscillators 
\be\label{harmonic}
\dot \zeta_j = - \im \, \Omega_j  \zeta_j \, , \qquad \Omega_j :=  |j|^\frac12 \, ,  \qquad \forall  j \in \Z \setminus \{0\} \, . 
\ee 
In view of \eqref{zeta0p},  recalling the definitions of the Sobolev norms \eqref{hs} and \eqref{usobo}, we have  
\be\label{equivalnorms}
\| (\eta, \psi) \|_{H^s_0 \times \dot H^{s+\frac12}}^2  := 
\| \eta \|_{H^s_0}^2 +
\| \psi \|_{\dot H^{s+\frac12}}^2 =
2 \| \zeta \|_{H^{s+\frac14}}^2 \, . 
\ee
The solution of  \eqref{harmonic} with initial datum 
$ \zeta_{0j} $ is the traveling plane wave 
$ \zeta_{0j} e^{- \im ( \Omega_j t - jx) } $. 
The assumption 
$ c_n = n^\frac12 d_n $ in \eqref{ckdk}
implies   that 
the complex initial datum 
\be\label{zeta0}
\zeta_0^\omega= 
\frac{1}{\sqrt{2}}|D|^{-\frac{1}{4}}\eta_0^\omega+\frac{\im}{\sqrt{2}}
|D|^{\frac{1}{4}}\psi_0^\omega =  
\frac{1}{\sqrt{2\pi}}
\sum_{j \in \Z \setminus \{0\}} \zeta_{0j}^\omega  e^{\im j x} \, , 
\ee
has Fourier coefficients  
$ \zeta_{0j}^\omega $ that 
are {\it independent complex Gaussians} random variables
$\cN_\C\big(0, \eps^2 \, c_j^2 \, |j|^{-\frac12}\big)$, for any $ j \in \Z \setminus \{0\} $,  
as we prove in  \Cref{lem:zeta0.d}.
The solution of 
the complex linear system  \eqref{eq:lin00_ww_C} with initial datum $\zeta_0^\omega (x) $ defined  in \eqref{zeta0} is  
\be\label{eq:Gaus}
    \zeta_L(t,x):=\frac{1}{\sqrt{2\pi}}\sum_{j\in \Z\setminus\{0\}}  \zeta_{0j}^\omega \,  e^{-\im\,\left( \Omega_j t- j\,x \right)} \, . 
\ee
Inverting  formula \eqref{zeta0p}, we find that 
the solution of the linear water waves system 
\eqref{lin.ww1} with 
initial datum $(\eta_0, \psi_0)$  is 
\begin{equation}\label{linz200}
        \eta_L(t,x) = \sqrt{2} \, \Re \, |D|^{\frac14} \zeta_L(t,x)  \, , \quad 
\psi_L(t,x) = \sqrt{2} \, \Im\, |D|^{-\frac14} \zeta_L(t,x) \, .
\end{equation}
The 
linear dynamics \eqref{linz200}  preserves the Gaussianity of the initial data, unlike the nonlinear water waves flow. 

\paragraph{Main results.}
In this paper we are interested 
in understanding the set of 
random initial data $ (\eta^\omega_0 (x), 
\psi^\omega_0(x) )  $ 
in \eqref{eq:init_data} for which the deterministic water waves evolution 
develops rogue waves at times $t > 0  $, namely the  wave profile 
$\eta (t,x) $ 
--which
has a typical initial amplitude of size $ \varepsilon $-- 
is larger than a  threshold $\lambda_0 \varepsilon^{1-\delta } $, $ \delta \in (0,1) $. 
The main 
Theorem \ref{thm:main_intro} below 
estimates the probability of 
the event \eqref{eventr} that the water waves solution $(\eta,\psi)$ to \eqref{ww} with initial data \eqref{eq:init_data} exists in 
$C^0([0,t], H^\ts_0(\T, \R)\times \dot H^\ts(\T, \R))$, $\ts > \frac32$,
up to a time $t $ and  develops a rogue wave
\begin{equation}\label{eventr}
\mathfrak{R}^t(\lambda):=
\Big\{ \omega \in \mathbb{\Omega}  \mid \exists \,  \eta \in C^0([0,t], H^\ts_0) \  \mbox{and}\ \sup_{x\in\T} \eta (t,x) \geq \lambda \Big\} \, .
\end{equation}
The physically more interesting case is when 
$ \delta $ is close to $  1 $, as in this case 
the  rogue wave crest profile is bigger. 
Note also 
that the sign of the wave profile $\eta (t,x) $ 
in \eqref{eventr} is important because we  are specifically interested  in the 
possible formation of high crests,
not deep troughs.
This is why we do not consider in \eqref{eventr} the absolute value  of $ \eta (t,x) $. 

The first  main result of this paper is the following large deviation principle for the formation of rogue waves.

\begin{thm}[\bf Rogue wave formation]\label{thm:main_intro}
For any $\lambda_0 >0$, $\delta \in (0,1)$ and $0<\kappa \ll 1-\delta$, the free surface $\eta (t,x) $ of the water waves solution of  \eqref{ww} with random initial datum \eqref{eq:init_data} satisfies the large deviation principle 
\begin{equation}\label{eq:LDP}
\lim_{\varepsilon\rightarrow 0^{+}} \varepsilon^{2\delta}\, \log \P \left(
\mathfrak{R}^t(
\lambda_0 \varepsilon^{1-\delta})
\right) = -\frac{\lambda_0^2}{
2\bsi^2
}    \, , 
\qquad
\mathfrak{R}^t(\lambda) \mbox{  in } \eqref{eventr} , \quad 
\bsi \mbox{ in } \eqref{upsigma}
\, , 
\end{equation}
for any time $t$ satisfying 
\begin{itemize}
    \item[(i)]  $|t| \leq \eps^{-\frac52 (1-\delta) + \kappa}$; 
    \item[(ii)]   $|t| \leq \eps^{-3 (1-\delta) + \kappa}$  provided $\delta \in (\frac35,1)$.
\end{itemize}
\end{thm}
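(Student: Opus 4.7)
The plan is to prove matching upper and lower bounds on $\P(\mathfrak{R}^t(\lambda_0\varepsilon^{1-\delta}))$. Both bounds rest on comparing the true surface $\eta(t,x)$ with an approximate surface $\eta_{\rm app}(t,x)$ whose $L^\infty$-tail statistics are computable in closed form, and then showing that the deterministic error $\eta-\eta_{\rm app}$ is much smaller than the rogue-wave threshold $\lambda_0\varepsilon^{1-\delta}$. The hierarchy $\varepsilon \ll \lambda_0\varepsilon^{1-\delta}\ll 1$ is crucial: each extra power of $\varepsilon^{1-\delta}$ saved in the error estimate translates into a longer time-scale on which the comparison with $\eta_{\rm app}$ is effective, and the two regimes $(i)$ and $(ii)$ correspond to the two qualitatively different choices of $\eta_{\rm app}$ described below.

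\textbf{Regime (i): $|t|\leq\varepsilon^{-\frac52(1-\delta)+\kappa}$.} In the complex variable $\zeta$ of \eqref{zeta0p}, the cubic Birkhoff normal form of Zakharov--Dyachenko, proved rigorously in \cite{BFP}, reduces the dynamics --up to a quintic error of size $\O(\varepsilon^5)$ in a high Sobolev norm-- to an integrable system in which the actions $|z_j|^2$ are exact constants of motion. I would accordingly set
\begin{equation*}
\zeta_{\rm app}(t,x) \; := \; \frac{1}{\sqrt{2\pi}}\sum_{j\in\Z\setminus\{0\}} \zeta_{0j}^\omega\, e^{-\im\big(\Omega_j t + \Theta_j(t;(|\zeta_{0\ell}^\omega|^2)_\ell) - jx\big)},
\end{equation*}
where $\Theta_j$ are the nonlinear phases produced by the normal form, depending only on the actions of the initial datum. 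Because a circular complex Gaussian $\zeta_{0j}^\omega$ is invariant in law under multiplication by any phase depending only on its modulus, and the moduli $|\zeta_{0j}^\omega|$ are independent across $j$, the family $\{\zeta_{0j}^\omega e^{-\im\Theta_j(t)}\}_j$ remains an independent collection of complex Gaussians with the same variances as at $t=0$. Consequently $\eta_{\rm app}(t,\cdot)=\sqrt{2}\,\Re\,|D|^{1/4}\zeta_{\rm app}(t,\cdot)$ is, at every fixed $t$, a centred, stationary (in $x$) Gaussian field of variance $\varepsilon^2\bsi^2$ with smooth covariance, so its supremum satisfies the standard Borell--TIS / Landau--Shepp Gaussian-supremum tail bound yielding the rate $-\lambda_0^2/(2\bsi^2\varepsilon^{2\delta})$. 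A Gronwall argument on the normal-form remainder gives $\|\eta(t)-\eta_{\rm app}(t)\|_{L^\infty}\lesssim \varepsilon^5\, |t|\ll \varepsilon^{1-\delta}$ precisely on the time-scale of regime $(i)$, so the events $\{\sup_x\eta>\lambda_0\varepsilon^{1-\delta}\}$ and $\{\sup_x\eta_{\rm app}>\lambda_0\varepsilon^{1-\delta}\}$ differ by a set of super-exponentially small probability.

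\textbf{Regime (ii): $|t|\leq\varepsilon^{-3(1-\delta)+\kappa}$, $\delta>3/5$.} Here the quintic resonant interactions of Craig--Worfolk and Dyachenko--Lvov--Zakharov break the integrable structure, Gaussianity of any reasonable approximate solution is lost, and the method of $(i)$ fails. I would instead abandon tracking the \emph{distribution} of $\eta_{\rm app}$ and propagate only the Gaussian character of its $L^\infty$-\emph{tails}, using dispersive focusing as the underlying mechanism. Concretely, truncate to the first $N_\varepsilon\to\infty$ (slowly) Fourier modes and observe that the approximate surface is a finite sum of $N_\varepsilon$ wavelets whose amplitudes $|\zeta_{0j}^\omega|$ are independent of their phases $\Phi_j(t,x;\omega)$; a rogue wave at $(t^\star,x^\star)$ occurs exactly when all $\Phi_j(t^\star,x^\star)$ are quasi-synchronized modulo $2\pi$, producing constructive interference. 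For the lower bound, apply a random Brouwer fixed point theorem to a map encoding the synchronization condition as a function of the initial phases, showing that for almost every amplitude configuration there exists, in phase-space, a point of \emph{exact} synchronization at some $(t^\star,x^\star)$; then upgrade to \emph{quasi}-synchronization on a thickened set via quantitative Lipschitz estimates for the deterministic flow applied to the normal-form Hamiltonian. Finally, a factorization lemma exploiting the rotational invariance of complex Gaussians decouples the probability of the quasi-synchronization event from the amplitudes, and integrating the resulting conditional Gaussian tail in the amplitudes reproduces the same rate $-\lambda_0^2/(2\bsi^2\varepsilon^{2\delta})$. The matching upper bound follows by showing that \emph{any} realization producing a rogue wave must force the first $N_\varepsilon$ phases into a quasi-synchronization set, so $\mathfrak{R}^t$ is contained in an event whose probability is controlled by the same Gaussian tail.

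\textbf{Main obstacle.} The hard part is not the short-time regime but the propagation of statistical information up to the optimal deterministic time $\varepsilon^{-3+}$ when no (quasi-)invariant measure is available for the quasilinear system \eqref{ww} and Gaussianity is genuinely destroyed by quintic resonances. The crux is the randomized Brouwer/Lipschitz construction of $(ii)$: one must choose $N_\varepsilon$ large enough that the neglected high-mode contribution is pointwise $\ll \varepsilon^{1-\delta}$ on the synchronization event, yet small enough that the Lipschitz constants of the nonlinear phases $\Phi_j$, together with the Brouwer non-degeneracy, can be quantitatively controlled uniformly over a set of amplitudes of nearly full probability. Establishing the factorization (weak independence of amplitudes and the quasi-synchronization event) against the highly nonlinear amplitude-dependence of $\Phi_j$ --while keeping the probability estimate sharp to leading exponential order-- is, in my view, the single most delicate step, and is what dictates the restriction $\delta>3/5$ in part $(ii)$.
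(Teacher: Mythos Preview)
Your proposal matches the paper's architecture closely for regime $(i)$ and for the \emph{lower} bound in regime $(ii)$. However, two points need correction.

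\textbf{The upper bound in regime (ii) is elementary --- not a synchronization argument.} You write that the upper bound follows by showing that ``any realization producing a rogue wave must force the first $N_\varepsilon$ phases into a quasi-synchronization set.'' This is neither how the paper proceeds nor obviously true: a large crest could in principle arise from a few anomalously large amplitudes without any phase alignment. The paper's upper bound (Proposition~5.3) is the same as in regime $(i)$: from the representation \eqref{eq:eta2notg} one has the pointwise bound
\[
\sup_{x\in\T}\eta_{\rm app2}(t,x)\;\le\;\frac{\varepsilon}{\sqrt\pi}\sum_{j\in\Z\setminus\{0\}} c_j R_j^\omega
\]
since $|\cos|\le 1$, and then the Rayleigh large-deviation Lemma~\ref{thm:LDP_Rayleigh} gives the rate $-\lambda_0^2/(2\bsi^2)$ directly. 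No information about the phases $\vartheta_j(t;\omega)$ is needed, and no restriction on $\delta$ arises from the upper bound. The entire random-Brouwer / Lipschitz / factorization machinery is used \emph{only} for the lower bound, where one must exhibit a positive-probability mechanism producing the crest.

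\textbf{Restriction to the ball $\cB_0$ is not optional.} Your approximation estimates (in both regimes) presuppose that the initial datum lies in the deterministic well-posedness ball $\cB_0(\varepsilon,\delta,\tR,s)$ of \eqref{cB0}; otherwise the normal-form flow $z(t)$ and hence $\eta_{\rm app}$ need not exist on the stated time interval. The paper handles this by proving (Lemma~\ref{lem:negligible}) that $\P(\cB_0^c)\le\exp(-\tR^2\varepsilon^{-2\delta}/4\|\vec c\|_{h^s}^2)$, which for $\tR$ large enough is strictly smaller than the target rate; one then sandwiches $\P(\mathfrak{R}^t)$ between $\P(\mathcal{A}^t\cap\cB_0)$ and $\P(\mathcal{A}^t\cap\cB_0)+\P(\cB_0^c)$. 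This step must appear explicitly in your argument.

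\textbf{Minor points.} In regime $(i)$, your Gaussianity justification (``invariant under multiplication by any phase depending only on its modulus'') is not quite the right statement, since the nonlinear phase $\Theta_j$ depends on \emph{all} the moduli $(|\zeta_{0\ell}|)_{\ell}$, not just $|\zeta_{0j}|$; the correct argument conditions on the full modulus vector and uses that the initial phases remain i.i.d.\ uniform under this conditioning (this is the content of Lemma~\ref{lem:gauind}). Also, your error estimate $\|\eta-\eta_{\rm app}\|_{L^\infty}\lesssim\varepsilon^5|t|$ should read $\varepsilon^{5(1-\delta)}|t|$ on $\cB_0$ (the data has size $\varepsilon^{1-\delta}$, not $\varepsilon$), and the actual bottleneck in Lemma~\ref{thm:approx} is a term of order $|t|^2\varepsilon^{6(1-\delta)}$; both scale to $\varepsilon^{1-\delta+\kappa}$ at the critical time $|t|=\varepsilon^{-\frac52(1-\delta)+\kappa}$.
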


\Cref{thm:main_intro} is a corollary of \Cref{thm:main} stated below.
Let us make some comments.
\begin{enumerate}
    \item {\sc Proof of the large deviation principle  \eqref{main_provv}.} 
    \Cref{thm:main_intro} guarantees 
        the existence  of initial data, 
        with exponentially small  probability, which
        lead to a rogue wave at time $t$. Specifically,  the initial data have average size $\cO(\eps)$, 
    as shown in \eqref{eta0psi0Sobo}, while the resulting 
    rogue wave crest at time $t$ has significantly 
     larger size $\cO(\eps^{1-\delta})$.
 The  statement \eqref{main_provv} follows immediately   by setting  $\tH_0 = 2 \lambda_0 \eps^{1-\delta}$ and 
 noting that the variance of $\eta_0^\omega(x) $ is ${\eps^2}\,\bsi^2 = \frac{1}
 {2\pi} \eps^2 \sum_{j \in \Z\setminus\{0\}} c_j^2$, see \eqref{upsigma}.

    \item{\sc Dispersive focusing.} As part of the proof of \Cref{thm:main_intro},  we construct in \Cref{sec:LDPIII} an explicit family of rogue-wave approximate solutions
    whose phases synchronize
    at time $t$, creating 
    a constructive superposition, see \Cref{cor:phase_sync}. 
    Synchronization occurs across 
    $\tN :=\tN(\varepsilon)$ phases, a number that is finite but grows polynomially with $ \eps^{-1} $, 
   and involves the lowest $2\tN$ Fourier modes of the truncated approximate solution. The contribution of the remaining high Fourier modes towards the rogue wave can be shown to be negligible, see \Cref{cor:phase_sync} for details.
    
    \item {\sc No growth of Sobolev norms.} The rogue waves constructed in 
    this work do not feature any growth of Sobolev $H^s$-norms. Instead, their $H^s$-norms remain almost 
    constant over long timescales and are of size $\O (\varepsilon^{1-\delta})$, as  a consequence 
    of  the energy estimates in \Cref{thm:control_hs}.  
This does not contradict the dispersive focusing mechanism for the $ L^\infty $-norm discussed  in the previous item.
    
    \item{\sc Timescale $|t|\leq \eps^{-3(1-\delta)+\kappa}$ with $ \kappa \ll 1 - \delta $.}
    This timescale restriction 
    guarantees the deterministic well posedness     of the water waves. 
    Since rogue waves in $\mathfrak{R}^t(
\lambda_0 \varepsilon^{1-\delta})$ have size $\O (\varepsilon^{1-\delta})$ in a Sobolev space $H^s(\T) $, the deterministic theory \cite{BFP}  ensures the
existence of water waves solutions up to times 
$|t| \lesssim \eps^{-3(1-\delta)}$ (which we shrink to the slightly smaller $|t| \leq \eps^{-3(1-\delta) + \kappa}$). 
This timescale is likely optimal due to  the quintic wave resonant interactions identified in 
\cite{CW,Dyachenko1995}.  
Such resonances could increase the likelihood of rogue wave formation over time, a phenomenon 
recently demonstrated for 
the 1d semilinear beating NLS in \cite{Grande}. The timescale $\eps^{-3(1-\delta)+}$ in $(ii)$ is clearly much longer than $\eps^{-\frac52(1-\delta)+}$ in $(i)$ for any fixed $\delta\in (0,1)$. The physically more interesting case is when 
$ \delta $ is close to $  1 $, as rogue waves are larger.

    \item {\sc Comparison with \cite{GGKS}.}\label{comp44}  We improve the results in \cite{GGKS} in several ways. First, Theorem \ref{thm:main_intro} concerns the full water waves equations in deep water,
    extending much beyond the semilinear NLS envelope approximation.
    The quasilinear nature of the water waves vector field requires a paradifferential normal form to  construct  approximate solutions.
Another key difference is the 
non-diagonal nature of the Zakharov-Dyachenko normal form
\eqref{theoBirHfull}, unlike the NLS normal form. This significantly complicates the probabilistic argument to prove the Gaussianity of the first approximate solution (see Lemma \ref{lem:gauind}). 
Furthermore, while the NLS result in  
\cite{GGKS} is valid only up to times $ |t| \lesssim \varepsilon^{-2(1-\delta)} $, 
a more  refined energy estimate (Lemma \ref{thm:approx}) is required to extend this result to longer times  $ |t| \lesssim \varepsilon^{- \frac52 (1-\delta) +\kappa } $. 
In any case, the main novelty in this work lies in the new approach detailed in Section \ref{sec:LDPIII}, which employs a non-Gaussian and non-explicit approximation of the water wave dynamics. This framework, based on a random fixed point theorem and phase quasi-synchronization, is  detailed in Section
\ref{sec:ideas} below.
    \item \label{comp_WT} {\sc Comparison with \cite{DIP,DIP2}.}  Deng-Ionescu-Pusateri prove existence of solutions of the water waves 
    equations \eqref{ww} with random initial data, in the weakly nonlinear regime, for \emph{large tori},
    well beyond the deterministic results. A key part of their work is to propagate $\cO(\varepsilon^{1-} )$ bounds for the $L^{\infty}$-norm of the solution with much larger $H^s$-norm, which they prove to be possible excluding a small set of size $ \cO(e^{-\varepsilon^{-a}})$, $0<a\ll 1$, of initial data. 
    \Cref{thm:main_intro} can therefore be seen as a complementary result to \cite{DIP2} in the \emph{unit torus}, as it proves the existence of an  exponentially small set of random initial data for which the solution does develop a large $L^{\infty}$-norm.
\end{enumerate}

\Cref{thm:main_intro} is complemented by  proving 
that the asymptotic probability of rogue wave formation is identical to the asymptotic probability of the set of rogue waves that form due to the {\it quasi-synchronization} of the first  $\tM$ 
phases $ (\theta_j(t))_{0<|j|\leq \tM}$  of the free water profile  
 \be\label{eta.final}
   \eta (t,x)= \frac{1}{\sqrt{\pi}}\, \sum_{j\in\Z\setminus\{0\}} \rho_j(t)\, \cos (\theta_j(t) +j x) \,  , \quad \rho_j(t) \geq 0 \ . 
\ee 
To formalize such statement, consider the event  that the water waves solution $(\eta,\psi)$ to \eqref{ww} 
with random initial data \eqref{eq:init_data} exists in 
$C^0([0,t], H^\ts_0(\T, \R)\times \dot H^\ts(\T, \R))$, $\ts > \frac32$,
up to a time $t $ and has the first $\tM $ phases quasi-syncronized, namely
\begin{equation}\label{eventp}
\mathfrak{P}^t(\tM,\epsilon):=
\Big\{ \omega \in \mathbb{\Omega}  \mid \exists \,  \eta \in C^0([0,t], H^\ts_0) \  \mbox{and}\ |\theta_j(t) \ \mathrm{mod}\ 2\pi|\leq \epsilon \quad \forall 0<|j|\leq \tM \Big\} \, .
\end{equation}
We prove the following result directly valid on the optimal time interval.

\begin{thm}[{\bf Dispersive focusing}]\label{thm:df}
For any $\tM \in \N$,  $\lambda_0 >0$, $\delta \in (\frac35,1)$ and $0<\kappa \ll 1-\delta$, 
the free surface $\eta (t,x) $ of the water waves solution of  \eqref{ww} with random initial datum \eqref{eq:init_data} 
satisfies 
the following large deviation principle for any time $|t| \leq \eps^{-3 (1-\delta) + \kappa}$, 
\begin{equation}\label{0210:1548}
\lim_{\varepsilon\rightarrow 0^{+}} \varepsilon^{2\delta}\, \log \P \left(
\mathfrak{R}^t(\lambda_0 \varepsilon^{1-\delta}) \cap 
\mathfrak{P}^t(\tM,\eps^{\frac{\kappa}{5}})
\right) = -\frac{\lambda_0^2}{
2\bsi^2
}    \, , 
\qquad
\mathfrak{R}^t(\lambda) \mbox{  in } \eqref{eventr} , 
\quad \mathfrak{P}^t(\tM,\epsilon) \mbox{  in } \eqref{eventp}
\, , 
\end{equation}
and $\bsi$  in  \eqref{upsigma}.
\end{thm}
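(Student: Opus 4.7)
The plan is to derive Theorem \ref{thm:df} from Theorem \ref{thm:main_intro}(ii) together with quantitative information hidden inside its proof. The upper bound is automatic: monotonicity gives $\P(\mathfrak{R}^t \cap \mathfrak{P}^t) \leq \P(\mathfrak{R}^t)$, so Theorem \ref{thm:main_intro}(ii) directly yields
$$
\limsup_{\varepsilon \to 0^+} \varepsilon^{2\delta} \log \P\big(\mathfrak{R}^t(\lambda_0 \varepsilon^{1-\delta}) \cap \mathfrak{P}^t(\tM, \varepsilon^{\kappa/5})\big) \leq -\frac{\lambda_0^2}{2 \bsi^2}.
$$
The entire content of the theorem therefore lies in the matching lower bound.

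For the lower bound, I would inspect the explicit family of rogue-wave-producing initial data $\mathcal{S}_\varepsilon$ constructed in Section \ref{sec:LDPIII} to establish Theorem \ref{thm:main_intro}(ii). As summarized in comment (2) of the introduction and in Corollary \ref{cor:phase_sync}, that construction (a random Brouwer fixed-point argument combined with quantitative Lipschitz estimates on the deterministic flow) produces $\mathcal{S}_\varepsilon \subseteq \mathfrak{R}^t(\lambda_0 \varepsilon^{1-\delta})$ of probability asymptotic to $\exp(-\lambda_0^2/(2 \bsi^2 \varepsilon^{2\delta}))$ on which the first $\tN_\varepsilon$ nonlinear phases of an approximate solution $\widetilde\eta$ are quasi-synchronized at time $t$ within some tolerance $\tau_\varepsilon$, with $\tN_\varepsilon \to \infty$ polynomially in $\varepsilon^{-1}$ and $\tau_\varepsilon \to 0$ as $\varepsilon \to 0^+$. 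Since for any fixed $\tM$ one has $\tM \leq \tN_\varepsilon$ for all sufficiently small $\varepsilon$, the first $\tM$ approximate phases are automatically $\tau_\varepsilon$-quasi-synchronized on $\mathcal{S}_\varepsilon$.

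It remains to pass from the approximate phases $\widetilde\theta_j(t)$ to the true phases $\theta_j(t)$ of $\eta(t,\cdot)$. This transfer uses the $H^s$-closeness between $\eta$ and $\widetilde\eta$ (Lemma \ref{thm:approx}) together with a lower bound $|\widetilde\zeta_j(t)| \gtrsim \varepsilon$ for each fixed $0 < |j| \leq \tM$, which on $\mathcal{S}_\varepsilon$ follows from the non-degeneracy of the conditioned Gaussian amplitudes and from $c_j>0$ for every $j$, see \eqref{ckdk}. Since for a complex number $w = \rho\, e^{-\im\theta}$ with $\rho \gtrsim \varepsilon$ a perturbation of size $o(\varepsilon)$ rotates the argument by $O(|w'|/\varepsilon)$, one obtains $|\theta_j(t) - \widetilde\theta_j(t)| \lesssim \varepsilon^{\alpha}$ for some $\alpha > 0$ dictated by the approximation order. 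Choosing the construction's parameters so that $\tau_\varepsilon + C\varepsilon^{\alpha} \leq \varepsilon^{\kappa/5}$ for small $\varepsilon$, which is possible thanks to $0 < \kappa \ll 1-\delta$, gives $\mathcal{S}_\varepsilon \subseteq \mathfrak{P}^t(\tM, \varepsilon^{\kappa/5})$ and hence the desired lower bound
$$
\liminf_{\varepsilon \to 0^+} \varepsilon^{2\delta} \log \P\big(\mathfrak{R}^t \cap \mathfrak{P}^t\big) \;\geq\; \liminf_{\varepsilon \to 0^+} \varepsilon^{2\delta} \log \P(\mathcal{S}_\varepsilon) \;=\; -\frac{\lambda_0^2}{2\bsi^2}.
$$

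The main obstacle I foresee is precisely this quantitative calibration: the quasi-synchronization tolerance $\tau_\varepsilon$ produced by the random fixed-point argument, the $H^s$-error between true and approximate solutions propagated over the long time interval $|t| \leq \varepsilon^{-3(1-\delta)+\kappa}$, and the lower bound on the approximate amplitudes must all combine to beat the target tolerance $\varepsilon^{\kappa/5}$. The exponent $\kappa/5$ in the statement appears to be dictated precisely by this alignment. Once the bookkeeping is made consistent, no new probabilistic input is required beyond the Gaussian tail estimate already performed for Theorem \ref{thm:main_intro}.
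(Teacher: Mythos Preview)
Your overall strategy is correct and matches the paper's, but there is a genuine quantitative gap in the phase-transfer step. The approximation error between the true and approximate Fourier coefficients is $|u_j(t) - (u_{\rm app2})_j(t)| \lesssim \varepsilon^{1-\delta+\kappa}$ (from Lemma~\ref{thm:approx2}, not Lemma~\ref{thm:approx}, which only covers the shorter timescale). Since $\delta > 3/5$ and $\kappa \ll 1-\delta$, this error is \emph{much larger} than $\varepsilon$, not $o(\varepsilon)$ as you assert. Consequently the lower bound $|\widetilde\zeta_j(t)| \gtrsim \varepsilon$ you propose is far too weak: dividing the error $\varepsilon^{1-\delta+\kappa}$ by an amplitude of order $\varepsilon$ gives a phase discrepancy of order $\varepsilon^{-\delta+\kappa}$, which diverges.

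What is actually needed is $R_j^\omega \gtrsim \varepsilon^{-\delta+\kappa/2}$ for each $0<|j|\leq \tM$, so that $|(u_{\rm app2})_j| \gtrsim \varepsilon^{1-\delta+\kappa/2}$ and the phase error becomes $O(\varepsilon^{\kappa/2}) \leq \varepsilon^{\kappa/5}$. This is \emph{not} automatic on the rogue-wave set $\mathcal{S}_\varepsilon$: the constraint $\sum_j c_j R_j^\omega \gtrsim \varepsilon^{-\delta}$ does not force any individual $R_k^\omega$ to be large. The paper handles this by intersecting with the event $\mathcal{R}(\tM,\varepsilon) = \{R_k^\omega \geq \varepsilon^{-\delta+\kappa/2},\ 0<|k|\leq\tM\}$ and proving (via the LDP for Rayleigh sums with one term deleted) that $\mathcal{A}_2^t \cap \mathcal{R}(\tM,\varepsilon)^c$ has a \emph{strictly worse} rate than $-\lambda_0^2/(2\bsi^2)$, because $\sum_{j\neq k} c_j^2 < \sum_j c_j^2$. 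This extra probabilistic argument, which you summarize as ``non-degeneracy of the conditioned Gaussian amplitudes,'' is the missing ingredient and requires its own LDP computation.
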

This result suggests  that dispersive focusing is the dominant mechanism in rogue wave formation.

\subsection{Ideas of the proof}\label{sec:ideas}

Let us explain the main ideas of the proof of \Cref{thm:main}, which implies \Cref{thm:main_intro} and the related \Cref{thm:df}. As mentioned before, 
it relies on deterministic long time existence results based on a paradifferential Birkhoff normal form jointly with novel probabilistic arguments to justify  the dispersive focusing mechanism of formation of rogue
waves. 

\smallskip
\noindent{\bf Well-posedness of solutions and normal form.}
We begin by introducing the deterministic water wave theory. 
For  any $\delta, \varepsilon \in (0,1)$, $\tR >0 $,  $s \geq 0$, we define the ball of initial data 
\begin{equation}
\begin{aligned}
\label{cB0}
\cB_0:= \cB_0(\eps, \delta, \tR , s) := & \left\{(\eta_0, \psi_0) \in H^s_0(\T, \R)\times \dot H^{s+\frac12}(\T, \R)  \colon  \quad \norm{(\eta_0, \psi_0)}_{H_0^{s} \times \dot H^{s+\frac12}}
\leq \tR \varepsilon^{1-\delta} \right\}  \,  , \\
& \quad \text{where} \quad 
\norm{(\eta_0, \psi_0)}_{H_0^{s} \times \dot H^{s+\frac12}}^2 := 
 \norm{\eta_0}_{H_0^s}^2 +
 \norm{\psi_0}_{\dot H^{s+\frac12}}^2 \, . 
\end{aligned}
\end{equation}
Following the common probabilistic perspective, when considering random initial  data $(\eta_0^\omega, \psi_0^\omega)$ as in \eqref{eq:init_data},  we shall still  denote by $\cB_0$ the event
\be\label{cB0p}
\cB_0:= \cB_0(\eps, \delta, \tR , s) :=\left\{ \omega\in \mathbb{\Omega}  \ : \  \norm{(\eta_0^\omega, \psi_0^\omega)}_{H_0^{s} \times \dot H^{s+\frac12}} \leq \tR \varepsilon^{1-\delta} \right\} \, . 
\ee
The fact that $ \psi_0 (x) $ is more regular that 
$ \eta_0 (x) $ 
in \eqref{cB0}-\eqref{cB0p} 
is due to the choice $ d_n = c_n n^{-1/2}$ in \eqref{ckdk}, 
which implies the Gaussianity of the Fourier coefficients $ \zeta_{0j}^\omega $ in \eqref{zeta0}, as we explain in  \Cref{lem:zeta0.d}.

The deterministic 
theory of \cite{ABZ1,BFP} guarantees that 
any random initial datum  in $\cB_0$, with $s>s_0 \gg 1 $ sufficiently large, yields a solution to the water waves equations \eqref{ww} that exists up to times $|t| \leq \eps^{-3(1-\delta) + \kappa}$ and remains small,  as stated in \eqref{est.flow.etapsi} of 
\Cref{cor:LIP}. The latter  is established using the Birkhoff normal form which reduces the water wave equations to system \eqref{transformed_eq},
as
proved in \cite{BFP}. The main novelty of Theorem \ref{BNFtheorem},    
compared to the original result in \cite{BFP},  
is that the normal form map is a \emph{phase space transformation} that satisfies the  Lipschitz bounds \eqref{phi_est} and  \eqref{lipB}. To establish these properties
we modify the construction of \cite{BFP}, as   
detailed in \Cref{app:normalform}. 
As a consequence, the map
\begin{equation}\label{eq:z_intro}
(\eta_0^\omega, \psi_0^\omega) \mapsto z(t,x; \eta_0^\omega, \psi_0^\omega) \, , 
\end{equation}
which sends the initial data to the flow of the normal form equation \eqref{transformed_eq} at time $t$, is Lipschitz continuous (see \Cref{cor:LIP}-($iv$)).
This
property is crucial for the probabilistic analysis developed in \Cref{partII}.

It is also important to note that 
the complementary event $\cB_0^c = \mathbb{\Omega} \setminus \cB_0 $,  where  $\cB_0 (\eps, \delta, \tR , s) $  is defined in 
\eqref{cB0p},  
 has negligible probability compared to that in \eqref{eq:LDP}
(cf. Lemma \ref{lem:negligible})
\begin{equation}
\label{terrone_intro}
\log \P(\cB_0^c) \leq  -  \frac{\tR^2}{ 
4\| \vec c \|_{h^s}^2 } \varepsilon^{-2\delta } + 1 \ , \qquad 
\mbox{ where } \  
\| \vec  c \|_{h^s}  \  \mbox{ is defined in } \ \eqref{norm.c} \, .  
\end{equation} 
As previously noted, 
the primary challenge in proving \Cref{thm:main_intro}, is that the surface profile $ \eta (t,x) $   fails to preserve the Gaussianity of the initial data during time evolution. 
We address this fundamental difficulty by employing different approximate solutions depending on the timescales, together with different methods to establish dispersive focusing.

\smallskip
\noindent{\bf Approximate solutions.}
We construct approximate solutions 
exploiting the cubic integrability of the deep water waves  normal form system \eqref{BNF3}. 
The first approximation that we use, see  \eqref{etaapptilde},  is 
\be \label{u_app_intro}
\eta_{\rm app}(t,\cdot) := \sqrt{2} \Re \,  |D|^{\frac14}  \, u_{\rm app}(t,\cdot) \ , \quad 
 u_{\rm app}(t,x):= \frac{1}{\sqrt{2\pi}} \sum_{j \in \Z \setminus\{0\}} e^{- \im t \cL_j(I(\zeta_0^\omega)) + \im \phi_j^\omega} \, |\zeta_{0j}^\omega|  \, e^{\im j x }\, , 
\end{equation}
where  $\zeta_0^\omega (x) $ is the initial datum  in \eqref{zeta0}, 
$\phi_j^\omega$ the phase of the Fourier coefficient $\zeta_{0j}^\omega$
and  $\cL_j(I(\zeta_0^\omega))$  the nonlinear normal form frequencies in  \eqref{HCS1}.
\Cref{thm:approx} demonstrates that $ \eta_{\rm app}(t,x)$ 
stays close in the $L^\infty(\T)$-norm to the water waves profile 
$\eta(t,x)$ for times up to 
 $\eps^{-\frac52(1-\delta) + \kappa}$. Furthermore 
\Cref{cor:etaapp} shows that $ \eta_{\rm app}(t,x)$ preserves the Gaussianity of the initial datum. 
While the moduli of the Fourier coefficients of 
 $ u_{\rm app}(t,x) $ are clearly Rayleigh-distributed (due to the Gaussianity of $\zeta_{0j}^\omega$), it is not immediately clear whether the nonlinear phases $- \im t \cL_j(I(\zeta_0^\omega)) + \im \phi_j^\omega$ are uniformly distributed on the circle and independent from the moduli. 
 This difficulty arises because the nonlinear frequencies 
$\cL_j(I(\zeta_0^\omega)) $ in \eqref{HCS1}
are coupled across all infinitely many actions.

 However \Cref{lem:gauind} resolves this issue by 
using conditional expectation arguments 
which allow us to deduce the Gaussianity of the Fourier coefficients  of $u_{\rm app}(t,x)$ and,
consequently, the Gaussianity of $\eta_{\rm app}(t,x)$. 
Then \Cref{thm:main} $(i)$ on the shorter time interval $|t| \leq \eps^{-\frac52(1-\delta) + \kappa}$, follows from a large deviation principle proved in \cite{GGKS}  for Gaussian random variables.

\smallskip
Proving  \Cref{thm:main} $(ii)$ 
up to the optimal timescales 
$|t|\leq \varepsilon^{- 3(1-\delta) + \kappa }$ 
requires a completely different approach, as the profile \eqref{u_app_intro} 
does not accurately approximate the free water wave surface
$\eta(t,x)$ over this  interval.
The proof of  \Cref{thm:main} $(ii)$ relies on the improved approximation given by 
\eqref{etaapp2}, namely
\be\label{u_intro_app_2}
 \eta_{\rm app2}(t,\cdot) := \sqrt{2} \Re\,  |D|^{\frac14}\, u_{\rm app2}(t,\cdot) \, , 
 \quad 
    u_{\rm app2}(t,x)  := \frac{1}{\sqrt{2\pi}}\!\!\!\sum_{j \in \Z \setminus\{0\}} \!\!\! e^{- \im \int_0^t  \cL_j(I(z(\tau; \eta_0^\omega, \psi_0^\omega))) \di \tau +\im \phi_j^{\omega}} \, |\zeta_{0j}^\omega | \, e^{\im j x } \, , 
\ee
where the phases are now evaluated along the full trajectory of the solution $z(t; \eta_0^\omega, \psi_0^\omega)$ of the normal form equation \eqref{theoBireq}.
\Cref{thm:approx2} proves that 
this new approximation stays close in $L^\infty(\T)$-norm to the full water waves solution $\eta(t,x)$ up to  optimal times  $|t|\leq \varepsilon^{- 3(1-\delta) + \kappa }$, but  
a drawback is the loss of control over its probability distribution. 
Hence we develop a new method that allows to identify its tail distribution.

\smallskip
\noindent{\bf Phase synchronization.} According to the dispersive focusing mechanism, a rogue wave occurs as a result of constructive interference, i.e. the phases of its Fourier coefficients align at a given spatial point, 
where the wave peaks increase its amplitude. 
We make this mechanism
rigorous by computing the probability that,
at a fixed time $|t| \leq \eps^{-3(1-\delta) + \kappa}$, the approximate solution \eqref{u_intro_app_2} has   
its  first $\tN \sim \eps^{-\gamma}$, $\gamma >0$,
 nonlinear phases  nearly synchronized, thereby creating a peak at $x =0$ (clearly this peak could form at any 
 point $ x \in \T $).
 Since the approximate solution is close to the true solution, this phase synchronization happens also for the full solution $\eta(t,x)$, which is the content of our \Cref{thm:df}.

To synchronize the phases of the approximate solution  \eqref{u_intro_app_2}, we 
shall use the random Brouwer fixed point theorem by Bharucha-Reid and Mukherjea, see \cite[Theorem 10]{BReid},
which we state as follows. 

\begin{thm}[{\bf Random Brouwer fixed point}] 
\label{thm:Brouwer}
Let $(\mathbb{\Omega}, \cF, \P)$ be a complete probability space and $K$ be a compact, convex subset of $\R^n$. Consider a map 
$
\cT \colon   \mathbb{\Omega } \times K \to K
$
satisfying 
    \begin{itemize}
        \item[(i)] for any fixed $\omega \in \mathbb{\Omega}$, the map $\cT( \omega, \cdot)\colon K \to K$ is continuous;
        \item[(ii)] for any $\varphi \in K$, the map 
        $\cT(\cdot, \varphi) \colon \mathbb{\Omega} \to K $ is measurable. 
    \end{itemize}
    Then there exists a random variable  $\varphi :(\mathbb{\Omega}, \cF, \P) \rightarrow K$ such that
    $\cT( \omega, \varphi^\omega) = \varphi^\omega$ a.s.  in $\mathbb{\Omega}$.
\end{thm}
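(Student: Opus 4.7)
The plan is to couple the deterministic Brouwer fixed point theorem, applied pointwise in $\omega$, with a measurable selection argument in the spirit of Bharucha-Reid and Mukherjea. For each $\omega \in \mathbb{\Omega}$, hypothesis $(i)$ makes $\mathcal{T}(\omega,\cdot)$ a continuous self-map of the compact convex set $K\subset \mathbb{R}^n$, so the classical Brouwer theorem guarantees that the fixed-point set
\[
F(\omega) \defeq \{\varphi \in K \ : \ \mathcal{T}(\omega,\varphi) = \varphi\}
\]
is non-empty, and continuity forces $F(\omega)$ to be closed (hence compact) in $K$ for every $\omega$. The whole problem thus reduces to producing a measurable selector $\omega \mapsto \varphi^\omega \in F(\omega)$.

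The first step is to upgrade the Carath\'eodory conditions $(i)$--$(ii)$ into joint measurability of $\mathcal{T}$ on $(\mathbb{\Omega}\times K,\ \mathcal{F}\otimes \mathcal{B}(K))$. This is standard because $K\subset \mathbb{R}^n$ is a separable metric space: picking a countable dense set $\{\varphi_k\}_{k\in\mathbb{N}}\subset K$, one approximates $\mathcal{T}(\omega,\varphi)$ by piecewise-constant-in-$\varphi$ maps built from the measurable slices $\omega\mapsto \mathcal{T}(\omega,\varphi_k)$, and then passes to the limit using continuity of $\mathcal{T}(\omega,\cdot)$. As an immediate consequence, the graph
\[
\mathrm{Gr}(F) \defeq \{(\omega,\varphi)\in \mathbb{\Omega}\times K \ : \ \mathcal{T}(\omega,\varphi) - \varphi = 0\}
\]
is measurable in the product $\sigma$-algebra, being the preimage of $\{0\}$ under the jointly measurable map $(\omega,\varphi)\mapsto \mathcal{T}(\omega,\varphi)-\varphi$.

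Once $\mathrm{Gr}(F)$ is measurable and each section $F(\omega)$ is non-empty and closed in the Polish space $K$, I would invoke the Kuratowski--Ryll-Nardzewski measurable selection theorem to obtain a measurable map $\varphi\colon\mathbb{\Omega}\to K$ with $\varphi^\omega\in F(\omega)$ for every $\omega$; this gives exactly $\mathcal{T}(\omega,\varphi^\omega) = \varphi^\omega$ surely, which is stronger than the a.s.\ statement needed. The step that I expect to require the most care is the passage from the Carath\'eodory structure of $\mathcal{T}$ to measurability of the multifunction $F$ in a form compatible with a selection theorem: verifying the approximation argument for joint measurability, and making sure the closed-valuedness of $F$ is preserved uniformly in $\omega$. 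Once that is set up, the application of the Kuratowski--Ryll-Nardzewski theorem on subsets of $\mathbb{R}^n$ is essentially turnkey and concludes the proof.
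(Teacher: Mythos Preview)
The paper does not provide its own proof of this theorem: it is stated as a known result and attributed to Bharucha--Reid and Mukherjea \cite[Theorem 10]{BReid}, and is then used as a black box in Section~\ref{sec:fixed}. So there is no ``paper's proof'' to compare against; your proposal is a self-contained argument where the paper simply cites the literature.

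That said, your sketch is a correct and standard route to the result. One point deserves slightly more care than you indicate: to apply the Kuratowski--Ryll-Nardzewski selection theorem you need weak measurability of the multifunction $F$, i.e.\ that $\{\omega : F(\omega)\cap U \neq \emptyset\}\in\mathcal{F}$ for every open $U\subset K$, and mere measurability of $\mathrm{Gr}(F)$ does not give this for an arbitrary measurable space $(\mathbb{\Omega},\mathcal{F})$ without completeness. However, weak measurability can be verified directly here: write $U$ as a countable union of closed balls $B_m$, note that $F(\omega)\cap B_m\neq\emptyset$ iff $\min_{\varphi\in B_m}|\mathcal{T}(\omega,\varphi)-\varphi|=0$ (the minimum exists by compactness and continuity), and that this minimum equals the infimum over a countable dense subset of $B_m$, hence is $\mathcal{F}$-measurable by hypothesis~$(ii)$. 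With this in hand, Kuratowski--Ryll-Nardzewski applies and your argument goes through.
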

Let us explain how we use this result in Section \ref{sec:fixed}.
The nonlinear phases 
of the approximate solution \eqref{u_intro_app_2}, 
$$
\theta_j(t; \omega):= \phi_j^{\omega} - \int_0^t  \cL_j(I(z(\tau; \eta_0^\omega, \psi_0^\omega))) \di \tau \, , \qquad j \in \Z \, , 
$$
depend in a highly nonlinear way on the initial random phases $ (\phi_j^\omega)_{j \in \Z\setminus\{0\}}$ of the random initial data \eqref{eq:init_data}.
We seek the probability that the first $2\tN$ uniformly distributed  random phases $(\phi_j^\omega)_{0<|j| \leq \tN}$ are chosen so that all 
the $\theta_j(t;\omega)$, $0<|j| \leq \tN$, 
 align at zero at time $t$.
\\[1mm]
\indent 
{\sc First step: alignment of first $ 2 \tN $ phases for 
partially randomized initial data.}
We  ask whether
{\it at least one choice} 
of phases exists 
that guarantees such alignment. 
For this analysis, it is  natural to treat  the  nonlinear phases $ (\theta_j(t;\omega))_{0<|j| \leq \tN} $
as depending 
 deterministically on  $2\tN$ auxiliary initial phases $\vec\phi = (\phi_j)_{0<|j|\leq \tN}   \in [0,2\pi)^{2\tN} $, and randomly on the  initial moduli and  remaining phases.
 Precisely we  introduce
\begin{equation}\label{eq:theta.intro}
\theta_j (t;\omega,  \vec{\phi}):= 
\phi_j - \cT_j(\omega, \vec\phi) \ , 
\quad 
\cT_j(\omega, \vec\phi):= 
\int_0^t \cL_j (I(z(\tau; \wt{\eta}_0^\omega(\cdot;\vec{\phi}), \wt\psi_0^\omega(\cdot;\vec{\phi})) \di\tau  \quad \mbox{(mod}\ 2\pi ) \, , 
\ 0<|j|\leq\tN \, ,
\end{equation}
where  $(\wt\eta_0^{\omega}, \wt\psi_0^{\omega}) (x;\vec{\phi})$ is the partially randomized initial datum \eqref{eq:part_rand_init}, whose   
  first  $2\tN$ Fourier modes have deterministic phases   $\vec{\phi} $ (not the randomized ones $(\phi_j^\omega)_{0<|j|\leq\tN}$), whereas all the Fourier moduli and the remaining 
  Fourier phases  on the high modes 
  are randomized as in 
\eqref{eq:init_data}.
Note that if the deterministic phases $(\phi_j)_{0<|j|\leq\tN}$ were randomized as independent uniform distributions, one would obtain the original randomization \eqref{eq:init_data}, i.e. $(\wt\eta_0^{\omega}, \wt\psi_0^{\omega}) (x;\vec{\phi^\omega}) = (\eta_0^{\omega}, \psi_0^{\omega})$ as stated in \eqref{eq:part_to_full}. 

Having the phases  
 $ (\theta_j (t;\omega,  \vec{\phi}))_{0<|j|\leq \tN }$ 
 in \eqref{eq:theta.intro} aligned at zero at a fixed time $t$ amounts to proving the existence of a  fixed point of the random map 
 $$
 \cT:= (\cT_j)_{0<|j| \leq \tN}\colon\cB_0\times  K  \to K \, ,
 \quad  \text{for some} \ K \subset \R^{2\tN}  \ \text{compact and convex} \, . 
 $$
The  crucial \Cref{lem:T_lips} proves that the map $\cT$ satisfies the assumptions of the random Brouwer fixed point theorem \ref{thm:Brouwer}.
Specifically the Lipschitzianity of the map
\eqref{eq:z_intro},  demonstrated by the deterministic  \Cref{cor:LIP}, ensures 
the continuity and measurability of $ \cT $. 
Thus \Cref{thm:Brouwer} guarantees  
that 
\be\label{deteral}
\text{there exists a r.v.} \ 
\vec{\phi}^{\ast,\omega}\in  [0,2\pi)^{2\tN} 
\quad \text{such that} \quad  
\theta_j (t; \omega, \vec{\phi}^{\ast,\omega})=0
\, , \quad \forall 0<|j|\leq\tN \, ,   \ 
\text{a.e. in } \mathbb{\Omega} \, . 
\ee
It is not trivial at all that the fixed point $\vec{\phi}^{\ast,\omega}$ is a random variable,
as  
Brouwer fixed points typically do not 
exhibit continuous dependence on external parameters  (i.e. the remaining Fourier phases and moduli).
Moreover, we note that 
$ \vec{\phi}^{\ast,\omega} $ is {\it independent} of the original $2\tN$ random uniformly distributed 
initial phases $(\phi_j^{\omega})_{0<|j| \leq \tN}$ by construction,  a crucial property that we shall exploit in a moment, see  \Cref{rk:independence}.

\smallskip

\indent 
{\sc Second step: stability and factorization properties. }
We now consider the set of events where  
the uniformly distributed phases  
$\vec{\phi}^{\omega} := 
(\vec{\phi}^{\omega}_j)_{0< |j| \leq \tN}$ are  close to the random fixed point $\vec{\phi}^{\ast,\omega}$, namely   the set
\begin{equation}\label{eq:N_intro}
\Nc ( \alpha)  
:= \Big\{ \omega\in\mathbb{\Omega} \mid \big| 
\phi^{\ast,\omega}_j - \phi^{\omega}_j \big| < \alpha \, , 
\quad \forall 0 < |j | \leq \tN \Big\}  \, , \quad 
\alpha \in (0, \pi) \, . 
\end{equation}
We prove in \Cref{lem:T_lips} three  crucial properties: 
\begin{itemize}
\item $(a)$  the set $\Nc ( \alpha)   $ is  measurable with probability $\left(\frac{ \alpha}{\pi}\right)^{2\tN}$. 
\item $(b)$ {\sc stability property \eqref{5.8}}:  for
any $\omega\in\cN(\alpha)$  the random phases $\vec{\phi}^{\omega}$ 
generate nonlinear phases  
$ (\theta_j(t; \omega, \vec{\phi}^{\omega}))_{0<|j| \leq \tN} $ quasi-synchronized  near zero, provided $\alpha$ is  sufficiently small, cf.
\eqref{alpha.N}. Specifically  $ \alpha $ is small with the inverse of the Lipschitz constant of the flow map \eqref{eq:z_intro}, estimated in \eqref{lip.z}. 

\item $(c)$ 
{\sc factorization property \eqref{splitting}:}
the  probability of the intersection of $\Nc ( \alpha)  $ with an event $ \cA $
in the sub-$\sigma$-algebra generated by the moduli and the remaining phases, is the product of their probabilities.
The 
factorization property  \eqref{splitting} 
ultimately follows  from   the independence of 
 $\vec{\phi}^{\ast,\omega} $ and $ \vec{\phi}^{\omega}$.
\end{itemize}

{\sc Third step: 
phase synchronization for the approximate solution.} In view of 
\eqref{deteral} and  the above stability property $(b)$,
\Cref{cor:phase_sync} proves that, 
at any time $t$, within  $\Nc(\alpha)$ and provided $\alpha$ is sufficiently small,    the approximate solution 
\eqref{u_intro_app_2}
admits  
 the lower bound
\be\label{2909:1522}
\sup_{x \in \T}\eta_{\rm app2}^{\omega}(t,x) 
\geq { \frac{\eps (1-\eps)}{\sqrt{\pi}} }\, \, \sum_{0<|j|\leq \tN}   c_j R_j^{\omega}  + o(\eps^{2-\delta}) \, .
\ee
\Cref{thm:lo_LDP_improved} 
combines these ingredients to establish the following lower bound on the tail-statistics  
 of the crest of $ \eta(t,x)$, 
\begin{align*}
    \P \Big( \sup_{x \in \T}\eta(t,x) \geq \lambda_0 \eps^{1-\delta} \Big) & \geq \P 
    \Big( \sup_{x \in \T}\eta_{\rm app2}^{\omega}(t,x) \geq \lambda_0 \eps^{1-\delta} (1+o_{\eps}(1)) \Big)
    \\
    & \geq 
 \P \Big( \big\{ \sup_{x \in \T}\eta_{\rm app2}^{\omega}(t,x) \geq \lambda_0 \eps^{1-\delta} (1+o_{\eps}(1))\big\} \cap \cN(\alpha) \Big) \\
 &
  \stackrel{\eqref{2909:1522}}{\geq }
    \P \Big(  
    \Big\{ 
    \sum_{0<|j|\leq \tN}   c_j R_j^{\omega}
    \geq \sqrt{\pi}\lambda_0 \eps^{1-\delta}(1+o_{\eps}(1)) \Big\} 
    \cap \cN(\alpha) \Big) \\
    & 
    \stackrel{(c)}{=}
       \P   \Big( 
    \sum_{0<|j|\leq \tN}   c_j R_j^{\omega}
    \geq \sqrt{\pi}\lambda_0 \eps^{1-\delta} (1+o_{\eps}(1))\Big)
    \, \P( \cN(\alpha) )\\
   & 
     \stackrel{(a)}{=}  \P \Big( 
    \sum_{0<|j|\leq \tN}   c_j R_j^{\omega}
    \geq \sqrt{\pi}\lambda_0 \eps^{1-\delta}(1+o_{\eps}(1)) \Big)
    \, \left(\frac{ \alpha}{\pi}\right)^{2\tN} \, .  
\end{align*}
 The  tail-statistics of the last set is readily computed by  \Cref{thm:LDP_Rayleigh}. 
The  choice of $\alpha$  in 
\eqref{choice.alpha} guarantees that  $\left(\frac{ \alpha}{\pi}\right)^{2\tN} =  \exp( \cO (\eps^{-3(1-\delta) - \gamma}))$,
which is larger than 
$
\exp(-\lambda_0^2\eps^{-2\delta}/2\bsi^2)$ provided $0<\gamma<5\delta -3$. This finally proves, for 
any $\delta > \frac35$,  
\Cref{thm:main} $(ii)$.
\\[1mm]
{\bf Organization of the paper. }
In Part \ref{sec:normalform_results}
we report  the deterministic 
well posedness  theory of water waves equations  for small 
periodic initial data. Because
of the quasilinearity of 
the equations this requires delicate energy estimates
based on paradifferential calculus and 
the normal form Theorem \ref{BNFtheorem} 
that we prove in Appendix
\ref{app:normalform}. 
In Part \ref{partII} we prove the large deviation Theorem \ref{thm:main}, which implies
\Cref{thm:main_intro}.
The primary innovation is the new rogue wave formation mechanism, detailed in the extensive \Cref{sec:LDPIII}, which relies on an unconventional combination of a deterministic normal form approximation and a random fixed-point argument to show phase quasi-synchronization. 
We include Section  \ref{sec:52} because its easier argument offers a valuable opportunity to integrate expertise from both PDEs and Probability. 
In \Cref{sec:allr} we prove Theorem \ref{thm:df}.

\paragraph{Notation.} 
We denote   $ \N = \{ 1, 2, \ldots \}$ the natural numbers and 
$ \N_0 := \N \cup \{0\} $. 
We write $a\lesssim b$ to indicate an estimate of the form $a\leq c b$ for some constant $c>0$ which may change from line to line. For functions $f,g:[0,\infty)\rightarrow\R$ of some variable $\varepsilon>0$, we write $f(\varepsilon)\ll g(\varepsilon)$ or $f(\varepsilon)= o (g(\varepsilon))$ to 
mean $\lim_{\varepsilon\rightarrow 0^{+}} f(\varepsilon)/g(\varepsilon) = 0$. We also write $f(\varepsilon)=\O ( g(\varepsilon))$ meaning $f(\varepsilon)\lesssim g(\varepsilon)$ for an implicit constant which is independent on $\varepsilon$.
We denote by $h^s$, $s \in \R $, the Hilbert space 
of sequences $\vec c := (c_j)_{j\in\Z\setminus\{0\}}$, $ c_j \in \C $,  with norm
\be\label{norm.c}
\| \vec c \|_{h^s}^2 := 
{\mathop \sum}_{j \in\Z\setminus \{0\}}  |j|^{2s} \, |c_j|^2  \, . 
\ee
We set $ \| \vec c \|_{\ell^\infty} := 
\sup_{j \in\Z\setminus \{0\}} \, |c_j| $. 
We expand a periodic function $u:\T \rightarrow \C$ in Fourier series   as 
\[
u(x) = \frac{1}{\sqrt{2\pi}} 
\sum_{j \in \Z} u_j e^{\im j x} \, , \quad 
u_j = \frac{1}{\sqrt{2\pi}} \int_0^{2\pi} u(x)\, e^{- \im jx}\,  \di x \, ,
\]
and we often identify 
$ u (x) $ with the sequence of 
its Fourier coefficients 
$ (u_j)_{j \in \Z} $.
We denote by $ H^s := H^s(\T,\C) $, $s \in \R $, the Hilbert space of  $u$ such that $(u_j)_{j\in\Z}\in h^s$.  
The subspace $H^s_0\subset H^s$, resp. 
$ {\dot H}^s := 
\dot{H}^s(\T,\C)$, consists of those functions in $H^s$ with zero average, resp.~equivalent classes modulo constants,  with  norm 
\be\label{usobo}
\| u \|_{H_0^s} = \| u \|_{\dot H^s} = 
\Big( {\mathop \sum}_{j\in\Z \setminus \{0\} } |u_j|^2 |j|^{2s} \Big)^{\frac12}  \, .
\ee
Finally, we define the Banach spaces $\mathcal{F} L^{s,p}$, $s\geq 0$, $p\in [1,\infty) $, of 
periodic functions 
$u:\T \rightarrow \C$ with norm  
\be\label{FL}
\norm{u}_{\mathcal{F} L^{s,p}} = \Big( {\mathop \sum}_{j\in\Z\setminus\{0\}} |u_j|^p  |j|^{sp} \Big)^{\frac{1}{p}} \, . 
\ee
{\bf Random variables.} 
We consider a probability space 
$ (\mathbb{\Omega}, \cF, \P) $ where $\mathbb{\Omega}$ is a measure space
with a  $\sigma $-algebra $ \cF $ and $ \P$ is a probability measure, i.e. $ \P(\mathbb{\Omega}) = 1 $. 
We consider  random variables (r.v.), namely  measurable functions 
$$ 
X :(\mathbb{\Omega},\cF) \to (\R, \cB(\R) ) \, , \quad  \omega \mapsto 
X^\omega \, , 
$$ 
where $\cB(\R) $
is the Borel $\sigma $-algebra of $ \R $. The law or distribution of $ X $ is the push-forward measure of $ \P $ under $ X $ and it is characterized by the 
cumulative distribution function 
$ F_X(x) := \P(X\leq x )$.
We denote by $\sigma(X):=\{ X^{-1}(B) \colon B \in \cB(\R)\}$ the $\sigma$-algebra generated by $X$, 
and by $\sigma(X,Y)$ the smallest $\sigma$-algebra containing $\sigma(X) \cup \sigma(Y)$.
Two $\sigma$-algebras $\cG_1, \cG_2$ are independent iff $\P(A_1 \cap A_2) = \P(A_1) \P(A_2)$ for any $A_j \in \cG_j$, $j=1,2$.
Two random variables $ X, Y $
are {\it independent} iff  $\sigma(X)$ and $\sigma(Y)$ are independent. We write $ X \independent Y $. 
It may happen that the distribution of 
$ X^\omega $ is assigned via a
{\it probability density function} $ f(x) $ (p.d.f.), 
namely a non-negative function $ f : \R \to \R $ such that    
$ \P(X\leq x ) = \int_{-\infty}^x f(y) \, \di y  $.  
In this case 
$ \P(X \in [a,b]) = \int_a^b f(x) \, \di x $. 
The {\it expected value} of the random variable $ X $ is 
$ \E X := \int_{\mathbb{\Omega}} X^\omega \di \P (\omega) = \int_{\R} x f(x) \, \di x $
and its  {\it variance} 
is $ \text{Var}(X) := \E (X- \E X)^2$. 
If $ X, Y $ are independent then $ \E (X Y) = 
\E (X) \E (Y) $.

Given a sub-$\sigma$-algebra $\cG\subset\cF$ and a random variable $X^{\omega}$ in $L^1(\mathbb{\Omega},\cF, \P)$, we denote by $\E[X^{\omega}\mid\cG]$ the conditional expectation of $X^{\omega}$ with respect to $\cG$. When $\cG$ is generated by another r.v. $Y^{\omega}$, i.e. $\cG=\sigma(Y^{\omega})$, we will simply write $\E[X^{\omega}|Y^{\omega}]$ to denote $\E[X^{\omega}\mid\cG]$. See \cite[Sections 10.1-10.2]{Resnick} for the definition and main properties.  

\begin{rk}\label{rk:cond_exp} We record here the two fundamental properties of conditional expectation that we will use in this work. Let  $\cF$ be a $\sigma$-algebra, $\cG$  a sub $\sigma$-algebra of $\cF$, and  $X\in L^1(\mathbb{\Omega},\Fc,\P)$. Then
    \begin{enumerate}
        \item[(i)] {\sc Tower property:} $\E[X] = \E[ \E[X | \cG]]$. 
        \item[(ii)] {\sc ``Take out what is known":} If $Y\in L^{\infty}(\mathbb{\Omega},\Fc,\P)$ is $\cG$-measurable, then $\E[YX | \cG] = Y \E[X|\cG]$.
    \end{enumerate}
\end{rk}

As is customary in probability theory, when considering random initial data
we  use the notation $ (\eta_0^{\omega},\psi_0^{\omega})\in \cB_0$ where $ \cB_0 $ is the ball in \eqref{cB0}, to mean $\omega\in \cB_0$ where $ \cB_0$  is the corresponding event in \eqref{cB0p}. 

We also recall the 
following elementary definitions.

\begin{defn}\label{def:rv}
    Let $\sigma \neq 0$. A random variable $X^\omega$ is a 
\begin{itemize}
    \item[1)]  {\bf Real Gaussian r.v.  $\sim \cN_\R(0, \sigma^2)$} if its p.d.f is  $f_\sigma(x)= \dfrac{1}{\sqrt{2\pi \sigma^2}}e^{-\frac{x^2}{2 \sigma^2}}$. Its expected value is $0 $  and its variance 
    is $ \sigma^2 $. 
    \item[2)] {\bf Complex Gaussian r.v.  $\sim\cN_\C(0, \sigma^2)$} if 
$X^\omega = \dfrac{a^\omega + \im b^\omega}{\sqrt{2}}$ with $a^\omega$, $b^\omega$ independent  real Gaussian variables in $\cN_\R(0, \sigma^2)$ (note that 
$ a^\omega $ and $ b^\omega $ have the same variance).
\item[3)] {\bf Rayleigh r.v.  $\sim\sR(\sigma)$}  if
its p.d.f is 
$
f_\sigma(x) = \dfrac{x}{\sigma^2} e^{-\frac{x^2}{2\sigma^2}}$ for any $x \geq 0$ and $f_\sigma(x) = 0$ for any  $x <0$.
\item[4)]{\bf Uniform r.v. $\sim \sU([0,2\pi])$} if its p.d.f. is $f(x) = \frac{1}{2\pi}$ for any $x \in [0,2\pi]$ and $f(x) = 0$ otherwise.
\item[5)]  {\bf Exponential r.v.}  $ \sim 
\Es (\lambda) $ if  its p.d.f. is $ f(x) =  \lambda e^{-\lambda x} $ for any $ x \geq 0 $ and $ f(x) =  0 $  
for any $ x < 0  $. 
\end{itemize}
\end{defn}
\begin{rk}\label{basic_prob}
The following simple 
properties  will be used repeatedly in the next sections.
\begin{enumerate}
\item[(i)] If $ a^\omega \sim \cN_\R(0,\sigma^2)$ then  $\lambda a^\omega \sim \cN_\R(0, \lambda^2 \sigma^2)$ for any $\lambda \in \R $. Furthermore, the following tail estimate holds:
\be\label{asintail}
 \P(a^\omega > z) \sim \frac{\sigma}{z \sqrt{2\pi}} e^{-\frac{z^2}{2\sigma^2}} \quad \mbox{ as } z \to \infty \, . 
\ee
\item[(ii)] If $ a_1^\omega \sim \cN_\R(0,\sigma_1^2) $ and  $  a_2^\omega \sim \cN_\R(0,\sigma_2^2) $ 
are {\it independent} real Gaussian random variables, then the sum 
$a_1^\omega + 
a_2^\omega \sim \cN_\R(0, \sigma_1^2 + \sigma_2^2)$ is a real Gaussian r.v.
\item[(iii)] If $\zeta^\omega \sim \cN_\C(0, \sigma^2)$ then $\E[\zeta^\omega] = 0$ and $\E[|\zeta^\omega|^2] = \sigma^2$. 
For any  $ \lambda \in \C $, 
$ \lambda \zeta^\omega \sim \cN_\C (0,|\lambda|^2 \sigma^2 ) $ and 
$ \zeta^\omega = |\zeta^\omega| e^{\im \theta^\omega} $ 
where the modulus 
$ |\zeta^\omega|$ is a   Rayleigh  r.v.    $\sim \sR \big( \frac{\sigma}{\sqrt{2}} \big)$, and the phase $\theta^\omega$ is a uniform r.v.  
$\sim \sU([0,2\pi])$.
\item[(iv)] If $R^\omega \sim \sR(\sigma)$ then 
for any $\lambda >0$, $\lambda R^\omega \sim\sR(\lambda\sigma)$. 
\item[(v)] If $X^{\omega} \sim \cN_{\C}  (0,\sigma^2) $
then $ |X^{\omega}|^2 \sim \Es(1/\sigma^2)$.
\item[(vi)]  If 
$ X^\omega $ is $ \sim  \Es (\lambda) $
then $ \alpha X^\omega $ is  
$ \sim  \Es ( \lambda/\alpha) $.  
\end{enumerate}
\end{rk}

\part{Deterministic pure gravity deep water waves}\label{sec:normalform_results}

\setcounter{section}{2}
\setcounter{thm}{0}
\setcounter{equation}{0}

In this part we present the deterministic well posedness theory of pure water waves equations in deep water. 
For initial data in Sobolev spaces the local Cauchy theory was  developed  first by S. Wu \cite{Wu0} using a Lagrangian formulation of the equations  and by 
Lannes \cite{Lannes} and  then Alazard-Burq-Zuily \cite{ABZ1} in an Eulerian framework, i.e. for system \eqref{ww}, expressing the problem in terms of a ``good unknown" 
$ \omega $, instead of $ \psi$, and 
a paralinearization of the Dirichlet-Neuman
operator.
We also mention the more 
geometric approach developed in \cite{SZ1} to study free boundary value problems. 

Longer lifespans follow provided the 
 initial datum is  sufficiently regular and  small,  as first proved in \cite{BFP} via a paradifferential Birkhoff normal form approach, result extended to more general initial data in \cite{Wu3,DIP}. 
These works 
ultimately descend  from the fact that the  pure gravity water waves equations in deep water are {\it integrable} up to quartic vector fields, as  formally guessed in \cite{ZakD} and rigorously proved in \cite{BFP}. This allows to control the evolution of sufficiently smooth and small  initial data of size $\eps $,  for times of order $\eps^{-3}$, as stated in Theorem \ref{thm:mainBFP} below, which  is actually a consequence of the Birkhoff normal form Theorem 
\ref{BNFtheorem}, the energy estimates of Corollary \ref{thm:control_hs} and the local existence result in \cite{ABZ1}.  

We prove Theorem \ref{BNFtheorem} in Appendix \ref{app:normalform}.
 The difference of Theorem  \ref{BNFtheorem} with respect to   \cite{BFP} is that the normal form transformation $ \mathfrak{B}(u) $ that we construct satisfies the Lipschitz properties 
\eqref{phi_est}, \eqref{lipB}. Such properties are necessary in the probabilistic approach developed in \Cref{partII}. 
In Theorem 
\ref{cor:LIP} below we collect
{\it all} the deterministic information 
on the water  waves flow required for the probabilistic proof of  the 
Large deviation \Cref{thm:main}.
In this way a reader may pass to  \Cref{partII},  skipping the (more technical) Appendix \ref{app:normalform}. 

We first report basic definitions and 
properties 
of paradifferential calculus.

\paragraph{Paradifferential operators.}

Paradifferential calculus
can be seen as a nonlinear version of pseudo-differential calculus for 
symbols with finite regularity. 

\begin{defn}{\bf (Symbols)} Given $m \in \R $,  we denote by $\Gamma^m_{L^{\infty}}$ the space of 
functions $ a : \T\times \R\to \C $,  $ (x, \xi) \mapsto a(x, \xi)$, 
which are $C^\infty$ with respect to $\xi$ and such that, for any  $ \beta \in \N_0 $, 
there exists a constant $C_\beta >0$ such that
$$
\big\| \pa_\xi^\beta \, a(\cdot, \xi) \big\|_{L^{\infty}} \leq C_\beta \, \la \xi \ra^{m - \beta}  , \quad \forall \xi \in \R \,  . 
$$
We endow $\Gamma^m_{L^{\infty}}$ with the  family of norms defined, for any $n \in \N_0$, by
\begin{equation}
\label{seminorm}
\abs{a}_{m, {L^{\infty}}, n}:=  \max_{0 \leq \beta \leq n }\, 
\sup_{\xi \in \R} 
\, \big\| \la \xi \ra^{-m+ \beta} \, \pa_\xi^{\beta} a(\cdot,  \xi) \big\|_{L^{\infty}} \, . 
\end{equation} 
\end{defn}
Paradifferential operators are defined as pseudo-differential operators with a suitable regularized symbol. 
Let $\epsilon \in (0,1)$ and consider a 
$ C^\infty $, even cut-off  function $\chi\colon \R \to [0,1]$ such that
\begin{equation}\label{def-chi}
\chi(\xi) =  
\begin{cases}
1 & \mbox{ if } |\xi| \leq 1.1 \\
0 & \mbox{ if } |\xi| \geq 1.9 \, , 
\end{cases}  
\qquad \chi_\epsilon(\xi) := \chi \left( \frac{\xi}{\epsilon}\right) \, . 
\end{equation}
\begin{defn} {\bf (Bony-Weyl paradifferential operator)}
Given a symbol $a \in  \Gamma^m_{L^\infty}$, we define the 
{\em Bony-Weyl paradifferential operator}
$ {\rm Op}^{BW} {(a)} $ that acts on a periodic function $ u (x)  $
as   
$$
\left({\rm Op}^{BW} {(a)}[u] \right)(x) 
= 
\sum_{j \in \Z} \Big( \sum_{k \in \Z}
\widehat a \Big(j-k, \frac{j+k}{2}\Big) 
\, \chi_\epsilon\Big( \frac{j-k}{\langle j+k \rangle}\Big)
\, u_k \Big) e^{\im j  x } \, . 
$$
If the symbol $ a(x) $ is independent of $ \xi $
the operator  ${\rm Op}^{BW} {(a)} $ is called a {\it paraproduct}. 
\end{defn}

It is convenient to use the Weyl quantization 
for several reasons. In particular, it ensures that  
a paradifferential operator is 
self-adjoint if and only if 
the symbol $ a(x, \xi) $ is real. 
 
We remark that the operator 
$ \opbw (a) $ acts on homogeneous spaces of $ 2\pi $-periodic functions.  
Furthermore for a symbol $ a(\xi) $ independent of $x $, the paradifferential operator 
$ {\rm Op}^{BW} {(a)} = a (D) $ reduces to a Fourier multiplier operator. 
We also note that paradifferential operators are defined up to smoothing remainders, i.e. we can take different cut-off functions in \eqref{def-chi}.

The  following  basic property  about the action of a paradifferential operator (see e.g. \cite{BMM}) is often used.
\begin{prop}\label{prop:action}
{\bf (Continuity)}
 Let  $ a  $ be a symbol in $ \Gamma^m_{L^\infty} $ with $ m \in \R $.
Then $\Opbw{a}$ extends to a bounded operator 
$ \dot H^{s}(\T) \to  \dot H^{s-m} (\T) $ for any $ s \in \R $  and, for any $ u \in \dot H^s (\T) $,  
\be\label{cont00}
\norm{\Opbw{a}u}_{\dot H^{s-m}} \lesssim \, \abs{a}_{m, L^\infty, 4} \, \norm{u}_{\dot H^{s}} \, .
\ee
For a paraproduct, the linear map 
$L^\infty (\T)  \mapsto {\cal L}({\dot H}^s) $, $ a \mapsto \Opbw{a(x)} $, taking values in the bounded linear operators of 
$ {\dot H}^s (\T) $ satisfies  $ \norm{\Opbw{a(x)}}_{{\cal L}({\dot H}^s)}  \lesssim \| a \|_{L^\infty }$. 
\end{prop}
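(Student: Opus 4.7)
The plan is to reduce the estimate to a single-dyadic-scale bound in frequency and then sum using Littlewood--Paley orthogonality, exploiting the paradifferential cutoff $\chi_\epsilon$ to keep each dyadic piece frequency-localized.

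First, I pass to Fourier and write $v := \Opbw{a}u$ via its coefficients $\widehat v_j = \sum_k K(j,k)\,u_k$ with
\[
K(j,k) \;:=\; \widehat{a}\Bigl(j-k,\,\tfrac{j+k}{2}\Bigr)\,\chi_\epsilon\!\left(\tfrac{j-k}{\langle j+k\rangle}\right).
\]
The key observation is that on the support of $\chi_\epsilon$ one has $|j-k|\leq 1.9\,\epsilon\,\langle j+k\rangle$, so that $\langle j\rangle$, $\langle k\rangle$ and $\langle j+k\rangle$ are equivalent up to constants depending only on $\epsilon$. Thus $\Opbw{a}$ essentially preserves dyadic frequency bands and it suffices to control it band by band.

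Next, I dyadically split the symbol: $a = \sum_N a_N$ with $a_N(x,\xi) := a(x,\xi)\,\varphi(\xi/N)$ for a smooth bump $\varphi$ supported in an annulus $|\xi|\sim 1$. Since the $\xi$-entry of the kernel is $\frac{j+k}{2}$, the operator $\Opbw{a_N}$ vanishes unless $|j+k|\sim N$, which together with $|j-k|\lesssim \langle j+k\rangle$ forces $|j|\sim|k|\sim N$. With $P_N$ a slightly fattened Littlewood--Paley projector at scale $N$, this gives
\[
\Opbw{a}u \;\sim\; \sum_N \Opbw{a_N}\bigl(P_N u\bigr),
\]
and by almost orthogonality
\[
\|\Opbw{a}u\|_{\dot H^{s-m}}^2 \;\lesssim\; \sum_N N^{2(s-m)}\,\|\Opbw{a_N}(P_N u)\|_{L^2}^2.
\]
Combined with $\|u\|_{\dot H^s}^2 \sim \sum_N N^{2s}\|P_N u\|_{L^2}^2$, the whole estimate reduces to the single-scale bound
\[
\|\Opbw{a_N}f\|_{L^2}\;\lesssim\;N^m\,|a|_{m,L^\infty,4}\,\|f\|_{L^2}.
\]

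The main technical obstacle is precisely this single-scale estimate. The kernel $K_N$ is supported on $\sim N\times N$ pairs $(j,k)$ and pointwise bounded by $N^m|a|_{m,L^\infty,0}$, so a bare Schur or Hilbert--Schmidt test loses a full power of $N$. Extra decay in $|j-k|$ must be extracted by summation by parts in the variable $\tfrac{j+k}{2}$, using smoothness of $a_N$ and of $\chi_\epsilon$ in their $\xi$-arguments; four $\xi$-derivatives (exactly the content of $|a|_{m,L^\infty,4}$) suffice to make the kernel $\ell^1$-summable in $j-k$ uniformly in $j$, at which point Schur's test closes the estimate. Equivalently, one can invoke a Calder\'on--Vaillancourt type theorem applied on each dyadic piece after rescaling.

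For the paraproduct statement, $a=a(x)$ is independent of $\xi$ and $\Opbw{a(x)}(P_N u)$ reduces to multiplication by a smooth low-pass truncation $S_{CN}a$ of $a$; since the cutoff is smooth, $S_{CN}$ is convolution with a Schwartz function, hence $\|S_{CN}a\|_{L^\infty}\lesssim\|a\|_{L^\infty}$ uniformly in $N$, producing the improved constant $\|a\|_{L^\infty}$ in place of the seminorm.
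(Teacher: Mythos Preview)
The paper does not prove this proposition; it simply cites \cite{BMM} for it. So there is no ``paper's proof'' to compare against, only the question of whether your argument is correct.

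Your global architecture (dyadic decomposition in $\xi$, frequency localization from the paraproduct cutoff, almost-orthogonality to reduce to a single-scale $L^2\to L^2$ bound, then summing) is the standard one and is fine. The paraproduct part is also fine.

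The gap is in your mechanism for the single-scale bound. You write that ``summation by parts in the variable $\tfrac{j+k}{2}$'' and ``four $\xi$-derivatives \ldots suffice to make the kernel $\ell^1$-summable in $j-k$''. This is not right: smoothness of the symbol in $\xi$ produces decay in the variable \emph{dual} to $\xi$, not in $j-k$. Decay in $j-k$ would require $x$-regularity of $a$, and the class $\Gamma^m_{L^\infty}$ assumes none. Moreover, for fixed $j$ there is a single sum over $k$; the quantity $\tfrac{j+k}{2}$ is not an independent summation variable, so there is nothing to integrate by parts in. Invoking Calder\'on--Vaillancourt as a black box does not help either, since that theorem in its usual form needs bounds on $\partial_x^\alpha\partial_\xi^\beta a$.

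A correct route for the single-scale estimate is to \emph{separate variables}: after localizing to $|\xi|\sim N$ and dividing by $N^m$, expand the symbol as a Fourier series in $\xi$ on the annulus, $b_N(x,\xi)=\sum_p c_p(x)\,e^{i\pi p\xi/(cN)}$. Integration by parts in $\xi$ (this is where the four $\xi$-derivatives enter) gives $\|c_p\|_{L^\infty}\lesssim \langle p\rangle^{-4}|a|_{m,L^\infty,4}$. Each term $\Opbw{c_p(x)e^{i\pi p\xi/(cN)}}$ factors, in the Weyl calculus, as unitary multipliers composed with a genuine paraproduct by $c_p$, hence is bounded on $L^2$ by $\|c_p\|_{L^\infty}$; summing in $p$ closes the estimate. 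In short: the $\xi$-derivatives buy you summability in the \emph{Fourier-series index} $p$, not in $j-k$.
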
 
In \Cref{app:para} we shall introduce 
more refined classes of  paradifferential symbols and smoothing operators  which  take into account  their Taylor expansion in $(\eta, \psi)$ at $(0,0)$, cf. \cite{BD}. This is needed in Appendix \ref{app:normalform} for  the proof of  the normal form \Cref{BNFtheorem}.

\paragraph{Birkhoff normal form and long time existence results.}

We  now present the novel Birkhoff normal form Theorem \ref{BNFtheorem} and some of its consequences.  

We  denote 
the horizontal and vertical components of the velocity field at the free interface 
\begin{align} 
\label{def:V}
& V :=  V (\eta, \psi) :=  (\pa_x \Phi) (x, \eta(x)) = \psi_x - \eta_x B \, , 
\\
\label{form-of-B}
& B :=  B(\eta, \psi) := (\pa_y \Phi) (x, \eta(x)) =  \frac{G(\eta) \psi + \eta_x \psi_x}{ 1 + \eta_x^2} \, ,
\end{align}
and we introduce the  ``good unknown'' of Alinhac 
\begin{equation}\label{omega0}
\begin{pmatrix}
\eta\\
\upomega
\end{pmatrix} = 
\cG(\eta,\psi) :=
\begin{pmatrix}
\eta \\
\psi-\Opbw{B}\eta
\end{pmatrix} \, . 
\end{equation} 
Note that $ V, B $ and
$ \upomega$,  are linear functions of $ \psi $.
The variables 
$ (\eta, \upomega) $ are introduced because, 
as noted in  \cite{Lannes,ABZ1}, they transform 
the water waves equations  into 
a quasi-linear hyperbolic system, for which 
Sobolev energy estimates are available,  
thus  guaranteeing the local existence of solutions. 

For future reference we mention  that,  
by \eqref{eq:DNan} 
and the algebra properties of Sobolev spaces,  for any $\ts > \frac72$, for some $ r> 0 $ small enough,  
the maps in \eqref{def:V} and  \eqref{form-of-B}, 
\be\label{an:DNX}
(\eta, \psi) \mapsto V(\eta, \psi) \, , 
(\eta, \psi) \mapsto  B(\eta, \psi) \, , \quad 
B^{\ts}(r)\times \dot H^{\ts}(\T, \R) \to H^{\ts-1}(\T, \R) \, , 
\quad \text{are {\it analytic}} \, ,  
\ee
as well  as  the good-unknown map in \eqref{omega0}, 
\be\label{an:omega}
(\eta, \psi) \mapsto {\cal G} (\eta, \psi) = (\eta, \upomega (\eta,\psi))\, \quad
B^{\ts}(r)\times \dot H^{\ts}(\T, \R) \to H^{\ts}(\T, \R) \times {\dot H}^{\ts}(\T, \R) \, 
 , \quad \text{is {\it analytic}} \, ,  
\ee
using \eqref{an:DNX} and the last property stated in \Cref{prop:action}.

We finally define the complex scalar variable
\begin{equation}\label{u0}
u := \frac{1}{\sqrt{2}}|D|^{-\frac{1}{4}}\eta+\frac{\im}{\sqrt{2}}|D|^{\frac{1}{4}}\upomega \, , \qquad \int_{\T}u(t,x) \, \di x=0 \, , 
\end{equation}
that is well defined  since  $ \int_{\T} \eta(t, \cdot) \, dx=0 $ 
is preserved by the flow of \eqref{ww}. 
Expressing in  \eqref{u0} the good unknown  
$ \upomega $ as a function of $ (\eta,\psi)$
by \eqref{omega0}, by \eqref{an:omega} we deduce that,  
for any $\ts > \frac72$, the map 
\be\label{an:u}
(\eta, \psi) \mapsto u(\eta, \psi) \, , \quad
B^{\ts}(r)\times \dot H^{\ts}(\T, \R) \to H_0^{\ts-\frac14}  \, , 
\quad \text{is {\it analytic}} \, .
\ee
Finally note that the difference of the complex 
variables $ u $ and $ \zeta $, defined respectively in  \eqref{u0} and  \eqref{zeta0p},  is 
\be\label{diffuzeta}
u - \zeta = \frac{\im}{\sqrt{2}}|D|^{\frac{1}{4}}
(\upomega - \psi) =
- \frac{\im}{\sqrt{2}}|D|^{\frac{1}{4}}
\Opbw{B}\eta \, . 
\ee
We now state the 
normal form result for small amplitude  pure gravity 
water waves in deep water. 
\begin{thm}
\label{BNFtheorem}
{\bf (Birkhoff normal form with Lipschitz map)}
Let $u(t)$ be defined as in \eqref{u0},  with $\upomega$ as in \eqref{omega0}, for $(\eta,\psi)$ solution of \eqref{ww}
satisfying 
$$
(\eta, \psi) \in C^0\left([-T, T]; H^{N+\frac14}(\T, \R) \times H^{N+\frac14}(\T, \R)\right) \, , 
\quad 
\int_\T \eta(0,x) \di x = 0 \, . 
$$
There exist $ N \gg K {\gg s_0} \gg 1 $  and $ 0 < \bar{\e} \ll 1  $,   such that, if 
\begin{equation}\label{u01}
\sup_{t\in [-T,T]} \norm{u(t,\cdot)}_{K,N} \ \leq \bar{\e} \, , \qquad 
 \norm{u(t,\cdot)}_{K,N} :=\sum_{k=0}^K \|\partial_t^k u(t)\|_{\dot{H}^{N-k}} \, , 
\end{equation}
then there exists a bounded 
and linearly invertible transformation $\mathfrak{B}=\mathfrak{B}(u)$ of $ \dot H^N $, which depends (nonlinearly) on $u$, 
and a constant $C :=C(N)>0$ 
such that for any  $\ts \in  [s_0, N]$
\begin{align}\label{Germe}
&{\|\mathfrak{B}(u) \|}_{\mathcal{L}(\dot{H}^{\ts}, \dot{H}^{\ts})}
	+ {\|(\mathfrak{B}(u))^{-1} \|}_{\mathcal{L}(\dot{H}^{\ts}, \dot{H}^{\ts})} \leq 1+C_N{\|u\|}_{\dot{H}^{\ts}} \, ,
    \\
\label{phi_est}
&\norm{\mathfrak{B}(u)  - \uno}_{\cL(\dot H^{\ts+1}, \dot H^{\ts} )}    \leq C_\ts \norm{u}_{\dot H^{\ts}} \, , 
\quad 
 \norm{\mathfrak{B}(u)^{-1}  - \uno}_{\cL(\dot H^{\ts+1}, \dot H^{\ts} )}   \leq C_\ts \norm{u}_{\dot H^{\ts}} \, , 
\end{align}
and the variable 
$z:=\mathfrak{B}(u)u$
satisfies the equation
\begin{equation}\label{theoBireq}
\pa_{t}z = -\im \pa_{\bar{z}}H_{ZD}(z,\bar{z}) + {\mathcal X}^{+}_{\geq 4}
\end{equation}
where:
\begin{itemize}
\item[(1)] {\sc (integrable Birkhoff normal form)} the Hamiltonian $H_{ZD}$ has the form 
\begin{align}
\label{theoBirHfull}
H_{ZD} = H^{(2)}_{ZD} + H^{(4)}_{ZD} \, , 
\qquad H^{(2)}_{ZD}(z,\bar{z}) := \frac12 \int_\T \big| |D|^{\frac14} z \big|^{2} \, \di x \, ,
\end{align}
with
\begin{equation}\label{theoBirH}
\begin{aligned}
H^{(4)}_{ZD} (z,\bar{z}) &:=
\frac{1}{4 \pi} \sum_{k \in \Z} |k|^3  \big(  |z_k|^4  - 2 |z_{k}|^2 |z_{-k}|^2   \big)
\\
&+  \frac{1}{\pi} \sum_{\substack{k_1, k_2 \in \Z, \, \sign(k_1) = \sign( k_2 )  \\ |k_2| < |k_1|}} |k_1| |k_2|^2  
  \big( - |z_{-k_1}|^2 |z_{k_2} |^2 + |z_{k_1}|^2  |z_{k_2}|^2 \big)
\end{aligned}
\end{equation}
 where $z_k:=\frac{1}{\sqrt{2\pi}} \int_\T z(x) e^{- \im k x} d x$ denotes the $k$-th Fourier coefficient of the function $z(x)$.
\item[(2)] {\sc (estimate of the remainder)} $ {\mathcal X}^{+}_{\geq 4}  := {\mathcal X}^{+}_{\geq 4} (u,\bar{u},z,\bar{z})$ 
is a quartic nonlinear term satisfying,
for some $C :=C(N)>0$, the ``energy estimate''
\begin{equation}\label{theoBirR}
{\rm Re}\int_{\T}|D|^N {\mathcal X}^{+}_{\geq 4} \cdot \bar{|D|^N z} \, \di x\leq C \|u\|_{\dot{H}^N}^{5} 
\end{equation} 
and 
\begin{equation}
\label{X_4_1}
\| {\mathcal X}_{\geq 4}^+ \|_{\dot H^1} 
\leq C \norm{u}_{\dot H^{s_0}}^4 \, . 
\end{equation}
\item[(3)]
{\sc (Lipschitz property)} 
 for any $\ts \in [s_0, N-1]$, there is $ r :=  r(\ts) >0 $, $ C_{\ts,r}>0$ such that  for   any   $u_i\in \dot H^{\ts}$ with $ \norm{u_i}_{\dot H^{\ts}} \leq r$, $i=1,2$,  one has
\be\label{lipB}
\norm{\mathfrak{B}(u_1)u_1 - \mathfrak{B}(u_2)u_2}_{ \dot H^{\ts}}  \leq C_{\ts,r} \norm{u_1 - u_2}_{\dot H^{\ts}} (1+\norm{u_1}_{\dot H^{\ts+1}}+\norm{u_2}_{\dot H^{\ts+1}} )\, .
\ee
\end{itemize}

\end{thm}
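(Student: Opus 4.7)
The plan is to build on the paradifferential Birkhoff normal form construction of \cite{BFP}, which already yields parts (1) and (2) of the statement, and upgrade it so that the conjugating map $\mathfrak{B}(u)$ satisfies the Lipschitz estimates \eqref{phi_est}, \eqref{lipB} needed in Part \ref{partII}. I would realize $\mathfrak{B}(u)$ as a composition $\mathfrak{B}(u)=\Phi_{\rm BNF}(u)\circ\Phi_{\rm diag}(u)$, where $\Phi_{\rm diag}(u)$ is a sequence of Bony-Weyl paradifferential conjugations diagonalizing the quasilinear part of the equation in the complex variable $u$ of \eqref{u0}, and $\Phi_{\rm BNF}(u)$ is the time-one flow of an auxiliary cubic Hamiltonian that removes the non-resonant cubic contributions of the resulting equation.

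First I would paralinearize \eqref{ww} using the good unknown $\upomega$ of \eqref{omega0} and the complex variable $u$ of \eqref{u0}, following \cite{ABZ1,BFP}, casting the equations in the form $\partial_t u=-\im\Opbw{\omega(u;x,\xi)}u+R(u)u$ with $\omega$ expanding in powers of $u,\bar u$ and $R(u)$ smoothing. The paradifferential conjugations reduce this to an equation whose leading part is the constant-coefficient $-\im|D|^{1/2}$ plus paradifferential resonant terms of order $0$, and the cubic Birkhoff step eliminates the non-resonant cubic Hamiltonians. By the uniqueness argument of \cite{BFP}, based on the non-resonance of the linear frequencies $\Omega_j=|j|^{1/2}$, the surviving resonant quartic Hamiltonian must coincide with the Zakharov-Dyachenko $H^{(4)}_{ZD}$ of \eqref{theoBirH}, whence \eqref{theoBireq} and the remainder estimates \eqref{theoBirR}, \eqref{X_4_1} follow as in \cite{BFP}, together with the norm equivalence \eqref{Germe}.

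The novel ingredient is the Lipschitz property \eqref{lipB}. The strategy is to propagate the Lipschitz dependence through every step. Each symbol $a(u;x,\xi)$ that enters the construction is a smooth (in fact polynomial) function of $u$ and finitely many spatial derivatives, so it satisfies an estimate of the form
\begin{equation*}
\abs{a(u_1;\cdot,\cdot)-a(u_2;\cdot,\cdot)}_{m,L^\infty,n}\lesssim \norm{u_1-u_2}_{\dot H^{\ts}}\bigl(1+\norm{u_1}_{\dot H^{\ts+1}}+\norm{u_2}_{\dot H^{\ts+1}}\bigr).
\end{equation*}
Combined with the continuity bound \eqref{cont00}, this yields Lipschitz continuity of each paradifferential conjugation, with the one-derivative loss on the right reflecting that some of the symbols involve $\partial_x u$. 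For the Birkhoff flow $\Phi_{\rm BNF}(u)$, the generator is a bounded Hamiltonian vector field on $\dot H^{\ts}$ depending smoothly on $u$, so a Gr\"onwall estimate on the difference of the two flows started from $u_1$ and $u_2$ propagates the Lipschitz bound to the time-one map without introducing further derivative loss. Composing the two transformations then produces \eqref{lipB}, and \eqref{phi_est} follows from the same argument comparing $\mathfrak{B}(u)$ to the identity and using that all generators vanish at $u=0$.

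The main obstacle is to confine the derivative loss in \eqref{lipB} to \emph{exactly} one derivative through the entire construction: each paradifferential conjugation is potentially derivative-losing, and a naive composition would accumulate losses, destroying the probabilistic application in Part \ref{partII}. This is overcome by exploiting the smoothing character of the remainder operators generated at each step (absorbing higher-regularity contributions) and by noting that the Birkhoff generator is of sufficiently low order so as not to cost extra derivatives. The systematic bookkeeping of these quantitative estimates through the construction of \cite{BFP} is what makes the proof lengthy and is the content of Appendix \ref{app:normalform}.
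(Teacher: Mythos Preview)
Your broad architecture is correct and matches the paper's: realize $\mathfrak{B}(u)$ as a composition of paradifferential diagonalizing flows followed by a Birkhoff-type step, invoke \cite{BFP} for items (1)--(2), and obtain the Lipschitz property by propagating Lipschitz bounds through each generator. However, there is a genuine gap in how you justify that last step.

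You assert that ``each symbol $a(u;x,\xi)$ that enters the construction is a smooth (in fact polynomial) function of $u$''. This is \emph{not} the case for the construction in \cite{BFP} as written. Two specific issues arise. First, the generator $m_{-1}(U;x)$ of the order-$\tfrac12$ block-diagonalization (see \eqref{def:lambda}) is a nonlinear function of $a(U;x)=\tfrac12(\partial_t B+VB_x)$, which involves $\partial_t B$ computed along the flow; it is not polynomial in $U$, and its Lipschitz dependence on $U$ is not automatic. The paper handles this by expressing $\partial_t B$ via \eqref{eq:patB} and invoking the analyticity results \eqref{wwana}, \eqref{an:DNX} for the water-waves vector field and $V,B$, thereby proving directly that $U\mapsto m_{-1}(U;\cdot)$ is analytic (Lemma \ref{lem:step12}). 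Second, in \cite{BFP} the subsequent block-diagonalization at negative orders removes off-diagonal symbols at \emph{all} degrees of homogeneity, producing non-homogeneous symbols in $\Gamma^m_{K,K',p}[r]$ for which no Lipschitz dependence on $U$ is established. The paper's key modification (Lemma \ref{lem:indud}) is to truncate these generators to pluri-homogeneous symbols of degree at most $2$ via the projector $\mathcal{P}_{\leq 2}$ in \eqref{generatore-jth}; the uncancelled cubic-and-higher off-diagonal pieces are then collected into a new quartic bounded operator $B(U)$ in \eqref{sistemaDiag}, which is harmless for the energy estimate but was absent from \cite{BFP}. Only after this truncation are the generators genuinely polynomial in $U$, so that the flow-difference Lemmas \ref{lem:flow.ad}--\ref{flow.s.ad} apply.

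In short, your proposal correctly identifies polynomiality of the generators as the mechanism behind \eqref{lipB}, but treats it as given rather than as something that must be \emph{engineered} by modifying \cite{BFP}. The substance of Appendix \ref{app:normalform} is precisely this engineering: proving analyticity of the one non-polynomial generator $m_{-1}$, and redesigning the remaining steps so that all other generators are pluri-homogeneous, at the cost of an additional bounded quartic remainder.
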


In  view of \eqref{Germe},
the norms of the solution $u(t) $ and the normal form variable $ z(t) $ are  equivalent  
\be \label{eq:normeq}
\norm{u(t)}_{\dot H^{\ts}}  \sim 
\norm{z(t)}_{\dot H^{\ts}}  \, , \quad \forall \ts \in [s_0,N] \, . 
\ee
In order to achieve the new Lipschitz property 
\eqref{lipB} 
we construct the normal form transformation $ \mathfrak B(u) u $ by a variant of the procedure in \cite{BFP}. 
The detailed proof  is postponed 
to  \Cref{app:normalform}. 

\smallskip

We now state a series of consequences of Theorem \ref{BNFtheorem}. 
First note that  system  \eqref{theoBireq} writes explicitly in Fourier as 
\begin{equation}\label{transformed_eq}
\dot z_k  = -\im \cL_k (I(z)) \, z_k + ({\mathcal X}^{+}_{\geq 4})_k \ , \quad \forall k \in \Z\setminus\{0\} \ , 
\end{equation}
where  $ I(z) $ is the ``action vector"  
\begin{equation}\label{actions}
I(z) :=(I_k(z))_{k \in \Z \setminus\{0\} } \ , \quad I_k(z):= (|z_k|^2)_{k \in \Z\setminus\{0\}} \, , 
\end{equation}
and 
$\cL_k(I) $ denotes the  $k$-th ``nonlinear frequency"
\begin{equation}\label{HCS1}
\begin{aligned}
\cL_k (I)
  :=&\ |k|^{\frac12}  - \frac{1}{\pi}\!\!\! \sum_{\substack{|k_4| < |k|, \\ - \sign(k) = \sign( k_4 )}} \!\!\! |k| |k_4|^2  I_{k_4}
  + \frac{1}{\pi} \!\!\!\sum_{\substack{|k_4| < |k|, \\ \sign(k) = \sign( k_4 )}} \!\!\! |k| |k_4|^2  I_{k_4}  \\
  & +  \frac{1}{ 2 \pi}  |k|^3 \big( I_k - 2  I_{-k} \big) 
  -  \frac{1}{\pi}  \!\!\!  \sum_{\substack{|k| < |k_1|, \\ \sign(k_1) = \sign( k )}} \!\!\! |k_1| |k|^2  \big(I_{-k_1}-  I_{k_1}\big) \ . 
\end{aligned}
\end{equation}
We shall use that for each $k \in \Z \setminus \{0 \}$ the map
$$
\cF L^{2,1} \to \R \ , \quad I := (I_k)_{k \in \Z\setminus\{0\}} \mapsto \cL_k(I) \, , 
$$
is Lipschitz with the quantitative estimate: there exists $C >0$ such that 
 \begin{equation}
 \label{L_lips}
{ \abs{\cL_{k}(I) - \cL_{k}(J)} \leq C |k|  
\norm{I - J }_{\cF L^{2,1}} } \, , 
\qquad \forall k \in \Z\setminus\{0\} \, , 
 \end{equation}
 where $ \norm{I}_{\cF L^{2,1}} = \sum_{k \neq 0} 
 k^2 |I_k| = \sum_{k \neq 0} k^2 |z_k|^2 $, cf. \eqref{FL}.

Since each nonlinear frequency $\cL_k(I)$ in \eqref{HCS1} is real,
each action $I_k(z) := |z_k|^2 $ is a first integral of the  cubic Hamiltonian Birkhoff normal form 
system 
\be\label{BNF3}
\dot z_k  = -\im \cL_k (I(z)) \, z_k \, ,
\quad \forall k \in \Z \setminus \{0\} \, , 
\ee
obtained by neglecting  
the quartic terms $ ({\mathcal X}^{+}_{\geq 4})_k $ in \eqref{transformed_eq}.
This directly implies the following energy estimate. 

\begin{cor}\label{thm:control_hs} 
{\bf (Energy estimate)} With the same assumptions and notations of Theorem \ref{BNFtheorem}, there exists   $C=C(N)>0$ such that the solution $z(t)$ of \eqref{theoBireq} satisfies 
\begin{equation}\label{energy_est}
\norm{z(t)}_{\dot H^N}^2 \leq \norm{z(0)}_{\dot H^N}^2 + C\, |t| \sup_{\tau \in [0,t]} \norm{u(\tau)}_{\dot H^N}^5 \ , \qquad \forall |t|\leq T \, , 
\end{equation}
and the vector of actions 
$I(t):= (I_k( z(t)))_{k \in \Z\setminus \{0\}}$  in \eqref{actions} 
satisfies 
\begin{equation}
\label{action_est}
 \norm{I(t) - I(0)}_{{\cF L^{2,1}}} \leq C  |t| \, \sup_{\tau \in [0,t]} \norm{u(\tau)}_{\dot H^{s_0}}^5  \, , \qquad \forall |t|\leq T \ .
\end{equation}
\end{cor}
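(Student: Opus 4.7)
The strategy is to exploit the integrable structure of the Birkhoff normal form: since $H_{ZD}(z,\bar z)$ in \eqref{theoBirHfull}--\eqref{theoBirH} depends only on the actions $|z_k|^2$ (and $|z_{-k}|^2$), the Hamiltonian vector field $-\im \partial_{\bar z} H_{ZD}$ acts as a pure phase rotation on each Fourier mode, as is made explicit by \eqref{transformed_eq}: $\dot z_k = -\im \cL_k(I(z)) z_k + (\mathcal X^+_{\geq 4})_k$ with $\cL_k(I)$ real. Hence only the quartic remainder $\mathcal X^+_{\geq 4}$ contributes to the time derivative of any quantity that depends solely on the moduli $|z_k|$. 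This observation immediately reduces both claims to bounds on $\mathcal X^+_{\geq 4}$, which are supplied by \eqref{theoBirR} and \eqref{X_4_1}.

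\textbf{Proof of \eqref{energy_est}.} I would compute
\begin{equation*}
\frac{d}{dt} \|z(t)\|_{\dot H^N}^2 = 2\,\Re \int_{\T} |D|^N (\partial_t z)\cdot\overline{|D|^N z}\,\di x.
\end{equation*}
The contribution of $-\im\cL_k(I)z_k$ is purely imaginary in each mode, so it vanishes mode by mode under the real part. Only the remainder survives, and \eqref{theoBirR} gives
\begin{equation*}
\frac{d}{dt}\|z(t)\|_{\dot H^N}^2 = 2\,\Re\int_\T |D|^N \mathcal X^+_{\geq 4}\cdot\overline{|D|^N z}\,\di x \leq 2C\|u(t)\|_{\dot H^N}^5.
\end{equation*}
Integrating in time from $0$ to $t$ and taking the supremum yields \eqref{energy_est}.

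\textbf{Proof of \eqref{action_est}.} For each $k\in\Z\setminus\{0\}$, differentiating $I_k(z(t)) = |z_k(t)|^2$ and using \eqref{transformed_eq} gives
\begin{equation*}
\frac{d}{dt} I_k(z(t)) = 2\,\Re\bigl(\dot z_k\,\bar z_k\bigr) = 2\,\Re\bigl((\mathcal X^+_{\geq 4})_k\,\bar z_k\bigr),
\end{equation*}
because $-\im\cL_k(I)|z_k|^2$ is imaginary. Then
\begin{equation*}
\|I(t)-I(0)\|_{\cF L^{2,1}} = \sum_{k\neq 0} k^2\,|I_k(t)-I_k(0)| \leq 2\int_0^t \sum_{k\neq 0} k^2\,|(\mathcal X^+_{\geq 4})_k(\tau)|\,|z_k(\tau)|\,\di\tau.
\end{equation*}
Applying Cauchy--Schwarz mode-wise distributes the weight $k^2 = k\cdot k$:
\begin{equation*}
\sum_{k\neq 0} k^2\,|(\mathcal X^+_{\geq 4})_k|\,|z_k| \leq \Bigl(\sum_{k\neq 0} k^2\,|(\mathcal X^+_{\geq 4})_k|^2\Bigr)^{\!1/2}\!\Bigl(\sum_{k\neq 0} k^2\,|z_k|^2\Bigr)^{\!1/2} = \|\mathcal X^+_{\geq 4}\|_{\dot H^1}\,\|z\|_{\dot H^1}.
\end{equation*}
Now invoke \eqref{X_4_1} to get $\|\mathcal X^+_{\geq 4}\|_{\dot H^1}\lesssim \|u\|_{\dot H^{s_0}}^4$, and use the norm equivalence \eqref{eq:normeq} together with $s_0\geq 1$ to bound $\|z\|_{\dot H^1}\lesssim \|z\|_{\dot H^{s_0}}\sim \|u\|_{\dot H^{s_0}}$. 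Combining and integrating in $\tau$ yields \eqref{action_est}.

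\textbf{Main obstacle.} There is no serious obstacle since the hard analytic work is already encapsulated in Theorem \ref{BNFtheorem}; the only real choice is how to distribute weights in the Cauchy--Schwarz step for \eqref{action_est}. Splitting $k^2 = k\cdot k$ so that both factors land in $\dot H^1$ is the natural one because it aligns exactly with the $\dot H^1$ estimate \eqref{X_4_1} on the remainder, avoiding any appeal to the stronger \eqref{theoBirR} and thereby producing a bound controlled by $\|u\|_{\dot H^{s_0}}^5$ rather than $\|u\|_{\dot H^N}^5$, as required.
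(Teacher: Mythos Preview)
Your proof is correct and follows essentially the same route as the paper's: differentiate $\|z\|_{\dot H^N}^2$ and $I_k$, observe that the integrable part $-\im\cL_k(I)z_k$ drops out under the real part, then apply \eqref{theoBirR} for the energy estimate and combine Cauchy--Schwarz with \eqref{X_4_1} and the norm equivalence \eqref{eq:normeq} for the action estimate. The weight splitting $k^2=k\cdot k$ you use is exactly the one in the paper.
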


\begin{proof}
The energy estimate  \eqref{energy_est} follows because
$$
\frac{d}{dt} \norm{z(t)}_{\dot H^N}^2 = 2 
{\rm Re}\int_{\T}|D|^N {\mathcal X}^{+}_{\geq 4} \cdot \bar{|D|^N z} \, \di x \stackrel{\eqref{theoBirR}} \leq C \|u\|_{\dot{H}^N}^{5} \, . 
$$
 Estimate \eqref{action_est} follows because  $ \frac{d}{dt} 
 I_k(t) = 2 {\rm Re}( (\mathcal{X}_{\geq 4}^+)_k \bar z_k ) $ 
 for any $k $ and so   
 \begin{align*}
\| \dot I \|_{{\cF L^{2,1}}} 
 \leq 2 \sum_{k \neq 0}
k^2  |[\mathcal{X}_{\geq 4}^+]_k| \, |z_k(t)| \lesssim \norm{z(t)}_{\dot H^1} \, \norm{\mathcal{X}_{\geq 4}^+}_{\dot H^1} \lesssim_N \norm{u(t)}_{\dot H^{s_0}}^5   
\end{align*}
by \eqref{X_4_1} and the equivalence \eqref{eq:normeq} of the norms 
$ \norm{z(t)}_{\dot H^{s_0}}  \sim 
\norm{u(t)}_{\dot H^{s_0}}  $. 
\end{proof}

The long-time existence of solutions up
to  times $ t \sim \varepsilon^{-3} $, as stated in Theorem \ref{thm:mainBFP}, is a direct 
consequence of  the energy estimate 
\eqref{energy_est} 
and 
the local existence result in \cite{ABZ1}. The latter  
proved that, if 
the initial data $ (\eta,\psi,V,B)_{|t=0} $ belong to the Banach space 
\begin{equation}\label{Xs}
X^s:= H^{s+\frac12}_0(\T, \R)\times \dot H^{s+\frac12}(\T, \R) \times H^s(\T, \R) \times H^s(\T, \R)   
\end{equation}
with $s > \frac32$, endowed with  norm
\be\label{epVBnorm}
\| (\eta, \psi, V, B) \|_{X^{s}}:= 
\|\eta \|_{H_0^{s+\frac12}} + \| \psi\|_{\dot H^{s+ \frac12}} + 
\| V\|_{H^{s}} + \| B \|_{H^{s}} \, , 
\ee
then there exists a time $T>0$ and a unique solution 
 $(\eta,\psi,V,B) \in C^0([-T, T],  X^{s})$ of \eqref{ww}.

\begin{thm}
\label{thm:mainBFP}
{\bf (Long time existence \cite[Theorem~1.2]{BFP})}
There exists $ s_0 > 0 $ such that, 
for all $s \geq s_0 $, there is $ \e_0 >  0 $ such that,  
for any initial data $ (\eta_0, \psi_0) $ satisfying 
\begin{equation}
\label{thm:main1}
\| (\eta_0, \psi_0, V_0, B_0) \|_{X^{s}} \leq \e \leq \e_0 \, , \quad
 \int_{\T} \eta_0 (x) \di x =  0 \, ,  
\end{equation}
where $ V_0 := V(\eta_0, \psi_0) $, $ B_0 := B (\eta_0, \psi_0) $ are defined by
\eqref{def:V}-\eqref{form-of-B}, the following holds: there exist constants $c>0$, $C>0$ and a unique classical solution 
\be\label{etapsi}
(\eta, \psi,V,B)  \in C^0([-T_\e, T_\e], X^{s})
\ee
of the water waves system \eqref{ww} 
with initial condition $(\eta,\psi)(0)= (\eta_0,\psi_0)$  
with   
\begin{equation}
\label{time}
T_\e \geq c  \e^{-3} \, ,
\end{equation}
satisfying 
\begin{equation}
\label{thm:main2}
\sup_{[-T_\e, T_\e]} \big( \| (\eta, \psi) \|_{H^{s} \times H^{s} } + \| (V, B) \|_{H^{s-1} \times H^{s-1} } \big) \leq {C} \e \, 
\end{equation}
and
\be\label{etaave0}
 \int_{\T} \eta (t,x) \di x = 0  \, \ \ \ \forall t \in [-T_\e, T_\e] \  . 
\ee
\end{thm}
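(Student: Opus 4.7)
The plan is to combine the local well-posedness of \cite{ABZ1} in $X^s$ with the Birkhoff normal form Theorem \ref{BNFtheorem} and its energy estimate \Cref{thm:control_hs}, via a bootstrap argument that extends the lifespan from $\cO(\varepsilon^{-1})$ (the natural local time) up to $\cO(\varepsilon^{-3})$. First, I would invoke \cite{ABZ1} to produce, for any initial datum as in \eqref{thm:main1}, a maximal classical solution $(\eta,\psi,V,B)\in C^0([-T_*,T_*],X^s)$ that can be continued as long as the $X^s$-norm stays bounded. Second, I pass to the complex good-unknown variable $u$ in \eqref{u0}; by the analyticity statements \eqref{an:DNX}, \eqref{an:omega}, \eqref{an:u}, whenever $(\eta,\psi,V,B)$ stays in a small ball of $X^s$ we have a two-sided comparison
\begin{equation*}
\|(\eta,\psi,V,B)\|_{X^s}\;\lesssim\;\|u\|_{\dot H^{s+\frac14}}+\|u\|_{\dot H^{s+\frac14}}^2\;\lesssim\;\|(\eta,\psi,V,B)\|_{X^s},
\end{equation*}
so controlling $u$ in a sufficiently high Sobolev norm is equivalent to controlling the $X^s$-norm of $(\eta,\psi,V,B)$.

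Next I would verify that the assumption \eqref{u01} of Theorem \ref{BNFtheorem} holds: on the local lifespan, $\|u(t)\|_{\dot H^N}\lesssim \varepsilon$ follows from \eqref{thm:main2} at short times, and the time derivatives $\partial_t^k u$ are controlled by repeatedly applying the water waves vector field \eqref{wwana} and using the Sobolev algebra and \eqref{eq:DNan}. Having set up the normal form coordinate $z=\mathfrak{B}(u)u$ with the norm equivalence \eqref{eq:normeq}, the energy estimate \eqref{energy_est} gives
\begin{equation*}
\|z(t)\|_{\dot H^N}^2\;\leq\;\|z(0)\|_{\dot H^N}^2+C\,|t|\,\sup_{\tau\in[0,t]}\|u(\tau)\|_{\dot H^N}^{5}.
\end{equation*}

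Third comes the bootstrap. Given $\|u(0)\|_{\dot H^N}\leq C_0\varepsilon$, define
\begin{equation*}
T_\varepsilon:=\sup\bigl\{t\in[0,T_*)\;:\;\|u(\tau)\|_{\dot H^N}\leq 2C_0\varepsilon\;\text{for all}\;\tau\in[0,t]\bigr\}.
\end{equation*}
On $[0,T_\varepsilon]$ the energy estimate together with \eqref{eq:normeq} yields
\begin{equation*}
\|u(t)\|_{\dot H^N}^2\;\leq\;C_1\bigl(C_0\varepsilon\bigr)^2+C_2\,|t|\,(2C_0\varepsilon)^{5},
\end{equation*}
and choosing $|t|\leq c\,\varepsilon^{-3}$ with $c$ small enough (depending only on $C_0,C_1,C_2$) forces $\|u(t)\|_{\dot H^N}<2C_0\varepsilon$ strictly, contradicting maximality of $T_\varepsilon$ unless $T_\varepsilon\geq c\varepsilon^{-3}$. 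Via the equivalence between $\|u\|_{\dot H^{s+\frac14}}$ and $\|(\eta,\psi,V,B)\|_{X^s}$ (from \eqref{an:DNX}--\eqref{an:u}), this controls $\|(\eta,\psi,V,B)\|_{X^s}$ by $C\varepsilon$, so the continuation criterion of \cite{ABZ1} keeps the solution alive up to that time, yielding \eqref{thm:main2} and the existence time \eqref{time}. Finally, the conservation of $\int_\T \eta\,dx$ along the flow of \eqref{ww} (noting that $G(\eta)\psi$ has zero average) gives \eqref{etaave0}.

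The main obstacle will be not the bootstrap itself, which is classical once the normal form is in hand, but bridging the gap between the $X^s$-framework of the local theory (which contains the auxiliary quantities $V,B$ and half-derivative weights measuring the hyperbolic structure) and the homogeneous $\dot H^N$-framework in which the normal form is constructed. Concretely, one must check that estimates on $u$ control in a Lipschitz way the composite nonlinear quantities $V(\eta,\psi)$ and $B(\eta,\psi)$, and that the bound \eqref{u01} involving time derivatives $\partial_t^k u$ (rather than spatial derivatives only) follows a priori from $X^s$-control using the equation \eqref{ww}. This is where the analyticity statements \eqref{an:DNX}, \eqref{an:omega}, \eqref{an:u} and the Sobolev algebra in high regularity do all the work; losing a derivative at this step would break the bootstrap, and it is exactly the careful paralinearization underlying Theorem \ref{BNFtheorem} that avoids any such loss.
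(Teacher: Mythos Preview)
Your proposal is correct and follows essentially the same approach as the paper: the paper states explicitly that Theorem~\ref{thm:mainBFP} ``is a direct consequence of the energy estimate \eqref{energy_est} and the local existence result in \cite{ABZ1}'', which is precisely the bootstrap you outline. The concern you flag about controlling $\|u\|_{K,N}$ (time derivatives) from spatial regularity alone is resolved in the paper by Lemma~\ref{lemma6.3}, which gives the equivalence $\|u(t)\|_{K,s}\sim\|u(t)\|_{\dot H^s}$ for solutions of \eqref{ww}; you correctly identify that this follows from the equation (the water waves vector field loses one derivative), and the paper packages this as a lemma rather than rederiving it.
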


\paragraph{Deterministic water waves flow.}
Theorem \ref{cor:LIP} below contains all the information  used in the probabilistic arguments of the forthcoming sections, concerning 
the pure gravity water waves flow  for any initial datum $(\eta_0, \psi_0)$ in the ball $\cB_0(\eps, \delta, \tR, s)$ defined in  \eqref{cB0}, 
provided  $ s $ is large enough and $\eps$ is small enough. To give a self-contained proof 
we  first report the following lemma 
stating the equivalence of the norm 
$\norm{u(t)}_{K,s} $ in  \eqref{u01} and 
the Sobolev norm $ \norm{u(t) }_{\dot H^s} $, for a 
sufficiently 
small solution $ u(t) $ of the water waves system. 
 
\begin{lem} {\bf (Norm equivalence \cite[Lemma~6.3]{BFP})}
   There is $s_0 >0$ such that for any $s \geq s_0$, for all $0< r \leq r_0(s)$ small enough, there is 
    a constant $C_{s,K}>0$ such that the following holds true:
   if $(\eta, \psi)$ is a solution of the water waves equations \eqref{ww} such that the complex function $u$ in \eqref{u0} satisfies 
   $\sup_{[-T, T]} \norm{u}_{K,s} \leq r$, then there is a constant $ C_{s,K} > 0 $ such that 
    \begin{equation}\label{BISMA2}
        \norm{\pa_t^k u(t, \cdot)}_{\dot H^{s-k}} \leq C_{s,K} \norm{u(t, \cdot)}_{\dot H^s} \ , \quad \forall 0 \leq k \leq K \ , 
        \quad \forall |t| \leq T \ .
    \end{equation}
    In particular  the norm $ \norm{u(t, \cdot)}_{K,s}$  is equivalent to the norm  $ \| u(t,\cdot) \|_{\dot H^s}$. 
    \label{lemma6.3}
\end{lem}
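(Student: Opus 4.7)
The direction $\|u(t,\cdot)\|_{\dot H^s} \leq \|u(t,\cdot)\|_{K,s}$ is tautological since the Sobolev norm of $u$ is the $k=0$ summand in the definition \eqref{u01}. The content of the lemma is the reverse inequality $\|u(t,\cdot)\|_{K,s} \leq C_{s,K}\|u(t,\cdot)\|_{\dot H^s}$, which we obtain by iteratively trading one time derivative for one spatial derivative via the equation of motion.

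\textbf{Step 1 (paralinearized equation for $u$).} Starting from \eqref{ww} in the good unknown $(\eta,\upomega)$ defined in \eqref{omega0} and passing to the complex variable \eqref{u0}, the standard Alazard--Burq--Zuily paralinearization of the water waves system, together with the analyticity of the Dirichlet--Neumann operator in \eqref{eq:DNan} and the maps \eqref{an:DNX}--\eqref{an:omega}, yields an equation of the form
\begin{equation}\label{eq:plan_u}
\partial_t u = -\im \, \Opbw{m(u;x,\xi)}\, u + \Opbw{m_-(u;x,\xi)}\,\bar u + R(u)
\end{equation}
where $m(u;\cdot,\cdot)$ is a symbol of order at most $1$ (coming from the transport term $V\partial_x$ and the dispersive part $\sqrt{a\,|\xi|}$), $m_-$ is of order $\leq 1/2$, and $R(u)$ is a smoothing remainder, all depending nonlinearly on $u$ and satisfying tame seminorm bounds
\[
|m(u;\cdot,\cdot)|_{1,L^\infty,n} + |m_-(u;\cdot,\cdot)|_{1/2,L^\infty,n} \lesssim_n \|u\|_{\dot H^{s_0}}, \qquad \|R(u)\|_{\dot H^{s-1}} \lesssim_s \|u\|_{\dot H^{s_0}}\|u\|_{\dot H^s},
\]
for $s\geq s_0$, provided $\|u\|_{\dot H^{s_0}}\leq r_0$.

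\textbf{Step 2 (base case $k=1$).} Apply the continuity estimate \eqref{cont00} of Proposition \ref{prop:action} to \eqref{eq:plan_u}: since $\Opbw{m(u;\cdot,\cdot)}\colon \dot H^s\to \dot H^{s-1}$ is bounded with operator norm controlled by $|m|_{1,L^\infty,4}\lesssim \|u\|_{\dot H^{s_0}}\leq r$, and similarly for $m_-$ and $R$, we obtain
\[
\|\partial_t u\|_{\dot H^{s-1}} \lesssim_s (1 + \|u\|_{\dot H^{s_0}})\|u\|_{\dot H^s} \lesssim_s \|u\|_{\dot H^s},
\]
which handles the $k=1$ term in $\|u\|_{K,s}$.

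\textbf{Step 3 (induction on $k$).} Suppose the estimate $\|\partial_t^j u\|_{\dot H^{s-j}}\leq C_{s,j}\|u\|_{\dot H^s}$ holds for all $0\leq j\leq k-1$ and $s\geq s_0+k$ (we allow a loss of baseline regularity, which is why the statement requires $N\gg K$). Differentiate \eqref{eq:plan_u} in time $k-1$ times using the Leibniz and Faà di Bruno rules; the result is a finite sum of expressions of the schematic form
\[
\Opbw{\partial_t^{j_0} m(u)} \partial_t^{j_1}u, \qquad \Opbw{\partial_t^{j_0} m_-(u)} \partial_t^{j_1}\bar u, \qquad \partial_t^{j_0} R(u),
\]
with $j_0+j_1 = k-1$. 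Each $\partial_t^{j_0}$ acting on a symbol produced by the nonlinear dependence on $u$ gives a multilinear expression in $(\partial_t^{\ell}u)_{\ell\leq j_0}$, whose paradifferential seminorms are controlled, by the inductive hypothesis and tame estimates for symbols, by $\|u\|_{\dot H^{s_0+k}}$. Applying \eqref{cont00} and the inductive bound for $\|\partial_t^{j_1}u\|_{\dot H^{s-1-j_1}}$ produces
\[
\|\partial_t^{k}u\|_{\dot H^{s-k}} \lesssim_{s,k} \|u\|_{\dot H^s},
\]
closing the induction.

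\textbf{Step 4 (main obstacle).} The genuine difficulty is the \emph{quasilinear} nature of \eqref{ww}: naively differentiating the equation in time loses a derivative at each step in a non-tame way, and one cannot write $\partial_t u = F(u)$ with $F$ bounded from $\dot H^s$ to $\dot H^{s-1/2}$ (the linear dispersive gain of $1/2$ is not enough once the transport term $V\partial_x$ is present). The resolution is the paralinearization \eqref{eq:plan_u}: Bony's paradifferential calculus allows the symbols $m(u;x,\xi)$ to be treated as order-$1$ multipliers with seminorms controlled by low norms of $u$ --- precisely the mechanism that lets each $\partial_t$ cost exactly one spatial derivative and a factor controlled by $\|u\|_{\dot H^{s_0}}\leq r$. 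The smallness $r$ is used to absorb all multiplicative constants $1+\|u\|_{\dot H^{s_0}}$ into $C_{s,K}$ uniformly in $t\in[-T,T]$.
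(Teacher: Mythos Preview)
Your approach is correct and matches the paper's: the paper gives only a one-sentence justification, namely that \eqref{BISMA2} ``follows directly from the fact that $u$ solves the PDE \eqref{ww.paraco} in the appendix, whose vector field loses one derivative.'' Your Steps~1--3 spell out exactly this mechanism---paralinearize, then induct on $k$, using that each $\partial_t$ applied to the equation costs one spatial derivative via the order-$1$ paradifferential operator and \eqref{cont00}. One minor remark: your Step~4 slightly overstates the obstacle---once the paralinearized form \eqref{ww.paraco} is in hand, time-differentiating the symbols keeps them in the same order-$1$ class (this is built into the non-homogeneous symbol classes $\Gamma^m_{K,K',p}$ of the appendix, whose seminorms already absorb $\partial_t^k$), so the induction is routine; the genuine work is the paralinearization itself, imported from \cite{ABZ1,BFP}.
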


The estimate \eqref{BISMA2}  
follows directly from the fact that 
$ u $ solves the PDE    
\eqref{ww.paraco} in the appendix,  whose vector field loses one derivative. 

\begin{thm}\label{cor:LIP}
{\bf (Deterministic  water waves flow)} Let $N \gg K \gg s_0 \gg 1$ be the constants provided by  \Cref{BNFtheorem}.
Fix 
$\delta \in (0,1)$ and  $s \geq  N + \frac54$. 
For any $\tR > 0$, any $0<\kappa \ll 1-\delta$, there exists $\eps_0 := \eps_0(\tR, \kappa)>0$ such that for   any $\eps \in (0, \eps_0)$, the following holds true:
\begin{itemize}
\item[(i)] {\sc (Long time existence)} for any initial data $(\eta_0, \psi_0) \in \cB_0(\eps, \delta, \tR, s)$ in \eqref{cB0}, 
the solution $(\eta, \psi)(t; \eta_0, \psi_0)$ of the water waves equation \eqref{ww}   exists  up to times 
  $|t| \leq T_\eps:= \eps^{-3(1-\delta) + \kappa} $ and fulfills
      \begin{equation}
\label{est.flow.etapsi}
\sup_{t \in [-T_\eps, T_\eps]} \left( \|\eta (t; \eta_0, \psi_0) \|_{H_0^{s-1}} + \| \psi (t; \eta_0, \psi_0)\|_{\dot H^{s- 1}}\right) \leq \und{C}\,  \tR \eps^{1-\delta}  \, , 
\end{equation}
for some $\und{C}>0$ independent from $\delta,  \tR, \kappa, \eps$ and the initial datum $ (\eta_0,\psi_0)$.
    \item[(ii)]  {\sc (Birkhoff normal form map) } The  corresponding function  $u(t; \eta_0, \psi_0)$   defined in 
    \eqref{u0}, with $ \upomega $ given by  \eqref{omega0}, 
 satisfies the smallness condition \eqref{u01} 
 required by  \Cref{BNFtheorem}, 
  and so the map 
   \be
  \begin{aligned}\label{Upsilon}
  \Upsilon^t \colon    \cB_0(\eps, \delta, \tR, s) &  \to \dot H^N \ , \\
     (\eta_0, \psi_0)  &  \mapsto \Upsilon^t(\eta_0, \psi_0):= z(t; \eta_0, \psi_0):= \mathfrak{B}(u(t; \eta_0, \psi_0)) u(t; \eta_0, \psi_0)
  \end{aligned}
  \ee
  is well defined   for any $|t| \leq T_\eps$;

\item[(iii)]  {\sc (Smallness)}   For any 
$(\eta_0, \psi_0) \in \cB_0(\eps, \delta, \tR, s)$, 
the functions $u(t; \eta_0, \psi_0)$ and $z(t; \eta_0, \psi_0)$ defined in the previous item satisfy 
  \begin{equation}\label{z.control}
\sup_{t \in [-T_\eps, T_\eps]}\norm{u(t; \eta_0, \psi_0)}_{\dot H^{N}}  + \sup_{t\in [-T_{\eps},T_{\eps}]} \norm{z(t; \eta_0, \psi_0)}_{\dot{H}^N} \leq \und{C}\, \tR \eps^{1-\delta} \, 
\end{equation}
for some $\und{C}>0$ independent from $\delta, \tR, \kappa, \eps$ and the initial datum $ (\eta_0,\psi_0)$.
\item[(iv)] 
  {\sc (Lipschitz property)} 
There exist $C_1, C_{\rm lip}>0$ such that  for any 
$(\eta_{0i}, \psi_{0i}) \in \cB_0(\eps, \delta, \tR, s)$, $i=1,2$, any 
$|t| \leq T_\eps$, we have
\begin{equation}\label{lip.z}
    \| \Upsilon^t(\eta_{01}, \psi_{01}) - \Upsilon^t(\eta_{02}, \psi_{02}) \|_{L^2} \leq  C_1 e^{C_{\rm lip} |t| }\,\left( \norm{\eta_{01} -  \eta_{02}}_{H^{s_0+2}_0} + 
    \| \psi_{01} -  \psi_{02} \|_{\dot H^{s_0+2}} \right)
    \, . 
\end{equation}
\end{itemize} 
\end{thm}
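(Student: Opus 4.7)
The proof combines Theorems \ref{thm:mainBFP} and \ref{BNFtheorem}; part (iv) additionally exploits the novel Lipschitz property \eqref{lipB} of the Birkhoff map $\mathfrak{B}$, proved in Appendix \ref{app:normalform}.

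For (i), fix $(\eta_0,\psi_0)\in\cB_0(\eps,\delta,\tR,s)$. By the analyticity \eqref{an:DNX}, one has $\|(V_0,B_0)\|_{H^{s-1}\times H^{s-1}}\lesssim \tR\eps^{1-\delta}$, so $\|(\eta_0,\psi_0,V_0,B_0)\|_{X^{s-1}}\lesssim \tR\eps^{1-\delta}$. Applying Theorem \ref{thm:mainBFP} at Sobolev level $s-1$ with effective size $\eps':=\tR\eps^{1-\delta}$ yields a unique solution existing up to times $c\tR^{-3}\eps^{-3(1-\delta)}\geq T_\eps$ for $\eps$ sufficiently small, and the bound \eqref{thm:main2} yields \eqref{est.flow.etapsi}. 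The complex variable $u(t)$ defined by \eqref{u0}, \eqref{omega0} then satisfies $\|u(t)\|_{\dot H^{s-5/4}}\lesssim \tR\eps^{1-\delta}$ via \eqref{an:u} (the $5/4$ loss accounts for one derivative from the Dirichlet-Neumann operator plus the $|D|^{1/4}$ factor). Since $s\geq N+5/4$, Lemma \ref{lemma6.3} shows that the smallness hypothesis \eqref{u01} of Theorem \ref{BNFtheorem} is verified throughout $[-T_\eps,T_\eps]$, proving (ii); the norm equivalence \eqref{Germe} then gives \eqref{z.control}, proving (iii).

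The bulk of the argument is part (iv). The strategy is to propagate Lipschitz continuity first at the $u$-level in the $\dot H^{s_0}$-norm, and then transfer it to the Birkhoff variable $z$ via \eqref{lipB}. Set $w:=u_1-u_2$ and $v:=z_1-z_2$. The paradifferential reduction \eqref{ww.paraco} of \eqref{ww} has a self-adjoint principal symbol and, by part (iii), coefficients uniformly bounded in any fixed higher Sobolev norm by a constant depending only on $\tR$. The difference $w$ satisfies the corresponding linearized equation, for which a standard paradifferential energy estimate yields
\begin{equation*}
\frac{d}{dt}\|w(t)\|_{\dot H^{s_0}}^2\leq 2C_{\rm lip}\|w(t)\|_{\dot H^{s_0}}^2,\qquad C_{\rm lip}=C_{\rm lip}(\tR)>0,
\end{equation*}
independent of $\eps$ and $t$. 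Grönwall then gives $\|w(t)\|_{\dot H^{s_0}}\leq e^{C_{\rm lip}|t|}\|w(0)\|_{\dot H^{s_0}}$.

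To conclude, we bridge between the data and $w(0)$, and between $w(t)$ and $v(t)$. Writing $u=|D|^{-1/4}\eta/\sqrt{2}+\im|D|^{1/4}(\psi-\Opbw{B}\eta)/\sqrt{2}$ and invoking \eqref{an:DNX}, \eqref{an:u}, together with the paradifferential continuity \eqref{cont00}, one obtains (with ample room)
\begin{equation*}
\|w(0)\|_{\dot H^{s_0}}\lesssim \|\eta_{01}-\eta_{02}\|_{H^{s_0+2}_0}+\|\psi_{01}-\psi_{02}\|_{\dot H^{s_0+2}}.
\end{equation*}
At time $t$, since $v$ has zero mean one has $\|v(t)\|_{L^2}\leq \|v(t)\|_{\dot H^{s_0}}$; combining the novel bound \eqref{lipB} at $\ts=s_0$ with the smallness $\|u_i(t)\|_{\dot H^{s_0+1}}\lesssim \tR\eps^{1-\delta}$ from (iii) yields $\|v(t)\|_{\dot H^{s_0}}\lesssim \|w(t)\|_{\dot H^{s_0}}$. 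Chaining the three estimates produces \eqref{lip.z}. The main obstacle is the paradifferential energy estimate for $w$: the quasilinear character of \eqref{ww} forces one to rely on the self-adjoint structure of the principal symbol of its paradifferential form, and the uniform bounds of (iii) are essential to keep $C_{\rm lip}$ independent of $\eps$ and $t$. The novel Lipschitz property \eqref{lipB} of the Birkhoff map, established in Appendix \ref{app:normalform} by refining the BFP construction, is precisely what permits transferring the $u$-level Lipschitz estimate to $z$ without incurring additional derivative loss.
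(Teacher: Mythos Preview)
Your treatment of (i)--(iii) is essentially the paper's argument: control the $X^{s-1}$ norm of the initial data via \eqref{an:DNX}, invoke Theorem~\ref{thm:mainBFP} at effective size $\tR\eps^{1-\delta}$, then use \eqref{an:u} and Lemma~\ref{lemma6.3} to verify \eqref{u01} and deduce \eqref{z.control} from \eqref{Germe}.

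For (iv) your route diverges from the paper's and has a genuine gap. The paper does \emph{not} derive a paradifferential energy estimate on $w=u_1-u_2$; instead it quotes the Lipschitz estimate \cite[Formula~5.3]{ABZ1} for the quantity $\mathfrak{N}(t)=\|(\triangle\eta,\triangle\psi,\triangle V,\triangle B)\|_{X^{s_0+1}}$, iterates it over time steps of fixed length to produce $\mathfrak{N}(t)\le e^{C_{\rm lip}t}\mathfrak{N}(0)$, and only then passes to $u$ via \eqref{an:u} and to $z$ via \eqref{lipB}. Your alternative---a direct $\dot H^{s_0}$ energy estimate on $w$ using \eqref{ww.paraco}---does not close as written. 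The principal symbol is not self-adjoint (the diagonal part is skew-adjoint, which is fine), but the matrix $A_{1/2}(U;x)$ in \eqref{A12} is \emph{not} symmetric: its off-diagonal entries are $-a$ and $a$. Hence $\Opbw{\im A_{1/2}|\xi|^{1/2}}$ couples $u$ and $\bar u$ at positive order, and the contribution $\Opbw{-\im a(U_1)|\xi|^{1/2}}\bar w$ in the equation for $w$ produces a term of size $\|a(U_1)\|_{L^\infty}\|w\|_{\dot H^{s_0+1/2}}$ in $\tfrac{d}{dt}\|w\|_{\dot H^{s_0}}^2$, which is not bounded by $\|w\|_{\dot H^{s_0}}^2$. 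Closing such an estimate requires either a symmetrizer (this is precisely what \cite{ABZ1} builds in the $(\eta,\psi,V,B)$ variables) or working after the block-diagonalization of Proposition~\ref{teodiagonal}, neither of which you invoke. So the ``standard paradifferential energy estimate'' you appeal to is not available for \eqref{ww.paraco} in the plain $\dot H^{s_0}$ norm.
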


\begin{proof}
$(i)$ In view of \eqref{an:DNX} and since  
the functions $V, B$ in \eqref{def:V}--\eqref{form-of-B}
vanish at $ \psi = 0 $, 
any initial datum 
$ (\eta_0, \psi_0) $ in $\cB_0(\eps, \delta, \tR, s)$ belongs to the space $X^{s-1}$ (cf.  \eqref{Xs}) and  
\be\label{est.init.data.Xs}
\norm{(\eta_0, \psi_0, V_0, B_0)}_{X^{s-1}} \lesssim  \norm{\eta_0}_{H_0^s} + \norm{\psi_0}_{\dot H^s} 
\stackrel{\eqref{cB0}} \lesssim  \tR \eps^{1-\delta}    \ .
\ee
Provided $\eps$ is sufficiently small, condition    \eqref{thm:main1} holds and Theorem \ref{thm:mainBFP} guarantees the existence of a  
unique solution $(\eta, \psi, V, B) (t,x ) $ in $ X^{s-1}$
    satisfying, by \eqref{thm:main2} and \eqref{est.init.data.Xs}, 
    \begin{equation}
\label{thm:main2app}
 \|\eta (t, \cdot) \|_{H_0^{s-1}} + \| \psi (t, \cdot)\|_{\dot H^{s- 1}} + 
\| V (t, \cdot) \|_{H^{s-2}} + \| B(t, \cdot) \|_{H^{s-2}} \leq \und{C}  \tR \eps^{1-\delta} \,  , 
 \quad 
 \forall |t| \leq c 
 \tR^{-3}  \eps^{-3(1-\delta)} \, ,
\end{equation}
for some $\und{C}>0$ independent from $\delta,  \tR, \kappa, \eps$ and the initial datum $ (\eta_0,\psi_0)$.
For  $\eps_0 :=\eps_0(\tR, \kappa)$ sufficiently small, the time 
$T_\eps = \eps^{-3(1-\delta) + \kappa}\leq c 
 \tR^{-3}  \eps^{-3(1-\delta)}$,   
concluding the proof of $(i)$.

$(ii)$
By  \eqref{an:u}, since the function $ u $ in \eqref{u0}
vanishes at $ (\eta,\psi) =(0,0) $, the assumption 
 $s -1 \geq N + \frac14$ and 
\eqref{thm:main2app}, 
 imply that 
\begin{align}
 \norm{u(t, \cdot)}_{\dot H^{N}} 
\lesssim \| \eta (t, \cdot) \|_{H_0^{N+\frac14}} + \|
 \psi (t, \cdot) \|_{{\dot H}^{N+\frac14}} 
 \leq \underline{C} \tR \eps^{1-\delta} \, , 
 \quad \forall |t| \leq T_\eps  \, , 
 \label{0809:1714}
\end{align}
proving the first bound in \eqref{z.control}
(the constant $ \underline{C} $ may be larger than in \eqref{est.flow.etapsi}).
Then 
 \Cref{lemma6.3}  and \eqref{0809:1714} imply that 
  \begin{equation}\label{u.control}
\sup_{t\in [-T_\eps,T_\eps]} \| u(t; \eta_0, \psi_0) \|_{K,N} =  
\sup_{t\in [-T_\eps,T_\eps]} \sum_{k=0}^K \|\partial_t^k u(t; \eta_0, \psi_0)\|_{\dot{H}^{N-k}} \leq C_1 \tR \eps^{1-\delta}  \ 
\end{equation}
for some $C_1 >0$ and so, for  $\eps$  small enough, 
$ u(t; \eta_0, \psi_0 ) $ 
satisfies 
 the smallness condition \eqref{u01}.
  
$(iii)$ It follows from \eqref{0809:1714} and \eqref{Germe}.

$(iv)$ 
We now prove \eqref{lip.z} for any $ t \in [0,T_\varepsilon] $. By time reversibility the same estimate holds in $[-T_\varepsilon,0] $. 
We rely on the Lipschitz estimates proved in \cite{ABZ1}.
Let $(\eta_i, \psi_i, V_i, B_i) \in C^0([-T_\eps, T_\eps], X^{s-1})$ be the solutions of \eqref{ww} with initial data $(\eta_{0i}, \psi_{0i}) $ in $ \cB_0(\eps, \delta, \tR, s)$
provided by item ($i$). 
Define 
\begin{equation}\label{def:Ns(t)}
\begin{aligned}
\triangle\eta:= \eta_1 - \eta_2 \, ,  \quad
\triangle\psi:= \psi_1 - \psi_2 \, ,  \quad
\triangle V:= V_1 - V_2 ,  \quad
\triangle B:= B_1 - B_2 \, , \\
\fM_i:= \sup_{t\in [0,T_\eps]} \| (\eta_i, \psi_i, V_i, B_i) \|_{X^{ s_0+2}}  \, , 
\quad
\fN(t):= \sup_{\tau \in [0,t]} \norm{(\triangle\eta, \triangle\psi, \triangle V, \triangle B)}_{X^{s_0+1}}\, , 
\end{aligned}
\end{equation}
with  $s_0$ given by \Cref{BNFtheorem} (which can be taken greater than $6$). 
By \cite[Formula 5.3]{ABZ1},  for any $0 \leq t \leq T_\eps$,  
\begin{equation}\label{ABZ.lip}
 \fN(t) \leq \cK(\fM_1, \fM_2) \fN(0) + t \cK(\fM_1, \fM_2) \fN(t) 
 \end{equation}
 for some non-decreasing constant $\cK(\fM_1, \fM_2)\geq 0$.
The constants
$ \mathfrak{M}_i $ in \eqref{def:Ns(t)} satisfy, in view of  
\eqref{epVBnorm}, \eqref{thm:main2app} and  $ s -2 > s_0 $,  
the uniform bound
 \be\label{Mi.est}
 \mathfrak{M}_1 + \mathfrak{M}_2 \leq \underline{C} 
 \tR \eps^{1-\delta} \leq 1 \, , 
 \ee 
 so the constant in \eqref{ABZ.lip} satisfies 
 \be\label{unifb}
 \cK(\fM_1, \fM_2)\leq \cK(1,1) \, .
\ee
  For any $t \in [0, T_\eps]$ where $ T_\eps = \eps^{-3(1-\delta) + \kappa} $ we 
divide 
  $[0, t]$ in  intervals of size $t_1$ where
$t_1$ is smaller than 
the minimum of $t$ and $1/2\cK(1,1)$. 
  Then, on $[0, t_1]$, \eqref{ABZ.lip} and \eqref{unifb} imply  
 $\fN(t) \leq 2\cK(1,1) \fN(0)$ and,  iterating such bound  between $[t_1, 2t_1],\ldots , [t -
t_1, t]$, we deduce, for some $C_{\rm lip} >0$,  
 \begin{equation}
\fN(t) \leq  e^{C_{\rm lip} t }\, \fN(0)  \, , \quad \forall  0 \leq  t \leq T_\eps \, .  
\label{lipconst}
\end{equation}
We now prove the bound \eqref{lip.z} for 
$\Upsilon^t (\eta_{01},\psi_{01}) 
- \Upsilon^t (\eta_{02},\psi_{02}) := z_1-z_2 $ where 
$ z_i:= \mathfrak{B}(u_i)u_i $ and  
$ u_i:= \tfrac{1}{\sqrt{2}} |D|^{-\frac14}\eta_i + \tfrac{\im}{\sqrt{2}}|D|^{\frac14} \upomega_i $, $ i = 1,2 $, satisfy, by  
\eqref{z.control} and $ N \geq s_0 + 1  $,  
\begin{equation}\label{massa1}
\sup_{t \in [-T_\eps, T_\eps]}  \norm{u_i(t, \cdot)}_{\dot H^{s_0+1}} 
\lesssim \tR \varepsilon^{1-\delta}  \, . 
\end{equation}
Thus, for any $ t \in [0,T_\varepsilon ] $, 
\begin{align}\label{intermedios}
    \norm{z_1-z_2}_{L^2}  \leq 
    \norm{z_1-z_2}_{\dot H^{s_0}} 
    & \stackrel{\eqref{lipB}, \eqref{massa1}}\lesssim
    \norm{u_1-u_2}_{\dot H^{s_0}}  \notag \\
 &    \stackrel{\eqref{an:u},\eqref{Mi.est}} \lesssim \| (\triangle \eta, \triangle \psi) \|_{H^{s_0+\frac14} \times {\dot H}^{s_0+ \frac14}} \stackrel{\eqref{def:Ns(t)},\eqref{epVBnorm}}{\lesssim} 
     \mathfrak{N}(t) \notag  \\
     & \stackrel{\eqref{lipconst},\eqref{def:Ns(t)}} 
      {\lesssim}  e^{C_{\rm lip} t }    
     \norm{(\triangle\eta (0), \triangle\psi(0), \triangle V(0), \triangle B(0))}_{X^{s_0+1}} \notag \\
     & \stackrel{\eqref{an:DNX}, \eqref{Mi.est},\eqref{epVBnorm}} 
      {\lesssim}  e^{C_{\rm lip} t }  \big(   
 \| \eta_{01} -  \eta_{02} \|_{H^{s_0+2}} +
\| \psi_{01} -  \psi_{02} \|_{\dot H^{s_0+2}} \big)  \notag
\end{align}
proving \eqref{lip.z}.
\end{proof}

\part{Probabilistic formation of rogue  water waves}\label{partII}

In this Part we prove the Large Deviation
\Cref{thm:main} 
which  directly implies \Cref{thm:main_intro}, as we show in 
Section \ref{sec:main}.
In Sections 
\ref{sec:52} and  \ref{sec:LDPIII}, relying on the normal form
long time well-posedness 
results we construct better approximate water waves solutions, and 
we prove the Large Deviation estimate \eqref{eq:LDP}, 
up to the  much longer times 
$ |t| \leq \eps^{-\frac52 (1-\delta) + } $, 
and  $ |t| \leq \eps^{- 3 (1-\delta) + } $, 
as stated  in items ($i$) and ($ii$). 
The proofs 
are rather different in these two cases.
Finally, in Section \ref{sec:allr} we prove the dispersive focusing mechanism in \Cref{thm:df}.

\section{The main theorem}\label{sec:main}

The following  theorem  provides the sharp estimate 
of  the probability that 
random initial data of typical size $ \varepsilon $ 
will lead to a 
rogue water wave  with a crest  
$ \geq \lambda_0 \varepsilon^{1-\delta} $. 

\begin{thm}{\bf (Large deviation principle)}\label{thm:main}
There exists $\und{s}>0$ 
such that
for any  $\lambda_0 >0$, $\delta \in (0,1)$, and $s \geq \und{s}$ there exists $\tR_0  >0$ and  for any $\tR\geq \tR_0$,   any $0<\kappa \ll 1-\delta$, the following holds true. There exists $\eps_0 > 0 $ such that for   any $\eps \in (0, \eps_0)$, 
for any random initial data $(\eta_0^\omega, \psi_0^\omega) \in  \cB_0(\eps, \delta, \tR , s)$ in \eqref{cB0}, 
the solution $(\eta(t,x), \psi(t,x))$ of the water waves equations \eqref{ww} exists up to times $|t| \leq \eps^{-3(1-\delta)+\kappa}$, and satisfies 
the large deviation principle 
\begin{equation}\label{eq:LDP_main}
\lim_{\varepsilon\rightarrow 0^{+}} \varepsilon^{2\delta}\, \log \P \left( \left\{\sup_{x\in\T}  \eta (t,x) \geq \lambda_0 \varepsilon^{1-\delta}\right\} \cap \cB_0(\eps, \delta, \tR , s) \right) = -\frac{\lambda_0^2}{2\bsi^2}  \, , 
\end{equation}
with $\bsi$ in \eqref{upsigma}, and for any time $t$ satisfying 
\begin{itemize}
    \item[(i)]  $|t| \leq \eps^{-\frac52 (1-\delta) + \kappa}$; 
    \item[(ii)]   $|t| \leq \eps^{-3 (1-\delta) + \kappa}$  provided $\delta \in (\frac35,1)$.
\end{itemize}
\end{thm}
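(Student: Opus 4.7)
The plan is to work on the good event $\cB_0$ of \eqref{cB0p}, where \Cref{cor:LIP} supplies long-time existence, a uniform $\O(\eps^{1-\delta})$ bound on the normal-form variable $z(t;\eta_0^\omega,\psi_0^\omega)$ and the Lipschitz estimate \eqref{lip.z}. The complement $\cB_0^c$ has $\log$-probability at most $-\tR^2\eps^{-2\delta}/(4\|\vec c\|_{h^s}^2)+1$, so taking $\tR$ large enough makes $\cB_0^c$ negligible compared with the target rate $-\lambda_0^2/(2\bsi^2)$. The LDP \eqref{eq:LDP_main} is then proved by a sharp lower bound (by exhibiting an explicit dispersive-focusing configuration of initial data) together with a matching upper bound (by reducing $\sup_x\eta(t,x)$ to a tractable amplitude/phase-aligned model). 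The two time regimes require different approximate solutions to the normal-form equation \eqref{theoBireq}.

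For part (i), on the shorter window $|t|\leq\eps^{-\frac52(1-\delta)+\kappa}$, I would freeze the nonlinear frequencies at the initial actions and work with
\begin{equation*}
u_{\rm app}(t,x):=\frac{1}{\sqrt{2\pi}}\sum_{j\in\Z\setminus\{0\}} e^{-\im t\,\cL_j(I(\zeta_0^\omega))+\im\phi_j^\omega}\,|\zeta_{0j}^\omega|\,e^{\im j x},\qquad \eta_{\rm app}=\sqrt{2}\,\Re\,|D|^{1/4}u_{\rm app}.
\end{equation*}
The approximation error $\|\eta(t)-\eta_{\rm app}(t)\|_{L^\infty}=o(\eps^{1-\delta})$ I would establish by plugging $u_{\rm app}$ into \eqref{theoBireq} and bounding the defect by the quartic remainder \eqref{X_4_1} and the action drift \eqref{action_est}, which gives a total error of order $|t|\,\eps^{5(1-\delta)}$ and hence a time budget $\eps^{-5/2(1-\delta)}$. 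The crucial probabilistic point is that $\eta_{\rm app}(t,x)$ remains a centered real Gaussian field even though each $\cL_j(I(\zeta_0^\omega))$ couples all moduli: conditioning on the $\sigma$-algebra generated by $(|\zeta_{0k}^\omega|)_k$ freezes every $\cL_j$, while the initial phases $(\phi_j^\omega)$ remain i.i.d.\ uniform; the tower property of conditional expectation then restores joint Gaussianity of the Fourier coefficients. Once the field is Gaussian with pointwise variance $\eps^2\bsi^2$ (cf. \eqref{upsigma}), the LDP reduces to a classical Landau--Adler-type statement for the supremum of a stationary centered Gaussian process on $\T$, whose rate is exactly $\lambda_0^2/(2\bsi^2)$.

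For part (ii), on the optimal window $|t|\leq\eps^{-3(1-\delta)+\kappa}$ with $\delta>3/5$, the frozen-frequency ansatz is no longer sharp and I would instead employ
\begin{equation*}
u_{\rm app2}(t,x):=\frac{1}{\sqrt{2\pi}}\sum_{j\in\Z\setminus\{0\}} e^{-\im\int_0^t \cL_j(I(z(\tau)))\,d\tau+\im\phi_j^\omega}\,|\zeta_{0j}^\omega|\,e^{\im j x},
\end{equation*}
which tracks $\eta$ in $L^\infty$ throughout the optimal interval. Since $u_{\rm app2}$ is no longer Gaussian, the lower bound must be engineered via phase quasi-synchronization at $x=0$: freeze the first $2\tN$ initial phases to deterministic parameters $\vec\phi\in[0,2\pi)^{2\tN}$ with $\tN\sim\eps^{-\gamma}$, interpret
\begin{equation*}
\cT(\omega,\vec\phi):=\Big(\int_0^t\cL_j(I(z(\tau;\widetilde\eta_0^\omega(\cdot;\vec\phi),\widetilde\psi_0^\omega(\cdot;\vec\phi))))\,d\tau\ \mathrm{mod}\ 2\pi\Big)_{0<|j|\leq\tN}
\end{equation*}
as a random self-map of a compact convex subset of $\R^{2\tN}$ which is continuous in $\vec\phi$ by \eqref{lip.z} and measurable in $\omega$ by the analyticity \eqref{eq:DNan}, and apply the random Brouwer \Cref{thm:Brouwer} to obtain a random fixed point $\vec\phi^{*,\omega}$ at which all $\tN$ nonlinear phases vanish modulo $2\pi$. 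By construction $\vec\phi^{*,\omega}$ is independent of the i.i.d.\ uniform phases $(\phi_j^\omega)_{|j|\leq\tN}$, so the stability neighborhood $\cN(\alpha):=\{|\phi_j^\omega-\phi_j^{*,\omega}|<\alpha,\ \forall|j|\leq\tN\}$ has exact probability $(\alpha/\pi)^{2\tN}$ and factorizes against any event measurable with respect to the moduli and the remaining phases. Choosing $\alpha\simeq e^{-C_{\rm lip}|t|}$ propagates exact synchronization to quasi-synchronization throughout $\cN(\alpha)$ via \eqref{lip.z}, whence on this set $\sup_x\eta_{\rm app2}^\omega(t,x)\geq \frac{\eps(1-\eps)}{\sqrt\pi}\sum_{0<|j|\leq\tN}c_j R_j^\omega+o(\eps^{2-\delta})$ for $R_j^\omega$ the Rayleigh moduli. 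A Rayleigh-sum LDP applied to the factorized amplitude sum then produces the lower bound $\exp\bigl(-\lambda_0^2\eps^{-2\delta}/(2\bsi^2)\bigr)\cdot(\alpha/\pi)^{2\tN}$; the range $\delta>3/5$ is what makes the geometric correction $(\alpha/\pi)^{2\tN}=\exp(-\O(\tN|t|))=\exp(-\O(\eps^{-3(1-\delta)-\gamma}))$ subdominant once $0<\gamma<5\delta-3$.

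The main obstacle is the matching upper bound in regime (ii), where no direct Gaussian comparison is available. My plan is to first replace $\eta$ by $\eta_{\rm app2}$ (paying an $o(\eps^{1-\delta})$ error on $\cB_0$), then dominate $\sup_x\eta_{\rm app2}^\omega(t,x)$ by the explicit weighted sum $\sqrt{2}\,\||D|^{1/4}u_{\rm app2}\|_{L^\infty}\leq C\eps\sum_j c_j R_j^\omega$ whose tail is governed by the variance $\eps^2\bsi^2$; the subtle point is to argue that any extremal event contributing at rate $\lambda_0^2/(2\bsi^2)$ must force phase alignment of the first $\tN$ modes, so that the upper bound indeed matches the lower bound constructed via $\cN(\alpha)$. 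A secondary technical difficulty is that the Lipschitz constant $e^{C_{\rm lip}|t|}$ in \eqref{lip.z} grows like $\exp(\eps^{-3(1-\delta)})$ on the optimal window; this is absorbed by the exponential decay $c_j=e^{-|j|\tb}$ in \eqref{ckdk}, which simultaneously keeps the truncation $\tN\sim\eps^{-\gamma}$ and the high-mode tail error compatible with the target rate, and it is ultimately this tension between the Lipschitz blow-up and the Gaussian rate that selects the threshold $\delta>3/5$.
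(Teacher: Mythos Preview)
Your outline matches the paper's architecture closely: the Gaussian approximation $u_{\rm app}$ with frozen frequencies for regime (i), the non-Gaussian $u_{\rm app2}$ for regime (ii), the random Brouwer fixed point plus factorization for the lower bound in (ii), and the negligibility of $\cB_0^c$ via \eqref{terrone}. These are all the right moves.

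The one genuine misjudgment is your identification of the upper bound in (ii) as ``the main obstacle''. In fact it is the easiest part. From the explicit representation \eqref{eq:eta2notg},
\[
\sup_{x\in\T}\eta_{\rm app2}(t,x)=\sup_{x\in\T}\frac{\eps}{\sqrt\pi}\sum_{j\neq 0}c_jR_j^\omega\cos(\vartheta_j(t;\omega)+jx)\ \leq\ \frac{\eps}{\sqrt\pi}\sum_{j\neq 0}c_jR_j^\omega,
\]
with the sharp constant $1/\sqrt\pi$ obtained simply from $|\cos|\leq 1$. The Rayleigh-sum LDP (\Cref{thm:LDP_Rayleigh}) with threshold $\sqrt\pi\lambda_0\eps^{-\delta}$ then gives rate exactly $-\pi\lambda_0^2/\sum_j c_j^2=-\lambda_0^2/(2\bsi^2)$, using $\bsi^2=\tfrac{1}{2\pi}\sum_j c_j^2$. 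No phase-alignment argument is needed for the upper bound; your ``subtle point'' is a red herring. If you go through $\sqrt{2}\,\||D|^{1/4}u_{\rm app2}\|_{L^\infty}\leq C\eps\sum_j c_jR_j^\omega$ with an uncontrolled $C$, you risk losing the constant and then genuinely would need extra work---but working directly from \eqref{eq:eta2notg} avoids this entirely. The hard direction in (ii) is the lower bound, which you have correctly identified and planned out.

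A minor technical slip: your error budget for (i), ``$|t|\,\eps^{5(1-\delta)}$'', would naively give a window $\eps^{-4(1-\delta)}$, not $\eps^{-5/2(1-\delta)}$. In the paper's \Cref{thm:approx} the binding term is actually $|t|^2\eps^{6(1-\delta)}$ (coming from the action drift \eqref{action_est} integrated once more against the frequency difference), which is what selects the $\eps^{-5/2(1-\delta)}$ threshold.
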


The rest of this section is dedicated to the proof of  \Cref{thm:main_intro} assuming \Cref{thm:main}. 

We first remark that the  assumption 
$ c_{n} = d_{n} \sqrt{n} $ in \eqref{ckdk} implies the following lemma. 

\begin{lem}\label{lem:zeta0.d}
{\bf (Random initial data)}
    Let $(\eta_0^\omega, \psi_0^\omega)$ be a random initial datum as in \eqref{eq:init_data}. Then the Fourier coefficients $(\zeta_{0j}^\omega)_{j \in \Z\setminus\{0\}}$ 
    of the complex initial datum $\zeta_0^\omega (x) $ defined in \eqref{zeta0} are independent 
    complex Gaussian random variables 
\be\label{zeta0.g}
    \zeta_{0j}^\omega  \sim \cN_\C\Big(0, \eps^2 \frac{c_j^2}{ |j|^{\frac12}}\Big) \, , \quad \forall j \neq 0  \,  , 
\ee
where $c_j $ are the constants defined in \eqref{ckdk}. 
    In particular the modulus 
$(|\zeta_{0j}^\omega|)_{j \in \Z\setminus\{0\}}$ are independent Rayleigh r.v.  $\sim\sR\big(\frac{1}{\sqrt2} \eps  |j|^{-\frac14}c_j\big)$.
\end{lem}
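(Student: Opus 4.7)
The plan is to compute explicitly the Fourier coefficients of $\zeta_0^\omega$ in terms of the independent real Gaussians $\alpha_n^\omega,\beta_n^\omega,\gamma_n^\omega,\delta_n^\omega$, and then verify the claimed distribution and independence by a direct covariance computation, exploiting the fortunate scaling relation $d_n=n^{-1/2}c_n$.

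First I would convert the real Fourier expansions in \eqref{eq:init_data} to complex form via $\cos(nx)=\tfrac12(e^{\im nx}+e^{-\im nx})$ and $\sin(nx)=\tfrac{1}{2\im}(e^{\im nx}-e^{-\im nx})$, and use $\eta_0^\omega(x)=\frac{1}{\sqrt{2\pi}}\sum_{j\ne 0}\eta_{0j}^\omega e^{\im jx}$ (similarly for $\psi_0^\omega$). A short computation using $c_{-n}=c_n$ yields, for $n\in\N$,
\[
\eta_{0,\pm n}^\omega=\frac{\eps\, c_n}{\sqrt{2}}(\alpha_n^\omega\mp\im\beta_n^\omega),\qquad \psi_{0,\pm n}^\omega=\frac{\eps\, d_n}{\sqrt{2}}(\gamma_n^\omega\mp\im\delta_n^\omega).
\]
Inserting these into the Fourier diagonalization $\zeta_{0j}^\omega=\tfrac{1}{\sqrt2}|j|^{-1/4}\eta_{0j}^\omega+\tfrac{\im}{\sqrt2}|j|^{1/4}\psi_{0j}^\omega$ and using the key identity $|n|^{1/4}d_n=n^{-1/4}c_n$ from \eqref{ckdk}, the two terms carry the same prefactor $\eps c_n n^{-1/4}/2$, and one finds
\[
\zeta_{0,n}^\omega=\frac{\eps c_n n^{-1/4}}{2}\bigl((\alpha_n^\omega+\delta_n^\omega)+\im(\gamma_n^\omega-\beta_n^\omega)\bigr),\quad
\zeta_{0,-n}^\omega=\frac{\eps c_n n^{-1/4}}{2}\bigl((\alpha_n^\omega-\delta_n^\omega)+\im(\gamma_n^\omega+\beta_n^\omega)\bigr).
\]

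Next I would read off the distribution of each $\zeta_{0j}^\omega$. Since $\alpha_n^\omega,\beta_n^\omega,\gamma_n^\omega,\delta_n^\omega$ are i.i.d.\ $\cN_\R(0,1)$, the pair $(\alpha_n^\omega+\delta_n^\omega,\gamma_n^\omega-\beta_n^\omega)$ is jointly Gaussian with zero mean, variances both equal to $2$, and zero covariance, hence two independent $\cN_\R(0,2)$ r.v.'s. By Definition \ref{def:rv}(2) and the scaling property Remark \ref{basic_prob}(iii), it follows that
\[
\zeta_{0,n}^\omega\ \sim\ \cN_\C\Bigl(0,\,\tfrac{\eps^2 c_n^2 |n|^{-1/2}}{4}\cdot 2\cdot 2\Bigr)=\cN_\C\bigl(0,\eps^2 c_n^2|n|^{-1/2}\bigr),
\]
and analogously for $\zeta_{0,-n}^\omega$, proving \eqref{zeta0.g}.

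The main (though mild) subtlety is independence: $\zeta_{0,n}^\omega$ and $\zeta_{0,-n}^\omega$ are built from the \emph{same} four underlying Gaussians $\alpha_n^\omega,\beta_n^\omega,\gamma_n^\omega,\delta_n^\omega$, so independence is not automatic. I would handle this by computing the four cross-covariances between $(\alpha_n^\omega+\delta_n^\omega,\gamma_n^\omega-\beta_n^\omega)$ and $(\alpha_n^\omega-\delta_n^\omega,\gamma_n^\omega+\beta_n^\omega)$; each one telescopes to a difference of equal variances and therefore vanishes. Since the four-dimensional vector is jointly Gaussian, vanishing covariances give full independence, whence $\zeta_{0,n}^\omega\independent \zeta_{0,-n}^\omega$. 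Independence across different values of $|n|$ is immediate from the i.i.d.\ hypothesis. Finally, Remark \ref{basic_prob}(iii) applied to \eqref{zeta0.g} gives $|\zeta_{0j}^\omega|\sim\sR(\tfrac{1}{\sqrt2}\eps|j|^{-1/4}c_j)$, completing the proof.
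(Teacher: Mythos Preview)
Your proof is correct and follows essentially the same approach as the paper's: both compute the Fourier coefficients $\zeta_{0,\pm n}^\omega$ explicitly in terms of $\alpha_n^\omega,\beta_n^\omega,\gamma_n^\omega,\delta_n^\omega$, use the relation $d_n n^{1/4}=c_n n^{-1/4}$ to obtain a common prefactor, and then verify independence of $\zeta_{0,n}^\omega$ and $\zeta_{0,-n}^\omega$ by checking that the relevant Gaussian cross-covariances vanish. The only difference is cosmetic --- the paper introduces auxiliary variables $r_{\pm n}^\omega,s_{\pm n}^\omega$ for the real and imaginary parts before simplifying, whereas you substitute the scaling relation first and work directly with the sums $\alpha_n\pm\delta_n$, $\gamma_n\mp\beta_n$.
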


\begin{proof}
In view of \eqref{zeta0p} and \eqref{eq:init_data}, 
 the complex initial datum $\zeta_0^\omega (x) $ in \eqref{zeta0} is equal to 
\be\label{eq:rnsn}
\zeta_0^\omega (x) =\frac{1}{\sqrt{2\pi}}  \sum_{j\in \Z\setminus\{0\}}  \underbrace{\eps \frac{r^\omega_j + \im s^\omega_j}{\sqrt{2}}}_{= \zeta_{0j}^\omega } \,  e^{\im j x } 
 \,  \quad \mbox{ where } \quad 
 \begin{cases}
r_{\pm n}^\omega  =  ( c_n n^{-\frac14}\, \alpha_n^\omega \pm  d_n n^{\frac14} \, \delta_n^\omega)/\sqrt{2}\\
s_{\pm n}^\omega  =  ( d_n n^{\frac14} \, \gamma_n^\omega \mp c_n n^{-\frac14}\, \beta_n^\omega)/\sqrt{2} \, , 
\end{cases}         \forall n \in \N \, . 
\ee
By assumption \eqref{eq:init_data}, $(\alpha^\omega_n)_{n\in\N}$, $(\beta_n^\omega)_{n\in\N}$, $(\gamma_n^\omega)_{n\in\N}$, and $(\delta_n^\omega)_{n\in\N}$ are i.i.d.  real Gaussian random variables  $\sim\cN_\R \left(0, 1 \right)$ and, by the assumption 
$ c_{n} = d_{n} \sqrt{n} $ in \eqref{ckdk},
  each $r^\omega_{\pm n}$ and $s^\omega_{\pm n}$ is a real
 Gaussian distribution  $\sim \cN_\R \Big(0,\dfrac{c_n^2}{  n^{\frac12}}\Big)$.
 Thus each Fourier coefficient 
 $ \zeta_{0j}^\omega  $ satisfies \eqref{zeta0.g}. 
Regarding their independence, note that 
$$
r_j^\omega  \independent  s_{j'}^\omega \ \ \  \forall j, j' \in \Z\setminus\{0\} , \quad
r_j^\omega  \independent r_{j'}^\omega \mbox{ and } s_j^\omega  \independent  s_{j'}^\omega  \ \ \ \forall |j| \neq |j'| \, . 
$$
In addition, the Gaussian variables 
$r_n^\omega$ and $r_{-n}^\omega$  are independent if and only if
\[
0= 2 \E[r_n^\omega \,  r_{-n}^\omega] =  n^{-\frac12} \, c_n^2 \E[ \alpha_n^2] - n^{\frac12} \,d_n^2 \E[ \delta_n^2] 
\quad\Leftrightarrow\quad c_n = n^{\frac12} \, d_n \, .
\] 
The  same condition implies the independence between $s_n^\omega$ and $s_{-n}^\omega$. This proves that all the 
$(\zeta_{0j}^\omega)_{j \in \Z\setminus\{0\}}$  are  independent. 
\end{proof}

\begin{proof}[Proof of \Cref{thm:main_intro} assuming \Cref{thm:main}]
We now  fix $\lambda_0>0$, $\delta\in (0,1)$, $0<\kappa\ll 1-\delta$ and a time $t$ satisfying \emph{(i)} or \emph{(ii)} in \Cref{thm:main_intro}.  Define the event
 \be\label{def:At}
 \mathcal{A}^t(\lambda) :=\Big\{ \omega\in\mathbb{\Omega} \ \big |\ \sup_{x\in\T} \eta^{\omega}(t,x) \geq \lambda \Big\} \, .
 \ee
For any initial datum in the ball 
$ \cB_0 (\eps, \delta, \tR , s) $
with $s>\und{s}$ given by \Cref{thm:main}, 
the solution $(\eta,\psi)$ of the water waves equations \eqref{ww} is well defined and therefore
 \begin{equation}\label{A_equals_R}
 \mathcal{A}^t(
\lambda_0 \varepsilon^{1-\delta})\cap \cB_0(\eps, \delta, \tR , s)=\mathfrak{R}^t(
\lambda_0 \varepsilon^{1-\delta})\cap \cB_0(\eps, \delta, \tR , s) 
 \end{equation}
 where  the set $\mathfrak{R}^t (\lambda) $ is defined in \eqref{eventr}.
 \\[1mm]
\noindent{\sc Lower bound:} 
In view of  \eqref{A_equals_R} we have 
 \begin{equation}\label{eq:lb_main}
     \P( \mathfrak{R}^t(
\lambda_0 \varepsilon^{1-\delta}))\geq \P(\mathfrak{R}^t(
\lambda_0 \varepsilon^{1-\delta})\cap \cB_0(\eps, \delta, \tR , s))= \P(\mathcal{A}^t(
\lambda_0 \varepsilon^{1-\delta})\cap \cB_0(\eps, \delta, \tR , s)) \, .
 \end{equation}
The estimates \eqref{eq:lb_main} and \eqref{eq:LDP_main} then imply 
\begin{equation}\label{eq:lb_main2}
    \liminf_{\varepsilon\to 0^{+}} \varepsilon^{2\delta}\, \log \P( \mathfrak{R}^t(
\lambda_0 \varepsilon^{1-\delta})) \geq -\frac{\lambda_0^2}{2\bsi^2} \, .
\end{equation}
\noindent{\sc Upper bound:} 
We need the following  lemma which shows that
the complementary event 
$ \cB_0(\eps, \delta, \tR , s)^c = 
\mathbb{\Omega}\setminus \cB_0(\eps, \delta, \tR , s) $ where 
$\cB_0 $ is defined in \eqref{cB0p}, is {\it negligible}.

\begin{lem}\label{lem:negligible} Let $\delta\in (0,1)$, and $\tR,s,\eps>0$. Then the probability  $ \P( \cB_0(\eps, \delta, \tR , s)^c)$ in \eqref{cB0p} is bounded by:
\begin{equation}
\label{terrone}
\log \P(\cB_0(\eps, \delta, \tR , s)^c) \leq  -  \frac{\tR^2}{ 4 
\| \vec c \|_{h^s}^2 } \varepsilon^{-2\delta } + 1 \ , \qquad 
\mbox{ with } \  
\| \vec  c \|_{h^s}  \  \mbox{defined in } \ \eqref{norm.c} \, . 
\end{equation} 
\end{lem}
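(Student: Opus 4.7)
Since $\|(\eta_0^\omega,\psi_0^\omega)\|_{H_0^s\times\dot H^{s+\frac12}}^2$ is, up to the factor $\eps^2$, a weighted sum of independent $\chi^2$ random variables, my plan is to apply a Chernoff bound tuned so that the coefficient of $\eps^{-2\delta}$ in the exponent matches the prescribed $-1/(4\|\vec c\|_{h^s}^2)$.

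First I would unpack the norms. Using the expansion \eqref{eq:init_data}, the relation $d_n = c_n n^{-1/2}$ from \eqref{ckdk}, and the definitions \eqref{hs}--\eqref{usobo}, one finds
\[
\|(\eta_0^\omega,\psi_0^\omega)\|_{H_0^s\times\dot H^{s+\frac12}}^2 = \eps^2 \sum_{n\in\N} a_n Z_n^\omega, \quad a_n := n^{2s} c_n^2, \quad Z_n^\omega := (\alpha_n^\omega)^2+(\beta_n^\omega)^2+(\gamma_n^\omega)^2+(\delta_n^\omega)^2,
\]
where $(Z_n^\omega)_{n\in\N}$ are i.i.d.\ $\chi^2_4$ random variables. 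Because $c_j=c_{-j}$, one has $\sum_{n\in\N} a_n = \tfrac12 \|\vec c\|_{h^s}^2$. Writing $X^\omega := \sum_n a_n Z_n^\omega$, the event $\cB_0^c$ coincides with $\{X^\omega > \tR^2 \eps^{-2\delta}\}$.

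Next I would apply Markov's exponential inequality. For any $t>0$ with $2t\max_n a_n < 1$, using the $\chi^2_4$ moment generating function $\E e^{tZ_n}=(1-2t)^{-2}$ and independence,
\[
\log\P(\cB_0^c) \leq -t\tR^2\eps^{-2\delta} - 2\sum_{n\in\N} \log(1-2ta_n).
\]
The elementary inequality $-\log(1-y)\leq y+y^2$, valid for $y\in[0,1/2]$, together with the crude bound $\sum_n a_n^2 \leq (\max_n a_n)\sum_n a_n \leq \tfrac14 \|\vec c\|_{h^s}^4$, give, for every $t$ such that $2ta_n \leq 1/2$,
\[
-2\sum_n \log(1-2ta_n) \leq 4t\sum_n a_n + 8t^2 \sum_n a_n^2 \leq 2t\|\vec c\|_{h^s}^2 + 2t^2\|\vec c\|_{h^s}^4.
\]

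Finally I would choose $t := 1/(4\|\vec c\|_{h^s}^2)$. This automatically satisfies $2ta_n \leq a_n/(2\|\vec c\|_{h^s}^2) \leq 1/4$ because $a_n \leq \sum_n a_n = \tfrac12\|\vec c\|_{h^s}^2$. Inserting this value in the preceding displays yields
\[
\log\P(\cB_0^c) \leq -\frac{\tR^2}{4\|\vec c\|_{h^s}^2}\eps^{-2\delta} + \frac12 + \frac18 \leq -\frac{\tR^2}{4\|\vec c\|_{h^s}^2}\eps^{-2\delta} + 1,
\]
which is exactly \eqref{terrone}. The only subtle point is the calibration of $t$: the unconstrained Chernoff optimum would violate the admissibility condition $2t a_n < 1/2$ once $A=\tR^2\eps^{-2\delta}$ is large, so one is forced to sit at a fixed, variance-sized value of $t$ and carefully balance the linear and quadratic contributions to $\log\E e^{tX}$. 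Beyond this bit of bookkeeping I do not anticipate any substantial obstacle.
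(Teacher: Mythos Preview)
Your proof is correct and follows essentially the same Chernoff-bound strategy as the paper. The only cosmetic difference is that the paper passes to the complex variable $\zeta_0^\omega$ (via \eqref{equivalnorms} and Lemma~\ref{lem:zeta0.d}) so that the summands are independent \emph{exponential} random variables indexed by $j\in\Z\setminus\{0\}$, whereas you stay in real coordinates and work with i.i.d.\ $\chi^2_4$ variables indexed by $n\in\N$; the two decompositions describe the same norm and lead to the same exponential inequality with the same choice of Lagrange parameter (your $t=1/(4\|\vec c\|_{h^s}^2)$ corresponds to the paper's $\rho=1/(2\|\vec c\|_{h^s}^2)$ after accounting for the factor $2$ in \eqref{B0_to_exp}).
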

\begin{proof} 
By  \eqref{equivalnorms} and \eqref{usobo},
\begin{equation}\label{B0_to_exp}
\norm{(\eta_0^{\omega},\psi_0^{\omega})}_{H_0^s\times {\dot H}^{s+\frac12}}^2  = 2 \norm{\zeta_0^{\omega}}_{H^{s+\frac14}}^2 = 2\,\varepsilon^2\, \sum_{j\in\Z\setminus\{0\}} X_j^\omega  \qquad \mbox{where}\ X_j^{\omega}:= c_j^2 |j|^{2s+\frac12}\, \varepsilon^{-2}\, |\zeta_{0j}^{\omega}|^2.
\end{equation}
Since $\zeta_{0j}^{\omega}\sim \cN_{\C}\big (0,\varepsilon^2 \frac{c_j^2}{|j|^{\frac12}}\big )$ (cf. \eqref{zeta0.g}), we have that $X_j^{\omega}\sim \Es \big (\frac{1}{c_j^2|j|^{2s}}\big )$ by \Cref{basic_prob} (v). We have to bound the tails of a sum of exponential random variables. Note that 
\begin{equation}\label{mgen_func}
    \E [e^{\rho X_j^{\omega}}] = 1+\frac{c_j^2|j|^{2s} \rho}{1-c_j^2|j|^{2s} \rho},\qquad \mbox{for any}\ \rho<\frac{1}{c_j^2|j|^{2s}}.
\end{equation}
By the Chernoff inequality, the independence of $(X_j^{\omega})_{j\in\Z\setminus\{0\}}$ and \eqref{mgen_func}, for any $0<\rho<\frac{1}{\norm{\vec{c}}_{h^s}^2}$ (cf. \eqref{norm.c})
\begin{align}
\P\Big( \sum_{j\in\Z\setminus\{0\}} X_j^{\omega} \geq \lambda \Big) & \leq e^{-\rho\lambda}\, \E [e^{ \rho\sum_{j\in\Z\setminus\{0\}} X_j^{\omega} }]  \leq e^{-\rho\lambda} \prod_{j\in\Z\setminus\{0\}} \Big( 1+\frac{c_j^2|j|^{2s} \rho}{1-c_j^2|j|^{2s} \rho}\Big)  \notag \\
& =\exp\Big(-\rho\lambda +{\mathop \sum}_{j\in\Z\setminus\{0\}} \log 
\Big( 1+\frac{c_j^2|j|^{2s} \rho}{1-c_j^2|j|^{2s} \rho}\Big)\Big) \, . \label{inter310}
\end{align}
Setting $\rho:= (2\norm{\vec{c}}_{h^s}^2)^{-1}$, we have $1-c_j^2|j|^{2s} \rho\geq 1/2$ for all $j\in\Z\setminus\{0\}$ and 
\eqref{inter310} and $\log (1+x)\leq x$
imply 
\begin{equation}\label{ciochevoglio}
    \P\Big( \sum_{j\in\Z\setminus\{0\}} X_j^{\omega} \geq \lambda \Big) \leq \exp\Big( -\frac{\lambda}{2\norm{\vec{c}}_{h^s}^2}+\sum_{j\in\Z\setminus\{0\}} \frac{c_j^2|j|^{2s}}{\norm{\vec{c}}_{h^s}^2}\Big) = 
    \exp\Big(  1- \frac{\lambda}{2\norm{\vec{c}}_{h^s}^2}\Big) \, .
\end{equation}
Recalling \eqref{cB0}
and \eqref{B0_to_exp} we have 
\begin{align}
\P(\cB_0(\eps, \delta, \tR , s)^c) & = \P \left( \norm{(\eta_0^{\omega},\psi_0^{\omega})}_{H_0^s\times \dot{H}^{s+\frac12}}^2 >\tR^2\, \eps^{2-2\delta}\right) \notag \\
&\leq \P\Big(  \sum_{j\in\Z\setminus\{0\}} X_j^{\omega} \geq \frac{\tR^2\, \eps^{-2\delta}}{2}\Big)\leq 
\exp\Big( 1- \frac{\tR^2 \eps^{-2\delta}}{4\norm{\vec{c}}_{h^s}^2}\Big) \notag 
\end{align}
by the estimate \eqref{ciochevoglio} 
with  $\lambda = \tR^2 \varepsilon^{-2\delta}/2$.  
\end{proof}

\begin{rk}
The optimal bound  is 
$ \log \P(\cB_0^c) \leq  -  \frac{\tR^2}{ 
2\| \vec c \|_{h^s}^2 } \varepsilon^{-2\delta } + 1 $,
as  proved in 
\cite[Theorem~5.1]{Janson}. 
\end{rk}

We can now conclude the proof of the upper bound. 
Taking  $\tR > 2 \lambda_0 \| \vec c \|_{h^s}  \bsi^{-1} $ 
and  $\eps$ sufficiently small, 
the probability  $ \P( \cB_0(\eps, \delta, \tR , s)^c)$ is much smaller than \eqref{eq:LDP_main}, in particular 
\begin{equation}\label{eq:B0c_control}
    \P( \cB_0(\eps, \delta, \tR , s)^c) \leq \P( \mathcal{A}^t(
\lambda_0 \varepsilon^{1-\delta})\cap \cB_0(\eps, \delta, \tR , s)) \, .
\end{equation}
Using \eqref{A_equals_R} and \eqref{eq:B0c_control}, we find
 \begin{align}
     \P( \mathfrak{R}^t(
\lambda_0 \varepsilon^{1-\delta})) & = \P( \mathfrak{R}^t(
\lambda_0 \varepsilon^{1-\delta})\cap \cB_0(\eps, \delta, \tR , s)) + \P( \mathfrak{R}^t(
\lambda_0 \varepsilon^{1-\delta})\cap \cB_0(\eps, \delta, \tR , s)^c)\notag \\
& \leq  \P( \mathcal{A}^t(
\lambda_0 \varepsilon^{1-\delta})\cap \cB_0(\eps, \delta, \tR , s)) + \P( \cB_0(\eps, \delta, \tR , s)^c)\notag\\
& \stackrel{\eqref{eq:B0c_control}} \leq  2 \P( \mathcal{A}^t(
\lambda_0 \varepsilon^{1-\delta})\cap \cB_0(\eps, \delta, \tR , s)) \, .
\label{eq:ub_main}
 \end{align}
The estimates \eqref{eq:ub_main} and 
\eqref{eq:LDP_main} then imply 
\begin{equation}\label{eq:ub_main2}
    \limsup_{\varepsilon\to 0^{+}} \varepsilon^{2\delta}\, \log \P( \mathfrak{R}^t(
\lambda_0 \varepsilon^{1-\delta})) \leq -\frac{\lambda_0^2}{2\bsi^2}.
\end{equation}
In conclusion, 
the bounds \eqref{eq:lb_main2} and \eqref{eq:ub_main2} yield the desired result \eqref{eq:LDP_main}.
\end{proof}

\section{Rogue waves on nonlinear  timescales}
\label{sec:52} 

In this section we prove Theorem \ref{thm:main}
for any time $ |t| \leq \eps^{-\frac52 (1-\delta) + \kappa} $ in item ($i$). 
The key step is to define an approximate profile  $\eta_{\rm app}(t,x)$  which stays close to the  water waves 
surface $\eta(t,x)$ over this time 
interval, and whose distribution is 
Gaussian,  
see properties ($i$)-($ii$) below. 

\subsection{The approximate solution} 

Let us first define  the following linear operator.  
Given $T>0$, $\ts \geq 0$ and a function $ g\in C^0 ([0,T],{ \dot H^1}(\T,\C))$, $ \tau \mapsto g(\tau) $,  let 
\begin{equation}\label{eq:def_upxi}
\begin{split}
\Upxi (t,g) : \dot H^\ts  & \longrightarrow  C^0([0,T], \dot H^\ts )\\
 v & \longmapsto \Upxi (t,g)[v](x):  = \frac{1}{\sqrt{2\pi}}\,\sum_{j \in\Z\setminus \{0 \}} e^{-\im \int_0^t  \cL_j(I(g(\tau)))\, \di\tau} \,v_j  \,e^{\im jx}\, ,
\end{split}
\end{equation}
where  $\cL_j (I)$ are the nonlinear frequencies in \eqref{HCS1},  and 
\begin{equation}
\label{action}
I(g(\tau))=  \big(|g_k(\tau)|^2 \big)_{k\in\Z\setminus \{0\}} 
\end{equation}
are the actions of $g(\tau ) \in \dot H^1 (\T) $,
cf.  \eqref{actions}, and $ g_k (\tau) $ denotes its $k$-th Fourier coefficient.  
For each $t$ fixed each 
 $ \cL_j (I) $ is finite by \eqref{L_lips} since $ g(\tau) \in \dot H^1 (\T) $. Furthermore  
 $\Upxi (t,g)$ is a Fourier multiplier and, since 
 all the $\cL_j (I)$ are real, 
it is a  unitary transformation from $\dot H^\ts$ to itself. 

The approximation of the free water waves surface profile  $\eta(t,x)$ with initial datum 
$ (\eta_0^\omega, \psi_0^\omega )$ that we shall use is
\begin{equation}
\label{etaapptilde}
\eta_{\rm app}(t,\cdot) := \sqrt{2} \Re \,  |D|^{\frac14}  \, u_{\rm app}(t,\cdot) \ , 
\qquad u_{\rm app}(t,x):=   \Upxi(t, \zeta_0^\omega) \zeta_0^\omega = \frac{1}{\sqrt{2\pi}}\,\sum_{j \in \Z \setminus\{0\}} e^{- \im t \cL_j(I(\zeta_0^\omega))} \, \zeta_{0j}^\omega  \, e^{\im j x }\, , 
\end{equation}
where  $\zeta_0^\omega (x) $ is the initial datum  in \eqref{zeta0}. 
Note that $ \eta (t,x)$ 
and $ \eta_{\rm app}(t,x) $
have the same initial datum $ (\eta_0^\omega, \psi_0^\omega)$. Remark that 
$ u_{\rm app} (t,x) =\Upxi(t,\zeta_0^\omega) \zeta_0^\omega$  is the solution 
of the integrable cubic Birkhoff 
normal form 
\eqref{BNF3} with  random initial datum $\zeta_{0}^\omega (x) $ in 
\eqref{zeta0}. It is not clear that  it is still a complex Gaussian random variable, since its phases  involve also the moduli $(|\zeta_{0j}|)_{j \in \Z\setminus \{0\}}$.
We shall prove the following key properties of the approximate solution $u_{\rm app}(t,x)$:  
\begin{itemize} 
    \item ($i$) {\bf Approximation: } the approximate profile 
    $ \eta_{\rm app}(t,\cdot )$ stays close to the  water waves surface $ \eta (t,\cdot ) $   
    in the sup-norm $ \| \  \|_{L^\infty(\T)} $, 
    up to times $|t|\leq \eps^{- \frac52 (1-\delta)+\kappa} $, see Lemma \ref{thm:approx}. 
\item ($ii$) {\bf Preservation of Gaussianity: } for any 
  $(t,x)$ the approximate water wave 
  profile $ \eta_{\rm app}(t,x)$ is a  Gaussian random variable, 
  see Lemma \ref{cor:etaapp}.
\end{itemize}
We start with the first property ($i$). 

\begin{lem}\label{thm:approx} {\bf (Approximation)}
Fix $\delta \in (0,1)$ and  $s \geq  N + \frac54$ with $ N $ given by \Cref{BNFtheorem}. 
For any $\tR > 0$, any $0<\kappa \ll 1-\delta$, there exist $\eps_0 := \eps_0(\tR, \kappa), \tC>0$ such that for   any $\eps \in (0, \eps_0)$, the following holds true.
For any initial datum 
 $(\eta_0^\omega, \psi_0^\omega) \in  \cB_0(\eps,\delta, \tR,s)$ defined in \eqref{cB0},
 the solution $(\eta(t,x), \psi_0^\omega, \psi(t,x) )$ of the water waves equations \eqref{ww} exists up to times $|t| \leq T_\eps:=\eps^{-3(1-\delta)+\kappa}$ and
 \begin{equation}\label{upxi_error20}
\norm{\eta(t, \cdot)- \eta_{\rm app}(t, \cdot)}_{L^\infty(\T)}
\leq \tC \, \tR^4 \, \varepsilon^{1-\delta+\kappa}  \ , \qquad \forall |t|\leq  \varepsilon^{-\frac52(1-\delta)
+ \kappa} \, , 
\end{equation} 
where
$ \eta_{\rm app}(t,x)$ is the approximate solution     in \eqref{etaapptilde} (which  has the same initial datum $ (\eta_0^\omega, \psi_0^\omega)$). 
\end{lem}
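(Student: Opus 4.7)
The idea is to reduce the comparison of $u(t)$ and $u_{\rm app}(t)$ to one between the normal form variable $z(t) := \mathfrak{B}(u(t))\,u(t)$ of \Cref{BNFtheorem} and a ``frozen-action'' reference trajectory $v(t) := \Upxi(t,\zeta_0^{\omega})\,z(0)$, whose Fourier modes are $v_k(t) = e^{-\im t\,\cL_k(I(\zeta_0^{\omega}))}\,z_k(0)$. Since $\eta - \eta_{\rm app} = \sqrt{2}\,\Re\,|D|^{1/4}(u - u_{\rm app})$ and $\||D|^{1/4}f\|_{L^{\infty}} \lesssim \|f\|_{\dot H^{3/4+\sigma}}$ by Sobolev embedding, it suffices to control $u - u_{\rm app}$ in some moderate Sobolev norm $\dot H^{\ts}$, say $\ts=1$. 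Decompose
\[
u(t) - u_{\rm app}(t) = \underbrace{(u(t) - z(t))}_{E_1(t)} + \underbrace{(z(t) - v(t))}_{E_2(t)} + \underbrace{\Upxi(t,\zeta_0^{\omega})\bigl(z(0) - \zeta_0^{\omega}\bigr)}_{E_3(t)}\,.
\]

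The terms $E_1$ and $E_3$ are time-uniform ``layer'' errors of size $\eps^{2(1-\delta)}$. For $E_1$, the Lipschitz bound \eqref{phi_est} for $\mathfrak{B}(u)-I$, combined with \eqref{z.control}, gives $\|E_1(t)\|_{\dot H^{\ts}} \lesssim \|u(t)\|_{\dot H^{\ts}}\|u(t)\|_{\dot H^{\ts+1}} \lesssim \eps^{2(1-\delta)}$. For $E_3$, since $\Upxi(t,\zeta_0^{\omega})$ is a unitary Fourier multiplier on $\dot H^{\ts}$, one reduces to bounding $\|z(0)-\zeta_0^{\omega}\|_{\dot H^{\ts}} \leq \|z(0)-u(0)\|_{\dot H^{\ts}} + \|u(0)-\zeta_0^{\omega}\|_{\dot H^{\ts}}$; the first summand is controlled as $E_1(0)$, and the second one via identity \eqref{diffuzeta} together with \eqref{an:DNX} and \Cref{prop:action} applied to $\Opbw{B_0}\eta_0$, yielding a quadratic estimate $\|u(0)-\zeta_0^{\omega}\|_{\dot H^{\ts}} \lesssim \|B_0\|_{L^{\infty}}\|\eta_0\|_{H^{\ts+1/4}} \lesssim \eps^{2(1-\delta)}$. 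As $\kappa \ll 1-\delta$, both errors are well below the target $\eps^{1-\delta+\kappa}$.

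The main work is on $E_2(t)$. Subtracting the ODE $\dot v_k = -\im\,\cL_k(I(\zeta_0^{\omega}))\,v_k$ from the Fourier-projected normal form equation \eqref{transformed_eq} yields
\[
\dot{(z_k - v_k)} + \im\,\cL_k(I(z(\tau)))\,(z_k - v_k) = \im\,[\cL_k(I(\zeta_0^{\omega})) - \cL_k(I(z(\tau)))]\,v_k + (\mathcal{X}^{+}_{\geq 4})_k\,.
\]
Since $\cL_k$ is real and $z_k(0)=v_k(0)$, the integrating-factor trick (which preserves the modulus) together with $|v_k(\tau)| = |z_k(0)|$ gives
\[
|z_k(t) - v_k(t)| \leq |z_k(0)|\int_0^t |\cL_k(I(\zeta_0^{\omega})) - \cL_k(I(z(\tau)))|\,d\tau + \int_0^t |(\mathcal{X}^{+}_{\geq 4})_k(\tau)|\,d\tau\,.
\]
By \eqref{L_lips}, the frequency difference splits into an initial-layer part $\|I(z(0)) - I(\zeta_0^{\omega})\|_{\cF L^{2,1}} \lesssim \|z(0)-\zeta_0^{\omega}\|_{\dot H^1}(\|z(0)\|_{\dot H^1}+\|\zeta_0^{\omega}\|_{\dot H^1}) \lesssim \eps^{3(1-\delta)}$ (Cauchy--Schwarz plus the preceding layer bound) and an action-drift part $\|I(z(0)) - I(z(\tau))\|_{\cF L^{2,1}} \lesssim |\tau|\,\eps^{5(1-\delta)}$ from \eqref{action_est}. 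Summing over $k$ with weight $|k|^{2\ts}$ and using \eqref{z.control} for the action factor and \eqref{X_4_1} for the remainder, one obtains
\[
\|E_2(t)\|_{\dot H^{\ts}} \lesssim \bigl(t\,\eps^{3(1-\delta)} + t^2\,\eps^{5(1-\delta)}\bigr)\,\eps^{1-\delta} + t\,\eps^{4(1-\delta)}\,.
\]
For $|t| \leq \eps^{-\frac{5}{2}(1-\delta)+\kappa}$ the largest contribution is $t^2\,\eps^{6(1-\delta)} \lesssim \eps^{(1-\delta)+2\kappa}$, comfortably below $\eps^{1-\delta+\kappa}$, which combined with the layer bounds gives \eqref{upxi_error20}.

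The main obstacle — and the very reason for the $\eps^{-\frac{5}{2}(1-\delta)+\kappa}$ time restriction — is the linear-in-$\tau$ drift of the actions due to the quintic remainder $\mathcal{X}^{+}_{\geq 4}$: the cubic integrable Birkhoff Hamiltonian $H^{(4)}_{ZD}$ conserves each $I_k$ exactly, but $\mathcal{X}^{+}_{\geq 4}$ allows them to vary at rate $O(\eps^{5(1-\delta)})$, and this drift is integrated twice (once inside $\cL_k$ and once in time), producing the quadratic factor $t^2\,\eps^{5(1-\delta)}$ that saturates precisely at $t \sim \eps^{-\frac{5}{2}(1-\delta)}$. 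Pushing the bound up to the deterministic timescale $\eps^{-3(1-\delta)+\kappa}$ necessarily requires freezing the actions along the true trajectory rather than at the initial datum, which is exactly the route taken via $u_{\rm app2}$ in \Cref{sec:LDPIII}.
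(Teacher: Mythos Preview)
Your proof is correct and follows essentially the same route as the paper: normal-form reduction $u\mapsto z$, comparison with a frozen-action trajectory via Duhamel/integrating factor, the Lipschitz bound \eqref{L_lips} on $\cL_k$, the action-drift estimate \eqref{action_est}, and the quartic remainder bound \eqref{X_4_1}. The only cosmetic difference is that you go directly to $\Upxi(t,\zeta_0^\omega)z(0)$ in a three-term split, whereas the paper passes through the intermediates $\Upxi(t,z_0)z_0$ and $\Upxi(t,u_0)u_0$ in a four-term split; your organization absorbs the paper's phase-comparison terms \eqref{u-uapp3}--\eqref{u-uapp5} into the initial-layer part of the frequency difference inside $E_2$.
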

\begin{proof}
By \Cref{cor:LIP}, for any $(\eta_0^\omega, \psi_0^\omega) \in \cB_0(\eps, \delta, \tR, s)$ the functions 
$$ 
u(t) := u(t,x) :=u(t,x; \eta_0^\omega, \psi_0^\omega) \, , \quad 
z(t):=z(t,x) := z(t,x; \eta_0^\omega, \psi_0^\omega) = \mathfrak{B}(u(t)) u(t) 
$$ are well 
 defined  for all times $|t| \leq T_\eps$ and satisfy  \eqref{z.control}.
 We have to bound the sup-norm of 
\be\label{eq:diffe}
 \eta(t,\cdot ) - \eta_{\rm app}(t, \cdot ) 
\stackrel{\eqref{etaapptilde}, \eqref{u0}}{=} \sqrt{2} \, \Re |D|^{\frac14} (u - u_{\rm app}) \, .
\ee
The key  to estimate  $u(t, \cdot)- u_{\rm app}(t, \cdot)$ is the normal form \Cref{BNFtheorem} which reduces the problem to  comparing the distance between the normal form variable $z(t, \cdot)$ and the solution  
 $\Upxi(t, z_0)z_0$ of the integrable Birkhoff  normal form \eqref{BNF3} with initial datum $ z_0 $.  
In view of  the invertibility of $\mathfrak{B}(u)$ (see \Cref{BNFtheorem}) and setting 
$u_0(x):= u(t,x)\vert_{t=0}$ and
 $z_0(x):= z(t,x)\vert_{t=0}$, 
by the triangle inequality we have 
\begin{align} \notag
\| \eta(t) - \eta_{\rm app}(t)\|_{L^\infty(\T)} \stackrel{\eqref{eq:diffe}} 
\lesssim  & 
\norm{u(t)- u_{\rm app}(t)}_{\dot H^1} \\
 \lesssim & \norm{\mathfrak{B}^{-1}(u(t)) z(t)- z(t)}_{\dot H^1} 
\label{u-uapp1}
\\
& + \norm{z(t) - \Upxi(t, z_0)z_0 }_{\dot H^1}
\label{u-uapp2}
\\
& + \norm{\Upxi(t, z_0)z_0 - \Upxi(t, u_0) u_0 }_{\dot H^1}
\label{u-uapp3}
\\
& + \norm{\Upxi(t, u_0) u_0 - u_{\rm app}(t)}_{\dot H^1} 
\, . 
\label{u-uapp5}
\end{align}
We estimate each term separately. 
We anticipate that the worst contribution, the one giving the restriction on the times 
$|t| \leq \eps^{-\frac52 (1-\delta) + \kappa}$ in \eqref{upxi_error20}, comes from \eqref{u-uapp2}.
\\[1mm]
{\sc Estimate of \eqref{u-uapp1}}. 
By   \eqref{phi_est} and \eqref{z.control} we have, with $s_0 \gg 1$ the constant of \Cref{BNFtheorem} 
\begin{equation}\label{0209:1924}
\eqref{u-uapp1} \lesssim \norm{u(t)}_{\dot H^{s_0}} \,\norm{z(t)}_{\dot H^{s_0+1}}  \lesssim \tR^2 \eps^{2(1-\delta)}\ , \quad \forall |t| \leq T_\eps  \, .
\end{equation}
{\sc Estimate of \eqref{u-uapp3}}. 
First  we bound
\begin{equation}\label{0209:1925}
\eqref{u-uapp3} \lesssim  \norm{\Upxi(t, z_0)z_0 - \Upxi(t, u_0) z_0 }_{\dot H^1} + \norm{\Upxi(t, u_0)z_0 - \Upxi(t, u_0) u_0 }_{\dot H^1} \, . 
\end{equation}
To estimate the second term in \eqref{0209:1925}, we use that $\Upxi(t,u_0)$ is a unitary Fourier multiplier, 
hence for every $t \in \R $,
\begin{equation}
\label{0209:1212}
\norm{\Upxi(t, u_0)z_0 - \Upxi(t, u_0) u_0 }_{\dot H^1} 
=  \norm{z_0 - u_0}_{\dot H^1}
=  {\norm{ \mathfrak{B}(u_0) u_0 - u_0}_{\dot H^1}
} \stackrel{\eqref{phi_est}}{\lesssim} \norm{u_0}_{\dot H^{s_0+1}}^2
\stackrel{\eqref{z.control}}{\lesssim}
\tR^2 \eps^{2(1-\delta)}
 \ . 
\end{equation}
To  bound the first term in  \eqref{0209:1925}, we compare different phases: using the explicit expression \eqref{eq:def_upxi}, we have
\begin{align}\notag
\norm{\Upxi(t, z_0)z_0 - \Upxi(t, u_0) z_0 }_{\dot H^1} & \lesssim 
|t| \, \Big(\sum_{j \in \Z\setminus \{0\}}  j^2 \abs{\cL_j(I(z_0)) - \cL_j(I(u_0))}^2 \, 
|z_{0j}|^2 \Big)^{\frac12} \label{passinte} \\
& {\stackrel{\eqref{L_lips}}{\lesssim }
|t| \,  \norm{I(z_0)- I(u_0)}_{\cF L^{2,1}} \, \norm{z_0}_{\dot H^2}} \\
\label{0209:1213}
&{ \lesssim |t| \norm{z_0 - u_0}_{L^2} (\norm{u_0}_{\dot H^2} + \norm{z_0}_{\dot H^2})^2}
 \stackrel{\eqref{eq:normeq}}{\lesssim}
 |t|  \norm{u_0}_{\dot H^{s_0+1}}^4 
 \stackrel{\eqref{z.control}}{\lesssim}
 |t| \tR^4 \eps^{4(1-\delta)}
 \, . 
\end{align}
Thus by \eqref{0209:1925}, \eqref{0209:1212} and \eqref{0209:1213} we have that
\begin{equation}
\eqref{u-uapp3} \lesssim \tR^2 \eps^{2(1-\delta)} + |t| \tR^4 \eps^{4(1-\delta)} \lesssim \tR^4 \eps^{\frac32 (1-\delta) + \kappa} \, , \quad \forall |t| \leq  \varepsilon^{-\frac52(1-\delta)+\kappa} \, .
\end{equation}
{\sc Estimate of \eqref{u-uapp5}}. 
Using the definition 
$ u_{\rm app}(t) =   \Upxi(t, \zeta_0^\omega) \zeta_0^\omega $ in 
 \eqref{etaapptilde}, the term  \eqref{u-uapp5} is estimated similarly to 
\eqref{u-uapp3}, replacing the estimate on $\norm{z_0 - u_0}_{\dot H^1}$ with 
\be\label{u0-zeta0}
\norm{u_0 - \zeta_0^\omega}_{\dot H^1} \stackrel{ \eqref{diffuzeta}}{\lesssim }  
\norm{\opbw{(B)} \eta}_{\dot H^{\frac54}} \stackrel{\Cref{prop:action}}{\lesssim } 
\norm{(\eta_0^\omega,\psi_0^\omega)}_{H^{s_0}_0 \times \dot H^{s_0+\frac12}}^2 \lesssim \tR^2 \eps^{2(1-\delta)} \, , 
\ee
hence obtaining the same kind of estimate  
\begin{equation}\label{0209:1926}
\eqref{u-uapp5} \lesssim \tR^4 \eps^{\frac32 (1-\delta) + \kappa} \ , \qquad \forall |t| \leq  \varepsilon^{-\frac52(1-\delta)+\kappa} .
\end{equation}
{\sc Estimate of \eqref{u-uapp2}}.
Writing the equation \eqref{transformed_eq} as
$$
\dot z_k = - \im \cL_k(I(z_0)) z_{k}  - \im \left( \cL_k(I(z)) - \cL_k(I(z_0)) \right) z_k + (\cX_{\geq 4}^+)_k \ , \quad \forall k \in \Z \setminus \{0 \} \, , 
$$
and using the Duhamel formula, we obtain that 
\begin{align*}
z(t)  = & \  \Upxi(t, z_0)z_0 -  \im\, \int_0^t \Upxi (t-\tau,z_0) \Big[ \Big( \cL (I(z (\tau)))-\cL(I(z_0)) \Big) \,z(\tau) \Big] \, \di\tau \\
& + \int_0^t \Upxi (t-\tau,z_0) ({\mathcal X}^{+}_{\geq 4})(u, \bar u, z, \bar z)(\tau)\, \di\tau \, , 
\end{align*}
where we denoted the Fourier multiplier $\cL(I(z))w:= \sum_{j \neq 0} \cL_j(I(z)) w_j e^{\im j x}$.
Then, since  $\Upxi(t, \cdot)$ is a unitary Fourier multiplier and using \eqref{L_lips} (as in \eqref{passinte})
\begin{align}\notag
\eqref{u-uapp2} \lesssim &\  |t|\, \norm{I(z)-I(z_0)}_{L^{\infty}([0,t],\Fc L^{2,1})}\norm{z}_{L^{\infty}([0,t],\dot H^2)} + |t|\,\norm{ ({\mathcal X}^{+}_{\geq 4})(u, \bar u, z, \bar z)}_{L^{\infty}([0,t],\dot H^1)}\\
\notag
\stackrel{\eqref{action_est},\eqref{X_4_1}}{\lesssim } &\  |t|^2 \sup_{\tau\in [0,t]} \norm{u(\tau)}_{\dot H^{s_0}}^5\, \norm{z}_{L^{\infty}([0,t],\dot H^2)} + |t|\,\sup_{\tau\in [0,t]} \norm{u(\tau)}_{\dot H^{s_0}}^4\\
\label{0209:1927}
\stackrel{ \eqref{z.control}}{\lesssim } &\  |t|^2\, \tR^6 \,\eps^{6(1-\delta)} + |t|\,\tR^4 \, \eps^{4(1-\delta)}\lesssim \tR^6\, \eps^{1-\delta +\kappa} , \qquad \forall |t| \leq  \varepsilon^{-\frac52(1-\delta)+\kappa} .
\end{align}
Estimate \eqref{upxi_error20} follows from \cref{0209:1924,0209:1925,0209:1925,0209:1927}.
\end{proof}

\subsection{Preservation of Gaussianity}

We now prove the second important property, namely that  
$u_{\rm app}(t,x)$ is  a complex Gaussian random variable.
This is nontrivial since each nonlinear frequency 
$\cL_k(I(\zeta_0^\omega))$ in \eqref{HCS1} depends on all the {\it infinitely} many 
actions $I_j^\omega := |\zeta_{0j}^\omega |^2$
and therefore two random variables  $\cL_k(I(\zeta_0^\omega))$, $\cL_{k'}(I(\zeta_0^\omega))$ are unlikely to be independent. 
\Cref{lem:gauind} below generalizes \cite[Lemma~4.2]{GGKS} and 
\cite[Lemma~4.2]{Oh-Tzvetkov}, which deal with the much simpler case where   $\mathcal{L}_k(I)=I_k$ for any $ k $.
This diagonal structure of the normal frequencies allows for a much simpler proof of independence.    
We therefore proceed differently with a new probabilistic argument.
 We start with a  useful lemma about conditional expectation.
 
\begin{lem}\label{thm:ricarsnick} 
Let $n,m\in\N$ and  
let $X$ be a random vector in a probability space $(\mathbb{\Omega},\Fc, \P)$ taking values in $\R^n$, and let $f:\R^n\times\R^m \rightarrow \R$ be a bounded Borel-measurable function. Define the function
\[
g(y) := \E [ f(X,y)] \, , \qquad y\in\R^m \, .
\]
If $\Gc\subseteq \Fc$ is a $\sigma$-algebra such that $\sigma (X)$ is independent of $\cG$ and  $Y$ is a  $\Gc$-measurable random vector taking values in $\R^m$, then
\[ 
\E [ f(X,Y) \mid \Gc] = g(Y) \, .
\]
\end{lem}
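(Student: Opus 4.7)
The plan is to prove the identity by the standard monotone class / measure-theoretic induction (``standard machine''), exploiting the two conditional expectation properties recorded in \Cref{rk:cond_exp} together with the hypothesized independence of $\sigma(X)$ and $\Gc$.

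First, I would verify the claim for product indicators $f(x,y) = \mathbf{1}_A(x)\,\mathbf{1}_B(y)$ with $A\in\cB(\R^n)$ and $B\in\cB(\R^m)$. In this case $g(y) = \P(X\in A)\,\mathbf{1}_B(y)$, which is Borel-measurable, so $g(Y)$ is a well-defined $\cG$-measurable random variable. Since $\mathbf{1}_B(Y)$ is $\cG$-measurable, the ``take out what is known'' property gives $\E[f(X,Y)\mid\cG] = \mathbf{1}_B(Y)\,\E[\mathbf{1}_A(X)\mid\cG]$, and the independence of $\sigma(X)$ and $\cG$ reduces the right conditional expectation to the unconditional one, yielding $\E[\mathbf{1}_A(X)\mid\cG] = \P(X\in A)$ a.s. Combining, $\E[f(X,Y)\mid\cG] = \P(X\in A)\,\mathbf{1}_B(Y) = g(Y)$.

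Second, by linearity of expectation and of conditional expectation, the identity extends to all finite linear combinations of such product indicators, i.e.\ to simple functions that are constant on measurable rectangles of $\R^n\times\R^m$.

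Finally, I would invoke a monotone class argument (or the $\pi$--$\lambda$ theorem) to pass from rectangles to all of $\cB(\R^n)\otimes\cB(\R^m) = \cB(\R^{n+m})$. Concretely, let $\Hs$ denote the class of bounded Borel-measurable $f:\R^n\times\R^m\to\R$ for which the identity $\E[f(X,Y)\mid\cG]=g(Y)$ holds (with $g$ defined as in the statement). Then $\Hs$ is a vector space containing the constants and closed under bounded monotone limits: for an increasing sequence $f_k\uparrow f$ of nonnegative bounded Borel functions with $f$ bounded, the Borel measurability of each $g_k(y)=\E[f_k(X,y)]$ and of the limit $g$ follows from Fubini--Tonelli, while both sides of the claimed identity converge a.s.\ by the bounded/dominated convergence theorem for conditional expectation. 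Since $\Hs$ contains the indicators of the $\pi$-system of rectangles, the functional monotone class theorem implies that $\Hs$ contains every bounded Borel-measurable function on $\R^{n+m}$, which is the desired conclusion.

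There is no substantive obstacle here; the only point that requires a brief justification is that $g$ is itself Borel-measurable, which is immediate from Fubini applied to the product of the law of $X$ and Lebesgue/Borel structure on $\R^m$.
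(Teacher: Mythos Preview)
Your proof is correct and follows the standard monotone class argument. The paper does not actually prove this lemma; it simply cites \cite[(10.17)]{Resnick}, so your argument is more detailed than what appears in the paper and is essentially the proof one would find in that reference.
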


\begin{proof}
This  is proved in \cite[(10.17)]{Resnick}.  
\end{proof}

Using  \Cref{thm:ricarsnick} we deduce the following result. 
 
 \begin{lem}\label{lem:gauind}
{\bf (Preservation of gaussianity)}
Let $ \zeta^\omega \equiv (\zeta_j^\omega)_{j \in \Z\setminus\{0\}}$ be a sequence of independent  complex Gaussian random variables 
\begin{equation}\label{eq:pres_gauss_cond}
\zeta_j^\omega \sim \cN_\C(0, \sigma_j^2) 
\quad  \text{with} \quad  \sum_{j \in\Z\setminus\{0\}}  j^2 \sigma_j^2  < \infty  \, . 
\end{equation}
Then, for any time $t\in\R$ fixed, 
$\left(\zeta_j^\omega \, e^{- \im t \cL_j(I(\zeta^\omega))}\right)_{j\in \Z\setminus\{0\}} $ where each $\cL_j(I(\zeta^\omega))$ is defined by \eqref{actions}-\eqref{HCS1},  is a sequence of independent complex Gaussians with $\zeta_j^\omega \, e^{- \im t \cL_j (I(\zeta^\omega))} \sim \cN_\C(0, \sigma_j^2)$.
\end{lem}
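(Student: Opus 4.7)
The plan is to exploit the polar decomposition of complex Gaussians to reduce everything to the invariance of the uniform distribution on the circle under deterministic shifts.

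First, I would write each $\zeta_j^\omega = R_j^\omega e^{\im \phi_j^\omega}$, where by Remark \ref{basic_prob}(iii) the moduli $R_j^\omega = |\zeta_j^\omega|$ are Rayleigh r.v.\ $\sim \sR(\sigma_j/\sqrt{2})$, the phases $\phi_j^\omega$ are uniform $\sim \sU([0,2\pi])$, and all of $(R_j^\omega)_{j \in \Z\setminus\{0\}}$ and $(\phi_j^\omega)_{j \in \Z\setminus\{0\}}$ are mutually independent. The crucial observation is that each nonlinear frequency $\cL_j(I(\zeta^\omega))$ in \eqref{HCS1} depends only on the actions $I_k = |\zeta_k^\omega|^2 = (R_k^\omega)^2$, so it is measurable with respect to the sub-$\sigma$-algebra $\Rc := \sigma(R_k^\omega : k \in \Z \setminus\{0\})$, which is independent of $\sigma((\phi_j^\omega)_j)$. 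Moreover, the summability condition \eqref{eq:pres_gauss_cond} ensures that $\cL_j(I(\zeta^\omega))$ is almost surely finite by \eqref{L_lips}, so the shifted phases $\widetilde\phi_j^\omega := \phi_j^\omega - t\cL_j(I(\zeta^\omega)) \pmod{2\pi}$ are well defined.

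Next, I would prove the lemma via its finite-dimensional distributions: it suffices to show that for any finite $J \subset \Z \setminus \{0\}$, the random vector $(\zeta_j^\omega e^{-\im t \cL_j(I(\zeta^\omega))})_{j \in J}$ has the same joint law as $(\zeta_j^\omega)_{j \in J}$, since then independence across all indices follows by standard Kolmogorov extension. Fix such a $J$ and a bounded Borel test function $F \colon \C^{|J|} \to \R$. Applying Lemma \ref{thm:ricarsnick} with $X = (\phi_j^\omega)_{j \in J}$ (independent of $\Rc$), $\Gc = \Rc$, and the $\Rc$-measurable auxiliary vector $Y = \big((R_j^\omega)_{j \in J}, (t\cL_j(I(\zeta^\omega)))_{j \in J}\big)$, we obtain
\[
\E\!\left[ F\!\left( R_j^\omega e^{\im(\phi_j^\omega - t\cL_j)} \right)_{j \in J} \,\Big|\, \Rc \right] = g(Y), \qquad g(r,\tau) := \E\!\left[ F\!\left( r_j e^{\im(\phi_j^\omega - \tau_j)} \right)_{j \in J} \right].
\]

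The key step is the translation invariance of the uniform distribution on $\T$: since $\phi_j^\omega \sim \sU([0,2\pi])$ is independent across $j \in J$, for any deterministic $(\tau_j)_{j \in J} \in \R^{|J|}$, the shifted phases $(\phi_j^\omega - \tau_j \bmod 2\pi)_{j \in J}$ have the same joint distribution as $(\phi_j^\omega)_{j \in J}$. Consequently $g(r,\tau) = g(r,0)$ does not depend on $\tau$, and equals $\E[ F(R_j^\omega e^{\im \phi_j^\omega})_{j \in J} \mid \Rc ]$ evaluated along $R_j^\omega = r_j$. Taking expectations and using the tower property (Remark \ref{rk:cond_exp}(i)),
\[
\E\!\left[ F\!\left( \zeta_j^\omega e^{-\im t \cL_j(I(\zeta^\omega))} \right)_{j \in J}\right] = \E\!\left[ F\!\left( \zeta_j^\omega \right)_{j \in J}\right],
\]
which proves that the joint law is preserved, and in particular that the components remain independent complex Gaussians with variances $\sigma_j^2$.

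The main obstacle I anticipate is the subtlety that $\cL_j(I(\zeta^\omega))$ involves infinitely many actions, so one must justify that finite-dimensional marginals suffice and that the phases $\phi_j^\omega$ are jointly (not merely pairwise) independent of the entire modulus $\sigma$-algebra $\Rc$; both are taken care of by restricting to finite $J$ and using that $(\phi_j^\omega)_{j \in \Z \setminus\{0\}}$ is independent of $\Rc$ as a whole, rather than index by index. Once these measure-theoretic bookkeeping points are handled, the heart of the argument is the elementary translation invariance of the uniform law on the circle.
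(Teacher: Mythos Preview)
Your proof is correct and follows essentially the same approach as the paper: polar decomposition, the observation that each $\cL_j(I(\zeta^\omega))$ is $\sigma((R_k)_k)$-measurable and hence independent of the phases, then Lemma~\ref{thm:ricarsnick} combined with the translation invariance of the uniform law on the circle. The only difference is packaging: the paper proves separately that each shifted phase is uniform and then establishes independence via factorization of characteristic functions, whereas you prove preservation of the full joint law in one step by testing against an arbitrary bounded Borel $F$; your version is slightly more economical but the underlying mechanism is identical.
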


\begin{proof}
For each $ k \in\Z\setminus\{0\}$, we write
$\zeta_k^\omega = R_k^\omega e^{\im \phi_k^\omega}$ 
where $R_k^\omega$ is Rayleigh distributed and $\phi_k^\omega \sim \sU([0,2\pi])$ are independent, as well as independent from $(R_j^\omega)_{j\in\Z\setminus\{0,k\}}$ and $(\phi_j^\omega)_{j\in\Z\setminus\{0,k\}}$. For simplicity of notation we drop the dependence on $ \omega $. 
For fixed $t$, the random variable
$X_k :=-t \cL_k(I(\zeta))$, is a.s.~finite by condition   \eqref{eq:pres_gauss_cond}. We note that $X_k $ only depends on the sequence $(R_j)_{j\in\Z\setminus\{0\}}$, but is otherwise independent of $(\phi_j)_{j\in\Z\setminus\{0\}}$. 
Our goal is to show that
\[
\varphi_k (X_k):= \phi_k + X_k \quad \mbox{(mod}\ 2\pi)
\]
has a uniform distribution in $[0,2\pi]$, and that it is independent of $(R_j )_{j\in\Z\setminus\{0\}}$ and $(\varphi_j (X_j))_{j\in\Z\setminus\{0,k\}}$.

\medskip

\noindent {\sc Uniform distribution:} Using the tower property (cf. \Cref{rk:cond_exp} (i)), we compute the characteristic function
\begin{equation}\label{eq:phase_distribution}
\E [e^{\im \lambda \varphi_k (X_k)}] = \E  [ \E [ e^{\im \lambda \varphi_k (X_k)} \mid X_k]] = \E [ h_k (X_k)]
\end{equation}
where $h_k(r) := \E [e^{\im\lambda \varphi_k (r)}]$ and in the second equality we applied \Cref{thm:ricarsnick} with $\cG= \sigma(X_k)$, exploiting that  $X_k$ and $\phi_k $ are independent.
Using the translation invariance of the uniform distribution, it is clear that $\varphi_k (r)$ has a uniform distribution in $[0,2\pi]$ and thus $h_k (r) = h_k(0)$, which yields the desired result.

\medskip

\noindent {\sc Independence:} 
To  prove the independence of an arbitrary finite number of different r.v.~$R_{k_1},\ldots,R_{k_n}$ and $\varphi_{j_1} (X_{j_1}),\ldots,\varphi_{j_m} (X_{j_m}) $ such that $k_r\neq k_s$ if $r\neq s$, $j_r\neq j_s$ if $r\neq s$ and $j_r\neq k$ for all $r$, it is  sufficient to show the following factorization property of the characteristic function 
\begin{equation}\label{eq:char_func_factor}
    \E \left[ \exp\left( \im \lambda_0 \varphi_k (X_k) + \im \sum_{s=1}^{n} \lambda_s R_{k_s} + \im \sum_{s=1}^{m} \lambda_{s+n} \varphi_{j_s} (X_{j_s})\right)\right]=\E \left[ e^{\im \lambda_0 \phi_{k}} \right] \,  \prod_{s=1}^m \E \left[ e^{\im \lambda_{s+n} \,\phi_{j_s}} \right]  \, \prod_{s=1}^{n} \E \left[ e^{\im \lambda_{s} R_{k_s}}\right].
\end{equation}
To prove \eqref{eq:char_func_factor},
consider the sigma-algebra generated by this finite collection of random variables, namely
\[
\mathcal{G}=\sigma(R_{k_1},\ldots,R_{k_n}, X_{j_1}, \ldots , X_{j_m}) \, .
\]
Using the independence of $(\phi_j)_{j\in\Z\setminus\{0\}}$ from $\mathcal{G}$, as well as \Cref{rk:cond_exp} (i) and (ii), \Cref{thm:ricarsnick} yields
\begin{multline}\label{eq:char_func_cond}
\E \left[ \exp\left( \im \lambda_0 \varphi_k (X_k) + \im \sum_{s=1}^{n} \lambda_s R_{k_s} + \im \sum_{s=1}^{m} \lambda_{s+n} \varphi_{j_s} (X_{j_s})\right)\right] \\
= \E \left[ \E \left[ \exp\left( \im \lambda_0 \varphi_k (X_k) + \im \sum_{s=1}^{n} \lambda_s R_{k_s} + \im \sum_{s=1}^{m} \lambda_{s+n} \varphi_{j_s} (X_{j_s})\right) \Big |\, \mathcal{G} \right] \right]\\
= \E \left[ \E \left[ \exp\left( \im \lambda_0 \varphi_k (X_k) + \im \sum_{s=1}^{m} \lambda_{s+n} \varphi_{j_s} (X_{j_s})\right) \Big |\, \mathcal{G} \right] \exp\left(  \im \sum_{s=1}^{n} \lambda_s R_{k_s} \right) \right]\\
= \E \left[ h(X_k, X_{j_1}, \ldots, X_{j_m}) \, \exp\left(  \im \sum_{s=1}^{n} \lambda_s R_{k_s} \right) \right] 
\end{multline}
where
\[
h(r_0,r_1, \ldots, r_m) := \E \left[ \exp\left( \im \lambda_0 \varphi_k (r_k) + \im \sum_{s=1}^{m} \lambda_{s+n} \varphi_{j_s} (r_{j_s})\right)\right].
\]
Using the invariance of the (circular) uniform distribution under translations and the independence of $(\phi_j)_{j\in\Z\setminus\{0\}}$ among each other, we find
\begin{equation}\label{eq:h_const}
h(r_0,r_1, \ldots, r_m)=h(0,\ldots, 0)=\E \left[ e^{\im \lambda_0 \phi_{k}} \right] \,  \prod_{s=1}^m \E \left[ e^{\im \lambda_{s+n} \,\phi_{j_s}} \right]  .
\end{equation}
Using \eqref{eq:char_func_cond}, \eqref{eq:h_const}, and the independence of $(R_j)_{j\in\Z\setminus\{0\}}$, we obtain
\begin{multline*}
\E \left[ \exp\left( \im \lambda_0 \varphi_k (X_k) + \im \sum_{s=1}^{n} \lambda_s R_{k_s} + \im \sum_{s=1}^{m} \lambda_{s+n} \varphi_{j_s} (X_{j_s})\right)\right]\\
= \E \left[ h(X_k, X_{j_1}, \ldots, X_{j_m}) \, \exp\left(  \im \sum_{s=1}^{n} \lambda_s R_{k_s} \right) \right] \\
=\E \left[ e^{\im \lambda_0 \phi_{k}} \right] \,  \prod_{s=1}^m \E \left[ e^{\im \lambda_{s+n} \,\phi_{j_s}} \right]  \, \prod_{s=1}^{n} \E \left[ e^{\im \lambda_{s} R_{k_s}}\right]
\end{multline*}
thereby establishing \eqref{eq:char_func_factor}.
\end{proof}

Lemmata \ref{lem:gauind} and \ref{lem:zeta0.d} directly imply the following result.

\begin{lem}\label{cor:etaapp}
For any random initial datum $(\eta_0^\omega,\psi_0^\omega)$ as  in \eqref{eq:init_data}, for any $ t \in \R $, $ x \in \T $,  
\begin{enumerate}
\item the 
approximate solution 
$u_{\rm app}(t,x)$ defined in \eqref{etaapptilde} is
a complex Gaussian random variable;
\item the real profile 
$\eta_{\rm app}(t,x)$ in \eqref{etaapptilde} 
    is Gaussian $ \sim \cN_\R\big(0, \eps^2 \bsi^2)$ 
and it has  the form
\begin{equation}\label{eta.app.f}
\eta_{\rm app}(t,x) = \frac{\eps}{\sqrt{\pi}} \sum_{j \in \Z \setminus \{0\} } c_j R_j^\omega \, \cos(\theta_j^\omega (t) + jx) \, , 
\end{equation}
where $(R_j^\omega)_{j \in \Z\setminus \{0\}}$ are time-independent i.i.d. Rayleigh r.v. $\sim \sR(\frac{1}{\sqrt{2}})$ and $(\theta_j^\omega(t))_{j \in \Z\setminus \{0\}}$ are i.i.d. uniform r.v.  $\sim \sU([0,2 \pi])$. In addition  $R_j^\omega  \independent \theta_{j'}^\omega(t)  $ for any $ j,j' \in \Z \setminus \{0\}$.
\end{enumerate}
\end{lem}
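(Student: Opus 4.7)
\textbf{Plan for Lemma \ref{cor:etaapp}.} The strategy is to chain the two preceding lemmas and then perform a polar-coordinate bookkeeping. By Lemma \ref{lem:zeta0.d}, the initial Fourier coefficients $(\zeta_{0j}^\omega)_{j\in\Z\setminus\{0\}}$ are independent complex Gaussians with $\zeta_{0j}^\omega \sim \cN_\C\bigl(0, \eps^2 c_j^2 |j|^{-1/2}\bigr)$. The exponential decay $c_j = e^{-|j|\tb}$ ensures $\sum_j j^2 \cdot \eps^2 c_j^2 |j|^{-1/2} < \infty$, so the summability hypothesis \eqref{eq:pres_gauss_cond} of Lemma \ref{lem:gauind} is satisfied. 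Applying that lemma, the rotated coefficients $\zeta_{0j}^\omega e^{-\im t \cL_j(I(\zeta_0^\omega))}$ remain independent complex Gaussians with unchanged variances. Since pointwise in $(t,x)$
\[
u_{\rm app}(t,x) = \frac{1}{\sqrt{2\pi}} \sum_{j\in\Z\setminus\{0\}} \bigl(\zeta_{0j}^\omega e^{-\im t \cL_j(I(\zeta_0^\omega))}\bigr)\, e^{\im jx}
\]
is a convergent sum of independent complex Gaussians weighted by deterministic unimodular factors, it is itself complex Gaussian. This proves item (1).

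For item (2), I would polar-decompose $\zeta_{0j}^\omega = |\zeta_{0j}^\omega|\, e^{\im \phi_j^\omega}$. By Remark \ref{basic_prob} (iii)--(iv), the moduli satisfy $|\zeta_{0j}^\omega| = \tfrac{\eps c_j}{|j|^{1/4}} R_j^\omega$ with $R_j^\omega \sim \sR(1/\sqrt{2})$ i.i.d., and the initial phases $\phi_j^\omega \sim \sU([0,2\pi])$ are i.i.d.\ and independent of the moduli. Substituting into $\eta_{\rm app}(t,x) = \sqrt{2}\,\Re\,|D|^{1/4} u_{\rm app}(t,x)$, the $|j|^{1/4}$ factors cancel against the modulus normalization, and setting $\theta_j^\omega(t) := \phi_j^\omega - t\,\cL_j(I(\zeta_0^\omega)) \pmod{2\pi}$ yields the claimed formula \eqref{eta.app.f}.

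The distributional properties then follow by reinvoking Lemma \ref{lem:gauind} and Remark \ref{basic_prob} (iii): each rotated coefficient $\zeta_{0j}^\omega e^{-\im t \cL_j(I(\zeta_0^\omega))}$ is complex Gaussian, so its phase $\theta_j^\omega(t)$ is uniform on $[0,2\pi]$ and independent of its modulus (hence of $R_j^\omega$); joint independence across $j$ of the rotated Gaussians transfers to joint independence of all the $R_j^\omega$ and $\theta_{j'}^\omega(t)$. Note that the $R_j^\omega$ are $t$-independent because they are rescaled moduli of the untouched initial data, while the $t$-dependence is absorbed entirely into the phases. Gaussianity of $\eta_{\rm app}(t,x)$ follows from item (1) since $w \mapsto \sqrt{2}\,\Re\,|D|^{1/4} w$ is a deterministic $\R$-linear map, and a direct computation gives the variance:
\[
\mathrm{Var}\bigl(\eta_{\rm app}(t,x)\bigr) = \frac{\eps^2}{\pi}\sum_{j\in\Z\setminus\{0\}} c_j^2\, \E[(R_j^\omega)^2]\,\E[\cos^2(\theta_j^\omega(t)+jx)] = \frac{\eps^2}{2\pi}\sum_{j\in\Z\setminus\{0\}} c_j^2 = \eps^2\bsi^2,
\]
using $\E[(R_j^\omega)^2] = 1$ for $R_j^\omega \sim \sR(1/\sqrt{2})$, $\E[\cos^2]=1/2$, and the definition \eqref{upsigma}. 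The genuine difficulty---that the nonlinear frequencies $\cL_j(I(\zeta_0^\omega))$ couple \emph{all} moduli, making the preservation of Gaussianity under phase rotation nontrivial---has already been absorbed into Lemma \ref{lem:gauind}; what remains here is essentially polar bookkeeping plus the observation that only the phases depend on~$t$.
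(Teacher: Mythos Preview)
Your proposal is correct and follows essentially the same approach as the paper's proof: invoke Lemma~\ref{lem:zeta0.d} to identify the distribution of the initial Fourier coefficients, apply Lemma~\ref{lem:gauind} to conclude the rotated coefficients remain independent complex Gaussians, and then carry out the polar-coordinate bookkeeping to obtain \eqref{eta.app.f} with the stated distributional properties. Your write-up is in fact slightly more explicit than the paper's, in that you verify the summability hypothesis \eqref{eq:pres_gauss_cond} and compute the variance $\eps^2\bsi^2$ directly rather than leaving it implicit.
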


\begin{proof}
By  \eqref{zeta0.g}, the complex Gaussian r.v. $(\zeta_{0j}^\omega )_{j \in \Z\setminus\{0\}}$ in \eqref{zeta0} fulfill the assumptions of Lemma  \ref{lem:gauind}, hence
$ \zeta_{0j}^\omega e^{- \im t \cL_j(I(\zeta_0^\omega))} \sim \cN_\C\Big(0, \eps^2\tfrac{c_j^2}{|j|^\frac12}  \Big) $. 
Thus item 1  follows by the bullets below  \Cref{def:rv}.

Formula \eqref{eta.app.f} follows from \eqref{etaapptilde} by writing 
$\zeta_{0j}^\omega e^{ - \im t 
  \cL_j (I(\zeta_0^\omega)) }  
 = |\zeta_{0j}^\omega | e^{\im \theta_j(t)}$ 
 with  
$ \theta_j^\omega (t) = \phi_j^\omega - 
 t \cL_j (I(\zeta_0^\omega) ) $, i.e.
\[
\eta_{\mathrm{app}} (t,x) = \Re \, \sqrt{2} 
|D|^{\frac14} u_{\mathrm{app}}(t,x) = 
\frac{\varepsilon }{\sqrt{\pi}}\,  \sum_{j\in\Z \setminus \{0\}}    \,  
c_j 
\underbrace{\frac{ |j|^{\frac14}}{\eps c_j}
|\zeta_{0j}^\omega|}_{=: R_j^\omega}\, \cos (\theta_j^\omega(t) + jx),
\]
where 
 $R_j^\omega$ are independent  Rayleigh r.v. $\sim \sR( \frac{1}{ \sqrt 2})$ by the last bullet 
 below \Cref{def:rv}, and $(\theta_j^\omega(t))_{j \in \Z\setminus \{0\}}$ are i.i.d. uniform r.v.  $\sim \sU([0,2 \pi])$ by \Cref{lem:gauind}.
\end{proof}

In order to derive Large Deviation Principles involving Rayleigh-distributed random variables, 
the following result in \cite[Section 2]{GGKS}  is useful. 

\begin{lem}[\cite{GGKS}]\label{thm:LDP_Rayleigh}
Suppose that $(R_j^\omega)_{j\in\Z\setminus\{0\}}$ are i.i.d. Rayleigh r.v. $\sim \sR(1/\sqrt{2})$, and let 
  $(c_j)_{j \in \Z\setminus\{0\}} $ be the sequence in \eqref{ckdk}. 
Then for any fixed $\lambda_0, \delta >0$,  we have that 
\[
\lim_{\epsilon\rightarrow 0^{+}} \epsilon^{2\delta} 
\log \P \Big( \sum_{j\in\Z\setminus\{0\}}
c_j R_j^\omega \geq \lambda_0 \epsilon^{-\delta}\Big) 
= - \frac{\lambda_0^2}{\sum_{j\in\Z\setminus\{0\}} c_j^2} \ .
\]
\end{lem}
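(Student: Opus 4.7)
The plan is to apply the Cram\'er--Chernoff method for large deviations to the weighted i.i.d.\ sum $S^\omega := \sum_{j\in\Z\setminus\{0\}} c_j R_j^\omega$. Denote $K := \sum_{j\in\Z\setminus\{0\}} c_j^2$, $z_\epsilon := \lambda_0 \epsilon^{-\delta}$, and let $M(u):=\E[e^{uR}]$ be the moment generating function of a single Rayleigh variable $R \sim \sR(1/\sqrt{2})$ (density $2r\, e^{-r^2}$ on $[0,\infty)$). Completing the square in $\int_0^\infty 2r\, e^{ur - r^2}\, dr$ gives the closed form
\[
M(u) = 1 + \frac{u \sqrt{\pi}}{2}\, e^{u^2/4}\bigl(1 + \mathrm{erf}(u/2)\bigr),
\]
with asymptotics $\log M(u) = u^2/4 + \log u + O(1)$ as $u\to\infty$ and $\log M(u) = u\sqrt{\pi}/2 + u^2(4-\pi)/8 + O(u^3)$ as $u\to 0$. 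In particular, there exists $C>0$ such that $\log M(u) \leq u^2/4 + C\bigl(u + \log(1+u)\bigr)$ for all $u\geq 0$.

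\textbf{Upper bound.} I would apply the Chernoff inequality with the optimal parameter $\rho_\epsilon := 2\lambda_0 \epsilon^{-\delta}/K$, obtained by minimizing the quadratic proxy $-\rho z_\epsilon + \rho^2 K/4$:
\[
\log \P(S^\omega \geq z_\epsilon) \leq -\rho_\epsilon z_\epsilon + \sum_{j} \log M(\rho_\epsilon c_j) \leq -\frac{\lambda_0^2 \epsilon^{-2\delta}}{K} + C\Bigl(\rho_\epsilon {\textstyle\sum_j c_j} + {\textstyle\sum_j} \log(1+\rho_\epsilon c_j)\Bigr).
\]
Since $c_j = e^{-|j|\tb}$ decays exponentially, $\sum_j c_j < \infty$, so $\rho_\epsilon \sum_j c_j = O(\epsilon^{-\delta})$. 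Splitting the last sum at the threshold $|j_0| \sim \tb^{-1}\delta\log(1/\epsilon)$ where $\rho_\epsilon c_{j_0}\approx 1$, the low modes contribute $O(\log^2(1/\epsilon))$ (number of terms times $\log(1/\epsilon)$), and the high modes contribute $O(1)$ (using $\log(1+x)\leq x$ and the exponential tail of $\sum_{|j|>j_0} c_j$). All corrections are $o(\epsilon^{-2\delta})$, yielding $\limsup_{\epsilon\to 0^+} \epsilon^{2\delta} \log \P(S^\omega \geq z_\epsilon) \leq -\lambda_0^2/K$.

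\textbf{Lower bound.} I would use an exponential change of measure (Cram\'er tilt) with the same $\rho_\epsilon$: define $d\Q_\epsilon/d\P := \prod_j e^{\rho_\epsilon c_j R_j^\omega}/M(\rho_\epsilon c_j)$, under which the $R_j^\omega$ remain independent with shifted densities $\propto r\, e^{\rho_\epsilon c_j r - r^2}$. A direct calculation yields $\E_{\Q_\epsilon}[S^\omega] = z_\epsilon(1 + o(1))$ and $\mathrm{Var}_{\Q_\epsilon}(S^\omega) = O(1)$, so by Chebyshev $\Q_\epsilon(z_\epsilon \leq S^\omega \leq z_\epsilon + 1) \geq c_0 > 0$ for $\epsilon$ small. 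Inverting the tilt,
\[
\P(S^\omega \geq z_\epsilon) \geq e^{-\rho_\epsilon(z_\epsilon + 1) + \sum_j \log M(\rho_\epsilon c_j)}\cdot \Q_\epsilon\bigl(z_\epsilon \leq S^\omega \leq z_\epsilon + 1\bigr),
\]
and the same asymptotics on $\sum_j \log M(\rho_\epsilon c_j)$ give the matching bound $\liminf_{\epsilon\to 0^+} \epsilon^{2\delta}\log \P(S^\omega \geq z_\epsilon) \geq -\lambda_0^2/K$.

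\textbf{Main obstacle.} The principal technical difficulty is the uniform control of the infinite series $\sum_j \log M(\rho_\epsilon c_j)$, whose summands transition between two regimes depending on whether $\rho_\epsilon c_j$ is large or small. This requires splitting the sum at $|j|\sim \tb^{-1}\delta\log(1/\epsilon)$, applying the two asymptotic expansions separately, and verifying that both the linear and logarithmic correction terms are $o(\epsilon^{-2\delta})$. The same splitting underlies the variance bound $\mathrm{Var}_{\Q_\epsilon}(S^\omega) = O(1)$ required for the lower bound.
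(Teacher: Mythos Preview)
The paper does not prove this lemma; it is quoted from \cite{GGKS} (Section 2) without argument, so there is no in-paper proof to compare against. Your Cram\'er--Chernoff approach is the canonical one for such weighted i.i.d.\ large-deviation statements and is essentially correct: the identification of the leading quadratic $u^2/4$ in $\log M(u)$, the optimal tilt $\rho_\epsilon = 2\lambda_0\epsilon^{-\delta}/K$, and the two-regime splitting of $\sum_j \log M(\rho_\epsilon c_j)$ at $|j|\sim \tb^{-1}\delta\log(1/\epsilon)$ all go through as you describe.

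One small point in the lower bound deserves care. With the specific choice $\rho_\epsilon = 2z_\epsilon/K$ you compute $\E_{\Q_\epsilon}[S^\omega] = z_\epsilon + O(\epsilon^\delta\log(1/\epsilon))$, which is only $z_\epsilon + o(1)$; since $\mathrm{Var}_{\Q_\epsilon}(S^\omega)\asymp K/2$ is of order one, Chebyshev alone does not guarantee that $\Q_\epsilon(z_\epsilon \leq S^\omega \leq z_\epsilon+1)$ is bounded below by a fixed $c_0>0$ --- the mean may sit slightly below $z_\epsilon$, and the window of width $1$ is comparable to the standard deviation. The standard remedy is either (i) to solve $\E_{\Q_\rho}[S^\omega]=z_\epsilon+c$ for $\rho$ with some fixed $c>0$ (this shifts $\rho$ by $O(1)$, which does not affect the leading asymptotics since $\partial_\rho \E_{\Q_\rho}[S^\omega]=\mathrm{Var}_{\Q_\rho}(S^\omega)=O(1)$), or (ii) to widen the window to $[z_\epsilon, z_\epsilon+A_\epsilon]$ with $A_\epsilon\to\infty$ but $\rho_\epsilon A_\epsilon = o(\epsilon^{-2\delta})$, e.g.\ $A_\epsilon=\epsilon^{-\delta/2}$. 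Either fix is routine and the conclusion stands.
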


Using Lemmata \ref{cor:etaapp} and \ref{thm:LDP_Rayleigh}, we are ready to prove our main theorem for timescales $|t|\lesssim \eps^{-\frac52 (1-\delta)+\kappa}$.

\paragraph{Proof of Theorem \ref{thm:main}-($i$).}
\label{proof(i)} Let $s \geq \und{s} := N+\frac54$ (with $ N $ given by \Cref{BNFtheorem}) and
\be\label{R0}
\tR  > \max \left( \lambda_0  \underline{C}^{-1} ,\  2 \lambda_0 \| \vec c \|_{h^s}  \bsi^{-1}
\right):= \tR_0
\ee
where $\underline{C}$ is as in \eqref{est.flow.etapsi}. 
The first condition guarantees that the  
 $\left\{\sup_{x\in\T}  \eta (t,x) \geq \lambda_0 \varepsilon^{1-\delta}\right\} \cap \cB_0(\eps, \delta, \tR, s) $ is empty, the second one is used in \eqref{1909:1326}.
\\[1mm]
By \Cref{thm:approx}, 
provided $\eps$ is small enough, 
 any initial datum $(\eta_0^\omega, \psi_0^\omega) \in \cB_0(\eps, \delta, \tR, s)\equiv \cB_0$  gives rise to a  water waves solution defined 
 for any  $|t| \leq T_\eps=\eps^{-3(1-\delta) + \kappa} $, whose wave profile 
is uniformly approximated by 
$ \eta _{\rm app}(t,x)$ defined in  \eqref{etaapptilde}, as stated in \eqref{upxi_error20}. 
By  the triangle inequality 
\begin{multline}\label{sandwich}
\sup_{x\in \T} \eta_{\rm app}(t,x) - \norm{\eta(t)-\eta_{\mathrm{app}}(t)}_{L^\infty(\T)} \leq \sup_{x \in \T} \eta(t,x) \leq \sup_{x \in \T} \eta_{\rm app}(t,x) + 
\norm{\eta(t)-\eta_{\mathrm{app}}(t)}_{L^\infty(\T)} 
\end{multline} 
so  the wave surface $ \eta(t,x) $ becomes large
if the approximate profile 
$\eta_{\rm app}(t,x)$ does. 

For any $ \lambda >0 $ we define the events
\be\label{eq:events}
\begin{aligned}
\Ac_{\rm app} (\lambda) & :=  \Big \{\sup_{x\in\T}\eta_{\rm app}(t,x)  \geq \lambda \Big\} \cap \cB_0  \, , \qquad 
\Cc  (\lambda)  := \left\{ \norm{\eta(t)-\eta_{\mathrm{app}}(t)}_{L^\infty(\T)} \geq \lambda \right\} \, ,  \\
\Dc_{\pm} (\lambda) & := \Big\{ \sup_{x\in\T} \eta_{\rm app}(t,x)  \pm \norm{\eta(t)-\eta_{\mathrm{app}}(t)}_{L^\infty(\T)}  \geq \lambda \Big\} \cap \cB_0 \, . 
\end{aligned}
\ee
In the sequel we denote $ \cC(\lambda)^c := \mathbb{\Omega} \setminus \cC (\lambda) $ the complementary event.  We fix 
\be\label{undC}
\und{\tC} := 2 \tC \lambda_0^{-1}  \quad \text{where} \ \tC >0 \mbox{ is defined in estimate \eqref{upxi_error20}} \, . 
\ee
\underline{Upper bound:}
Using the second inequality in \eqref{sandwich}, and recalling \eqref{eq:events}, we get 
\begin{align}
\P \Big (\Big\{ \sup_{x\in\T} \eta (t,x) \geq \lambda_0 \varepsilon^{1-\delta} \Big \} \cap \cB_0 \Big)  \leq &\ 
\P \left( \Dc_{+} \left({\lambda_0 \varepsilon^{1-\delta}}\right)\right)\nonumber\\
= &\ \P \left( \Dc_{+} \left({\lambda_0 \varepsilon^{1-\delta}}\right)
 \cap 
 \Cc \left(  {\und{\tC}\,\tR^6 \,\lambda_0\varepsilon^{1-\delta+\kappa}}\right)^c\, \right) \nonumber\\
 & + \P \left(\Dc_{+}
 \left({\lambda_0 \varepsilon^{1-\delta}}\right) \cap 
 \Cc \left( \und{\tC}\,\tR^6 {\lambda_0\varepsilon^{1-\delta+\kappa}}\right)\right) \nonumber\\
 \leq &\ 
 \P \left( \Ac_{\rm app} \left({\lambda_0\varepsilon^{1-\delta}}\, 
 (1- \und{\tC}\,\tR^6 \, \varepsilon^{\kappa})\right)\right) +
 \P \left(\Cc \left(\und{\tC}\,\tR^6{\lambda_0\varepsilon^{1-\delta+\kappa}}\right)\right) \notag \\ 
& \stackrel{\eqref{terrone}} \leq  \P \left( \Ac_{\rm app} \left({\lambda_0\varepsilon^{1-\delta}}\, 
 (1- \und{\tC}\,\tR^6 \, \varepsilon^{\kappa})\right)\right)  + 
\exp\Big( -\frac{\tR^2\,\varepsilon^{-2\delta}}{4\norm{\vec c}_{h^s}^2}+1 \Big)  \label{eq:B}
\end{align}
because by \Cref{thm:approx} and \eqref{undC}, for any $|t|\leq  \varepsilon^{-\frac52(1-\delta)+\kappa} $ we have the inclusion  
$$
\Cc \left(\und{\tC}\,\tR^6 {\lambda_0\varepsilon^{1-\delta+\kappa}}\right) \subseteq \cB_0(\eps, \delta, \tR, s)^c \, .
$$
We now prove that the first term in \eqref{eq:B} is dominant.  Indeed, 
 by \eqref{eta.app.f}, and recalling \eqref{eq:events},  we get 
 \begin{align}
 \P \left( \Ac_{\rm app} \left(\lambda_0\varepsilon^{1-\delta}\, (1- \und{\tC}\, \tR^6  \,\varepsilon^{\kappa})\right) \right) 
& \leq \P \Big( \sum_{j\in\Z\setminus\{0\}} c_j R_j^\omega \geq \sqrt{\pi}\lambda_0\varepsilon^{-\delta}(1-\und{\tC} \tR^6 \,\varepsilon^{\kappa})\Big) \notag \\
& \leq \exp\Big( {-  \frac{ \varepsilon^{-2 \delta}  \lambda_0^2}{2\bsi^2} + o(\varepsilon^{-2\delta}) } \Big) \label{piccolastima}
\end{align}
by \Cref{thm:LDP_Rayleigh} with $\epsilon \leadsto \eps (1-\und{\tC} \tR^6 \,\varepsilon^{\kappa})^{-1/\delta}$, since each $  R_j^\omega\sim \sR(1/\sqrt{2})$, with $\bsi$ in \eqref{upsigma}.

In conclusion \eqref{eq:B} and  \eqref{piccolastima} imply that, 
since $\tR > 2 \lambda_0 \| \vec c \|_{h^s}  \bsi^{-1} $, for $ \varepsilon \to 0^+ $,   
\be\label{1909:1326}
\P \Big(\Big\{ \sup_{x\in\T} \eta (t,x) \geq \lambda_0 \varepsilon^{1-\delta} \Big\} \cap \cB_0 \Big)  
\leq  \exp\Big( - \varepsilon^{-2 \delta}  \frac{\lambda_0^2}{2\bsi^2}
+ o(\eps^{-2\delta})\Big) \, 
\ee
and we deduce the upper bound estimate  
in \eqref{eq:LDP} 
$$
\limsup_{\eps \to 0^+}  \eps^{2\delta} \log \P \Big(
\Big\{ \sup_{x\in\T} \eta (t,x) \geq \lambda_0 \varepsilon^{1-\delta}\Big\} \cap \cB_0\Big) \leq  -\frac{\lambda_0^2}{2\bsi^2} \, , \quad \bsi\ \mbox{in \eqref{upsigma}.}
$$
\underline{Lower bound:} Using the first inequality in \eqref{sandwich}, and recalling \eqref{eq:events}, we estimate 
\begin{align}
\P \Big(\Big\{ \sup_{x\in\T} \eta (t,x) \geq \lambda_0 \varepsilon^{1-\delta} \Big\} \cap \cB_0 \Big)  
\geq &\ \P \left( \Dc_{-} \left({\lambda_0 \varepsilon^{1-\delta}}\right)\right)  \label{lb52} \\
\geq  &\ \P \left( \Dc_{-} \left({\lambda_0 \varepsilon^{1-\delta}}\right)
 \cap 
 \Cc \left({\und{\tC}\,\tR^6 \lambda_0\varepsilon^{1-\delta+\kappa}}\right)^c\, \right) \nonumber \\
\geq &\ \P \Big( \Ac_{\rm app} \left({\lambda_0\varepsilon^{1-\delta}}\, (1+\und{\tC}\tR^6 \,\varepsilon^{\kappa})\right) \cap \Cc \left(\und{\tC}\,\tR^6 {\lambda_0\varepsilon^{1-\delta+\kappa}}\right)^c\Big) \notag  \\
 \geq &\ \P \left( \Ac_{\rm app} \left({\lambda_0\varepsilon^{1-\delta}}\, (1+\und{\tC}\tR^6 \,\varepsilon^{\kappa})\right)\right) - \P \left(\Cc \left(\und{\tC}\,\tR^6 {\lambda_0\varepsilon^{1-\delta+\kappa}}\right)\right) \notag \\ 
 \stackrel{\eqref{eq:events},\eqref{terrone}}  \geq &\  \P \Big( \sup_{x\in\T}  \eta_{\rm app} (t,x) \geq \lambda_0\varepsilon^{1-\delta} (1+\und{\tC} \,\tR^6 \varepsilon^{\kappa} ) \Big)   - 
\exp\Big( -\frac{\tR^2\,\varepsilon^{-2\delta}}{4\norm{\vec c}_{h^s}^2}+1 \Big) \, . \notag 
\end{align}
By \Cref{cor:etaapp} we get 
$\varepsilon^{-1} \eta_{\rm app}(t,0) \sim \cN_{\R}(0,\bsi^2)$  with  $\bsi$ in \eqref{upsigma}, 
and thus, for $\eps$ sufficiently small,   
\begin{align}\notag
 \P & \Big(  \sup_{x\in\T}  \eta_{\rm app} (t,x) \geq \lambda_0\varepsilon^{1-\delta} (1+\und{\tC} \,\tR^6 \varepsilon^{\kappa} ) \Big) 
  \geq 
\P \left( \eta_{\rm app}(t,0) 
\geq \lambda_0 \varepsilon^{1-\delta} (1+\und{\tC} \,\tR^6 \varepsilon^{\kappa} ) \right) \\
&\qquad  = 
\P \left( \varepsilon^{-1} 
\eta_{\rm app}(t,0)  \geq \lambda_0 \varepsilon^{-\delta} (1+\und{\tC} \,\tR^6 \varepsilon^{\kappa} ) \right) 
 \stackrel{\eqref{asintail}} \gtrsim 
\frac{\varepsilon^\delta  \bsi }{ \lambda_0\, \sqrt{2 \pi}}
\,\exp\Big( -\frac{\lambda_0^2\varepsilon^{-2\delta}}{2\bsi^2} (1+ \und{\tC} \,\tR^6 \varepsilon^{\kappa})^{2}
\Big).
\label{ULTIDAV}
\end{align}
The estimates \eqref{lb52}, \eqref{ULTIDAV}  imply that, provided $\tR > 2 \lambda_0 \| \vec c \|_{h^s}  \bsi^{-1} $ 
$$
\P \Big(\Big\{ \sup_{x\in\T} \eta (t,x) \geq \lambda_0 \varepsilon^{1-\delta} \Big\} \cap \cB_0 \Big)  
\geq   \exp\Big( - \varepsilon^{-2 \delta}  \frac{\lambda_0^2}{2\bsi^2}
+ o(\eps^{-2\delta})
\Big) \, \quad \bsi\ \mbox{in \eqref{upsigma}} \, , 
$$
and we deduce the lower bound  estimate in \eqref{eq:LDP} 
$$
\liminf_{\eps \to 0^+}  \eps^{2\delta} \log \P \Big(
\Big\{ \sup_{x\in\T} \eta (t,x) \geq \lambda_0 \varepsilon^{1-\delta}\Big\} \cap \cB_0\Big) \geq  -\frac{\lambda_0^2}{2\bsi^2} \, . 
$$

\section{Rogue waves on  optimal timescales}\label{sec:LDPIII}
In this section we prove Theorems \ref{thm:main} 
for any time $ |t| \leq \eps^{- 3 (1-\delta) + \kappa} $ in item ($ii$). Over this extended (optimal) timescale,
the  profile in \eqref{etaapptilde} is not  an approximation of the surface water wave anymore, and we
need a more refined one. 
The  idea is to  use
as approximate solution 
$
\eta_{\rm app2}(t,\cdot)
$  in \eqref{etaapp2}, 
whose
phases are evaluated along the solution $z(t; \eta_0^\omega, \psi_0^\omega)$ of \eqref{theoBireq} and not along the solution of the cubic Birkhoff normal form \eqref{BNF3}, which preserves the actions, 
cf.  \eqref{etaapptilde}. 
The introduction of {\em nonlinearly time-evolving phases} improves the time-validity of the approximation, cf. 
Lemma \ref{thm:approx2}. This is sufficient to prove the upper bound of the Large Deviation Principle of   \Cref{thm:up_LDP3}.  Conversely, a new issue arises: we can no longer guarantee that the new approximate solution
 $\eta_{\rm app2}(t,x)$ 
 is still a Gaussian random variable. A new strategy is needed to prove 
 the lower bound of the Large Deviation Principle of 
 \Cref{thm:lo_LDP_improved}.
 This is addressed by the construction in \Cref{lem:T_lips}. 
Finally, in Section \ref{sec:allr} we prove \Cref{thm:df}.

\subsection{The new approximate solution
and the upper bound} 
We define
\begin{equation}\label{etaapp2}
\begin{aligned}
    \eta_{\rm app2}(t,\cdot) & := \sqrt{2} \Re\,  |D|^{\frac14}\, u_{\rm app2}(t,\cdot) \ , \\
    u_{\rm app2}(t,x) & :=   \Upxi\big(t, z(\cdot; \eta_0^\omega, \psi_0^\omega)\big) \zeta_0^\omega  = {\frac{1}{\sqrt{2\pi}}}\sum_{j \in \Z \setminus\{0\}} e^{- \im \int_0^t  \cL_j(I(z(\tau; \eta_0^\omega, \psi_0^\omega))) \di \tau } \, \zeta_{0j}^\omega  \, e^{\im j x } \, , 
\end{aligned}
\end{equation}
where  $\zeta_0^\omega (x) $ is the initial datum in \eqref{zeta0}, 
with Fourier coefficients
$\zeta_{0j}^\omega = 
|\zeta_{0j}^\omega| e^{\im \phi_j^\omega}$, 
$\Upxi(t,\cdot)$ is the linear unitary operator  in \eqref{eq:def_upxi}, 
and $z(t; \eta_0^\omega, \psi_0^\omega)$ is  defined in \eqref{Upsilon} (recall it solves  the equation \eqref{theoBireq}).
The approximate wave profile \eqref{etaapp2} can be written as  
 \begin{equation}\label{eq:eta2notg}
 \eta_{\rm app2} (t,x) =  {\frac{\varepsilon}{\sqrt{\pi}}}\,\sum_{j\in\Z\setminus\{0\}}   c_j R_j^{\omega}\,\cos\left(\vartheta_j (t;\omega) + j x \right) 
\end{equation}
where $ c_j $ are defined in \eqref{ckdk}, 
\begin{itemize}
\item the amplitudes 
$R_j^\omega := 
{\dfrac{ |j|^{\frac14}|\zeta_{0j}^\omega|}{\eps c_j}}$ are i.i.d Rayleigh r.v. $\sim \sR(1/\sqrt{2})$ (by \eqref{zeta0.g}); 
\item nonlinear  phases are 
\begin{equation}\label{eq:theta}
\vartheta_j (t;\omega):=-\int_0^t \cL_j (I(z(\tau; \eta_0^{\omega}, \psi_0^{\omega}))) \di\tau + \phi_j^{\omega} \quad \mbox{(mod}\ 2\pi ) \, . 
\end{equation}
\end{itemize}
Note that  since 
$z(\tau; \eta_0^{\omega}, \psi_0^{\omega})$ depends on the initial datum $(\eta_0^{\omega}, \psi_0^{\omega})$  in a very complex and nonlinear way, the phases $ \vartheta_j (t;\omega)$ are 
 generally not uniformly distributed in the desired timescale nor independent among themselves or from the $R_j^\omega$'s. This stands in contrast 
with the previous approximation $\eta_{\rm app}(t,x)$ in \eqref{etaapptilde}.

However, a key property is that 
 $\eta_{\rm app2}(t,x)$ approximates  the  water waves surface 
 $\eta(t,x)$ in the $\norm{\cdot}_{L^\infty}$
 on the whole time  interval $|t| \leq \eps^{-3(1-\delta)+\kappa}$, as proved in the next lemma.

\begin{lem}\label{thm:approx2} {\bf (Approximation 2)} 
Fix $\delta \in (0,1)$ and  $s \geq  N + \frac54$ with $ N $ given by \Cref{BNFtheorem}. 
For any $\tR > 0$, any $0<\kappa \ll 1-\delta$, there exist $\eps_0 := \eps_0(\tR, \kappa), \tC_2>0$ such that for   any $\eps \in (0, \eps_0)$, the following holds true.
For any initial datum 
 $(\eta_0^\omega, \psi_0^\omega) \in  \cB_0(\eps,\delta, \tR,s)$ in \eqref{cB0}, 
  the solution $(\eta(t,x),  \psi(t,x)) $ of the water waves equations \eqref{ww} exists up to times $|t| \leq T_\eps:= \eps^{-3(1-\delta)+\kappa}$,  as well as the approximate solution  $\eta_{\rm app2}(t, \cdot)$ 
  in \eqref{etaapp2}, and satisfy 
\begin{equation}\label{upxi_error2}
\norm{\eta(t, \cdot)- \eta_{\rm app2}(t, \cdot)}_{L^\infty(\T)}\lesssim \norm{u(t,\cdot ) - u_{\rm app2}(t, \cdot )}_{H^1}
\leq \tC_2 \, \tR^4 \, \varepsilon^{1-\delta+\kappa}   \ , \qquad  \forall |t|\leq T_\eps \, .
\end{equation}
\end{lem}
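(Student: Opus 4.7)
The first reduction is standard: by Sobolev embedding, $\|\eta(t,\cdot) - \eta_{\rm app2}(t,\cdot)\|_{L^\infty(\T)} \lesssim \|u(t) - u_{\rm app2}(t)\|_{\dot H^1}$, since $\eta = \sqrt{2}\,\Re\,|D|^{1/4} u$ and similarly for $\eta_{\rm app2}$. So it suffices to control $u - u_{\rm app2}$ in $\dot H^1$.  As in the proof of Lemma~\ref{thm:approx}, write $u(t) = \mathfrak{B}^{-1}(u(t))\, z(t)$ using the normal form map from Theorem~\ref{BNFtheorem} and split
\[
u(t) - u_{\rm app2}(t) = \bigl[\mathfrak{B}^{-1}(u(t)) - \uno\bigr] z(t) \;+\; \bigl[z(t) - u_{\rm app2}(t)\bigr].
\]
The first term is bounded by $\|u(t)\|_{\dot H^{s_0}} \|z(t)\|_{\dot H^{s_0+1}} \lesssim \tR^2 \varepsilon^{2(1-\delta)}$ using \eqref{phi_est} and \eqref{z.control}, exactly as in \eqref{0209:1924}.

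The main point is that the second term $z(t) - u_{\rm app2}(t)$ now behaves much better than the analogous $z(t) - \Upxi(t,z_0)z_0$ in the proof of Lemma~\ref{thm:approx}. Since $z$ solves \eqref{transformed_eq}, view $\cL_k(I(z(\tau)))$ as a given real-valued time function and apply variation of parameters with the integrating factor $\exp\bigl(\im \int_0^t \cL_k(I(z(\tau)))\di\tau\bigr)$. This yields the Duhamel identity
\[
z(t) \;=\; \Upxi(t, z)\, z_0 \;+\; \int_0^t \Upxi_{s,t}(z)\,\mathcal{X}^{+}_{\geq 4}(s)\,\di s,
\]
where $\Upxi_{s,t}(z)$ is the unitary Fourier multiplier with symbol $e^{-\im \int_s^t \cL_j(I(z(\tau)))\di\tau}$. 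Subtracting the definition \eqref{etaapp2} of $u_{\rm app2}(t) = \Upxi(t,z)\zeta_0^\omega$ gives
\[
z(t) - u_{\rm app2}(t) \;=\; \Upxi(t,z)\bigl(z_0 - \zeta_0^\omega\bigr) \;+\; \int_0^t \Upxi_{s,t}(z)\,\mathcal{X}^{+}_{\geq 4}(s)\,\di s.
\]

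The key payoff is that $\Upxi(t,z)$ and $\Upxi_{s,t}(z)$ are \emph{unitary} on $\dot H^1$, so no growing-in-time phase mismatch appears. The first term is bounded in $\dot H^1$ by $\|z_0 - \zeta_0^\omega\|_{\dot H^1} \leq \|z_0 - u_0\|_{\dot H^1} + \|u_0 - \zeta_0^\omega\|_{\dot H^1} \lesssim \tR^2 \varepsilon^{2(1-\delta)}$, using \eqref{phi_est} and \eqref{u0-zeta0} as in the previous proof. The Duhamel remainder is bounded by unitarity and \eqref{X_4_1}:
\[
\Bigl\| \int_0^t \Upxi_{s,t}(z)\,\mathcal{X}^{+}_{\geq 4}(s)\,\di s \Bigr\|_{\dot H^1} \leq \int_0^t \|\mathcal{X}^{+}_{\geq 4}(s)\|_{\dot H^1}\,\di s \lesssim |t|\,\sup_{s\in[0,t]} \|u(s)\|_{\dot H^{s_0}}^4 \lesssim |t|\,\tR^4\,\varepsilon^{4(1-\delta)}.
\]
For $|t| \leq \varepsilon^{-3(1-\delta)+\kappa}$, this gives $\tR^4\,\varepsilon^{1-\delta+\kappa}$, which is precisely the bound claimed in \eqref{upxi_error2}. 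Collecting the three contributions yields the desired estimate with constant $\tC_2 = \tC_2(N)$.

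The essential improvement over Lemma~\ref{thm:approx} lies in having eliminated the two ``phase mismatch'' terms \eqref{u-uapp2}, \eqref{u-uapp3}: there, replacing the true action $I(z(\tau))$ by the frozen action $I(z_0)$ produced an $O(|t|^2\,\varepsilon^{6(1-\delta)})$ error via the Lipschitz bound \eqref{L_lips} on $\cL_j$ combined with the action drift \eqref{action_est}, forcing the restriction $|t| \lesssim \varepsilon^{-5/2(1-\delta)+\kappa}$. By building the phases of $u_{\rm app2}$ out of the genuine trajectory $z(\tau;\eta_0^\omega,\psi_0^\omega)$, those terms cancel identically in the Duhamel representation above, and only the linear-in-$|t|$ contribution of the quartic remainder $\mathcal{X}^{+}_{\geq 4}$ survives. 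The main obstacle to this strategy was simply that one must have $z(t;\eta_0^\omega,\psi_0^\omega)$ available as a bona fide deterministic object up to the optimal time, which is exactly what Theorem~\ref{cor:LIP} and the integrability structure of Theorem~\ref{BNFtheorem} provide.
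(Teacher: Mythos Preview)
Your proof is correct and essentially identical to the paper's. The paper inserts the intermediate term $\Upxi(t,z)z_0$ and splits into three pieces (corresponding to your $[\mathfrak{B}^{-1}-\uno]z$, $\Upxi(t,z)(z_0-\zeta_0^\omega)$, and the Duhamel integral), handling the last via the change of variable $z(t)=\Upxi(t,z)v(t)$ rather than writing the two-parameter propagator $\Upxi_{s,t}$, but this is purely cosmetic and the estimates match line by line.
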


\begin{proof}
By \Cref{cor:LIP}, for any $(\eta_0^\omega, \psi_0^\omega) \in \cB_0(\eps, \delta, \tR, s)$ the functions 
$$ 
u(t) := u(t,x) :=u(t,x; \eta_0^\omega, \psi_0^\omega) \, , \quad 
z(t):=z(t,x) := z(t,x; \eta_0^\omega, \psi_0^\omega) = \mathfrak{B}(u(t)) u(t) \, , 
$$ are well 
 defined  for all times $|t| \leq T_\eps$ and satisfy  \eqref{z.control}.
 Then the approximate solutions 
 $u_{\rm app2}(t,x)$ and $\eta_{\rm app2}(t,x)$ in \eqref{etaapp2} are well defined for all 
 $|t| \leq T_\eps $.
We now estimate
\be\label{eq:ladiff}
\eta(t,\cdot ) - \eta_{\rm app2}(t, \cdot ) 
\stackrel{\eqref{etaapp2},\eqref{u0}} 
= \sqrt{2} \, \Re |D|^{\frac14} \big(
u(t,\cdot) - u_{\rm app2}(t,\cdot) \big) \, . 
\ee 
As in \Cref{thm:approx}, the key idea is to use the normal form \Cref{BNFtheorem} to reduce the problem to comparing the normal form variable $z(t,\cdot)$ with the solution $\Upxi(t, z(\cdot))z_0$ of the integrable Birkhoff normal form \eqref{BNF3}, where $z_0(x) := z(0,x)$, but now allowing for nonlinearly evolving phases.
Set 
$u_0(x):= u(t,x)\vert_{t=0}$.  
By \eqref{eq:ladiff}, Sobolev embedding,
and since  $\mathfrak{B}(u)$ is invertible (see \Cref{BNFtheorem}),  we estimate 
\begin{align}\notag
\norm{\eta(t,\cdot ) - \eta_{\rm app2}(t, \cdot )}_{L^\infty(\T)}\lesssim &
\norm{u(t, \cdot)- u_{\rm app2}(t, \cdot)}_{\dot H^1} \\
 \lesssim & \norm{\mathfrak{B}^{-1}(u(t, \cdot)) z(t, \cdot)- z(t, \cdot)}_{\dot H^1} 
\label{u-uapp12}
\\
& + \norm{z(t, \cdot) - \Upxi(t, z)z_0 }_{\dot H^1}
\label{u-uapp22}
\\
& + \norm{\Upxi(t, z) z_0 - u_{\rm app2}(t, \cdot)}_{\dot H^1}\, . 
\label{u-uapp42}
\end{align}
We estimate each contribution separately. Line \eqref{u-uapp12} is
equal to \eqref{u-uapp1} which is estimated by
\eqref{0209:1924}.\\
{\sc Estimate of  \eqref{u-uapp42}.} By \eqref{etaapp2} and since  $\Xi(t,\cdot)$ is a  unitary Fourier multiplier for every $t \in \R$, we have 
\begin{align}
\label{u-uapp33}
& \eqref{u-uapp42} \leq
\norm{z_0 - \zeta_0^\omega}_{\dot H^1} \leq
\norm{z_0 - u_0}_{\dot H^1} +
\norm{u_0 - \zeta_0^\omega}_{\dot H^1} 
\stackrel{\eqref{0209:1212},\eqref{u0-zeta0}}{\lesssim}
\tR^2 \eps^{2(1-\delta)}  \ . 
\end{align}
{\sc Estimate of  \eqref{u-uapp22}.} 
Write  
$
z(t) := \Upxi(t,z) v(t)$; by variation of constants, using  
\eqref{transformed_eq}, \eqref{eq:def_upxi},  the variable 
$v(t)$ satisfies 
$$
\pa_t v  = \Upxi(t, z)^{-1} \cX_{\geq 4}^+(u, \bar u, z, \bar z)\, ,   \quad v(0) = z_0 \, . 
$$
By the unitarity of $\Upxi(t,z)$ and estimate \eqref{X_4_1} we bound 
$\norm{\pa_t v(t)}_{\dot{H}^1} 
\lesssim \norm{u(t)}_{\dot H^{s_0}}^4 $, 
implying
\be\label{u-uapp43}
\eqref{u-uapp22}  =
\norm{v(t) - z_0}_{\dot{H}^1}  \lesssim |t| 
\sup_{|\tau| \leq t}\norm{u(\tau)}_{\dot H^{s_0}}^4 \stackrel{\eqref{z.control}}{\lesssim } \tR^4 \eps^{1-\delta +\kappa} \ , \quad \forall |t| \leq T_\eps \ .
\ee
Estimate \eqref{upxi_error2} follows from 
\eqref{0209:1924},
\eqref{u-uapp33},
\eqref{u-uapp43}.
\end{proof}

With this approximation lemma, we can readily prove  the  upper bound of \Cref{thm:main} $(ii)$. 

\begin{prop}[{\bf LDP: Upper-bound on optimal timescale}]\label{thm:up_LDP3}
There exists $\und{s}>0$ 
such that
for any  $\lambda_0 >0$, $\delta \in (0,1)$, and $s \geq \und{s}$ there exists $\tR_0  >0$ and  for any $\tR\geq \tR_0$,   any $0<\kappa \ll 1-\delta$, the following holds true. There exists $\eps_0 > 0 $ such that for   any $\eps \in (0, \eps_0)$, 
for any random initial data $(\eta_0^\omega, \psi_0^\omega) \in  \cB_0(\eps, \delta, \tR , s)$ in \eqref{cB0}, 
the solution $(\eta(t,x), \psi(t,x))$ of the water waves equations \eqref{ww} exists up to times $|t| \leq \eps^{-3(1-\delta)+\kappa}$, and satisfies 
the upper bound 
\begin{equation}\label{uppboptimal}
\limsup_{\eps \to 0^+}  \eps^{2\delta} \log \P \left(
\left\{ \sup_{x\in\T} \eta (t,x) \geq \lambda_0 \varepsilon^{1-\delta}\right\} \cap \cB_0(\eps,\delta, \tR, s) \right) \leq  -\frac{\lambda_0^2}{{2 \bsi^2}} \, ,
\end{equation}
with $\bsi$ in \eqref{upsigma}.
\end{prop}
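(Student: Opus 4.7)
The plan is to derive \eqref{uppboptimal} from two ingredients already in hand: the approximation \Cref{thm:approx2}, which keeps $\eta_{\rm app2}(t,x)$ in \eqref{etaapp2} close in $L^\infty(\T)$ to $\eta(t,x)$ on the full optimal interval $|t|\leq T_\eps := \eps^{-3(1-\delta)+\kappa}$, and the large deviation estimate for Rayleigh sums \Cref{thm:LDP_Rayleigh}. The crucial observation, distinguishing this argument from the one in \Cref{sec:52}, is that for the \emph{upper bound} the (non-Gaussian, in general non-independent) distribution of the nonlinear phases $\vartheta_j(t;\omega)$ in \eqref{eq:eta2notg} plays no role at all: the trivial pointwise bound $|\cos(\cdot)|\leq 1$ already produces the correct rate, precisely because the dispersive focusing mechanism (where all phases align) saturates the Rayleigh-sum inequality.

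Concretely, fix $\und{s}$, $\tR_0$, $\eps_0$ as in the proof of \Cref{thm:main}~$(i)$, and let $(\eta_0^\omega,\psi_0^\omega) \in \cB_0$. By \Cref{thm:approx2}, for every $|t|\leq T_\eps$,
\[
\sup_{x\in\T}\eta(t,x) \;\leq\; \sup_{x\in\T}\eta_{\rm app2}(t,x) + \tC_2\tR^4 \eps^{1-\delta+\kappa}\, ,
\]
while the representation \eqref{eq:eta2notg} and $|\cos(\cdot)|\leq 1$ yield
\[
\sup_{x\in\T}\eta_{\rm app2}(t,x) \;\leq\; \frac{\eps}{\sqrt{\pi}}\sum_{j\in\Z\setminus\{0\}} c_j R_j^\omega\, ,
\]
where, by \Cref{lem:zeta0.d}, $(R_j^\omega)_{j\in\Z\setminus\{0\}}$ are i.i.d.~Rayleigh r.v.~$\sim \sR(1/\sqrt 2)$. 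Setting $\gamma_\eps := 1 - \tC_2\tR^4\lambda_0^{-1}\eps^\kappa = 1 - o_\eps(1)$, these two bounds combine into
\[
\P\Big(\big\{\sup_{x\in\T}\eta(t,x)\geq \lambda_0\eps^{1-\delta}\big\}\cap \cB_0\Big) \;\leq\; \P\Big({\mathop\sum}_{j\in\Z\setminus\{0\}} c_j R_j^\omega \;\geq\; \sqrt{\pi}\lambda_0\eps^{-\delta}\gamma_\eps\Big)\, .
\]

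To finish, \Cref{thm:LDP_Rayleigh} applied with $\lambda_0 \leadsto \sqrt{\pi}\lambda_0 \gamma_\eps$, together with the identity $\sum_{j\in\Z\setminus\{0\}}c_j^2 = 2\pi \bsi^2$ from \eqref{upsigma}, gives
\[
\limsup_{\eps \to 0^+}\eps^{2\delta}\log\P\Big(\big\{\sup_{x\in\T}\eta(t,x)\geq \lambda_0\eps^{1-\delta}\big\}\cap \cB_0\Big) \;\leq\; -\frac{\pi\lambda_0^2}{2\pi\bsi^2} \;=\; -\frac{\lambda_0^2}{2\bsi^2}\, ,
\]
which is exactly \eqref{uppboptimal}. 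The main obstacle in extending the large deviation upper bound to the optimal timescale therefore lies not in the probabilistic step, which is essentially for free, but in the deterministic \Cref{thm:approx2}: there, the introduction of the fully nonlinear time-evolving phases $\int_0^t \cL_j(I(z(\tau;\eta_0^\omega,\psi_0^\omega)))\,\di\tau$ is what absorbs the $\cX^{+}_{\geq 4}$ remainder in \eqref{transformed_eq} over intervals of length $\eps^{-3(1-\delta)+\kappa}$. The corresponding lower bound, by contrast, cannot be handled this way and will genuinely require the random-fixed-point / phase quasi-synchronization construction of the next subsections, since one has to show that the trivial bound via $|\cos|\leq 1$ is \emph{actually achievable} with the stated probability.
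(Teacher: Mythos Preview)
Your proof is correct and follows essentially the same approach as the paper: use the approximation \Cref{thm:approx2} (valid deterministically on $\cB_0$) together with the crude bound $|\cos(\cdot)|\leq 1$ in \eqref{eq:eta2notg} to dominate $\sup_{x\in\T}\eta_{\rm app2}(t,x)$ by the Rayleigh sum $\frac{\eps}{\sqrt{\pi}}\sum_{j\neq 0} c_j R_j^\omega$, and then apply \Cref{thm:LDP_Rayleigh}. Your presentation is in fact slightly more streamlined than the paper's, which routes through the auxiliary events $\Cc(\cdot)$ and $\Dc_+(\cdot)$ as in the proof of \Cref{thm:main}~$(i)$; since the approximation bound \eqref{upxi_error2} already holds pointwise on all of $\cB_0$, your direct substitution is equivalent and avoids the extraneous $\P(\cB_0^c)$ term.
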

\begin{proof}
Argue as in the proof of  Theorem \ref{thm:main} $(i)$ (upper bound), replacing $\eta_{\rm app}(t,x)$ with 
$\eta_{\rm app2}(t,x)$, the constant $\tC$ with  
$\tC_2$, 
using Lemma \ref{thm:approx2} in place of Lemma \ref{thm:approx} and  using that
$$
\norm{\eta_{\rm app2} (t,\cdot)}_{L^\infty(\T) } \stackrel{\eqref{eq:eta2notg}}{\leq} 
\frac{\eps}{\sqrt{\pi}} \sum_{j \neq 0}
c_j \,  R_j^\omega \quad 
\text{where} \quad 
R_j^\omega   \sim \sR \left(\frac{1}{\sqrt2}\right)\, , 
$$ 
from which it follows that 
 \[
\P \left( \sup_{x\in\T}  \eta_{\rm app2} (t,x) \geq \lambda_0 \varepsilon^{1-\delta}(1-\tC_2\tR^4\,\varepsilon^{\kappa}) \right)
\leq 
\P \Big( \sum_{j\in\Z\setminus\{0\}} c_j R_j^\omega \geq \sqrt{\pi}\, \lambda_0\varepsilon^{-\delta}(1-{\tC_2} \tR^4 \,\varepsilon^{\kappa})\Big) \, .
\]
Then \Cref{thm:LDP_Rayleigh}
implies \eqref{uppboptimal}. 
\end{proof}

Proving a sharp lower bound over the desired timescale $|t|\leq \varepsilon^{-3(1-\delta)+\kappa}$ requires much more work.
The key point to have a rogue wave form  at time $t$ is that plenty of the  phases 
$\vartheta_j(t;\omega)$ in \eqref{eq:theta}
align at zero, so that, in the expression \eqref{eq:eta2notg} evaluated at $x =0$, 
$\cos\left(\vartheta_j (t;\omega) \right) \simeq 1$ and one can exploit again \Cref{thm:LDP_Rayleigh}.
We can guarantee that, given an arbitrary time 
 $|t| \leq \eps^{-3(1-\delta) + \kappa}$, a finite but large number of  phases \eqref{eq:eta2notg} align at zero
via a random fixed point theorem, that is the content of the next \Cref{sec:fixed}. The probability that random initial phases coincide with the fixed point is zero, so we construct an event of quasi-synchronized phases by perturbing around the fixed point. An additional difficulty is to estimate the measure of this event.
It is in this point that we need to keep track of the Lipschitz dependence of  $z(t,\cdot)\equiv z(t; \eta_0^{\omega}, \psi_0^{\omega})$   on the initial datum.

\subsection{Random fixed point}\label{sec:fixed}

For any integer $\tN\geq 1$ 
we consider the low/high frequency  projectors
\be\label{truncN}
\begin{aligned}
& \Pi_{ \leq \tN} : \dot L^2 (\T, \C)  \longrightarrow C^\infty (\T, \C) \ , 
\quad 
 f  (x) \longmapsto \Pi_{\leq \tN}f(x) :=  {\frac{1}{\sqrt{2\pi}}}\sum_{0 < |j| \leq \tN} f_j \, e^{\im j  x} \ ,  \\
& \Pi_{>\tN} : \dot L^2 (\T, \C)  \longrightarrow L^2 (\T, \C) \, , 
\quad 
 f  (x) \longmapsto \Pi_{>\tN}f(x) := {\frac{1}{\sqrt{2\pi}}}\sum_{|j|>\tN} f_j \, e^{\im j  x} \, ,
\end{aligned}
\ee
so that  $ \uno =  \Pi_{\leq \tN} + \Pi_{>\tN}$. 
The truncation parameter $ \tN $
will be linked to $ \varepsilon $ in \eqref{N.choice}.

\paragraph{\bf Partially randomized initial data.} 
For any $\vec{\phi}\in\R^{2\tN}$ we define the 
{\it partially randomized} initial data
\be 
\begin{aligned}\label{eq:part_rand_init}
& \wt\eta_0^{\omega}  (x;\vec{\phi})  := 
{ \frac{\eps}{\sqrt{\pi}} } \sum_{0<|j|\leq\tN}c_j\, R_j^{\omega}  \,\cos\left(\phi_j+  j x\right)+
\underbrace{{ \frac{\eps}{\sqrt{\pi}} } \sum_{|j|>\tN}c_j\, R_j^{\omega}  \,\cos\left(\phi_j^{\omega}+  j x\right)}_{= \Pi_{>\tN} \wt\eta_0^{\omega}  } 
\\
&  \wt\psi_0^{\omega}  (x;\vec{\phi})  :=
{ \frac{\eps}{\sqrt{\pi}} }\sum_{0<|j|\leq\tN}d_j\, R_j^{\omega} \,\sin\left(\phi_j+  j x\right)+\underbrace{{ \frac{\eps}{\sqrt{\pi}} } \sum_{|j|>\tN}d_j\, R_j^{\omega} \,\sin\left(\phi_j^{\omega}+  j x\right)}_{= \Pi_{>\tN} \wt \psi_0^{\omega} }
\end{aligned}
\ee
where $ c_j $ are the coefficients defined in \eqref{ckdk}, $ d_j = c_j |j|^{-\frac12}$,  and 
\begin{itemize}
\item 
$(R_j^{\omega})_{j\in\Z\setminus\{0\}}$ are i.i.d.  Rayleigh random variables $ \sim \sR(1/\sqrt{2})$;
\item $(\phi_j^{\omega})_{|j|>\tN}$ are i.i.d. random variables with a uniform distribution 
$ \sim \sU ([0,2\pi])$;
\item the random variables  $(R_j^{\omega})_{j\in\Z\setminus\{0\}}$ 
and $(\phi_j^{\omega})_{|j|>\tN}$ are independent.
\end{itemize} 
Note that in \eqref{eq:part_rand_init} 
the high Fourier modes are fully random, while low Fourier modes have random moduli but {\it deterministic} phases $\vec{\phi}$.

If $\vec{\phi}^{\omega}=(\phi_j^{\omega})_{0<|j|\leq \tN}$ are i.i.d.  random variables $ \sim \sU([0,2\pi]) $,  independent of $(R_j^{\omega})_{j\in\Z\setminus\{0\}}$ and $(\phi_j^{\omega})_{|j|> \tN}$, then 
\begin{equation}\label{eq:part_to_full}
(\wt\eta_0^{\omega}, \wt\psi_0^{\omega}) (x;\vec{\phi}^{\omega})=(\eta_0^{\omega},\psi_0^{\omega})(x)
\end{equation}
where the right-hand side denotes the fully randomized initial datum in \eqref{eq:init_data}. 
In particular, for any $\vec \phi \in \R^{2\tN}$,   
 \begin{align}\label{1909:1520}
 \big\| (\wt\eta_0^{\omega}(\cdot,\vec{\phi}),\wt{\psi}_0^{\omega}(\cdot,\vec{\phi})) \big\|_{H^s_0 \times \dot H^{s+\frac12}}^2 
 & =2\,\eps^2\,\sum_{j\in\Z\setminus\{0\}} c_j^2 (R_j^{\omega})^2 | j|^{2s}\\
 & \hspace{-1cm} = 2\,\eps^2\,
 \!\! \sum_{0<|j|\leq \tN}  \!\! c_j^2 (R_j^{\omega})^2 | j|^{2s}+\big\| (\Pi_{>\tN}\wt\eta_0^{\omega},\Pi_{>\tN}\wt{\psi}_0^{\omega}) \big\|_{H^s_0 \times \dot H^{s+\frac12}}^2 =  \norm{(\eta_0^{\omega},{\psi}_0^{\omega})}_{H^s_0 \times \dot H^{s+\frac12}}^2 \, . \notag
 \end{align}
As a result, the event $\cB_0(\eps,\delta, \tR, s) $ in \eqref{cB0p}  only depends on the variables 
$$
(\vec{R}^{\omega},\Pi_{>\tN}  \wt\eta_0^{\omega},\Pi_{>\tN} \wt\psi_0^{\omega})
\quad \text{where} 
\quad 
\vec{R}^{\omega} :=(R_j^{\omega})_{0<|j|\leq \tN}
\quad \text{in} \  \eqref{eq:part_rand_init} \, . 
$$  

\paragraph{\bf Random nonlinear phase operator.} At any  time  $|t| \leq \eps^{-3(1-\delta) + \kappa}$, we define the operator
\be\label{def:T}
\begin{split}
\cT : \cB_0(\eps,\delta, \tR, s) \times \R^{2\tN} & \longrightarrow \R^{2\tN} \, , \\
(\omega , \vec{\phi} ) & \longmapsto (\cT_j 
(\omega , \vec{\phi} ))_{0<|j|\leq \tN} \, , 
\end{split}
\ee
where, for any $0<|j|\leq \tN$, 
\be\label{def:T2}
\cT_j (\omega , \vec{\phi} ):= \int_0^t \cL_j (I(z(\tau ; \omega,\vec{\phi})))\, \di\tau \, , \qquad z(t;\omega,\vec{\phi}) := \Upsilon^t \big( \wt\eta_0^{\omega}(\cdot,\vec{\phi}),\wt{\psi}_0^{\omega}(\cdot,\vec{\phi})\big) \, ,
\ee
and  $\Upsilon^t $  is defined in \eqref{Upsilon}. The domain $\cB_0(\eps,\delta, \tR, s) $ in \eqref{def:T} is the {\it restriction}  $(\cB_0(\eps,\delta, \tR, s),\wt{\Fc}\cap \cB_0(\eps,\delta, \tR, s),\wt{\P})$ of the probability space $(\mathbb{\Omega},\Fc,\P)$ given by the $\sigma$-algebra generated by the random moduli and the high-Fourier modes, i.e.
\be\label{259:1639}
\wt{\Fc}:= \sigma(\vec{R}^{\omega},\Pi_{>\tN}  \wt\eta_0^{\omega},\Pi_{>\tN} \wt\psi_0^{\omega})\subseteq \Fc \, , 
\qquad 
\wt{\P}(F):=\P(F) \, ,\quad \forall F=\wt{F}\cap\cB_0(\eps,\delta, \tR, s)\in \wt{\Fc}\cap\cB_0(\eps,\delta, \tR, s) .
\ee
Without loss of generality, we can assume that $(\Omega,\wt{\Fc},\mathbb{P})$ and $(\cB_0(\eps,\delta, \tR, s),\wt{\Fc}\cap \cB_0(\eps,\delta, \tR, s),\wt{\P})$ are complete and independent of the initial uniformly distributed phases $\vec{\phi}^{\omega}$.

The mapping $\cT$ is well defined since, for any $\omega \in \cB_0(\eps,\delta, \tR, s)$, 
 \[
 \big\| (\wt\eta_0^{\omega}(\cdot,\vec{\phi}),\wt{\psi}_0^{\omega}(\cdot,\vec{\phi})) \big\|_{H^s_0 \times \dot H^{s+\frac12}} 
\stackrel{\eqref{1909:1520}}{=}  \norm{(\eta_0^{\omega},{\psi}_0^{\omega})}_{H^s_0 \times \dot H^{s+\frac12}} 
 \leq \tR \eps^{1-\delta}
 \]
and so \Cref{cor:LIP} guarantees that the functions 
$$ 
\wt u(t)   
:= \wt u(t,x; \wt \eta_0^\omega (\cdot,\vec{\phi}),
\wt \psi_0^\omega (\cdot,\vec{\phi}) ) \, , \quad 
z(t;\omega,\vec{\phi})= \Upsilon^t \big( \wt\eta_0^{\omega}(\cdot,\vec{\phi}),\wt{\psi}_0^{\omega}(\cdot,\vec{\phi})\big) 
= 
\mathfrak{B}( \wt u(t)) \wt u(t)  \, , 
$$ 
are well 
 defined  for all times $|t| \leq T_\eps$ and satisfy  \eqref{z.control}.

The next crucial proposition  guarantees the existence of a random fixed point for $\cT $, and studies properties of the event of random phases being close to the fixed point. Recall that, at the random fixed point,  the nonlinear phases \eqref{eq:theta} vanish, and a rogue wave forms.

The following proposition contains all the results to prove the  lower bound \eqref{eq:lo_LDP_improved}. 
\begin{prop}[{\bf Random fixed point}]\label{lem:T_lips}
Let $\tN \in \N$, $\delta\in (0,1)$ and $s \geq  N + \frac54$.
For any $\tR >0$,  any 
$0 < \kappa \ll 1-\delta$, there exists $\eps_0 >0$ such that for any $\eps \in (0, \eps_0)$, any 
$|t|\leq  T_\eps:=\eps^{-3(1-\delta)+\kappa}$, the  random nonlinear phase operator  $\cT $ defined in \eqref{def:T}--\eqref{def:T2}
satisfies the following properties:  
\begin{itemize}
\item[(i)] {\sc Lipschitz property.} For any $\omega \in \cB_0(\eps,\delta, \tR, s)$, the map $\cT(\omega, \cdot)\colon \R^{2\tN}\to \R^{2\tN}$ is Lipschitz, and, for any $ 0 < |j| \leq \tN $,     
\begin{equation}\label{eq:bilipschitz}
\big| \cT_j (\omega, \vec\phi) - \cT_j(\omega, \und{\vec\phi}) \big|  \leq C_2 \tN\, \tR^2 \,  
\eps^{2(1-\delta)} \, e^{C_{\rm lip} |t|} \,
\| \vec \phi - \und{\vec \phi} \|_{\ell^\infty}
 \, , \quad \forall \,  
 \vec \phi,  \und{\vec \phi} 
\in \R^{2\tN} \, . 
\end{equation}
\item[(ii)] {\sc Measurability.} For any $\vec{\phi} \in \R^{2\tN}$, the map  $\cT(\cdot, \vec{\phi})\colon 
( \cB_0(\eps,\delta, \tR, s), \wt{\Fc} \cap \cB_0(\eps,\delta, \tR, s))
\to (\R^{2\tN}, \cB(\R^{2\tN}))$ is measurable.
\item[(iii)] 
{\sc Fixed point.} 
There exists a random fixed point $\vec{\phi}^{\ast,\omega} = ( \phi^{\ast,\omega}_j)_{0 <|j| \leq \tN}
$ of $\cT$,  
\be\label{eq:T_fixedp}\cT (\omega, \vec{\phi}^{\ast,\omega}) = \vec{\phi}^{\ast,\omega} \, ,
\ee
which is a random variable in the restricted probability space $(\mathbb{\Omega},\wt{\Fc},\wt{\P})$ defined in \eqref{259:1639}.
\end{itemize} 
Moreover, the fixed point $ \vec{\phi}^{\ast,\omega}  $
satisfies the following properties:   
\begin{itemize}
\item[(a)] If $\vec{\phi}^{\omega}=(\phi_j^{\omega})_{0<|j|\leq \tN}$ are  i.i.d. uniform  random variables  $\sim \sU([0,2\pi])$ independent of $(R_j^{\omega})_{j \in\Z\setminus\{0\}}$ and $(\phi_j^{\omega})_{|j|> \tN}$,
then, for any $\alpha \in (0, \pi)$, the set 
\begin{equation}\label{eq:nhood_fixedp}
\Nc ( \alpha) := \Big\{ \omega\in\mathbb{\Omega} \mid |\phi_j^{\omega} - \phi_j^{\ast,\omega}|< \alpha   \quad \forall\ 0<|j|
\leq \tN \Big\}  
\end{equation}
is $\cF$-measurable and  
$ 
\P( \Nc (\alpha)) 
= \left(\frac{ \alpha}{\pi}\right)^{2\tN} $.
    \item[(b)] 
    {\sc Stability.}
    For any $\omega \in \Nc(\alpha)\cap \cB_0(\eps,\delta, \tR, s)$ we have 
\be\label{5.8}
\abs{\phi_j^\omega - \cT_j (\omega , \vec{\phi}^\omega)}  < 2\, \alpha\, \tN \,  e^{ C_{\rm lip} \varepsilon^{-3(1-\delta)}}, \qquad \forall\ 0<|j|
\leq \tN \, , 
\ee
where $C_{\rm lip}>0$ is the constant in 
\Cref{cor:LIP}-(iv).
    \item[(c)]{\sc Factorization property.} For any measurable set $\cA \in \wt \cF$ (see \eqref{259:1639}), 
    \be\label{splitting}
\P(\cA \cap \Nc (\alpha)) 
 = {\P(\cA)\, \P (\Nc (\alpha))} 
= \left(\frac{ \alpha}{\pi}\right)^{2\tN} \P(\cA ) \, .
    \ee
\end{itemize}
\end{prop}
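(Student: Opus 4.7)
For (i), I would combine three ingredients: the Lipschitz bound \eqref{L_lips} on the nonlinear frequencies, the Lipschitz property \eqref{lip.z} of the normal form flow $\Upsilon^t$, and an elementary bound on the $H^{s_0+2}_0\times\dot H^{s_0+2}$-norm of the data difference in terms of $\|\vec\phi_1-\vec\phi_2\|_{\ell^\infty}$. The pointwise identity $\bigl||a|^2-|b|^2\bigr|\leq (|a|+|b|)|a-b|$ and Cauchy--Schwarz yield
\[
\|I(z_1)-I(z_2)\|_{\mathcal{F}L^{2,1}} \lesssim (\|z_1\|_{\dot H^2}+\|z_2\|_{\dot H^2})\|z_1-z_2\|_{L^2},
\]
which combined with \eqref{L_lips} and the size estimate \eqref{z.control} gives $|\cL_j(I(z_1))-\cL_j(I(z_2))| \lesssim |j|\tR\varepsilon^{1-\delta}\|z_1-z_2\|_{L^2}$. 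Then \Cref{cor:LIP}(iv) controls the $L^2$ difference by $C_1 e^{C_{\rm lip}|t|}$ times the $H^{s_0+2}_0\times\dot H^{s_0+2}$-norm of the data difference; since only the first $2\tN$ modes differ and $|\cos(\phi_1+jx)-\cos(\phi_2+jx)|\leq|\phi_1-\phi_2|$, a Cauchy--Schwarz argument in $j$ exploiting \eqref{1909:1520} (valid on $\cB_0$ whenever $s\geq N+5/4$ so a tail series in $|j|$ converges) gives $\|(\Delta\wt\eta_0,\Delta\wt\psi_0)\|_{H^{s_0+2}_0\times\dot H^{s_0+2}} \lesssim \tR\varepsilon^{1-\delta}\|\vec\phi_1-\vec\phi_2\|_{\ell^\infty}$. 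Integrating in $\tau\in[0,t]$, using $|j|\leq\tN$, and absorbing the factor $|t|$ into the exponential produces \eqref{eq:bilipschitz}. For (ii), the map $\omega\mapsto(\wt\eta_0^\omega(\cdot;\vec\phi),\wt\psi_0^\omega(\cdot;\vec\phi))$ is $\wt\cF$-measurable for each fixed $\vec\phi$, the flow $\Upsilon^t$ is continuous on $\cB_0$ by \Cref{cor:LIP}, and $\cL_j\circ I$ is continuous, so measurability of $\cT(\cdot,\vec\phi)$ follows.

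\textbf{Random fixed point.} For (iii), I would apply \Cref{thm:Brouwer} with $K=[0,2\pi]^{2\tN}\subset\R^{2\tN}$ compact and convex, the map $\cT(\omega,\cdot)$ being understood modulo $2\pi$. Continuity in $\vec\phi$ is provided by (i) and measurability in $\omega$ by (ii), hence the theorem delivers a $\wt\cF$-measurable random variable $\vec\phi^{*,\omega}\in K$ satisfying \eqref{eq:T_fixedp} almost surely on $\cB_0$. \emph{This is the hard step of the whole argument}: the mod-$2\pi$ reduction is discontinuous at $\partial K$, so one must lift $\cT$ to the torus $\mathbb{T}^{2\tN}$, on which the induced map is continuous, and argue on a suitable fundamental domain. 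Moreover, Brouwer fixed points a priori fail to depend continuously (or even measurably) on parameters, so the measurable selection of $\vec\phi^{*,\omega}$ provided by \cite{BReid} -- and \emph{not} the mere existence of a fixed point for each fixed $\omega$ -- is the true probabilistic content needed here.

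\textbf{Properties of $\cN(\alpha)$.} Since $\vec\phi^{*,\omega}$ is $\wt\cF$-measurable while $(\phi_j^\omega)_{0<|j|\leq\tN}$ are i.i.d.~$\sim\sU([0,2\pi])$ and independent of $\wt\cF$, conditioning on $\wt\cF$ and invoking \Cref{rk:cond_exp} gives, for any $\cA\in\wt\cF$,
\[
\P(\cA\cap\cN(\alpha)) = \E\bigl[\mathbf{1}_\cA\,\P(\cN(\alpha)\mid\wt\cF)\bigr] = \P(\cA)\prod_{0<|j|\leq\tN}\frac{2\alpha}{2\pi} = \P(\cA)\Bigl(\frac{\alpha}{\pi}\Bigr)^{2\tN},
\]
where $2\alpha/(2\pi)$ is the circular measure of the $\alpha$-neighborhood of a fixed point in $[0,2\pi]$ with respect to the uniform law. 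Taking $\cA=\mathbb{\Omega}$ proves (a), and the general case is (c). Finally, (b) is a one-line triangle inequality: for $\omega\in\cN(\alpha)\cap\cB_0$, combining \eqref{eq:T_fixedp} with \eqref{eq:bilipschitz} gives
\[
|\phi_j^\omega-\cT_j(\omega,\vec\phi^\omega)| \leq |\phi_j^\omega-\phi_j^{*,\omega}|+|\cT_j(\omega,\vec\phi^{*,\omega})-\cT_j(\omega,\vec\phi^\omega)| \leq \alpha\bigl(1+C_2\tN\tR^2\varepsilon^{2(1-\delta)}e^{C_{\rm lip}|t|}\bigr),
\]
which is bounded by $2\alpha\tN e^{C_{\rm lip}\varepsilon^{-3(1-\delta)}}$ for $\varepsilon$ small and $|t|\leq\varepsilon^{-3(1-\delta)+\kappa}$, since $\tR^2\varepsilon^{2(1-\delta)}$ is small and the exponent $\varepsilon^{-3(1-\delta)+\kappa}$ is dominated by $\varepsilon^{-3(1-\delta)}$.
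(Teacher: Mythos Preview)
Your treatment of (i), (ii), (a), (b), (c) is essentially the paper's argument: the Lipschitz bound is assembled from \eqref{L_lips}, \eqref{lip.z}, \eqref{z.control} and the elementary data-difference estimate; measurability is a composition argument; the stability (b) is the triangle inequality plus the fixed-point equation and \eqref{eq:bilipschitz}; and the factorization (c) is conditioning on $\wt\cF$ using that $\vec\phi^{*,\omega}$ is $\wt\cF$-measurable while $\vec\phi^\omega$ is independent of $\wt\cF$.

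The gap is in (iii). Your proposed domain $K=[0,2\pi]^{2\tN}$ with $\cT$ taken mod $2\pi$ runs into exactly the discontinuity you flag, and your suggested fix --- lift to $\mathbb{T}^{2\tN}$ and ``argue on a suitable fundamental domain'' --- does not work: the torus has Euler characteristic zero and hence \emph{no} fixed-point property (a rotation has no fixed point), so neither Brouwer nor Lefschetz applies there. One cannot simply pass to a fundamental domain either, since the induced self-map of the closed cube is genuinely discontinuous at the boundary. The paper avoids this entirely by a different, simpler route: it first proves the a priori bound $|\cT_j(\omega,\vec\phi)|\leq |t|\sup_\tau|\cL_j(I(z(\tau)))|\leq C_2|t|\tN\leq \tN\eps^{-3}$ (using \eqref{HCS1} and \eqref{z.control}), which shows that $\cT(\omega,\cdot)$, \emph{without any modular reduction}, maps the large compact convex box $K=[-\tN\eps^{-3},\tN\eps^{-3}]^{2\tN}$ into itself. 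The random Brouwer theorem then applies directly in $\R^{2\tN}$, yielding a $\wt\cF$-measurable fixed point $\vec\phi^{*,\omega}\in K$. The subsequent comparison with $\vec\phi^\omega\in[0,2\pi)^{2\tN}$ in $\cN(\alpha)$ is then read on the circle (the data, and hence $\cT$, are $2\pi$-periodic in each $\phi_j$), which is why $g(y)=(\alpha/\pi)^{2\tN}$ independently of $y$.
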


\begin{rk}\label{rk:independence} 
The domain in \eqref{def:T} is restricted to  \eqref{259:1639} in order to guarantee 
that the first $2\tN$ uniformly distributed random phases $\vec{\phi}^{\omega}$ are independent from 
the random fixed point $\vec{\phi}^{\ast,\omega}$ (constructed in \Cref{lem:T_lips}), which is
generated
 only by the random variable $(\vec{R}^{\omega},\Pi_{>\tN}  \wt\eta_0^{\omega},\Pi_{>\tN} \wt\psi_0^{\omega})$. 
This is the key to ensure the crucial factorization property in  \Cref{lem:T_lips} ($c$).
\end{rk} 

Remarkably, the factorization property \eqref{splitting} holds even though 
$\Nc(\alpha)$ and $\wt\cF$ are not independent. 
This property will play a key role in establishing the sharp lower bound 
for the probability of rogue wave formation in 
\Cref{thm:lo_LDP_improved}. 
In particular, one has 
$\P(\Nc(\alpha)) = \left( \frac{\alpha}{\pi} \right)^{2\tN}$.
Since $\left( \frac{\alpha}{\pi} \right)^{2\tN} \to 0$ as $\tN \to +\infty$, 
it is important to choose $\tN = \tN(\varepsilon) \sim \varepsilon^{-\gamma}$ 
as in \eqref{N.choice} below, which ensures that the measure of the set 
$\Nc(\vec{\phi}^{\ast,\omega}; \alpha)$ does not vanish too rapidly as 
$\varepsilon \to 0^{+}$.

\begin{cor}{\bf (Phase synchronization for the approximate solution)} \label{cor:phase_sync}
Let $\tN \in \N$, $\delta\in (0,1)$ and $s \geq  N + \frac54$.
For  any $\tR >0$,  any 
$0 < \kappa \ll 1-\delta$, there exists $\eps_0 >0$ such that for any $\eps \in (0, \eps_0)$, any 
$|t|\leq  T_\eps:=\eps^{-3(1-\delta)+\kappa}$,
the following holds. 
Let $\alpha \in (0, \pi)$  such that 
\be\label{alpha.N}
2\, \alpha\, \tN  \leq  \eps \, e^{ -C_{\rm lip} \varepsilon^{-3(1-\delta)}}. 
\ee
Then, for any $\omega \in \Nc (\alpha)\cap \cB_0(\eps, \delta, \tR, s)$
(the set   
$ \Nc ( \alpha)  $ is defined in \eqref{eq:nhood_fixedp}),
 the first $2\tN$-phases $\vartheta_j(t; \omega) $
 of   the approximate wave profile
 $\eta_{\rm app2}(t,x)$ 
in \eqref{eq:eta2notg} 
\emph{synchronize} at $ x = 0 $, i.e.
$$
|\vartheta_j(t; \omega)| < \eps  \, ,
\quad 
\forall\,  0< |j| \leq \tN \, , \quad 
\vartheta_j(t;\omega) \mbox{ in }  \eqref{eq:theta} \, .
$$
Consequently 
\be\label{cor:eta.syn}
\sup_{x \in \T}\eta_{\rm app2}^{\omega}(t,x) 
\geq { \frac{\eps}{\sqrt{\pi}} }\, (1-\varepsilon)\, \sum_{0<|j|\leq \tN}   c_j R_j^{\omega} - C\tN^{-(s-1)} \tR\, \varepsilon^{1-\delta}.
\ee
\end{cor}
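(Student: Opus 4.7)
The plan is to read off the phase synchronization directly from the stability property \eqref{5.8} once the nonlinear phase $\vartheta_j(t;\omega)$ in \eqref{eq:theta} is identified with the fixed-point residual $\phi_j^\omega - \cT_j(\omega,\vec{\phi}^\omega)$, and then to turn this into the crest lower bound \eqref{cor:eta.syn} by splitting the Fourier expansion \eqref{eq:eta2notg} of $\eta_{\rm app2}^\omega(t,\cdot)$ at $x=0$ into low ($0<|j|\leq \tN$) and high ($|j|>\tN$) frequency pieces.

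\textbf{Step 1 (Identification of phases).} Using the consistency relation \eqref{eq:part_to_full}, for the actual uniform phases $\vec{\phi}^\omega=(\phi_j^\omega)_{0<|j|\leq \tN}$ one has $(\wt\eta_0^\omega,\wt\psi_0^\omega)(\cdot;\vec{\phi}^\omega)=(\eta_0^\omega,\psi_0^\omega)$, hence the flow used in \eqref{def:T2} agrees with the one used in \eqref{eq:theta}. Comparing the two definitions gives $\vartheta_j(t;\omega)=\phi_j^\omega-\cT_j(\omega,\vec{\phi}^\omega) \pmod{2\pi}$ for every $0<|j|\leq \tN$.

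\textbf{Step 2 (Phase synchronization).} Since $\omega\in\Nc(\alpha)\cap\cB_0$, the stability bound \eqref{5.8} of \Cref{lem:T_lips}(b) yields $|\phi_j^\omega-\cT_j(\omega,\vec{\phi}^\omega)|<2\alpha\tN\, e^{C_{\rm lip}\eps^{-3(1-\delta)}}$. The assumption \eqref{alpha.N} on $\alpha$ then gives $|\phi_j^\omega-\cT_j(\omega,\vec{\phi}^\omega)|<\eps$, and for $\eps$ small enough this value already lies in $(-\pi,\pi]$, so it is also the representative mod $2\pi$. Hence $|\vartheta_j(t;\omega)|<\eps$ for all $0<|j|\leq \tN$.

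\textbf{Step 3 (Low-frequency contribution to the crest).} Evaluate the pointwise lower bound $\sup_{x\in\T}\eta_{\rm app2}^\omega(t,x)\geq \eta_{\rm app2}^\omega(t,0)$ and split the sum in \eqref{eq:eta2notg} at $x=0$ into $|j|\leq \tN$ and $|j|>\tN$. On the low-frequency block, the synchronization from Step 2 gives $\cos(\vartheta_j(t;\omega))\geq \cos(\eps)\geq 1-\eps^2/2\geq 1-\eps$ for $\eps$ small, producing the main term $\tfrac{\eps(1-\eps)}{\sqrt{\pi}}\sum_{0<|j|\leq \tN} c_j R_j^\omega$.

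\textbf{Step 4 (High-frequency tail).} The high-frequency piece is bounded in absolute value by $\|\Pi_{>\tN}\eta_{\rm app2}^\omega(t,\cdot)\|_{L^\infty}$. Since $\Upxi(t,z(\cdot;\eta_0^\omega,\psi_0^\omega))$ in \eqref{etaapp2} is a unitary Fourier multiplier on every $\dot H^\sigma$, the modulus of each Fourier coefficient of $\eta_{\rm app2}^\omega(t,\cdot)$ equals that of $\sqrt{2}\Re\,|D|^{1/4}\zeta_0^\omega$, so $\|\eta_{\rm app2}^\omega(t,\cdot)\|_{H^{s-1}}$ is controlled by $\|(\eta_0^\omega,\psi_0^\omega)\|_{H^s_0\times \dot H^{s+\frac12}}\leq \tR\eps^{1-\delta}$ on $\cB_0$. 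A Cauchy–Schwarz estimate $\|\Pi_{>\tN}f\|_{L^\infty}\leq \sum_{|j|>\tN}|f_j|\leq C\tN^{-(s-1)+\frac12}\|f\|_{H^{s-1}}$ (for $s$ large enough, as assumed) then produces the remainder $C\tN^{-(s-1)}\tR\eps^{1-\delta}$ in \eqref{cor:eta.syn}. Combining Steps 3 and 4 yields the bound \eqref{cor:eta.syn}.

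All the work has been absorbed into \Cref{lem:T_lips}; no genuine obstacle remains here. The only care point is ensuring that $2\alpha\tN\, e^{C_{\rm lip}\eps^{-3(1-\delta)}}$ is strictly less than $\pi$ so that the mod-$2\pi$ representative of $\phi_j^\omega-\cT_j(\omega,\vec{\phi}^\omega)$ coincides with the quantity itself — this is exactly what \eqref{alpha.N} buys, provided $\eps$ is small enough.
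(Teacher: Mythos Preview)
Your argument follows the paper's proof essentially verbatim: the identification $\vartheta_j(t;\omega)=\phi_j^\omega-\cT_j(\omega,\vec\phi^\omega)$ via \eqref{eq:part_to_full}, the application of the stability bound \eqref{5.8} together with \eqref{alpha.N}, and the low/high frequency splitting of $\eta_{\rm app2}^\omega(t,0)$ are exactly what the paper does.

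There is one small slip in Step~4. Your Cauchy--Schwarz estimate correctly gives $\|\Pi_{>\tN}f\|_{L^\infty}\leq C\,\tN^{-(s-1)+\frac12}\|f\|_{H^{s-1}}$, but $\tN^{-(s-1)+\frac12}$ is \emph{larger} than $\tN^{-(s-1)}$, so it does not produce the remainder $C\,\tN^{-(s-1)}\tR\,\eps^{1-\delta}$ stated in \eqref{cor:eta.syn}. The fix is immediate: you actually control $\|\eta_{\rm app2}^\omega(t,\cdot)\|_{H^{s}}$ (not merely $H^{s-1}$), since $\Upxi$ is unitary and $\|\,|D|^{1/4}\zeta_0^\omega\|_{H^s}\lesssim\|\zeta_0^\omega\|_{H^{s+1/4}}\lesssim\|(\eta_0^\omega,\psi_0^\omega)\|_{H^s_0\times\dot H^{s+1/2}}$ by \eqref{equivalnorms}. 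Running Cauchy--Schwarz with $H^s$ instead yields $\tN^{-s+\frac12}\leq\tN^{-(s-1)}$. Alternatively, the paper uses Sobolev embedding $\|\Pi_{>\tN}\eta_{\rm app2}^\omega\|_{L^\infty}\lesssim\|\Pi_{>\tN}\zeta_0^\omega\|_{\dot H^1}$ directly, which gives the exponent $-(s-\tfrac34)\leq-(s-1)$.
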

\begin{proof}
By \Cref{lem:T_lips} $(b)$, for any 
$\omega\in \Nc (\alpha )\cap \cB_0(\eps,\delta, \tR, s)$ the phase synchronization property  \eqref{5.8} occurs.
Recalling the definitions \eqref{eq:theta}, \eqref{def:T2} and the identity \eqref{eq:part_to_full}, we have 
\be\label{piccaggio}
|\vartheta_j(t;\omega)| = \abs{\phi_j^\omega - \cT_j (\omega , \vec{\phi}^\omega)} \stackrel{\eqref{5.8}, \eqref{alpha.N}}{\leq} \eps 
\quad
\Rightarrow
\quad 
\cos\left(\vartheta_j(t;\omega)\right) \geq 1-\eps \ , \quad 
\forall 0<|j| \leq \tN \, . 
\ee
Then we bound
\begin{align}
\notag
\sup_{x\in\T} \eta_{\mathrm{app2}}(t,x)  \geq \eta_{\rm app2}(t,0) & \stackrel{\eqref{eq:eta2notg}}{=} 
   { \frac{\eps}{\sqrt{\pi}} }\sum_{0< |j|\leq \tN}   c_j R_j^{\omega}\,\cos\left(\vartheta_j(t;\omega)  \right) + \Pi_{>\tN} \eta_{\mathrm{app2}}(t,0) \\
    & \stackrel{\eqref{piccaggio}}{\geq}
    { \frac{\eps}{\sqrt{\pi}} } (1-\eps)\, \sum_{0< |j|\leq \tN}   c_j R_j^{\omega}+ \Pi_{>\tN} \eta_{\mathrm{app2}}(t,0) \, .
    \label{passiok}
\end{align}
To bound the last term, we note that for any $\omega\in \cB_0 $
\begin{align}
\big\| \Pi_{>\tN} \eta_{\mathrm{app2}}^{\omega}(t,\cdot) \big\|_{L^\infty} & 
\stackrel{\eqref{etaapp2}}{\lesssim} \norm{ \Pi_{>\tN} \zeta_0^{\omega}}_{\dot H^1}  \stackrel{\eqref{zeta0}}{\lesssim}\norm{ (\Pi_{>\tN} \eta_0^{\omega}, 
\Pi_{>\tN} \psi_0^{\omega})
}_{H_0^\frac34 \times \dot H^\frac54} \notag \\
& \lesssim \tN^{-(s- 1)} \, \norm{(\eta_0^{\omega},\psi_0^{\omega})}_{H_0^{s} \times \dot{H}^{s+\frac12}} 
\stackrel{\eqref{cB0p}}{\lesssim} \tN^{-(s- 1)}\, \tR \, \eps^{1-\delta} \,  .  \label{passiok2}
\end{align} 
Estimate \eqref{cor:eta.syn} follows by 
\eqref{passiok2} and \eqref{passiok} .
\end{proof}

 The rest of this subsection is devoted to the proof of \Cref{lem:T_lips}.

The fixed point will be found by  
the  random Brouwer fixed point theorem \ref{thm:Brouwer}. In order to verify its hypotheses, we first prove a deterministic Lipschitz property of the nonlinear phases, Lemma \ref{lem:bilipschitz}. Then, writing the 
nonlinear phase operator $\cT$ in \eqref{def:T}-\eqref{def:T2} as the composition of a continuous and a measurable map as in \eqref{eq:decomp_T},  we will deduce  properties ($i$) and ($ii$) in \Cref{lem:T_lips}.
\\[1mm]
{\bf Deterministic nonlinear phase operator 
$\widetilde \cT $.}
For any $s\in\R_{+}$, let 
\begin{equation}\label{eq:B0tilde}
\begin{aligned}
\cY (\eps,\delta, \tR, s, \tN) := \Big\lbrace 
v & 
= (\vec{r}, \wt\eta_{0,\tN},\wt\psi_{0,\tN} )  
\in  \R_{+}^{2\tN}\times  {H}^{s}_0 \times \dot{H}^{s+\frac12} \  | \ \Pi_{\le\tN} \, \wt\eta_{0,\tN}
=  \Pi_{\le\tN}\wt\psi_{0,\tN}  =0 \, , \\
 & 
\norm{v}_{\cY}:=
\Big( \sum_{0< |j|\leq \tN} |j|^{2s} c_j^2\,r_j^2 + \big\| (\wt\eta_{0,\tN},\wt\psi_{0,\tN}) \big\|_{{H}_0^{s} \times \dot{H}^{s+\frac12}}^2\Big)^{1/2}
  \leq \tR\, \varepsilon^{-\delta}\Big\rbrace  
  \end{aligned}
\end{equation}
where the constants $c_j$ are introduced in \eqref{ckdk}.
Then we define for any $0 < |j| \leq  \tN$ the deterministic map
\begin{equation}\label{wtT}
\begin{split}
{\wt\cT}_j & :\cY(\eps,\delta, \tR, s, \tN) \times \R^{2\tN}  \longrightarrow \R \, , \\
(v , \vec{\phi} ) & \longmapsto 
{\wt\cT}_j ( v, \vec{\phi}) = \int_0^t \cL_j\big(I(z(\tau; v, \vec{\phi})\big) \,\di\tau \, , 
\qquad 
z(\tau; v, \vec{\phi}):= \Upsilon^t ((\wt\eta_0,\wt\psi_0) (\cdot;v,\vec{\phi})) \, , 
\end{split}
\end{equation}
where $\Upsilon^t$ is introduced  in  \eqref{Upsilon} and, 
for any $(v, \vec{\phi})\in 
\cY (\eps,\delta, \tR, s, \tN)\times \R^{2\tN}$,
\begin{equation}\label{eq:trunc_init_data}
\begin{split}
\wt\eta_0 (x;v,\vec{\phi}) & := \frac{\eps}{\sqrt{\pi}} \sum_{0<|j|\leq\tN}c_j\, r_j \,\cos\left(\phi_j +  j x\right)+ \eps\,\wt\eta_{0,\tN} (x)\\
 \wt\psi_0 (x;v,\vec{\phi}) & := \frac{\eps}{\sqrt{\pi}}  \sum_{0<|j|\leq\tN}d_j\, r_j \,\sin\left(\phi_j+  j x\right)+ \eps\,\wt\psi_{0,\tN} (x)
\end{split}
\end{equation}
where $ d_j = c_j |j|^{-\frac12}$.

\begin{lem}[{\bf Deterministic nonlinear phase operator 
$\widetilde \cT $}]\label{lem:bilipschitz}
Let $\tN \in \N$. 
Fix  
$\delta \in (0,1)$ and  $s \geq  N + \frac54$. 
For any $\tR > 0$, any $0<\kappa \ll 1-\delta$, there exists $\eps_0 := \eps_0(\tR, \kappa)>0$ and 
$C_2 >0 $ 
such that for   any $\eps \in (0, \eps_0)$,  
for any $|t| \leq T_\eps = \eps^{-3(1-\delta) + \kappa}$, 
for any $0 < |j| \leq  \tN $, 
the  map $\wt\cT_j $ in \eqref{wtT}  is well defined,
\be\label{eq:badboundtilde0}
| \widetilde 
\cT_j (v , \vec{\phi} )|\leq |t| \, \sup_{\tau\in [0,t]} |\cL_j (I(z(\tau; v, \vec{\phi})))|  
\leq C_2 |t| \tN  \, , 
\ee
and satisfies the Lipschitz property,    
for any $ (v, \vec\phi) $, $ (\und{v}, \und{\vec \phi}) 
$ in 
$ \cY (\eps,\delta, \tR, s, \tN) \times \R^{2\tN}$, 
\begin{equation}\label{eq:bilipschitz2}
\abs{\widetilde{\cT}_j (v, \vec\phi) - \widetilde{\cT}_j(\und{v}, \und{\vec\phi})} \leq C_2 \tN\, \tR\,  \eps^{1-\delta} \,  e^{C_{\rm lip} |t|} \left(\tR \eps^{1-\delta}\,\| \vec \phi - \und{\vec \phi} \|_{\ell^\infty}
+
\eps\,\norm{v-\und{v}}_{\cY}  
 \right)  
\end{equation}
where $ C_{\rm lip} $ is the positive constant in 
\eqref{lip.z}.
\end{lem}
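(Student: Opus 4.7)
The strategy is to reduce the Lipschitz estimate on $\widetilde\cT_j$ successively: (Lipschitz of $\cL_j$ on actions) $\Rightarrow$ (Lipschitz of the actions $I(z)$ in $z$) $\Rightarrow$ (Lipschitz of the normal form flow $\Upsilon^\tau$ in the initial data) $\Rightarrow$ (Lipschitz of the partially randomized initial data in $(v, \vec\phi)$). Each of the first three steps is supplied by deterministic inputs already in hand: \eqref{L_lips}, an elementary Cauchy--Schwarz bound, and the quantitative Lipschitz estimate \eqref{lip.z} from \Cref{cor:LIP}. The last step is a direct computation on the explicit formula \eqref{eq:trunc_init_data}.

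First I verify well-definedness. A direct Parseval computation using $c_j = |j|^{1/2} d_j$ gives $\|(\wt\eta_0,\wt\psi_0)(\cdot;v,\vec\phi)\|_{H^s_0 \times \dot H^{s+1/2}} \lesssim \eps \|v\|_\cY \leq \tR \eps^{1-\delta}$, so (after possibly enlarging $\tR$ by a harmless absolute constant) the data lie in $\cB_0(\eps,\delta,\tR,s)$. By \Cref{cor:LIP}(i)--(ii), the map $\Upsilon^\tau$ is well defined for $|\tau|\leq T_\eps$ and the bound \eqref{z.control} holds. Since $\|I(z(\tau))\|_{\cF L^{2,1}} = \|z(\tau)\|_{\dot H^1}^2 \lesssim \tR^2 \eps^{2(1-\delta)}$, applying \eqref{L_lips} between $I(z(\tau))$ and $0$ and using $\cL_j(0)=|j|^{1/2}$ yields $|\cL_j(I(z(\tau)))| \leq |j|^{1/2} + C|j|\tR^2\eps^{2(1-\delta)} \leq 2|j|\leq 2\tN$ for $\eps$ small, which integrates to \eqref{eq:badboundtilde0}.

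For the Lipschitz estimate, set $z_i(\tau) := z(\tau;v_i,\vec\phi_i)$. Applying \eqref{L_lips} under the integral reduces the problem to bounding $\|I(z_1(\tau)) - I(z_2(\tau))\|_{\cF L^{2,1}}$. Writing $|z_{1,k}|^2 - |z_{2,k}|^2 = (|z_{1,k}|+|z_{2,k}|)(|z_{1,k}|-|z_{2,k}|)$ and applying Cauchy--Schwarz with weight split $k^2 = k^2 \cdot 1$ gives
\[
\|I(z_1) - I(z_2)\|_{\cF L^{2,1}} \leq (\|z_1\|_{\dot H^2}+\|z_2\|_{\dot H^2})\,\|z_1-z_2\|_{L^2} \lesssim \tR\eps^{1-\delta} \|z_1-z_2\|_{L^2},
\]
using \eqref{z.control}. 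Since $s \geq N+\tfrac54 \geq s_0+2$, the estimate \eqref{lip.z} from \Cref{cor:LIP}(iv) then transfers this to a bound by the $H^{s_0+2}_0 \times \dot H^{s_0+2}$-norm of the difference of initial data, times $e^{C_{\rm lip}|\tau|}$.

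The last step is a direct computation on \eqref{eq:trunc_init_data}. The high-frequency pieces contribute $\eps\|v-\und v\|_\cY$. The low-frequency piece splits as $c_j(r_j-\und r_j)\cos(\phi_j+jx)$ plus $c_j \und r_j(\cos(\phi_j+jx) - \cos(\und\phi_j+jx))$. The first term contributes at most $\eps\|v-\und v\|_\cY$. Using $|\cos a - \cos b| \leq |a-b|$, the second term is bounded in $H^{s_0+2}_0$ by
\[
\eps\|\vec\phi-\und{\vec\phi}\|_{\ell^\infty} \Bigl(\sum_{0<|j|\leq\tN} |j|^{2(s_0+2)} c_j^2 \und r_j^2\Bigr)^{1/2} \leq \eps\|\vec\phi-\und{\vec\phi}\|_{\ell^\infty}\|\und v\|_\cY \leq \tR\eps^{1-\delta}\|\vec\phi-\und{\vec\phi}\|_{\ell^\infty},
\]
and the analogous estimate for $\wt\psi_0$ follows from $d_j \leq c_j$. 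Integrating over $\tau\in[0,t]$ and absorbing the factor $|t|\leq T_\eps$ into $e^{C_{\rm lip}|t|}$ by slightly enlarging $C_{\rm lip}$ (using $|t|e^{C_{\rm lip}|t|} \leq e^{2C_{\rm lip}|t|}$ for $\eps$ small) produces \eqref{eq:bilipschitz2}.

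The main subtlety is the asymmetry between the modulus-variation contribution (bounded by $\eps\|v-\und v\|_\cY$) and the phase-variation contribution (bounded by $\tR\eps^{1-\delta}\|\vec\phi-\und{\vec\phi}\|_{\ell^\infty}$): the extra factor $\tR\eps^{-\delta}$ in the phase bound arises precisely from the size of $\und r_j$ in the $\cY$-norm. Tracking this carefully is what later dictates the scaling $\alpha \sim \eps\,\exp(-C_{\rm lip}\eps^{-3(1-\delta)})$ in the stability condition \eqref{alpha.N} and ultimately the threshold $\delta > 3/5$ in \Cref{thm:main}(ii).
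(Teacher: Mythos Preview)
Your proof is correct and follows essentially the same chain of reductions as the paper's: Lipschitz of $\cL_j$ via \eqref{L_lips}, then of the actions via Cauchy--Schwarz, then of the flow via \eqref{lip.z}, then of the initial data \eqref{eq:trunc_init_data} via direct computation. One cosmetic point: rather than bounding $\int_0^{|t|} e^{C_{\rm lip}\tau}\,d\tau$ by $|t|\,e^{C_{\rm lip}|t|}$ and then enlarging $C_{\rm lip}$, you can simply integrate to get $C_{\rm lip}^{-1}\bigl(e^{C_{\rm lip}|t|}-1\bigr)\leq C_{\rm lip}^{-1}e^{C_{\rm lip}|t|}$ and absorb $C_{\rm lip}^{-1}$ into $C_2$, so that $C_{\rm lip}$ remains exactly the constant from \eqref{lip.z} as the statement requires.
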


\begin{proof}
For any 
$(v, \vec \phi) 
\in \cY(\eps, \delta, \tR, s, \tN) \times \R^{2\tN}$ 
 (recall  \eqref{eq:B0tilde}), the initial data
$ (\wt\eta_0,\wt\psi_0) (\cdot;v,\vec{\phi}) $ in \eqref{eq:trunc_init_data} 
belong to the ball $\cB_0(\eps, \delta, \tR, s)$ in 
\eqref{cB0}. Thus by Theorem \ref{cor:LIP} 
the function $ z(\tau; v, \vec{\phi}) $  
in \eqref{wtT} is well defined for any $|t| \leq T_\varepsilon $, as well as the operator $\wt\cT_j $ in   \eqref{wtT},
and
the estimate  \eqref{eq:badboundtilde0} follows by 
$$
| \widetilde 
\cT_j (v , \vec{\phi} )|\stackrel{\eqref{wtT}}
\leq |t| \, \sup_{\tau\in [0,t]} |\cL_j (I(z(\tau; v, \vec{\phi})))| 
\stackrel{\eqref{HCS1},\eqref{z.control}} \lesssim \,|t | 
\, \tN (1 +  \tR^2 \varepsilon^{2-2\delta}) \leq C_1
|t| \tN  \, .
$$
For any  $(v, \vec \phi), (\und{v}, \vec{\und{\phi}}) $ in $ \cY(\eps, \delta, \tR, s, \tN) \times \R^{2\tN}$  denote $z(\tau):= \Upsilon^\tau ((\wt\eta_0,\wt\psi_0) (\cdot;v,\vec{\phi}))$ and 
$\und{z}(\tau):= \Upsilon^\tau ((\wt\eta_0,\wt\psi_0) (\cdot;\und{v},\vec{\und{\phi}}))$.
The  data in 
\eqref{eq:trunc_init_data} satisfy 
the Lipschitz property 
\be\label{iota.lip}
\big\| (\wt\eta_0,\wt\psi_0) (\cdot;v,\vec{\phi})  - (\wt\eta_0,\wt\psi_0) (\cdot;\und{v}, 
\vec{ \und{\phi}}) \big\|_{{H}_0^{s} \times \dot{H}^{s+\frac12}} \lesssim \eps\,\norm{v-\und{v}}_{\cY} + \tR\,\eps^{1-\delta}\,\| \vec \phi - \und{\vec \phi}\|_{\ell^\infty} \, .
\ee
The  Lipschitz property of $I\mapsto \cL_j(I)$ in \eqref{L_lips} and the one of $(\eta_0, \psi_0) \mapsto \Upsilon^\tau(\eta_0, \psi_0)$ in \Cref{cor:LIP} imply  
that, for any $|j| \leq \tN $, 
\begin{align*}
|\cL_j \big(I( z(\tau))\big) - & \cL_j \big(I(\und{z}(\tau))\big) |  \stackrel{\eqref{L_lips}}{\lesssim} \tN 
\norm{I(z(\tau))
-
I(\und{z}(\tau))}_{\Fc L^{2,1}}\\
& \stackrel{\eqref{actions}} \lesssim \tN 
\big( \norm{z(\tau)}_{H^2}+\norm{\und{z}(\tau)}_{H^2} \big)\, \norm{z(\tau)-\und{z}(\tau)}_{L^2}  \\
& \stackrel{\eqref{z.control},\eqref{lip.z}}{\lesssim} \tN \,\tR\, \varepsilon^{1-\delta}\, e^{C_{\rm lip} |\tau| }\,\norm{(\wt\eta_0,\wt\psi_0) (\cdot;v,\vec{\phi})- (\wt\eta_0,\wt\psi_0) (\cdot;\und{v},\vec{\und{\phi}})}_{H_0^{s_0+2}\times \dot{H}^{s_0+2}}\\
& \stackrel{\eqref{iota.lip}}{\lesssim}
 \tN \,\tR\, \varepsilon^{1-\delta}\, e^{C_{\rm lip} |\tau| }\, \left(  \tR\,\eps^{1-\delta}\,
\big\| \vec \phi - \und{\vec \phi} \big\|_{\ell^\infty} + 
\eps\,\norm{v-\und{v}}_{\cY} 
\right) \, .
\end{align*}
Integrating in $\tau$ in \eqref{wtT} one gets the Lipschitz estimate \eqref{eq:bilipschitz2}.
\end{proof}

We introduce the map
\begin{equation}
\begin{split}
\iota : \cB_0(\eps,\delta, \tR, s) \subset \mathbb{\Omega} & \longrightarrow \cY (\eps,\delta,\tR,s,\tN) \subset \R_{+}^{2\tN}\times  {H}^{s}_0 \times \dot{H}^{s+\frac12} \, , 
\quad
\omega  \longmapsto (\vec{R}^{\omega}, 
\eps^{-1} \Pi_{>\tN} \wt\eta_{0}^{\omega}, \eps^{-1} \Pi_{>\tN}  \wt\psi_{0}^{\omega}) \, , 
\end{split}
\end{equation}
where 
$(\wt\eta_{0}^{\omega},\wt\psi_{0}^{\omega})(x;\vec{\phi})$
and $\vec{R}^{\omega} :=(R_j^{\omega})_{0<|j|\leq \tN}$ 
are defined in  \eqref{eq:part_rand_init}.
Note that, by its very definition,  $\iota$ is measurable with respect to $\wt{\Fc} \cap \cB_0(\eps,\delta, \tR, s)$, since $\wt{\Fc} \cap \cB_0(\eps,\delta, \tR, s) = \sigma(\iota)$, cf. \eqref{259:1639}.
The map $\iota$ turns the deterministic datum \eqref{eq:trunc_init_data} into the random datum \eqref{eq:part_rand_init}, i.e.
\begin{equation}
(\wt\eta_0,\wt\psi_0) (x;\iota(\omega),\vec{\phi}) = (\wt\eta_0^{\omega},\wt\psi_0^{\omega})
(x;\vec{\phi}) \, . 
\end{equation}
As a result 
\begin{equation}\label{eq:decomp_T}
\cT_j (\omega, \vec{\phi}) = 
{\wt\cT}_j ( \iota(\omega), \vec{\phi})   \, , \quad \forall 0 < |j| \leq \tN \, .
\end{equation}

\medskip

We now prove the first two 
properties ($i$)-($ii$) stated in \Cref{lem:T_lips}.

\smallskip

($i$) It follows by \eqref{eq:decomp_T} and  \eqref{eq:bilipschitz2} 
with $v=\und{v}=\iota (\omega)$.

\smallskip

($ii$) In view of 
\eqref{eq:decomp_T} it suffices to prove the measurability  of  $\wt{\cT}_j (\cdot, \vec{\phi})$ and $\iota$ separately. The operator $\wt{\cT}_j (\cdot, \vec{\phi})$ is Lipschitz by \Cref{lem:bilipschitz}, thus continuous. Moreover, the map $\iota$ is weakly measurable. By the Pettis measurability theorem\footnote{
 A mapping from a measure space to a separable Banach space is weakly measurable if and only if it is strongly measurable.
} \cite{Pettis}, and by the separability of the space $H^s$, this mapping is also strongly measurable.
As a result, for each fixed $\vec{\phi}$, the map $\cT_j (\cdot, \vec{\phi})\colon 
 ( \cB_0(\eps,\delta, \tR, s), \wt\cF )
\to (\R^{2\tN}, \cB(\R^{2\tN}) )$ is also measurable.

\medskip

($iii$) We now prove the 
existence of a random 
fixed point theorem of $ \cT $. In view of 
\eqref{eq:decomp_T} and  \eqref{eq:badboundtilde0},  
provided $\eps$ is sufficiently small, we have
\be\label{eq:badbound}
\max_{0 < |j| \leq \tN}|\cT_j (\omega , \vec{\phi} )|\leq 
\tN \, \varepsilon^{-3} \, , \quad \forall 
|t| \leq T_\varepsilon < \varepsilon^{-3}\, . 
\ee 
Define the set $K:=[-  \tN \, \varepsilon^{-3}, \tN \, \varepsilon^{-3}]^{2\tN}$.  
For each $\omega$, the bound 
\eqref{eq:badbound} guarantees that $\cT(\omega,\cdot):K\rightarrow K$   and  \Cref{lem:T_lips}-($i$) that 
each $\cT_j (\omega,\cdot):K\rightarrow K$ is continuous. By \Cref{lem:T_lips}-($ii$), each $\cT_j (\cdot,\phi):(\cB_0(\eps,\delta, \tR, s),\wt{\Fc} \cap \cB_0(\eps,\delta, \tR, s))\rightarrow \R^{2\tN}$ is  a random operator.
Then   the random Brouwer fixed point \Cref{thm:Brouwer} guarantees the existence of a random variable $\vec{\phi}^{\ast,\omega}$ in  $(\cB_0(\eps,\delta, \tR, s),\wt{\Fc} \cap \cB_0(\eps,\delta, \tR, s),\wt{\P})$ satisfying \eqref{eq:T_fixedp}.

Next, we extend the random variable $\vec{\phi}^{\ast,\omega}$ to the full probability space $(\mathbb{\Omega}, \widetilde{\mathcal{F}},\widetilde{\P})$ by setting $\vec{\phi}^{\ast,\omega} :=0$ for any $\omega\in\mathbb{\Omega}\setminus\cB_0(\eps,\delta, \tR, s)$. Note that, with this definition, $\vec{\phi}^{\ast,\omega}$ is $\wt {\mathcal{F}}$-measurable.

\medskip

Finally, we prove the properties $(a)$-$(c)$
of the fixed point  in \Cref{lem:T_lips}.

\smallskip

($a$) The map $F: \omega\mapsto (\vec{\phi}^{\omega},\vec{\phi}^{\ast,\omega})$ is $\cF$-measurable and since 
$G: \R^{\tN}\times\R^{\tN} \to \R_+ $,   
$ (a,b) \mapsto  \norm{a-b}_{\ell^{\infty}}$, 
is continuous, the composed map $G\circ F$ is measurable, as well as  $(G\circ F)^{-1} ([0,r))$ for any $r>0$. Consequently, the set $\Nc ( \alpha)$ in \eqref{eq:nhood_fixedp} is measurable. Its measure is $(2\alpha)^{2\tN}$ as follows by item $(c)$ with $\cA = \mathbb{\Omega}$.

\smallskip

($b$) For any $\omega \in \Nc (\alpha)\cap \cB_0(\eps,\delta, \tR, s)$, for any 
$ |t| \leq  \eps^{-3(1-\delta) + \kappa} $, we have, for any $ 0 < |j| \leq \tN $,  
\[
\begin{split}
\big| \phi_j^{\omega} - \cT_j (\omega , \vec{\phi}^{\omega})\big | & \stackrel{\eqref{eq:T_fixedp}}{\leq} 
\big | \phi_j -  \phi_j^{\ast,\omega} \big | + \big| \cT_j (\omega, \vec{\phi}^{\ast,\omega}) - \cT_j (\omega , \vec{\phi}^{\omega})\big| \\
&  \stackrel{\eqref{eq:nhood_fixedp},\eqref{eq:bilipschitz}}{\leq} \alpha +  \, C_2 \, \tN
\tR^2 \, \varepsilon^{2(1-\delta)}\, e^{C_{\mathrm{lip}} |t|} \,
\big\| \vec{\phi}^{\ast,\omega}-\vec{\phi}^{\omega}\big\|_{\ell^{\infty}} 
\stackrel{\eqref{eq:nhood_fixedp}}{\leq}  2\alpha\, \, \tN\,e^{C_{\mathrm{lip}} \varepsilon^{-3(1-\delta)}}
\end{split}
\]
for   $ \eps $ small.

\smallskip

($c$) Using the standard  properties of conditional expectation in \Cref{rk:cond_exp},
for any  $\cA \in \widetilde \cF$ we have 
\be\label{259:1648}
\P  \left(
\cA \cap \Nc (\alpha)\right) = \E \left[\1_\cA \, \1_{\Nc (\alpha)} \right] = 
\E \left[ \E\left[ \1_\cA \, \1_{\Nc (\alpha)} \, | \widetilde \cF \right]\right] 
=  \E \left[ \1_\cA \, \E\left[  \1_{\Nc (\alpha)} \, | \widetilde \cF \right]\right] \, . 
\ee
Consider now the random variable
$\E \big[ \1_{\Nc (\alpha)} \, | \widetilde \cF \big]$. 
We wish to compute it explicitly using 
Lemma \ref{thm:ricarsnick} with  $\cG = \widetilde \cF$, $X =\vec{\phi}^{\omega}$, $Y =\vec{\phi}^{\ast,\omega}$ and $f\colon \R^{2\tN}\times \R^{2\tN} \to \R $ is the bounded function defined by
$$
f(x,y) := \1_{\widetilde \cN(y;\alpha)}(x) \, , \quad 
\widetilde{\Nc} (y;\alpha) := \left\lbrace \vec\phi  \in\R^{2\tN} \ \big | \ \|\vec\phi - y\|_{\ell^\infty}< \alpha \right\rbrace \, , \quad 
\forall y \in \tR^{2 \tN} \, .
$$
The assumptions are met since 
$\widetilde \cF \subset \cF$,  $Y= \vec{\phi}^{\ast,\omega}$ is $\widetilde \cF$-measurable according to \Cref{lem:T_lips}-($a$), and  
 the random variable  $X= \vec{\phi}^{\omega}$ is  independent from $\widetilde \cF$, see
 \Cref{rk:independence}. 
Then \Cref{thm:ricarsnick} yields
 $$
\E\left[  \1_{\Nc (\alpha)} \, | \widetilde \cF \right]
= g(\vec{\phi}^{\ast,\omega})
\quad
\mbox{ with }
\quad 
 g(y):= \E\left[ f( \vec{\phi}^{\omega}, y)] \right] = \left(\frac{ \alpha}{\pi}\right)^{2\tN} \ . $$
Plugging into \eqref{259:1648} proves \eqref{splitting}.
The proof of \Cref{lem:T_lips} is complete. 

\subsection{The lower bound}

We  now prove the lower bound for \eqref{eq:LDP} in \Cref{thm:main} $(ii)$.

\begin{prop}[{\bf LDP: Lower-bound on optimal timescale}]\label{thm:lo_LDP_improved}
There exists $\und{s}>0$ 
such that
for any  $\lambda_0 >0$, $\delta \in (\frac35,1)$, and $s \geq \und{s}$ there exists $\tR_0  >0$ and  for any $\tR\geq \tR_0$,   any $0<\kappa \ll 1-\delta$, the following holds true. There exists $\eps_0 > 0 $ such that for   any $\eps \in (0, \eps_0)$, 
for any random initial data $(\eta_0^\omega, \psi_0^\omega) \in  \cB_0(\eps, \delta, \tR , s)$ in \eqref{cB0}, 
the solution $(\eta(t,x), \psi(t,x))$ of the water waves equations \eqref{ww} exists up to times $|t| \leq T_\eps:= \eps^{-3(1-\delta)+\kappa}$, and satisfies 
the lower bound 
\begin{equation}\label{eq:lo_LDP_improved}
 \liminf_{\eps\to 0+} \eps^{2\delta} \log \P \left(
\left\{ \sup_{x\in\T} \eta (t,x) \geq \lambda_0 \varepsilon^{1-\delta}\right\} \cap \cB_0(\eps,\delta, \tR, s) \right) \geq  -\frac{\lambda_0^2}{2 \bsi^2} \, ,
\end{equation}
with $\bsi$ in \eqref{upsigma}.
\end{prop}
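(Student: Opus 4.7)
The plan is to construct, for each fixed $|t|\le T_\eps$, an event of the form $\cE\cap\cN(\alpha)$ on which the rogue wave threshold is certainly achieved, and then estimate its probability using the factorization \eqref{splitting} together with the Rayleigh LDP of \Cref{thm:LDP_Rayleigh}. First, by \Cref{thm:approx2}, on $\cB_0$ the error $\|\eta(t,\cdot)-\eta_{\rm app2}(t,\cdot)\|_{L^\infty}$ is $\cO(\eps^{1-\delta+\kappa})$, so it suffices to prove the liminf for $\sup_x \eta_{\rm app2}(t,x)$ exceeding $\lambda_0 \eps^{1-\delta}(1+o_\eps(1))$. I would then choose $\tN:=\lfloor \eps^{-\gamma}\rfloor$ with $0<\gamma<5\delta-3$ (available precisely because $\delta>3/5$) and pick $\alpha:=\eps\, e^{-C_{\rm lip}\eps^{-3(1-\delta)}}/(2\tN)$, with $C_{\rm lip}$ as in \Cref{cor:LIP}-$(iv)$, so that the stability condition \eqref{alpha.N} is satisfied. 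This triggers \Cref{cor:phase_sync}, and the lower bound \eqref{cor:eta.syn} reduces the event to the purely Rayleigh-type inequality
\[
\sum_{0<|j|\le \tN} c_j R_j^\omega \;\ge\; \sqrt{\pi}\,\lambda_0\,\eps^{-\delta}\,(1+o_\eps(1)),
\]
where the $o_\eps(1)$ absorbs both the factor $(1-\eps)$ and the tail error $C\tN^{-(s-1)}\tR$ (negligible for $s$ large).

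Next, I would exploit the independence structure encoded in \Cref{lem:T_lips}-$(c)$. Denoting by $\cE$ the above Rayleigh inequality intersected with $\cB_0$, this event is measurable with respect to the sub-$\sigma$-algebra $\widetilde\cF$ of \eqref{259:1639}, since both the sum $\sum_{0<|j|\le\tN}c_j R_j^\omega$ and the ball condition depend only on the moduli $\vec R^\omega$ and on $(\Pi_{>\tN}\wt\eta_0^\omega,\Pi_{>\tN}\wt\psi_0^\omega)$ via \eqref{1909:1520}. Hence the factorization property yields
\[
\P\bigl(\cE\cap\cN(\alpha)\bigr) \;=\; \P(\cE)\,\Bigl(\tfrac{\alpha}{\pi}\Bigr)^{2\tN}.
\]
By \Cref{lem:negligible}, once $\tR>2\lambda_0\|\vec c\|_{h^s}\bsi^{-1}$, $\P(\cB_0^c)$ is exponentially smaller than the Rayleigh tail, so removing the $\cB_0$ restriction inside $\cE$ changes $\P(\cE)$ only by a negligible additive correction. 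Applying \Cref{thm:LDP_Rayleigh} (or its routine adaptation to the truncated sequence, whose variance $\tfrac{1}{2\pi}\sum_{0<|j|\le\tN}c_j^2$ converges to $\bsi^2$ as $\tN\to\infty$ by the exponential decay of the $c_j$) gives $\log\P(\cE)\ge -\tfrac{\lambda_0^2\,\eps^{-2\delta}}{2\bsi^2}(1+o_\eps(1))$.

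Assembling the three ingredients, $\eps^{2\delta}\log\P$ of the rogue wave event is bounded below by
\[
-\frac{\lambda_0^2}{2\bsi^2}\,(1+o_\eps(1)) \;+\; \eps^{2\delta}\cdot 2\tN\,\log\!\Bigl(\tfrac{\alpha}{\pi}\Bigr).
\]
With the choices above, the second term is of order $\eps^{2\delta-3(1-\delta)-\gamma}=\eps^{5\delta-3-\gamma}$, which tends to $0^-$ since $\gamma<5\delta-3$, thereby yielding the desired liminf. This balancing constitutes the main obstacle and the precise reason the hypothesis $\delta>3/5$ appears: the deterministic stability \eqref{alpha.N} forces $\alpha$ to be exponentially small in $\eps^{-3(1-\delta)}$, so confining the uniform phases $\vec\phi^\omega$ to $\cN(\alpha)$ costs $\sim\exp(-\tN\,\eps^{-3(1-\delta)})$, while the Rayleigh tail pays only $\sim\exp(-\eps^{-2\delta})$. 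These two scales are reconcilable only when one can select a truncation $\tN$ growing slower than $\eps^{-(5\delta-3)}$, which is exactly the regime $\delta>3/5$. Unlike Section \ref{sec:52}, where the Gaussian-preserving approximation $\eta_{\rm app}$ carried the statistical information, here we have no such Gaussian structure, and its role is replaced by the random fixed point of \Cref{lem:T_lips} together with the factorization \eqref{splitting}.
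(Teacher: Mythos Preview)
Your proposal is correct and follows essentially the same approach as the paper's proof: the same choice of $\tN=\lfloor\eps^{-\gamma}\rfloor$ with $0<\gamma<5\delta-3$, the same $\alpha$, the reduction to the Rayleigh event via \Cref{cor:phase_sync}, the factorization \eqref{splitting}, and the same balancing computation showing the phase-confinement cost is $o(\eps^{-2\delta})$. The only cosmetic difference is that the paper handles the truncated Rayleigh sum by first bounding below with a fixed cutoff $\tM\le\tN_\eps$, applying \Cref{thm:LDP_Rayleigh}, and then sending $\tM\to\infty$, which is precisely the ``routine adaptation'' you allude to.
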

\begin{proof}
 Let $s \geq \und{s} := N+\frac54$ (with $ N $ given by \Cref{BNFtheorem}) and $\tR \geq \tR_0$ with  $\tR_0$ defined  in \eqref{R0}.
For any $ \varepsilon > 0 $ 
we choose 
\begin{equation}\label{N.choice}
\tN := \tN_\eps :=  \lfloor\eps^{-\gamma} \rfloor  \in \N \ ,  \quad 0<\gamma<5\delta-3 \, , 
\end{equation}
which is possible since $\delta>3/5$. 
By \Cref{thm:approx2}, 
provided $\eps$ is small enough, 
 any initial datum $(\eta_0^\omega, \psi_0^\omega) \in \cB_0(\eps, \delta, \tR, s)\equiv \cB_0$  gives rise to a  water waves solution defined 
 for any  $|t| \leq T_\eps $, whose wave profile 
is uniformly approximated by 
$ \eta _{\rm app2}(t,x)$ defined in  \eqref{etaapp2}, as stated in \eqref{upxi_error2}. 
By  the triangle inequality 
\be\label{sandwich2}
\sup_{x \in \T} \eta(t,x) \geq 
\sup_{x\in \T} \eta_{\rm app2}(t,x) - \norm{\eta(t)-\eta_{\mathrm{app2}}(t)}_{L^\infty(\T)}  
\ee
and $ \eta(t,x) $ becomes large
if the approximate profile 
$\eta_{\rm app2}(t,x)$ does. 
We define  the event
$$
\Cc_\eps  := \left\{ \norm{\eta(t)-\eta_{\mathrm{app2}}(t)}_{L^\infty(\T)} \geq \und{\tC}\,\lambda_0  \tR^4 \, \,  \eps^{1-\delta + \kappa} \right\} \, , 
\quad
\und{\tC}:= 2 {\tC_2} \lambda_0^{-1} \quad \text{with} 
\quad \tC_2 \mbox{ in } \eqref{upxi_error2} \, .
$$
By \Cref{thm:approx2},
for any $|t|\leq T_\eps  $ we have the inclusion
$ \cB_0  \subseteq  \Cc_\eps^c $, hence
$\cB_0 = \cB_0 \cap \cC_\eps^c$, and 
by \eqref{sandwich2}
\begin{align}
\notag
\P &\Big(
\Big\{ \sup_{x\in\T} \eta (t,x) \geq \lambda_0 \varepsilon^{1-\delta}\Big\} \cap \cB_0  \Big)\\
\notag
&  \geq 
 \P \Big(
\Big\{ \sup_{x\in\T} \eta_{\rm app2} (t,x) 
-\norm{\eta(t) - 
\eta_{\rm app2}(t)}_{L^\infty(\T)}
\geq \lambda_0 \varepsilon^{1-\delta}\Big\} \cap \cB_0  
\cap \cC_\eps^c 
\Big)
\\
&\geq \P \Big(
\Big\{ \sup_{x\in\T} \eta_{\mathrm{app2}} (t,x) 
\geq 
\lambda_0\varepsilon^{1-\delta}
 (1+\und{\tC}\tR^4\,\varepsilon^{\kappa}) 
\Big\} \cap \cB_0 
\Big) \label{1909:1645-1} \\
&
\geq  \P \Big(
\Big\{
\sup_{x\in\T} \eta_{\mathrm{app2}} (t,x) 
\geq 
\lambda_0\varepsilon^{1-\delta}
 (1+\und{\tC}\tR^4\,\varepsilon^{\kappa}) 
\Big\} 
\cap \cB_0   \cap \Nc (\alpha) \Big)  \label{1909:1645}
\end{align} 
where
$ \Nc (\alpha)$ is the measurable 
set in \eqref{eq:nhood_fixedp} with 
\be\label{choice.alpha}
\alpha:= (2\tN_\eps)^{-1} \,  \eps\,  e^{ -C_{\rm lip} \varepsilon^{-3(1-\delta)}}    \ . 
\ee
With this choice of $\alpha$, condition \eqref{alpha.N} is fulfilled and then we apply 
\Cref{cor:phase_sync}  (with $\tN_\eps $ in \eqref{N.choice})
to deduce that
\begin{align}\notag
\eqref{1909:1645} 
& \stackrel{\eqref{cor:eta.syn}}{ \geq }
 \P \Big(
\Big\{\sum_{0<|j| \leq  \tN_\eps}   c_j R_j^{\omega}\, \geq 
\sqrt{\pi}\,\lambda_0\frac{\varepsilon^{-\delta}}{1-\eps} (1+\und{\tC}\tR^4\,\varepsilon^{\kappa}+C \lambda_0^{-1} \eps^{\gamma(s- 1)}\tR )\Big\} \cap \cB_0  \cap \Nc (\alpha)\Big) \\
\label{2509:1636}
&  \geq
 \P \Big( 
 \underbrace{
\Big\{\sum_{0<|j| \leq  \tN_\eps}   c_j R_j^{\omega}\, \geq 
\,\sqrt{\pi}\, \lambda_0
\varepsilon^{-\delta} (1+ o_\varepsilon(1) ) \Big\} \cap \cB_0
}_{=: \cA}  \cap \, \Nc (\alpha)\Big)  \, . 
\end{align}
The set $\cA$ depends only on the random variables $\big(\vec{R}^{\omega},\Pi_{>\tN}  \wt\eta_0^{\omega},\Pi_{>\tN} \wt\psi_0^{\omega}\big)$ and  therefore is 
$\wt\cF$-measurable (recall \eqref{259:1639}). In addition we claim that 
\be\label{prob.cA} 
\and \P(\cA) \geq  \,\exp\Big( -\frac{\lambda_0^2\, \eps^{-2\delta}}{2\bsi^2} +o(\varepsilon^{-2\delta})\Big) \, . 
\ee
Assuming this lower bound for the moment,  let us conclude the proof of \eqref{eq:lo_LDP_improved}.
We  give a lower bound on the right-hand side of \eqref{2509:1636}. 
By \eqref{splitting}
we have
$ \P(\cA \cap \Nc (\alpha))
 = 
 \left(\frac{ \alpha}{\pi}\right)^{2\tN_\eps} \, \P(\cA)$. 
By the choice of $\tN_\eps $ in \eqref{N.choice}, of $\alpha$ in \eqref{choice.alpha} and that    $0<\gamma< 5 \delta -3$ (cf. \eqref{N.choice}) we have 
\be\label{1909:1828}
0\leq -2\tN_\eps \log(\alpha/\pi) = 
\cO (\eps^{-3(1-\delta)-\gamma})
=o(\eps^{-2\delta}) 
\quad
\Rightarrow
\quad 
\left(\frac{ \alpha}{\pi}\right)^{2\tN_\eps} = \exp( - o(\eps^{-2\delta})) \, .
\ee
Hence  by  \eqref{1909:1828} and \eqref{prob.cA}  we deduce that 
\begin{align}
\label{1909:1726}
\eqref{1909:1645} \geq  \P(\cA \cap \Nc (\alpha)) 
& { \geq}\exp \Big( -\frac{\lambda_0^2\, \eps^{-2\delta}}{2\bsi^2} - o(\varepsilon^{-2\delta})
\Big) \, . 
\end{align}
Finally we conclude by  \eqref{2509:1636}
 and \eqref{1909:1726}, that 
\begin{align*}
\P \Big(
\Big\{ \sup_{x\in\T} \eta (t,x) \geq \lambda_0 \varepsilon^{1-\delta}\Big\} \cap \cB_0  \Big)
&  \geq
\exp\Big( -\frac{\lambda_0^2\,\eps^{-2\delta}}{2\bsi^2} - o(\varepsilon^{-2\delta})
\Big) 
\end{align*}
and the desired bound \eqref{eq:lo_LDP_improved} follows.
\\[1mm]
{\sc Proof of  \eqref{prob.cA}:}
By the very definition of $\cA$ in \eqref{2509:1636}, we estimate 
\be\label{finaste}
\P(\cA)  \geq \P\Big( \Big\{ 
\sum_{0<|j|\leq \tN_\eps }   c_j R_j^{\omega}\, \geq \sqrt{\pi}\lambda_0\, 
\varepsilon^{-\delta} 
(1+o_\varepsilon (1) ) \Big\}  
\Big) - \P(\cB_0^c )  
\ee
and we bound the two probabilities separately. 

For any $\tM \in \N$, for any  $\eps $ small enough we have 
$  \tN_\eps \geq \tM $, and then 
\begin{align*} 
 \P  \Big( \Big\{ \sum_{0<|j|\leq \tN_\eps}   c_j R_j^{\omega} \geq \sqrt{\pi}\lambda_0\,
\varepsilon^{-\delta} 
(1+ o_\varepsilon(1) )\,  \Big\}  
\Big) 
\geq
 \P  \Big( \Big\{ \sum_{0<|j|\leq \tM}   c_j R_j^{\omega} \geq \sqrt{\pi}\lambda_0\,
\varepsilon^{-\delta} 
(1+ o_\varepsilon(1) )\,  \Big\}  
\Big)  \, . 
\end{align*}
We apply  Lemma \ref{thm:LDP_Rayleigh} to the right hand side with a sequence $(c_j)_{j \in \Z\setminus \{0\}}$ with $c_j =0$ for any $|j| > \tM$, getting 
\begin{align*} \label{269:1735}
\liminf_{\eps \to 0} \eps^{2\delta}\log \P  \Big( \Big\{ \sum_{0<|j|\leq \tN_\eps}   c_j R_j^{\omega} \geq \sqrt{\pi}\lambda_0\,
\varepsilon^{-\delta} 
(1+ o_\varepsilon(1) )
\,  \Big\}  
\Big) 
\geq  -\frac{\pi\,\lambda_0^2}{ \sum_{0 < |j| \leq \tM} c_j^2} \, ,
\quad \forall \tM \in \N \, . 
\end{align*}
Then  taking $\tM \to \infty$ 
and recalling \eqref{upsigma} yields the lower bound
\be\label{2909:1916}
\liminf_{\eps \to 0} \eps^{2\delta}\log \P  \Big( \Big\{ \sum_{0<|j|\leq \tN_\eps}   c_j R_j^{\omega} \geq \sqrt{\pi}\lambda_0 (1+ o_\varepsilon(1) ) \,
\varepsilon^{-\delta} 
\,  \Big\}  
\Big) 
\geq  -\frac{2\,\lambda_0^2}{ \bsi^2}    \ . 
\ee
By \eqref{terrone}, the probability $\P(\cB_0^c ) 
\leq - 
\exp\left( -\frac{\tR^2\,\varepsilon^{-2\delta}}{4\norm{\vec c }_{h^s}^2}+1 \right)  $. 
Taking $\tR \geq \tR_0 >  2 \lambda_0 \| \vec c \|_{h^s}  \bsi^{-1}$ (cf. \eqref{R0}),
shows that $\P(\cB_0^c) \leq \frac12\P\Big( \Big\{ 
\sum_{0<|j|\leq \tN_\eps}   c_j R_j^{\omega}\, \geq \sqrt{\pi}\lambda_0\, 
\varepsilon^{-\delta} 
(1+o_\varepsilon (1) ) \Big\}  
\Big) $ for $\eps$ small enough. 
In view of  \eqref{finaste}, \eqref{2909:1916}, the claimed estimate    \eqref{prob.cA} follows.
\end{proof}

\subsection{Dispersive focusing }\label{sec:allr}

In this section we prove \Cref{thm:df}. 
The upper bound in \eqref{0210:1548} follows by \Cref{thm:main_intro} since $\P (
\mathfrak{R}^t(\lambda_0 \varepsilon^{1-\delta}) \cap 
\mathfrak{P}^t(\tM,\eps^{\frac{\kappa}{5}}))\leq \P (
\mathfrak{R}^t(\lambda_0 \varepsilon^{1-\delta}))$.
To establish the lower bound in \eqref{0210:1548}, we proceed as follows. 
Approximating again $\eta(t,x)$ with $\eta_{\rm app2}(t,x)$ (defined in \eqref{etaapp2}) as we did to obtain the inequality in \eqref{1909:1645-1}, we get 
\be\label{0210:1451}
\P \Big(
\mathfrak{R}^t(\lambda_0 \varepsilon^{1-\delta}) \cap 
\mathfrak{P}^t(\tM,\eps^{\frac{\kappa}{5}})
\Big) \geq \P \Big(
\mathcal{A}_2^t\left(\lambda_0 \varepsilon^{1-\delta}(1+o_{\eps} (1))\right) \cap 
\mathfrak{P}^t(\tM,\eps^{\frac{\kappa}{5}})\cap \cB_0\Big) 
\ee
where  $\cB_0 := \cB_0(\eps, \delta, \tR, s)$ as in \eqref{cB0}-\eqref{cB0p}, and
\begin{equation}\label{defA2app}
\mathcal{A}_2^t(\lambda):=\Big\{ \omega\in\mathbb{\Omega} \ \big |\ \sup_{x\in\T} \eta_{\mathrm{app2}}(t,x) \geq \lambda \Big\}.
\end{equation}
Next we define the event 
\be\label{evR}
\mathcal{R}(\tM,\varepsilon) := \Big\{ \omega\in\mathbb{\Omega} \mid  R_k^{\omega}\geq \eps^{-\delta+\frac{\kappa}{2}}\, , 
\quad \forall 0<|k|\leq\tM \Big\} \, .
\ee
We  claim that

\smallskip

($i$) $\mathcal{R}(\tM,\varepsilon)^c$ is negligible for rogue waves, i.e.
\begin{equation}\label{0210:1452}
\limsup_{\eps\to 0^{+}} \varepsilon^{2\delta} \log \P \Big( \mathcal{A}_2^t\big(
\lambda_0 \varepsilon^{1-\delta}(1+o_{\eps} (1)) \big) \cap \mathcal{R}(\tM,\varepsilon)^c \Big) < - \frac{\lambda_0^2}{2\bsi^2} \, .
\end{equation}

($ii$) Within the set $ \cN(\alpha) $
defined in \eqref{eq:nhood_fixedp} and with $
\alpha $ in \eqref{choice.alpha}, 
a lower bound on the moduli $ R_j^\omega $
implies the phases are synchronized, i.e. 
\begin{equation}\label{0210:1455}
 \mathfrak{P}^t(\tM,\eps^{\frac{\kappa}{5}})\cap \cB_0 \supset 
\mathcal{N} (\alpha) \cap 
\mathcal{R}(\tM,\varepsilon)\cap \cB_0 \, . 
\end{equation}
Assuming ($i$)-($ii$), we can prove the sharp lower bound \eqref{0210:1548}. Indeed
\begin{align*}
    \eqref{0210:1451} & \stackrel{\eqref{0210:1455}}{\geq }
    \P \Big(\mathcal{A}_2^t\left(\lambda_0 \varepsilon^{1-\delta}(1+o_\eps (1))\right) \cap \mathcal{N} (\alpha) \cap 
\mathcal{R}(\tM,\varepsilon) \cap \cB_0\Big)  \\
& \geq  \P \Big(\mathcal{A}_2^t\left( \lambda_0 \varepsilon^{1-\delta}(1+o_\eps (1))\right) \cap \mathcal{N} (\alpha)\cap \cB_0  \Big) - 
\P\Big(\mathcal{A}_2^t\left(\lambda_0 \varepsilon^{1-\delta}(1+o_{\eps} (1))\right) \cap \mathcal{R}(\tM,\varepsilon)^c
\Big) \\
& \stackrel{\eqref{1909:1726}}{\geq}
\exp \Big( -\frac{\lambda_0^2\, \eps^{-2\delta}}{2\bsi^2} - o(\varepsilon^{-2\delta})
\Big) - \P\left(\mathcal{A}_2^t\left((\lambda_0 \varepsilon^{1-\delta}(1+o_{\eps} (1))\right) \cap \mathcal{R}(\tM,\varepsilon)^c
\right)\\
& \stackrel{\eqref{0210:1452}}{\geq} \exp \Big( -\frac{\lambda_0^2\, \eps^{-2\delta}}{2\bsi^2} - o(\varepsilon^{-2\delta})
\Big)
\end{align*}
proving \eqref{0210:1548}. We now prove the claims ($i$) and ($ii$).
\smallskip

($i$) {\sc  Proof of \eqref{0210:1452}.}
Recalling \eqref{evR} 
we have 
\begin{align}
\P \Big( \mathcal{A}_2^t\left(\lambda_0 \varepsilon^{1-\delta}(1+o_{\eps} (1))\right) \cap \mathcal{R}(\tM,\varepsilon)^c \Big) & \leq \sum_{0<|k|\leq \tM} \P \Big( \mathcal{A}_2^t\left(\lambda_0 \varepsilon^{1-\delta}(1+o_{\eps} (1))\right) \cap \left\{ R_k^{\omega} < 
\eps^{-\delta+\frac{\kappa}{2}} \right\} \Big) \notag \\
& \hspace{-1cm} \stackrel{\eqref{defA2app}, \eqref{eq:eta2notg}}
\leq \!\!\!\!\!\!
\sum_{0<|k|\leq \tM} 
\!\!\! \P \Big ( \Big \{ \sum_{j\in\Z\setminus\{0\}} c_j R_j^{\omega} \geq \sqrt{\pi}\lambda_0 \varepsilon^{-\delta} (1+o_{\eps} (1))\Big \} \cap \left\{ R_k^{\omega} < \eps^{-\delta+\frac{\kappa}{2}} \right\} \Big ) \notag \\
&  \hspace{-1cm} \leq \sum_{0<|k|\leq \tM} \P \Big( \Big\{ \sum_{j\neq k} c_j R_j^{\omega} \geq \sqrt{\pi}\lambda_0 \varepsilon^{-\delta} (1+o_{\eps} (1))\Big\} \Big) \notag \\
&  \hspace{-1cm} \stackrel{\Cref{thm:LDP_Rayleigh}}{\leq}  \sum_{0<|k|\leq \tM}  \exp \Big( - \frac{\pi \lambda_0^2}{\sum_{j\neq k} c_j^2} \, \eps^{-2\delta} + o (\eps^{-2\delta} )\Big) \notag \\
&   \hspace{-1cm} \leq \tM \, \exp \Big( - \frac{\pi \lambda_0^2}{\sum_{j\neq\tM} c_j^2} \, \eps^{-2\delta} + o (\eps^{-2\delta} )\Big) \label{upplims}
\end{align}
where in the  last step we used
$ \sum_{j\neq\tM} c_j^2\geq {\sum_{j\neq k} c_j^2}$ for any $0<|k|\leq \tM$, since the $ c_j $ in \eqref{ckdk} are decreasing.
Taking the logarithm in \eqref{upplims}, multiplying by  $\eps^{2\delta}$, and taking the limsup as $\eps \to 0$, yields \eqref{0210:1452},  recalling also  that $ \bsi^2 > \frac{1}{2\pi} 
\sum_{j \neq \tM} c_j^2 $ by definition, see \eqref{upsigma}. 

\smallskip

($ii$) {\sc Proof of \eqref{0210:1455}.} 
Take  $\omega \in \mathcal{N} (\alpha) \cap 
\mathcal{R}(\tM,\varepsilon)\cap \cB_0$, cf.
\eqref{eq:nhood_fixedp}, \eqref{evR}, \eqref{cB0p}. We  wish to prove that $\omega \in \mathfrak{P}^t(\tM, \eps^{\frac{\kappa}{5}})$ defined in 
\eqref{eventp}. 
Expand in Fourier the
water waves solution  with random initial data \eqref{eq:init_data},
$$
u(t,x) = \frac{1}{\sqrt{2\pi}} \sum_{j \in \Z\setminus \{0\}} u_j(t) e^{\im j x} = 
 \frac{1}{\sqrt{2\pi}} \sum_{j \in \Z\setminus \{0\}}  |u_j(t)| e^{\im \theta_j(t)} e^{\im j x} \, , 
$$ 
provided by Theorem \ref{cor:LIP}.  
Also recall that 
by \eqref{etaapp2}-\eqref{eq:theta} the $ j$-th 
Fourier coefficient of the approximate solution $ u_{\rm app2} (t,x) $ is equal to 
$ (u_{\rm app2})_j = \varepsilon c_j R_j^\omega |j|^{-\frac14} e^{\im \vartheta_j(t;\omega)} $. Then, 
by the triangle inequality, 
 the continuous embedding 
 $ \cF L^{\frac14,1} \hookrightarrow H^1 (\T)$
 and \Cref{thm:approx2}, we get that
for any  $\omega \in \cB_0$,
 for any  $ |t| \leq \eps^{-3(1-\delta) +\kappa} $, 
\be\label{flchec}
\sum_{j \in \Z\setminus \{0\}}  \Big|  
|j|^{\frac14}|u_j(t)| - \underbrace{\eps c_j R_j^\omega}_{=|j|^{\frac14}|(u_{\rm app2})_j|} \Big| 
\lesssim \norm{u(t,\cdot ) - u_{\rm app2}(t, \cdot )}_{\cF L^{\frac14,1}} \lesssim \tR^4 \eps^{1-\delta + \kappa}  \, . 
\ee
Then the triangle inequality yields 
\begin{align}
\sum_{j \in \Z\setminus\{0\}}\!\!\!\! \eps c_j R_j^\omega \abs{e^{\im \theta_j(t)}  - e^{\im \vartheta_j(t;\omega)}}
&  \lesssim  \sum_{j \in \Z\setminus \{0\}} \Big (
 \abs{\eps c_j R_j^\omega-|j|^{\frac14} |u_j(t)|}
 +
 \Big| |j|^{\frac14} \underbrace{|u_j(t)| e^{\im \theta_j(t)}}_{= u_j(t)} - |j|^{\frac14} (u_{\rm app2})_j \Big|\Big ) \notag  \\
&   \stackrel{\eqref{flchec}} \lesssim
\norm{u(t,\cdot ) - u_{\rm app2}(t, \cdot )}_{\cF L^{\frac14,1}} \!\!\!\!  \stackrel{\eqref{flchec}}
\lesssim  \tR^4 \eps^{1-\delta + \kappa} \label{eq:trian} \, . 
\end{align}
Exploiting that $\omega \in \mathcal{R}(\tM,\varepsilon)$ (cf. \eqref{evR}) the estimate \eqref{eq:trian} implies that
\be\label{cjleq}
c_j \,\big| e^{\im \theta_j(t)}  - e^{\im \vartheta_j(t;\omega)} \big|  \lesssim \eps^{\kappa/2}  \, , \quad \forall 0<|j|\leq \tM \, .
\ee
Finally since $\omega\in \cN (\alpha)$, 
with $
\alpha $ in \eqref{choice.alpha}, 
\Cref{cor:phase_sync} yields $|\vartheta_j(t;\omega)|\leq \eps$ for any $0<|j|\leq \tM$ (for $\eps$ small enough). As a result, inserting in \eqref{cjleq} we deduce 
$|e^{\im \theta_j(t)} - 1| \lesssim \eps^{\kappa/2}$  for any $ 0<|j|\leq \tM$, meaning that
$\omega \in \mathfrak{P}^t(\tM, \eps^{\frac{\kappa}{5}})$ for $ \varepsilon $  small enough  
(recall \eqref{eventp}). This proves  \eqref{0210:1455}.
\qed

\appendix

\section{Proof of the normal form theorem}\label{app:normalform}

In this section we prove Theorem \ref{BNFtheorem}. 
It is a variant of the 
paradifferential Birkhoff normal form result in  \cite{BFP}. We first  introduce paradifferential calculus following \cite{BD,BFP},  reporting only definitions and properties  that will be used along the proof of  \Cref{BNFtheorem}. 

\subsection{Paradifferential calculus}\label{app:para}

Given an interval $ I = (-T,T)\subset \R$ 
and $s\in \R$,
we define the space of functions 
$$
C_*^K(I,\dot{H}^s\left(\mathbb{T},\mathbb{C}^2)\right) := 
\bigcap_{k=0}^K C^k\big(I, \dot{H}^{s-  k}(\mathbb{T},\mathbb{C}^2)\big)
$$ 
endowed with the  norm 
$$
\sup_{t\in I} \| U(t, \cdot)\|_{K,s} \qquad 
{\rm where} \qquad 
\| U(t, \cdot)\|_{K,s}:= {\mathop \sum}_{k=0}^K \| \partial_t^kU(t, \cdot)\|_{\dot{H}^{s- k}} \, , 
$$
and its subspace 
$$
C_{*\R}^K(I,\dot{H}^s\left(\mathbb{T},\mathbb{C}^2)\right)
:= \Big\{ U\in C_*^K(I,\dot{H}^s\left(\mathbb{T},\mathbb{C}^2)\right) \ : \ 
U=\begin{pmatrix} u \\ \bar{u}\end{pmatrix}\Big\} \, .
$$  
The parameter $ s $  will denote the space Sobolev regularity of the water waves solution $ U(t, \cdot) $
and $ K $ its time regularity. Since
the gravity  water waves vector field  
 loses $ 1$-derivative, $ \pa_t^k U(t, \cdot ) $ will 
 belong to $ {\dot H}^{s-k}$. We denote $ \dot H^s_\R (\T,\C^2 )$
 the subspaces of functions $ U = \vect{u}{\bar u} $ of $ \dot H^s (\T,\C^2 )$.

Given $r>0$ we denote the ball  
\be\label{ball}
B^K_{s, \R}(I;r):= \left\{U \in C_{*\R}^K(I,\dot{H}^s\left(\mathbb{T},\mathbb{C}^2)\right) \colon \ \ \sup_{t\in I} \| U(t, \cdot)\|_{K,s} \leq r \right\}
\ee 
and similarly by $B^K_{s}(I;r)$
the ball of radius $r$ in $C_*^K(I,\dot{H}^s\left(\mathbb{T},\mathbb{C}^2)\right)$.

 Given a linear operator  $ R(U) [ \cdot ]$ acting on $\dot L^2(\T,\C)$ 
we associate the linear  operator  
$\ov{R}(U)[v] := \ov{R(U)[\ov{v}]} $\ for all    $v: \T \rightarrow \C$.
We say that a matrix of operators acting on $\dot L^2(\T,\C^2)$  is \emph{real-to-real}, if it has the form 
\begin{equation}\label{vinello}
R(U) =
\left(\begin{matrix} R_{1}(U) & R_{2}(U) \\
\ov{R_{2}}(U) & \ov{R_{1}}(U)
\end{matrix}
\right) \, , \quad \forall  \,  U = \vect{u}{\ov{u}} \, .
\end{equation}
Similarly we say that a vector field  
$$
X(U): =  \vect{X(U)^+}{X(U)^-}  \quad \text{is real-to-real if} \quad
\bar{X(U)^+}=X(U)^- \, , \quad \forall \, 
U = \vect{u}{\ov{u}}  \, .  
$$
If $\cU := (U_1, \ldots, U_p)$ is a $p$-tuple of functions, $\vec{n} := (n_1, \cdots, n_p) \in \N^p$, we set
$$
\Pi_{\vec n}\cU := (\Pi_{n_1} U_1 , \cdots , \Pi_{n_p} U_p ) 
\, , \quad (\Pi_n u)(x) := \tfrac{1}{\sqrt{2\pi}} u_n e^{\im n x} + \tfrac{1}{\sqrt{2\pi}} u_{-n} e^{-\im n x}  \, . 
$$
\paragraph{\bf Paradifferential operators.}

We  introduce  first 
 paradifferential  operators (Definition \ref{quantizationtotale}) and then 
smoothing operators (Definition \ref{Def:Maps}).

\smallskip
\noindent{\bf Classes of symbols.}
 In the definition below we introduce two classes of symbols  $\wt{\Gamma}_p^m$  and $\Gamma_{K,K',p}^m$. Roughly speaking the class $\wt{\Gamma}_p^m$ contains symbols of order $m$ and homogeneity $p$ in $U$, whereas the class $\Gamma_{K,K',p}^m$ contains non-homogeneous symbols of order $m$ that vanishes at degree at least $p$ in $U$ and that are $(K-K')$-times differentiable in $t$; we can think of  the parameter $K'$ like the number of time derivatives of $U$ that are contained in the symbols. 
In the following we denote $ \dot{H}^{\infty}(\mathbb{T},\mathbb{C}^2)
:= \bigcap_{s \in \R} \dot{H}^{s}(\mathbb{T},\mathbb{C}^2)$.
\begin{defn}
Let $m\in \R$, $p, \tN\in \N_0 $, 
$ K, K' \in \N_0 $ with $ K' \leq K  $, and $ r>0$.\\
($i$) $p$-{\bf homogeneous symbols.} We denote by $\wt{\Gamma}^m_p$ the space of symmetric $p$-linear maps 
from $ (\dot{H}^{\infty}(\mathbb{T},\mathbb{C}^2))^p$ to the space of $ C^\infty $ functions from $\mathbb{T}\times \R$ to $\mathbb{C}$, 
$ (x, \xi) \mapsto a(U_1, \ldots, U_p;x,\xi)$,  satisfying the following: there exist $\mu>0$ and, for any $\alpha, \beta\in \N_0$, 
there is a constant $C>0$ such that 
\begin{equation}\label{homosymbo}
|\partial_x^{\alpha}\partial_{\xi}^{\beta}  
a( \Pi_{\vec n} \cU ;x,\xi)|
\leq 
C |\vec{n}|^{\mu+\alpha} \langle \xi \rangle^{m-\beta} 
\prod_{j=1}^p \| \Pi_{n_j} U\|_{L^2}
\end{equation}
for any $ \cU = (U_1,\dots,U_p)\in ( \dot{H}^{\infty}(\mathbb{T},\mathbb{C}^2) )^p$ and $\vec{n}=(n_1,\dots,n_p)\in \mathbb{N}^p$. 
 In addition we require the translation invariance property
$ a\left( \tau_{\varsigma} \cU; x,\xi\right)= a\left( \cU; x+\varsigma, \xi\right) $, for any $ \varsigma\in \R $, 
where $\tau_\varsigma$ is the translation operator in \eqref{X.tra0}. 
For $ p = 0 $ we denote by $\wt{\Gamma}^m_0 $ the space of constant coefficients symbols $ \xi \mapsto a(\xi) $ which satisfy \eqref{homosymbo} with $ \alpha = 0 $ and the right hand side replaced by $ C \la \xi \ra^{m - \beta} $. 

 We denote by $\sum\limits_p^\tN \widetilde \Gamma^{m}_q$ the class of pluri-homogeneous symbols $\sum_{q=p}^{\tN}a_{q}$ with $a_q \in  \widetilde{\Gamma}_{q}^m$.

($ii$) {\bf Non-homogeneous symbols. }   We denote by $\Gamma_{K,K',p}^m[r]$ the space of functions  $ (U;t,x,\xi)\mapsto a(U;t,x,\xi) $, 
defined for $U\in B_{s_0}^{K'}(I;r)$ for some $s_0$ large enough, with complex values, such that for any $0\leq k\leq K-K'$, any $\ts\geq s_0$, there are $C>0$, $0<r(\ts)<r$ and for any $U\in B_{s_0}^K(I;r(\ts))\cap C_{*}^{k+K'}(I, \dot{H}^{\ts}(\mathbb{T},\mathbb{C}^2))$ and any $\alpha,\beta \in \N_0$, with $\alpha\leq \ts-s_0$ one has the estimate
$$
| \partial_t^k\partial_x^\alpha\partial_\xi^\beta a(U;t,x,\xi)| \leq C \langle \xi \rangle^{m-\beta} \| U\|_{k+K',s_0}^{p-1}\|U\|_{k+K',\ts} \, .
$$
If $ p = 0 $ the right hand side has to be replaced by $ C \langle \xi \rangle^{m-\beta} $. 

($iii$) {\bf Symbols.} We denote by $\Sigma \Gamma_{K,K',p}^m[r,\tN]$ the space of functions 
$ (U;t,x,\xi) \mapsto a(U;t,x,\xi), $
with complex values such that there are homogeneous symbols $a_q\in \wt{\Gamma}_q^m$, $q=p,\dots, \tN$ and a non-homogeneous symbol $a_{>\tN}\in \Gamma_{K,K',\tN+1}^m$ such that 
\be\label{espsymbol}
a(U;t,x,\xi)= {\mathop \sum}_{q=p}^{\tN} a_q(U,\dots,U;x,\xi) + a_{>\tN}(U;t,x,\xi) \, .
\ee
We denote by  $\Sigma \Gamma_{K,K',p}^m[r,\tN]\otimes \mathcal{M}_2(\mathbb{C})$ the space of $2\times 2$ matrices with entries in  $\Sigma \Gamma_{K,K',p}^m[r,\tN]$.

We say that a symbol  $a(U;t,x,\xi) $ is \emph{real} if it is real valued for any 
$ U \in B^{K'}_{s_0,\R}(I;r)$.
\end{defn}

We also define classes of functions in analogy with our classes of symbols.

\begin{defn}{\bf (Functions)} \label{apeape} Let $p, \tN \in \N_0 $,  
 $K,K'\in \N_0$ with $K'\leq K$, $r>0$.
We denote by $\widetilde{\mathcal{F}}^{\R}_{p}$, resp. $\mathcal{F}_{K,K',p}^{\R}[r]$, 
$\Sigma\mathcal{F}_{K,K',p}^{\R}[r,\tN]$,
the subspace of $\widetilde{\Gamma}^{0}_{p}$, resp. $\Gamma^0_{K,K',p}[r]$, 
resp. $\Sigma\Gamma^{0}_{K,K',p}[r,\tN]$, 
made of those symbols which are independent of $\xi $ and
 real valued for any $ U \in B^{K'}_{s_0,\R}(I;r)$.
\end{defn}

\paragraph{Paradifferential quantization.}
Given $p\in \N_0$ we consider   functions
  $\chi_{p}\in C^{\infty}(\R^{p}\times \R;\R)$ and $\chi\in C^{\infty}(\R\times\R;\R)$, 
  even with respect to each of their arguments, satisfying, for $0<\delta\ll 1$,
\begin{align*}
&{\rm{supp}}\, \chi_{p} \subset\{(\xi',\xi)\in\R^{p}\times\R; |\xi'|\leq\delta \langle\xi\rangle\} \, ,\qquad \chi_p (\xi',\xi)\equiv 1\,\,\, \rm{ for } \,\,\, |\xi'|\leq \delta \langle\xi\rangle / 2 \, ,
\\
&\rm{supp}\, \chi \subset\{(\xi',\xi)\in\R\times\R; |\xi'|\leq\delta \langle\xi\rangle\} \, ,\qquad \quad
 \chi(\xi',\xi) \equiv 1\,\,\, \rm{ for } \,\,\, |\xi'|\leq \delta   \langle\xi\rangle / 2 \, . 
\end{align*}
For $p=0$ we set $\chi_0\equiv1$. 
We assume moreover that 
$$ 
|\partial_{\xi}^{\alpha}\partial_{\xi'}^{\beta}\chi_p(\xi',\xi)|\leq C_{\alpha,\beta}\langle\xi\rangle^{-\alpha-|\beta|} \, , \  \forall \alpha\in \N_0, \,\beta\in\N_0^{p} \, ,  
\ \ 
|\partial_{\xi}^{\alpha}\partial_{\xi'}^{\beta}\chi(\xi',\xi)|\leq C_{\alpha,\beta}\langle\xi\rangle^{-\alpha-\beta}, \  \forall \alpha, \,\beta\in\N_0 \, .
$$ 
The Weyl quantization  of a smooth symbol $ a (x, \xi) $ 
is the operator
acting on a
$ 2 \pi $-periodic function
$u(x)$ 
 as
$$
{\rm Op}^{W}(a)u=\frac{1}{\sqrt{2\pi}}\sum_{k\in \Z}
\Big(\sum_{j\in\Z}\hat{a}\big(k-j, \frac{k+j}{2}\big)\hat{u}(j) \Big)\frac{e^{\im k x}}{\sqrt{2\pi}}
$$
where $\hat{a}(k,\xi)$ is the $k^{th}-$Fourier coefficient of the $2\pi-$periodic function $x\mapsto a(x,\xi)$.

\begin{defn}{\bf (Bony-Weyl quantization)}\label{quantizationtotale}
If $a$ is a symbol in $\widetilde{\Gamma}^{m}_{p}$, 
respectively in $\Gamma^{m}_{K,K',p}[r]$,
we set
$$
a_{\chi_{p}}(\mathcal{U};x,\xi) := \sum_{\vec{n}\in \N^{p}}\chi_{p}\left(\vec{n},\xi \right)a(\Pi_{\vec{n}}\mathcal{U};x,\xi) \, , 
\quad a_{\chi}(U;t,x,\xi) :=\frac{1}{2\pi}\int_{\mathbb{R}}  
\chi (\xi',\xi )\hat{a}(U;t,\xi',\xi)e^{\im \xi' x}\di \xi'  \, ,
$$
where in the last equality $  \hat a $ stands for the Fourier transform with respect to the $ x $ variable, and 
we define the \emph{Bony-Weyl} quantization of $ a $ as 
$$
\opbw(a(\mathcal{U};\cdot)) := {\rm Op}^{W} (a_{\chi_{p}}(\mathcal{U};\cdot)) \, ,\qquad
\opbw(a(U;t,\cdot)) := {\rm Op}^{W} (a_{\chi}(U;t,\cdot)) \, .
$$
If  $a$ is a symbol in  $\Sigma\Gamma^{m}_{K,K',p}[r,\tN]$, 
we define its \emph{Bony-Weyl} quantization 
$$
\opbw(a(U;t,\cdot))= {\mathop \sum}_{q=p}^{\tN}\opbw(a_{q}(U,\ldots,U;\cdot))+\opbw(a_{>\tN}(U;t,\cdot)) \, . 
$$
 \end{defn}

\noindent
$\bullet$ 
The action of
$\opbw(a)$ on  homogeneous spaces only depends
on the values of the symbol $ a = a(U;t,x,\xi)$  for $|\xi|\geq 1$. Whenever 
we encounter a symbol that is not smooth at $\xi=0 $,
such as, for example, $a = g(x)|\x|^{m}$ for $m\in \R\setminus\{0\}$, or $ \sign (\xi) $, 
we will consider its smoothed out version
$\chi(\xi)a$, where
$\chi\in  C^{\infty}(\R;\R)$ is an even and positive cut-off function satisfying  
\begin{equation}\label{cutoff11}
\chi(\x) =  0 \;\; {\rm if}\;\; |\x|\leq \tfrac{1}{8}\, , \quad 
\chi (\x) = 1 \;\; {\rm if}\;\; |\x|>\tfrac{1}{4}, 
\quad  \pa_{\x}\chi(\x)>0\quad\forall  \x\in \big(\tfrac{1}{8},\tfrac{1}{4} \big) \, .
\end{equation}

\noindent
$ \bullet $
A matrix of paradifferential operators $ \opbw(A(U;t, x,\x))$ is real-to-real, i.e. \eqref{vinello} holds, if and only if 
the  matrix of symbols $A(U;t, x,\x)$ has the form 
$$
A(U;x,\x) =
\left(\begin{matrix} {a}(U;t, x,\x) & {b}(U;t, x,\x)\\
{\ov{b(U;t, x,-\x)}} & {\ov{a(U;t, x,-\x)}}
\end{matrix} 
\right)  \, .
$$
We now define  smoothing operators. 
Roughly speaking, the class $\widetilde{\mathcal{R}}^{-\vr}_{p}$ contains smoothing operators
which gain $\vr$ derivatives and are homogeneous of degree $p$ in $U$, while 
$\mathcal{R}_{K,K',p}^{-\vr}$ contains non-homogeneous $\vr$-smoothing operators which
vanish at degree at least $p$ in $U$, satisfy tame estimates  and are $(K-K')$-times differentiable in $t$.
The constant $ \mu $ in \eqref{hom:rest} 
takes into account  losses of derivatives in the ``low" frequencies.

\begin{defn}{\bf (Classes of smoothing operators)}\label{Def:Maps}
Let  $ \vr \geq 0 $,  $p,\tN \in \N$,  $ K, K'\in \N_0 $,  
 with $K'\leq K$, and $ r > 0 $.

(i) {\bf $p$-homogeneous  smoothing operators.} 
We denote by $\widetilde{\mathcal{R}}^{-\vr}_{p}$
 the space of $(p+1)$-linear operators $M$ 
 from $(\dot{H}^{\infty}(\T,\C^{2}))^{p}\times \dot{H}^{\infty}(\T,\C)$ to 
 $\dot{H}^{\infty}(\T,\C)$ which are symmetric
 in $(U_{1},\ldots,U_{p})$, of the form
$ (U_{1},\ldots,U_{p+1})\to R(U_1,\ldots, U_p)U_{p+1} $ 
that satisfy the following. There are $\mu\geq0$ and  $C>0$ such that 
 \be \label{hom:rest} 
\|\Pi_{n_0}R(\Pi_{\vec{n}}\mathcal{U})\Pi_{n_{p+1}}U_{p+1}\|_{L^{2}}\leq
 C\frac{{\rm \max}_2( n_1,\ldots, n_{p+1})^{\vr+\mu}}{\max( n_1,\ldots, n_{p+1})^{\varrho}} \prod_{j=1}^{p+1}\|\Pi_{n_{j}}U_{j}\|_{L^{2}} 
\ee
  for any  $ \mathcal{U}=(U_1,\ldots,U_{p})\in (\dot{H}^{\infty}(\T,\C^{2}))^{p}$, any 
 $ U_{p+1}\in \dot{H}^{\infty}(\T,\C) $,
 $ \vec{n} = (n_1,\ldots,n_p) $ in $  \N^{p}$, any $ n_0,n_{p+1}\in \N$, and where $\max_2$ denotes the second largest element among $n_1,\ldots , n_{p+1}$.
 In addition we require the translation invariance property
$ R( \tau_\varsigma {\cal U}) [\tau_\varsigma U_{p+1}]  =  
\tau_\varsigma \big( R( {\cal U})U_{p+1} \big)$ for any 
$ \varsigma \in \R $.

(ii) {\bf Non-homogeneous smoothing-operators.} 
  We denote by  $\mathcal{R}^{-\vr}_{K,K',p}[r]$ 
  the space of operators $(U,t,V)\mapsto R(U;t) V $ defined on $B^{K'}_{s_0}(I;r)\times I \times C^0_{*}(I,\dot{H}^{s_0}(\T,\C))$ for some $ s_0 >0  $, 
  which are linear in the variable $ V $ and such that the following holds true. 
  For any $s\geq s_0$ there are $C>0$ and 
  $r(s)\in]0,r[$ such that for any 
  $U\in B^K_{s_0}(I;r(s))\cap C^K_{*}(I,\dot{H}^{s}(\T,\C^2))$, 
  any $ V \in C^{K-K'}_{*}(I,\dot{H}^{s}(\T,\C))$, any $0\leq k\leq K-K'$, $t\in I$, we have that
\begin{equation}
\label{piove}
\|{\partial_t^k\left(R(U;t)V\right)}\|_{\dot{H}^{s-  k + \vr}}
 \leq C \!\!\!\! \sum_{k'+k''=k} \!\!\!\! \|{V}\|_{k'',s}\|{U}\|_{k'+K',s_0}^{p} 
 +\|{V}\|_{k'',s_0}\|U\|_{k'+K',s_0}^{p-1}\|{U}\|_{k'+K',s} \, .
\end{equation}
In case $ p = 0$ we require  the estimate
$ \|{\partial_t^k\left(R(U;t)V\right)}\|_{\dot{H}^{s-  k + \vr}}
 \leq C  \|{V}\|_{k,s}$.
 
(iii) {\bf Smoothing operators.}
We denote by $\Sigma\mathcal{R}^{m}_{K,K',p}[r,\tN]$, 
the space of operators $(U,t,V)\to R(U;t)V$ such that there are homogeneous  smoothing operators  $R_{q} $ in $ \widetilde{\mathcal{R}}^{m}_{q}$, $q=p,\ldots, \tN$  and a non--homogeneous smoothing  operator $R_{>\tN}$  in  
$\mathcal{R}^{m}_{K,K',\tN+1}[r]$ such that 
$$
R(U;t)V= {\mathop \sum}_{q=p}^{\tN}R_{q}(U,\ldots,U)V+R_{>\tN}(U;t)V \, .
$$
 We denote by  $\Sigma\mathcal{R}^{-\vr}_{K,K',p}[r,\tN]\otimes\mathcal{M}_2(\C)$
the space of $2\times 2$ matrices whose entries are  in $\Sigma\mathcal{R}^{-\vr}_{K,K',p}[r,\! \tN]$.
\end{defn}

If $R( U_1,\dots , U_p)$ is a $p$--homogeneous 
 $\varrho $-smoothing operator in $  \widetilde \cR_p ^{-\vr}$ then 
  for any $K\in \N_0$  there exists $s_0 >0$ such that for any $s \geq s_0$,
  for any   $U \in C^K_{*}(I,\dot{H}^{s}(\T,\C^2))$,  any $ v \in C^K_{*}(I,\dot{H}^{s}(\T,\C)) $, one has (cf. e.g. \cite{BMM2}[Lemma 2.8])
 \be
 \label{mappabonetta1}
  \| R(U_1,\dots, U_p) v\|_{\dot H^{s+\varrho}}
  \lesssim_{K} 
  \norm{v}_{\dot H^s } \prod_{a =1}^p \norm{U_a}_{\dot H^{ s_0}}  +   \norm{v}_{\dot H^{s_0} }\sum_{\bar a =1}^p \norm{U_{\bar a}}_{\dot H^s}
  \prod_{a =1\atop a\neq \bar a}^p \norm{U_a}_{\dot H^{s_0}} 
  \, . 
 \ee
To simplify notation we will omit the dependence on the time $t$ from the symbols and smoothing remainders. Furthermore  
we fix  $\tN = 3 $ and 
we write $ \Sigma\mathcal{R}^{-\vr}_{K,K',p}$ instead of  
$ \Sigma\mathcal{R}^{-\vr}_{K,K',p}[r,3]$,  and similarly for 
functions and smoothing operators.

\subsection{Paralinearization of the equations and strategy}

We now start proving \Cref{BNFtheorem}. 
The starting point is to write the water waves system 
\eqref{ww}
in paradifferential form. Precisely \cite[Proposition 3.3]{BFP} guarantees that for any $K \in \N$ and $\vr \gg 1$, there exists $s_0>0$ such that for any $s \geq s_0$, for any $0< r \leq r_0(s)$ small enough, if 
 the variable 
 $U = \begin{pmatrix}
     u \\ \bar u
 \end{pmatrix}$ with $u$ defined in \eqref{u0} belongs to $B^K_{s, \R}(I;r)$ (with $ s \rightsquigarrow N $ and recall notation \eqref{ball}) and  solves 
\be\label{ww.paraco}
\pa_t U = \Opbw{\im A_1 (U;x) \xi  + \im A_{\frac12}(U;x) |\xi|^{\frac12}  + A_0(U;x) + A_{-1}(U;x,\xi)}U + R(U)U 
\ee
where
\begin{align}\label{A1}
& A_1(U;x):= \begin{pmatrix} -V(U;x) & 0 \\  0 &  - V(U;x) \end{pmatrix} \ , \\
\label{A12}
& A_{\frac12}(U;x):=  \begin{pmatrix} -(1+a(U;x)) & -a(U;x) \\  a(U;x) &  1+a(U;x) \end{pmatrix} \ , \quad a := \frac12 \left( \pa_t B  + V B_x \right) \in  \Sigma \cF^\R_{K,1,1} \ , \\
 \label{A0}
& A_0(U;x):= -\frac14 \begin{pmatrix} 0& 1 \\ 1 & 0 \end{pmatrix}  V_x(U;x) \ , 
\end{align}
the functions $V(U;x), B(U;x)  $ belong to 
$ \Sigma \cF^\R_{K,0,1}$ 
(they are  the functions \eqref{def:V}-\eqref{form-of-B} expressed in terms of the complex variable $ U $), 
$A_{-1}(U; \cdot)$ is a real-to-real matrix of symbols in $\Sigma \Gamma^{-1}_{K,1,1} \otimes \cM_2(\C)$ and $R(U)$
 is a real-to-real matrix of smoothing operators
belonging to $\Sigma \cR^{-\vr}_{K,1,1} \otimes \cM_2(\C)$.

\smallskip
We now explain the main changes we address 
with respect to   \cite{BFP} 
to achieve 
the Lipschitz properties \eqref{phi_est} and \eqref{lipB}  of the normal form transformation. 
In order to obtain energy estimates, following  \cite{BFP} we  
block-diagonalize the matrix of symbols in \eqref{ww.paraco} at positive orders at {\em any} degree of homogeneity. 
 Since the matrix $A_{1}(U;x) \xi$ in \eqref{A1} is already diagonal, one needs to block-diagonalize only the 
 matrix  $A_{\frac12}(U;x) |\xi|^{\frac12}$ in \eqref{A12}.
 This is achieved, as in \cite{BFP}, conjugating \eqref{ww.paraco} with the flow in \eqref{generatore20} generated by a paradifferential operator with a inhomogeneous symbol 
 $m_{-1}(U;x)$ in \eqref{def:lambda}. The first difference is that
\\[1mm]
$ \bullet\ (i)$ 
 in \Cref{lem:step12} we additionally 
 prove that the map 
 $U\mapsto m_{-1}(U;x)$ is  Lipschitz. 
 We exploit that $ m_{-1}(U;x)$ is a function of the symbol $a(U;x)$ defined in \eqref{A12}, which we write in terms of $(\eta, \psi)$ by computing 
 \be\label{eq:patB}
 \pa_t B = \di_{\eta,  \psi} 
 B(\eta, \psi)[X(\eta, \psi)] = 
 \di_{\eta} 
 B(\eta, \psi)[X^{(\eta)}(\eta, \psi)] + 
 \di_{\psi} 
 B(\eta, \psi)[X^{(\psi)}(\eta, \psi)]  
 \ee
 where $ X(\eta, \psi)$ is the 
 water waves vector field in \eqref{ww}, which is analytic by 
 \eqref{wwana}, as well as $ B $ is \eqref{an:DNX}. In this way we check the Lipschitzianity of 
 $ a(U;\cdot )$ in $ U $ and thus of $ m_{-1}(U;\cdot) $. 
 \\[1mm]
$ \bullet\ (ii)$ as in \cite{BFP} we also need to remove the off-diagonal symbols up to very large negative orders (in order to prove energy estimates). 
In \cite{BFP} these symbols are removed at {\em any} order of homogeneity. In this way one loses control of the Lipschitz property since the symbols in $A_{-1}(U;x)$ are not explicit at any degree (and it is not proved they are Lipschitz).  
We overcome this issue by removing those symbols only up to cubic  degree of homogeneity. As a result, 
the normal form transformations we perform are 
the flow in  \eqref{flow-jth} generated 
by the matrix of symbols $ M^{(j)} (U)$ in \eqref{generatore-jth} which are pluri-homogeneous in $ U $, 
thus satisfy a Lipschitz property in $U $, as we show in the 
next section.

\smallskip
\noindent
{\bf Notation:} In the sequence  $s_0>0$ denotes  the index of a low Sobolev norm, that may change from line to line.

\subsection{Lipschitz properties of linear flows}

We consider linear flows of the form
 \be\label{flussoG}
\begin{cases}
\pa_\tau \Phi^\tau(U)= G(\tau,U)\Phi^\tau(U)\\
\Phi^0(U)=\uno
\end{cases} 
\ee
where $G(\tau,U)$ is a matrix of  real-to-real paradifferential or smoothing operators. 
Similarly to  \cite{MM}, we prove that 
the flow map   $U\mapsto \Phi^\tau(U)$ 
is Lipschitz provided 
$U \mapsto G(\tau, U)$ is  (in suitable topologies).
We first 
consider the case  $G(\tau, U)$ is a  paradifferential operator.
 
\begin{lem}\label{lem:flow.ad}
Let $p \in \N$, $\tN $ with $ p \leq \tN $, $ \tau \in [0,1]$. 
Let  $G(\tau, U)$ be either
\begin{itemize}
\item[(i)] $ \Opbw{\sm{ \frac{\beta(U;x)}{1+\tau \beta_x(U;x)}\im \xi}{0}{0}{\frac{\beta(U;x)}{1+\tau \beta_x(U;x)}\im \xi}}$ with $\beta(U;x)$ a $p$-homogeneous real value function in $\wt \cF_p^\R$;
\item[(ii)] $ \Opbw{\sm{ \im f(U;x, \xi)}{0}{0}{-\im \bar{f(U;x, -\xi)}}}$ with $f(U; x, \xi) $ a $p$-homogeneous  symbol in  $\wt\Gamma_p^m, m < 1$, which additionally is  real valued in case $m \in (0,1)$;
\item[(iii)] $\Opbw{\sm{0}{ g(U;x,\xi)}{\bar{g(U;x, -\xi)}}{0}}$ with $g(U;x, \xi)$ a  pluri-homogeneous symbol in $\sum_p^\tN \wt \Gamma^m_q $, $m \leq  0$, or 
a function $g(U;x)$ satisfying $g(0;\cdot) =0$ and the Lipschitz estimate
\be\label{0509:1056}
 \norm{   g(U_1;x) - g(U_2;x)  }_{L^\infty} \lesssim   
 \norm{U_1-U_2}_{\dot H^{s_0}} \, . 
\ee
\end{itemize}
Then, there exists $ s_0, r > 0 $ such that, for any $U \in \dot H_\R^{s_0}(\T, \C^2)$ satisfying 
 $ \norm{U}_{\dot H^{s_0}} \leq r $, the flow  $\Phi^\tau(U)$ in \eqref{flussoG} is well defined, $\Phi^\tau(0) = \uno$,  and satisfies
 for  any  $s \in \R$ 
 and $V \in \dot H^s (\T, \C^2)$  the bound 
\begin{equation}\label{flusso.est}
\sup_{\tau \in [0,1]}
\norm{\Phi^\tau(U) V}_{\dot H^s} \leq  (1 + C_s \norm{U}_{\dot H^{s_0}}) \norm{V}_{\dot H^s} \, , 
\end{equation}
 as well as its  inverse $\Phi^\tau(U)^{-1}$. In addition  \\
{\sc Lipschitz property:}
There exist $s_0, r > 0$  such that
for any  $U_j\in \dot H_\R^{s_0}(\T, \C^2)$ with $ \norm{U_j}_{\dot H^{s_0}} \leq r$, $j=1,2$, 
 any $s \in \R$ and 
any  $ Z \in \dot H^s (\T, \C^2)$,  
\be\label{est.diff.Phi}
\sup_{\tau \in [0,1]}\norm{\Phi^\tau(U_1)Z - \Phi^\tau(U_2)Z }_{\dot H^{s-m}} \leq C_{s,r}  \norm{U_1 -U_2}_{\dot H^{s_0}} \norm{Z}_{\dot H^s} 
\ee
for some $C_{s,r}>0$, and with 
   $m=1$ in case of item $(i)$.
\end{lem}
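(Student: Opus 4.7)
The plan is to first establish well-posedness of the flow \eqref{flussoG} and the a priori bound \eqref{flusso.est} by standard energy estimates, and then obtain the Lipschitz estimate \eqref{est.diff.Phi} via a Duhamel comparison of the two flows. In each of the cases (i)--(iii) the generator $G(\tau,U)$ is skew-adjoint modulo an operator of strictly lower order, because the Weyl quantization of a symbol $ia$ with $a$ real and even in $\xi$ is self-adjoint up to bounded corrections: in case (i) this applies to $\frac{\beta(U;x)}{1+\tau\beta_x(U;x)}\im\xi$ (using reality of $\beta$); in case (ii) to the diagonal principal symbol (using reality of $f$ when $m>0$, and boundedness when $m\le 0$); in case (iii) the symbol is of order $m\le 0$, so the operator is already bounded on $\dot H^s$ uniformly. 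Applying $|D|^s$ to \eqref{flussoG}, commuting with $G(\tau,U)$ via symbolic calculus (the commutators drop one order, hence are bounded), and running Gronwall yields existence of $\Phi^\tau(U)$ and its inverse on $[0,1]$ together with \eqref{flusso.est}, for $s_0$ large and $r$ small enough to control the seminorms \eqref{seminorm}.

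For the Lipschitz property, the starting point is the Duhamel identity
\begin{equation*}
\bigl[\Phi^\tau(U_1)-\Phi^\tau(U_2)\bigr]Z = \int_0^\tau \Phi^{\tau-\sigma}(U_1)\bigl[G(\sigma,U_1)-G(\sigma,U_2)\bigr]\Phi^\sigma(U_2)Z\, d\sigma,
\end{equation*}
so by \eqref{flusso.est} it suffices to estimate the operator difference $G(\sigma,U_1)-G(\sigma,U_2)$. The key observation is that $\beta$, $f$, and the pluri-homogeneous part of $g$ are $p$-homogeneous or a finite sum of multilinear terms, hence admit the representation
\begin{equation*}
\beta(U_1;x)-\beta(U_2;x) = \int_0^1 D_U\beta\bigl(U_2+\theta(U_1-U_2);x\bigr)[U_1-U_2]\, d\theta,
\end{equation*}
and analogously for $f$ and $g$. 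The symbol seminorm estimate \eqref{homosymbo} together with Sobolev embedding and the algebra property of $\dot H^{s_0}$ then gives
\begin{equation*}
\bigl|\partial_\xi^\alpha\partial_x^\gamma\bigl[f(U_1;x,\xi)-f(U_2;x,\xi)\bigr]\bigr| \lesssim \langle\xi\rangle^{m-\alpha}\norm{U_1-U_2}_{\dot H^{s_0}}\bigl(\norm{U_1}_{\dot H^{s_0}}^{p-1}+\norm{U_2}_{\dot H^{s_0}}^{p-1}\bigr),
\end{equation*}
and similarly for $\beta$ and $g$; in case (iii), when $g$ is only a paraproduct, the bound is directly provided by the hypothesis \eqref{0509:1056}. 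Invoking Proposition \ref{prop:action}, this translates into
\begin{equation*}
\norm{\bigl[G(\sigma,U_1)-G(\sigma,U_2)\bigr]W}_{\dot H^{s-m}} \lesssim \norm{U_1-U_2}_{\dot H^{s_0}}\norm{W}_{\dot H^s},
\end{equation*}
with $m=1$ in case (i), $m<1$ in case (ii), and $m\le 0$ in case (iii). Substituting $W=\Phi^\sigma(U_2)Z$, applying \eqref{flusso.est} twice, and integrating in $\sigma$ yields \eqref{est.diff.Phi}.

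The main technical obstacle is the loss of one derivative in case (i): since the principal symbol has order $1$, the operator difference loses one derivative, which is precisely what forces the Lipschitz inequality to hold in $\dot H^{s-1}$ rather than in $\dot H^{s}$. Closing the estimate uniformly in $s$ requires that the constants in \eqref{flusso.est} behave well for both indices $s$ and $s-1$, which is built into the energy argument. A secondary nuisance is the rational dependence $\beta/(1+\tau\beta_x)$ in case (i); this is handled by expanding the denominator as a convergent Neumann series in $\tau\beta_x=\O(\norm{U}_{\dot H^{s_0}})$ under the smallness condition $r\ll 1$, which turns the quotient into a pluri-homogeneous symbol to which the preceding multilinear bounds apply verbatim.
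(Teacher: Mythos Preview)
Your approach is essentially the same as the paper's: cite existing energy estimates for well-posedness and \eqref{flusso.est}, write a Duhamel comparison for $\Phi^\tau(U_1)-\Phi^\tau(U_2)$, bound the generator difference via multilinearity (or the hypothesis \eqref{0509:1056}) and Proposition~\ref{prop:action}, and conclude using \eqref{flusso.est}.

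There is one technical slip. Your Duhamel identity
\[
\bigl[\Phi^\tau(U_1)-\Phi^\tau(U_2)\bigr]Z = \int_0^\tau \Phi^{\tau-\sigma}(U_1)\bigl[G(\sigma,U_1)-G(\sigma,U_2)\bigr]\Phi^\sigma(U_2)Z\, d\sigma
\]
is only valid when the generator is autonomous in $\tau$. In case (i) the generator $G(\tau,U)$ depends explicitly on $\tau$ through the denominator $1+\tau\beta_x$, so the flow does not satisfy $\Phi^\tau(\Phi^\sigma)^{-1}=\Phi^{\tau-\sigma}$. The correct formula (the one the paper uses) replaces $\Phi^{\tau-\sigma}(U_1)$ by the two-parameter propagator $\Phi^\tau(U_1)\bigl[\Phi^\sigma(U_1)\bigr]^{-1}$; equivalently, the paper writes
\[
\Phi^\vartheta(U_1)-\Phi^\vartheta(U_2)=\Phi^\vartheta(U_2)\int_0^\vartheta \Phi^\tau(U_2)^{-1}\bigl[G(\tau,U_1)-G(\tau,U_2)\bigr]\Phi^\tau(U_1)\,d\tau.
\]
Since \eqref{flusso.est} holds for $\Phi^\tau(U)^{\pm1}$ at every index $s$, this correction does not affect the estimates, and the rest of your argument goes through unchanged.
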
 

 \begin{proof}
 Under the assumption $(i)$ or $(ii)$ or $(iii)$, the flow $\Phi^\tau(U)$ is well defined for any $\tau \in [0,1]$ and
 satisfies \eqref{flusso.est} by \cite[Lemma 3.16 $(i)$]{BMM2}.  
Let us prove the Lipschitz estimate \eqref{est.diff.Phi}.
For any     $U_j\in \dot H_\R^{s_0}(\T, \C^2)$, $j=1,2$, with $ \norm{U_j}_{\dot H^{s_0}} \leq r$, the difference 
  $ \Phi^\vartheta(U_1)- \Phi^\vartheta(U_2)$ 
fulfills the variational equation
$$
\begin{cases}
\partial_\vartheta \Big(\Phi^\vartheta(U_1)- \Phi^\vartheta(U_2) \Big) =G(\vartheta,U_2) \Big(\Phi^\vartheta(U_1)- \Phi^\vartheta(U_2) \Big)
+ \Big(G(\vartheta, U_1) - G(\vartheta, U_2) \Big) \Phi^\vartheta(U_1) \\
\Phi^0(U_1)- \Phi^0(U_2)=0 \, , 
\end{cases}  
$$
whose solution is given by the  Duhamel formula 
\begin{align}\label{duhamel.diff}
\Phi^\vartheta(U_1)- \Phi^\vartheta(U_2) =  \Phi^\vartheta(U_2) \int_0^\vartheta  \Phi^\tau (U_2)^{-1} \ \Big(  G(\tau,U_1) - G(\tau, U_2) \Big)\   \Phi^\tau (U_1)\, {\rm d}\tau  \ . 
\end{align}
We claim that, for  $G(\tau, U)$ as in item $(i), (ii)$ or $(iii)$, for any $s \in \R$,  any  $Z \in \dot H^s(\T, \C^2)$, 
\begin{equation} 
\sup_{\tau \in [0,1]}\norm{ \Big(G(\tau,U_1) - G(\tau, U_2)\Big)Z}_{\dot H^{s-m}} \leq C_{s,r} 
\norm{U_1 - U_2}_{\dot H^{s_0}}\, \norm{Z}_{\dot H^s}, \label{claimata} 
\end{equation}
where $m = 1$ in  case $(i)$. 
Then the  Lipschitz estimate \eqref{est.diff.Phi} follows 
by  \eqref{duhamel.diff}, \eqref{claimata} and  \eqref{flusso.est}.

It remains to prove \eqref{claimata} which 
is a consequence of the estimate \eqref{cont00} for the action of 
paradifferential operators.
In case $(i)$, we have  
\be\label{3008:1215}
\Big(G(\tau,U_1) - G(\tau, U_2)\Big)Z = \Opbw{\sm{ a(\tau; x, \xi)}{0}{0}{a(\tau; x, \xi)}}Z \ 
\ee
where
\begin{align*}
    a(\tau; x, \xi) & :=  \left(\frac{\beta(U_1;x)}{1+\tau \beta_x(U_1;x)}-\frac{\beta(U_2;x)}{1+\tau \beta_x(U_2;x)}  \right)\im \xi 
\\
& =
\frac{\beta(U_1;x)-\beta(U_2;x)}{1+\tau \beta_x(U_1;x)}\im \xi
-\tau \frac{\beta(U_2;x) \left(\beta_x(U_1; x) - \beta_x(U_2;x) \right)}{(1+\tau \beta_x(U_1;x)) \,(1+\tau \beta_x(U_2;x))  }\im \xi \, . 
\end{align*}
Since  $\beta(U;x)\in \wt \cF^\R_{p} $ is a homogeneous polynomial in $U$, one has 
$$
\beta(U_1;x) - \beta(U_2;x) = 
\sum_{1 \leq i_1 \leq \cdots \leq i_{p-1} \leq 2} 
\beta(U_1-U_2, U_{i_1}, \cdots, U_{i_{p-1}};x)
$$
implying, by the very definition \eqref{homosymbo} (for a 
function so $m \rightsquigarrow 0$, $  \beta \rightsquigarrow 0 $), the estimate
$$
\norm{\beta(U_1;\cdot) - \beta(U_2;\cdot)}_{L^\infty} 
+\norm{\beta_x (U_1;\cdot) - \beta_x (U_2;\cdot)}_{L^\infty}  \leq C_{r}  \norm{U_1 - U_2}_{\dot H^{s_0}}  
$$
for some $C_r >0$.
Then the seminorm \eqref{seminorm} of the symbol $a(\tau; x, \xi)$ is bounded by 
$$
\sup_{\tau \in [0,1]}
|a(\tau; \cdot)|_{1,L^\infty, 4} =  
\max_{|\beta| \leq 4} \sup_{\xi \in \R} \big\| \la \xi \ra^{-1+|\beta|} \, \pa_\xi^\beta a(\tau; \cdot, \xi) \big\|_{L^\infty}  
\leq C_r \norm{U_1 - U_2}_{\dot H^{s_0}} 
$$
and  \eqref{cont00} implies that the operator \eqref{3008:1215} satisfies \eqref{claimata}.

Similar arguments prove   estimate \eqref{claimata} also in cases $(ii)$, $(iii)$.
 \end{proof}
 
Now we consider the case $G(\tau,U) $ is a matrix of smoothing operators.

\begin{lem}\label{flow.s.ad}
Let $\varrho \geq 0$, $p \in \N$ and 
$G(\tau, U):= 
 R(U) $ be a matrix of smothing operators in $  \wt \cR^{-\varrho}_p \otimes \cM_2(\C)$.
Then there  is $s_0\geq 0$ such that for any $s \geq s_0$, there is $r = r(s), C_{s,r}>0$ so that for 
any  $ \norm{U}_{\dot H^{s}} \leq r$, the flow $\Phi^\tau(U)$ in \eqref{flussoG} 
is a well defined bounded operator in 
$\dot H^s(\T, \C^2)$, satisfies 
$\Phi^\tau(0) = \uno$, and 
\begin{equation}\label{flusso.estR}
 \norm{[\Phi^\tau(U)]^{\pm 1} V}_{\dot H^s} \leq  C_s \norm{V}_{\dot H^s} ( 1+ C_s\norm{U}_{\dot H^{s_0}})  + C_s \norm{V}_{\dot H^{s_0}} \norm{U}_{\dot H^{s}}
\end{equation}
uniformly in $\tau \in [0,1]$. In addition one has\\
{\sc Lipschitz property:} for 
any  $ \norm{U_i}_{\dot H^{s}} \leq r$,  $i=1,2$,  any  $ Z \in \dot H^s (\T, \C^2)$,  one has
\be\label{est.diff.Phi2}
\norm{\Phi^\tau(U_1)Z-\Phi^\tau(U_2)Z }_{\dot{H}^{s}} \leq C_{s,r}  \norm{U_1-U_2}_{\dot{H}^{s}} \norm{Z}_{\dot{H}^{s}} \ .
\ee
\end{lem}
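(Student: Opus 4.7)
The plan is to follow the scheme of Lemma \ref{lem:flow.ad}, replacing the paradifferential action bound \eqref{cont00} by the tame estimate \eqref{mappabonetta1} for smoothing operators. First I would establish well-posedness of the flow $\Phi^\tau(U)$ in $\dot H^s(\T,\C^2)$ for $s \geq s_0$ with $s_0$ large enough so that \eqref{mappabonetta1} applies. Since $R(U)$ is a smoothing operator which is, in particular, bounded on $\dot H^s$, standard Cauchy-Lipschitz theory in Banach spaces yields a solution on $[0,1]$ for every initial datum $V \in \dot H^s$. The inverse $[\Phi^\tau(U)]^{-1}$ is constructed identically by replacing $R(U)$ with $-R(U)$.

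Next I would prove the tame bound \eqref{flusso.estR}. Setting $W(\tau) := \Phi^\tau(U)V$, one has $\partial_\tau W = R(U)W$. Since $R(U)=R(U,\ldots,U)$ is $p$-homogeneous, estimate \eqref{mappabonetta1} (with $\varrho$ absorbed into $s$) gives
\begin{equation*}
\Big| \tfrac{d}{d\tau} \| W(\tau) \|_{\dot H^s}^2 \Big| \lesssim \|W(\tau)\|_{\dot H^s}^2 \, \|U\|_{\dot H^{s_0}}^p + \|W(\tau)\|_{\dot H^{s_0}} \, \|W(\tau)\|_{\dot H^s} \, \|U\|_{\dot H^s} \, \|U\|_{\dot H^{s_0}}^{p-1}.
\end{equation*}
Applying this first at the level $s=s_0$ (where the second term collapses into $\|U\|_{\dot H^{s_0}}^{p}\|W\|_{\dot H^{s_0}}^{2}$) and invoking Grönwall yields, for $r$ sufficiently small, $\| W(\tau) \|_{\dot H^{s_0}} \leq 2\|V\|_{\dot H^{s_0}}$ on $[0,1]$. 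Substituting this into the differential inequality at general $s$ and applying Grönwall a second time produces \eqref{flusso.estR}. The same argument, run with generator $-R(U)$, handles the inverse.

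For the Lipschitz estimate \eqref{est.diff.Phi2}, I would use the Duhamel identity
\begin{equation*}
\Phi^\tau(U_1) - \Phi^\tau(U_2) = \Phi^\tau(U_2) \int_0^\tau [\Phi^{\tau'}(U_2)]^{-1} \bigl( R(U_1) - R(U_2) \bigr) \Phi^{\tau'}(U_1) \, d\tau',
\end{equation*}
as in \eqref{duhamel.diff}. The crucial point is that, since $R$ is symmetric $p$-linear in $U$, the telescoping expansion
\begin{equation*}
R(U_1) - R(U_2) = \sum_{j=1}^{p} R\bigl(\underbrace{U_2,\ldots,U_2}_{j-1},\, U_1-U_2,\, \underbrace{U_1,\ldots,U_1}_{p-j}\bigr)
\end{equation*}
together with \eqref{mappabonetta1} gives $\|(R(U_1)-R(U_2))Z\|_{\dot H^s} \lesssim r^{p-1}\|U_1-U_2\|_{\dot H^{s}}\|Z\|_{\dot H^s}$ when $\|U_i\|_{\dot H^s}\leq r$ (using the trivial embedding $\|U_1-U_2\|_{\dot H^{s_0}} \leq \|U_1-U_2\|_{\dot H^s}$ to handle all terms). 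Combining this with the uniform bound \eqref{flusso.estR} on $\Phi^{\tau'}(U_1)$ and $[\Phi^{\tau'}(U_2)]^{-1}$, and integrating in $\tau'\in[0,1]$, gives \eqref{est.diff.Phi2}.

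The main technical subtlety is the tame nature of \eqref{mappabonetta1}: the presence of the ``low-high'' term involving $\|W\|_{\dot H^{s_0}}\|U\|_{\dot H^s}$ precludes a direct Grönwall argument at level $s$, and forces the two-step scheme (first propagate the $\dot H^{s_0}$ norm, then use it as a ``coefficient'' at level $s$). Letting $r=r(s)$ depend on $s$, as permitted by the statement, absorbs all constants arising from this bootstrap and from the Grönwall factor $e^{C r^{p-1}}$, producing the final constants $C_s$ and $C_{s,r}$.
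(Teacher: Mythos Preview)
Your proposal is correct and follows essentially the same approach as the paper: the paper cites \cite[Lemma A.3]{BFP} for well-posedness and the tame bound \eqref{flusso.estR} (which is precisely the two-step Gr\"onwall argument you sketch), and for the Lipschitz estimate it uses the same Duhamel identity \eqref{duhamel.diff} together with the multilinear decomposition of $R(U_1)-R(U_2)$ and estimate \eqref{mappabonetta1}. The only cosmetic difference is that the paper writes the decomposition via symmetry as a sum over $1\leq i_1\leq\cdots\leq i_{p-1}\leq 2$ rather than your telescoping sum, which is equivalent.
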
 
 \begin{proof}
The well-definition of the flow $\Phi^\tau(U)$ and \eqref{flusso.estR} follow from 
\cite[Lemma A.3]{BFP} with $K = 0$. 
 The proof of \eqref{est.diff.Phi2} follows 
similarly  to the previous lemma. Formula \eqref{duhamel.diff} holds with $ G(\tau,U_1)-G(\tau, U_2) $ replaced by $R(U_1)-R(U_2)$.
Then replace  
 estimate \eqref{claimata} by 
 \begin{equation*}
 \norm{R(U_1)Z-R(U_2)  Z}_{\dot{H}^{s}}\leq
 \!\!\!\!\!\!
 \sum_{1 \leq i_1 \leq \cdots \leq i_{p-1} \leq 2} 
 \norm{
R(U_1-U_2, U_{i_1}, \cdots, U_{i_{p-1}})Z}_{\dot{H}^{s}}  \leq 
 C_{s,r} \norm{U_1 - U_2}_{\dot{H}^{s}} \norm{Z}_{\dot{H}^{s}}
 \end{equation*}
 obtained applying  \eqref{mappabonetta1}.
 \end{proof}

\subsection{Normal form with a Lipschitz map}
We now start the normal form procedure.

\smallskip
\noindent{\bf Block-diagonalization up to quartic degree bounded terms.}
 The first step is to block-diagonalize system \eqref{ww.paraco}  in the variables $(u, \bar u)$ up to a quartic degree term.
The following result is a variant of \cite[Proposition 3.10]{BFP} with a new  bounded remainder $B(U)W$  in \eqref{sistemaDiag} which satisfies the quartic bound \eqref{estB(U)}.  
  \begin{prop}\label{teodiagonal}
Let  $ \vr \gg 1   $  and  $ K \geq  K':=2\vr+2  $.  There exists $s_0>0$ such that, 
for any $s\geq s_0$, for all $0<r \leq r_0(s)$ small enough,  
and  any solution $U\in B^K_{s, \R}(I;r)$ of  \eqref{ww.paraco},  the following holds:

\begin{enumerate}
\item[(i)] 
there is a map 
$ \Psi_{diag}^{\theta}(U) $, $ \theta\in [0,1] $,  satisfying, for some $C=C(s,r,K)>0$, for  any 
$V\in \dot{H}^s(\T,\C^{2})$
$$
 \| \Psi_{diag}^{\theta}(U)V\|_{\dot{H}^{s}}
+\|(\Psi_{diag}^{\theta}(U))^{-1}V\|_{\dot{H}^{s}}
\leq \big(1+C \|{U}\|_{K,s_0}\big)\|V\|_{\dot H^s} \,;
$$
\item[(ii)] the function $W:=(\Psi_{diag}^{\theta}(U)U)_{|_{\theta=1}}$ solves the real-to-real system 
\begin{equation}\label{sistemaDiag}
 \pa_{t}W =
 \opbw\left(
 \sm{d(U;x,\xi) + r_{-1/2}(U;x,\x)}{0}{0}{\overline{d(U;x,-\xi)}
 +\bar{r_{-1/2}(U;x,-\x)}}\right)
W +R(U)W + B(U)W
\end{equation}
where $d(U;x,\x)$  is a  symbol of the form
\begin{equation}\label{sistemainiziale3}
d(U;x,\xi):=- \im V(U;x) \xi  -  \im (1+ a^{(0)}(U;x) ) |\xi|^{1/2} 
\end{equation}
where
 $  a^{(0)}(U;x)  $ is a function in $  \Sigma {\cF}^{\R}_{K,1,1}  $, 
$ r_{-1/2}(U;x,\x) $ is a symbol in $  \Sigma {\Gamma}^{-1/2}_{K,2\vr+2 ,1} $, 
$R(U) $ is a real-to-real matrix of smoothing operators in $ \Sigma\mathcal{R}^{-\vr}_{K,2\vr+2 ,1}\otimes\mathcal{M}_2(\mathbb{C})$, 
and $B(U)$ satisfies the estimate for any $s \in \R$
\be\label{estB(U)}
\norm{B(U)V}_{\dot H^s} \leq C_s 
\norm{V}_{\dot H^s} \norm{U}_{2\vr+1 , s_0}^3  \, . 
\ee
\end{enumerate}
{\sc Lipschitz property:} The operator $\Psi_{diag}^\theta(U) $ satisfies a Lipschitz property as 
\eqref{est.diff.Phi} and  $\Psi_{diag}^{\theta}(0) = \uno$.
  \end{prop}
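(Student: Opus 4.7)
The strategy is to construct $\Psi_{diag}^\theta(U)$ as a finite composition of paradifferential and smoothing flows, each designed to eliminate the off-diagonal contributions at successively lower orders of the symbolic expansion of \eqref{ww.paraco}. Following the commentary below the proposition, the key novelty compared to \cite{BFP} is twofold: one has to track the Lipschitz dependence of every conjugating flow on $U$, and one stops the block-diagonalization procedure at cubic homogeneity, collecting the quartic remainder into the bounded operator $B(U)$ whose gain is four powers of $U$ (cf.~\eqref{estB(U)}). The Lipschitz control of the flows will be a direct consequence of \Cref{lem:flow.ad} and \Cref{flow.s.ad} once the generators are shown to be either paradifferential symbols whose coefficients are explicit polynomials in $(\eta,\psi)$ (hence Lipschitz by \eqref{eq:patB} and the analyticity \eqref{an:DNX}--\eqref{an:u}), or pluri-homogeneous smoothing operators, which are automatically Lipschitz up to the chosen homogeneity.

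First I would block-diagonalize the top order $|\xi|^{1/2}$. Looking at \eqref{A12}, the off-diagonal part of $\opbw{\im A_{1/2}(U;x)|\xi|^{1/2}}$ equals $\opbw{\im a(U;x)\sm{0}{-1}{1}{0}|\xi|^{1/2}}$, which is conjugated away by the flow $\Psi_{1/2}^\theta(U)$ generated by a paradifferential operator of order $-1/2$ with coefficient $m_{-1}(U;x)$ chosen so that the cohomological equation is solved algebraically (this is the flow of type (iii) in \Cref{lem:flow.ad}). Since $a\in\Sigma\cF^\R_{K,1,1}$ depends on $(\eta,\psi)$ analytically through $B$, $V$ and the identity $a=\tfrac12(\pa_t B + V B_x)$, substituting \eqref{eq:patB} and using the analyticity of the water waves vector field \eqref{wwana} shows that $U\mapsto m_{-1}(U;\cdot)$ is Lipschitz in the $\dot H^{s_0}$ topology, which activates \eqref{est.diff.Phi}. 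After this conjugation, the matrix symbols at orders $\xi$ and $|\xi|^{1/2}$ become block-diagonal and one obtains new off-diagonal terms of order $0$, $-1/2$, and lower, together with a new smoothing remainder, all inheriting the Lipschitz property.

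Next I would iterate: for each order $j\in\{0,-1/2,-1,\dots,-\vr+1/2\}$, construct a flow $\Psi_j^\theta(U)$ of the form \eqref{flussoG} generated by $M^{(j)}(U)=\opbw{\sm{0}{b^{(j)}(U;x,\xi)}{\bar{b^{(j)}(U;x,-\xi)}}{0}}$, where $b^{(j)}$ is the pluri-homogeneous symbol in $\sum_1^3 \widetilde\Gamma^{j-1/2}_q$ uniquely determined (at each homogeneity) by the cohomological equation that cancels the off-diagonal symbol of order $j$, \emph{truncated at homogeneity three}. The resolution of the cohomological equation is purely algebraic, so it preserves pluri-homogeneity, and \Cref{lem:flow.ad}(iii) yields the Lipschitz bound \eqref{est.diff.Phi}. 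The remainders produced by the Bony--Weyl symbolic calculus at each step split into a pluri-homogeneous part that will be eliminated at the next step, and a non-homogeneous part that vanishes at quartic degree in $U$ (of order $\leq j-1/2$), which I collect into the quartic remainder $B(U)W$. The bound \eqref{estB(U)} then follows from \Cref{prop:action} plus the tame estimates \eqref{piove} once it is checked that each such contribution gains three powers of $\|U\|_{K',s_0}$; the fourth power is automatic since $B(U)W$ is linear in $W$.

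Finally, to remove the off-diagonal smoothing operators up to cubic homogeneity, I would apply successive flows of the form \eqref{flussoG} with generators in $\widetilde\cR^{-\vr}_p\otimes\cM_2(\C)$ for $p=1,2,3$, solving the corresponding homological equations algebraically; \Cref{flow.s.ad} gives the boundedness on $\dot H^s$ and the Lipschitz property \eqref{est.diff.Phi2}. All non-homogeneous smoothing contributions of degree $\geq 4$ are absorbed either into $R(U)$ (if $\vr$-smoothing) or into $B(U)W$ (bounded cases). The composition $\Psi_{diag}^\theta(U)$ of all these flows satisfies item (i) by multiplying the individual bounds, and $\Psi_{diag}^\theta(0)=\uno$ since all generators vanish at $U=0$. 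The main obstacle is the bookkeeping of the Lipschitz estimates through the telescoping compositions: one needs to ensure that at every step the generator depends Lipschitz-continuously on $U$ in a low Sobolev norm, and that the symbolic calculus identities (Bony--Weyl composition, exponential conjugation of symbols) preserve this property up to homogeneity three, which is precisely why truncating at cubic degree and deferring everything of higher order to the single bounded remainder $B(U)W$ is essential.
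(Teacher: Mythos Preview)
Your overall strategy matches the paper's: compose a first flow $\Psi_{-1}^\theta(U)$ that diagonalizes at order $|\xi|^{1/2}$ (with Lipschitz dependence coming from the analyticity of $a(U;x)$ via \eqref{eq:patB}), followed by an iteration of flows generated by pluri-homogeneous off-diagonal symbols of decreasing order. Two points deserve correction.

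First, the generator of the initial step is a \emph{function} $m_{-1}(U;x)$ (a paraproduct of order $0$, see \eqref{def:lambda}--\eqref{generatore20}), not a symbol of order $-1/2$; this is the case (iii) of \Cref{lem:flow.ad} with $g(U;x)=m_{-1}(U;x)$ satisfying \eqref{0509:1056}. More importantly, in the iterative step the paper truncates the generator at homogeneity $\leq 2$, not $\leq 3$: one takes $m_j = \cP_{\leq 2}\big(-\chi(\xi)b_j/(2\im(1+a^{(0)})|\xi|^{1/2})\big)$, so the residual off-diagonal symbol $q_j$ lies in $\Gamma^0_{K,j+2,3}$ and contributes $\norm{U}_{K',s_0}^3$ to $B(U)$, matching \eqref{estB(U)}. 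Your phrasing ``vanishes at quartic degree in $U$'' but ``gains three powers'' is inconsistent; the correct count is that $B(U)$ is cubic in $U$ (and linear in $W$, making $B(U)W$ quartic overall).

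Second, your final step---conjugating away off-diagonal smoothing operators via flows with generators in $\widetilde\cR^{-\vr}_p$---is \emph{not} part of this proposition. The statement of \Cref{teodiagonal} allows $R(U)$ to be an arbitrary (not block-diagonal) matrix of smoothing operators; the smoothing reduction is performed later, inside \Cref{cor:BNF} (the maps $\cB_1^\theta,\cB_2^\theta$). Including it here is harmless but superfluous, and the Lipschitz estimate you would get from \Cref{flow.s.ad} loses one derivative relative to what \Cref{teodiagonal} claims (compare \eqref{est.diff.Phi2} with \eqref{est.diff.Phi}).
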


\noindent
Proposition \ref{teodiagonal} is proved by 
applying a sequence a  transformations 
which iteratively block-diagonalize \eqref{ww.paraco} in decreasing orders up to the quartic-degree bounded term  $ B(U)W  $.

{\sc Step 1: Block diagonalization at order $\frac12$.} 
We diagonalize the matrix
of symbols $A_{\frac12}(U; x)|\xi|^{\frac12}$
in \eqref{ww.paraco}, up to a matrix of symbols of order 0. 
The following lemma is \cite[Lemma 3.11]{BFP}, and we prove  in addition  that 
the normal form map satisfies a Lipschitz property  as 
\eqref{est.diff.Phi}.
\begin{lem}\label{lem:step12}
 Define the function 
\begin{align}\label{def:lambda}
m_{-1}(U;x) := - \log \Big(\frac{1+\lambda}{\sqrt{(1+ a+ \lambda)^2 - a^2}}\Big) \in \Sigma\mathcal{F}_{K,1,1} \, , \qquad \lambda(U;x):= \sqrt{1+2 a(U;x)}
\, , 
\end{align}
where
 $a(U;x)$ is the function in \eqref{A12}, and 
the flow 
\begin{equation}\label{generatore20}
\pa_{\theta}\Psi_{-1}^{\theta}(U) =\opbw(M_{-1})\Psi_{-1}^{\theta}(U), \quad 
\Psi_{-1}^{0}(U) = {\rm Id} \, , \quad  
M_{-1} :=
\begin{pmatrix} 0 & m_{-1}(U;x) \\ \ov{m_{-1}(U;x)} & 0 \end{pmatrix}
\, .
\end{equation}
If $U$ solves \eqref{ww.paraco},  then the function 
\begin{equation}\label{TRA5}
W_0:=(\Psi_{-1}^{\theta}(U))_{|_{\theta=1}}U 
\end{equation}
 solves the real-to-real system
\begin{equation}\label{NuovoParaprod}
\begin{aligned}
\pa_t W_0=\opbw\Big( 
\sm{ d(U;x,\x)}{ 0}{0}{\ov{d(U;x,-\x)} }
+A^{(0)} 
\Big)W_0+R^{(0)}(U)W_0
\end{aligned}
\end{equation}
where $ d(U;x,\xi)  $ is the symbol in  \eqref{sistemainiziale3} with  
$a^{(0)}(U;x):= \lambda(U;x) -1 \in \Sigma \cF^\R_{K,1, 1} $, 
a matrix of symbols 
\begin{equation}\label{lambdazero}
\begin{aligned}
A^{(0)} :=\left(\begin{matrix}  c_0(U;x, \xi)  & b_0(U;x, \xi) \\
 \ov{b_0(U;x, - \xi)} & \ov{c_0(U;x, - \xi)}   \end{matrix}\right)  \, , \ 
c_0 \in \Sigma\Gamma^{-\frac{1}{2}}_{K,2,1} \, ,\;\;  b_0 \in \Sigma\Gamma^{0}_{K,2,1}  \, , 
\end{aligned}
\end{equation}
and a real-to-real matrix of smoothing operators $R^{(0)}(U)$ in  
$ \Sigma\mathcal{R}^{-\vr}_{K,2,1}\otimes\mathcal{M}_2(\mathbb{C})$.

 {\sc Lipschitz property:} The operator $\Psi_{-1}^\theta(U) $ satisfies a Lipschitz property as 
\eqref{est.diff.Phi} and $\Psi_{-1}^{\theta}(0) = \uno$.
\end{lem}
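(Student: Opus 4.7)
\textbf{Proof plan for Lemma \ref{lem:step12}.}
The scheme follows the block-diagonalization strategy of \cite[Lemma 3.11]{BFP}: we conjugate system \eqref{ww.paraco} by the flow $\Psi_{-1}^\theta(U)$ generated by the off-diagonal paradifferential operator $\Opbw{M_{-1}}$ in \eqref{generatore20}, with $m_{-1}$ chosen so as to cancel the off-diagonal component of the order-$1/2$ matrix $A_{1/2}(U;x)|\xi|^{1/2}$ in \eqref{A12}. The explicit formula \eqref{def:lambda} is forced by the algebraic requirement that conjugation by $\exp(M_{-1})$ diagonalize the symmetric matrix
$\bigl(\begin{smallmatrix} -(1+a) & -a \\ a & 1+a \end{smallmatrix}\bigr)$
to $\mathrm{diag}(-(1+\lambda), 1+\lambda)$ with $\lambda = \sqrt{1+2a}$; the logarithm in \eqref{def:lambda} is precisely the infinitesimal generator of the corresponding similarity transformation.

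Concretely, I would first compute
\[
\partial_t W_0 = (\partial_t \Psi_{-1}^1(U)) U + \Psi_{-1}^1(U)\, \partial_t U,
\]
substitute \eqref{ww.paraco} for $\partial_t U$, and expand $\Psi_{-1}^1(U)\, \Opbw{\im A_1 \xi + \im A_{1/2} |\xi|^{1/2} + A_0 + A_{-1}}\, \Psi_{-1}^{-1}(U)$ using the Bony-Weyl symbolic calculus along the flow (the Lie-series/Heisenberg expansion: $\partial_\theta (\Psi_{-1}^{-\theta} \Opbw{A} \Psi_{-1}^\theta ) = \Psi_{-1}^{-\theta}[\Opbw{A}, \Opbw{M_{-1}}]\Psi_{-1}^\theta$, plus lower-order commutator terms). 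By construction of $m_{-1}$, the off-diagonal entry at order $1/2$ cancels, producing the diagonal principal part $d(U;x,\xi)$ in \eqref{sistemainiziale3} with $a^{(0)} = \lambda - 1 \in \Sigma\mathcal{F}^{\R}_{K,1,1}$ and a remainder whose symbols lie in $\Sigma\Gamma^{-1/2}_{K,2,1}$ (order-$0$ off-diagonal $b_0$, order-$(-1/2)$ diagonal correction $c_0$), together with a matrix $R^{(0)}(U)$ of $\vr$-smoothing operators in $\Sigma\mathcal{R}^{-\vr}_{K,2,1}\otimes \mathcal{M}_2(\C)$ coming from the Bony-Weyl remainders. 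All symbol/smoothing-class tracking follows verbatim the computations in \cite{BFP}, which I would quote rather than redo.

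The novel ingredient, and the main obstacle, is the Lipschitz property of $U \mapsto \Psi_{-1}^\theta(U)$. By Lemma \ref{lem:flow.ad}\emph{(iii)} applied with generator $g = m_{-1}$, it suffices to establish
\[
\norm{m_{-1}(U_1;\cdot) - m_{-1}(U_2;\cdot)}_{L^\infty} \lesssim \norm{U_1 - U_2}_{\dot H^{s_0}}
\]
for $U_1, U_2$ small in $\dot H^{s_0}_{\R}(\T,\C^2)$, together with $m_{-1}(0;\cdot) = 0$. The latter is immediate: at $U=0$ one has $a=0$, hence $\lambda = 1$, so the argument of the logarithm in \eqref{def:lambda} equals $1$. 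For the former, since $m_{-1}$ is an analytic function of $a$ in a neighbourhood of $a=0$ (the denominator $\sqrt{(1+a+\lambda)^2 - a^2}$ is bounded away from zero), it reduces to the Lipschitz continuity of $U \mapsto a(U;\cdot)$ into $L^\infty$. This is where I would use the identity $a = \tfrac12(\partial_t B + V B_x)$ of \eqref{A12}, rewrite $\partial_t B$ via \eqref{eq:patB} in terms of $(\eta,\psi)$ and the water waves vector field $X$, and invoke the analyticity statements \eqref{an:DNX} (for $B$, $V$), \eqref{wwana} (for $X$), and \eqref{an:u} (to translate the $U$-topology into $(\eta,\psi)$-topology via the invertible map \eqref{u0}). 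Each of these maps is analytic, hence Lipschitz on small balls, and Sobolev algebra together with Sobolev embedding $H^{s_0-1} \hookrightarrow L^\infty$ closes the estimate.

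The estimate $\norm{\Psi_{-1}^\theta(U_1) Z - \Psi_{-1}^\theta(U_2) Z}_{\dot H^s} \lesssim \norm{U_1 - U_2}_{\dot H^{s_0}} \norm{Z}_{\dot H^s}$ analogous to \eqref{est.diff.Phi} then follows directly from Lemma \ref{lem:flow.ad}\emph{(iii)}, concluding the proof. I do not foresee further difficulties; the only quantitative point to monitor is that the loss of regularity in passing from $U$ to $a$ (and thus to $m_{-1}$) is absorbed by choosing $s_0$ sufficiently large, which is consistent with the scheme of Theorem \ref{BNFtheorem}.
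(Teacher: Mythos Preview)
Your proposal is correct and follows essentially the same approach as the paper: defer the conjugation computation producing \eqref{NuovoParaprod} to \cite[Lemma 3.11]{BFP}, then establish the Lipschitz property by invoking Lemma \ref{lem:flow.ad}\emph{(iii)} and reducing to the estimate \eqref{0509:1056} for $m_{-1}$, which in turn follows from the analyticity chain $U \mapsto (\eta,\psi) \mapsto a \mapsto m_{-1}$ built from \eqref{wwana}, \eqref{an:DNX}, and \eqref{eq:patB}. One small point to make precise when writing it out: \eqref{an:u} gives analyticity of $(\eta,\psi)\mapsto u$, so to pass from $U$ back to $(\eta,\psi)$ you need the local analytic invertibility of the good-unknown map $\cG$ in \eqref{omega0} (via $\di\cG(0,0)=\uno$), exactly as the paper does.
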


\begin{proof}
We prove only the Lipschitz property of the flow map $\Psi_{-1}^\theta(U) $, 
the other statements  are proved in  \cite[Lemma 3.11]{BFP}. It follows by Lemma \ref{lem:flow.ad} $(iii)$ once we prove that 
the function $m_{-1}(U;x) $ satisfies the 
Lipschitz estimate  \eqref{0509:1056}.
We now prove that, provided 
$\ts > 5 $ and $ r> 0 $ is small enough,  the map
\be\label{defm-1U}
B^{\ts}_\R(r) := \{ U \in H_\R^\ts  (\T,\C^2) , 
\| U \|_{{\dot H}^\ts} < r \} 
\to  H^{\ts-\frac94}(\T,\R) \, , 
\quad U\mapsto m_{-1}(U;\cdot) \ \   \text{is analytic} \, ,
\ee
and consequently  $m_{-1}(U_1;\cdot)- m_{-1}(U_2;\cdot)$ satisfies the Lipschitz estimate \eqref{0509:1056}. 
The explicit form of the function $m_{-1}(U;x) $ in 
\eqref{def:lambda} is obtained by a direct 
calculus. 
Since $m_{-1}(U;x)$ depends on $U$ only through the function $a(U;x)$, and the map  $ a \mapsto m_{-1} $ is  analytic provided $\norm{a}_{H^{s_0}}$ is small enough, by composition,  
it is sufficient to 
prove, for any $\ts > 5 $, 
the analyticity of the map 
\be\label{aUdot}
B^{\ts}_\R(r)\to  H^{\ts-\frac94}(\T, \R) \, , \quad 
U \mapsto a(U;\cdot) \stackrel{\eqref{A12}} 
= \tfrac12 \left( \pa_t B  + V B_x \right)
\stackrel{\eqref{eq:patB}} = \tfrac12  \big( \di_{\eta,  \psi} 
 B(\eta, \psi)[X(\eta, \psi)] + V B_x
 \big) 
\ee
where $ X(\eta, \psi) $  is the water waves vector field, 
which is analytic as stated in \eqref{wwana}, and $ V, B $ are analytic as well in $ (\eta, \psi) $  as stated in \eqref{an:DNX}. 
Thus the map 
 $(\eta, \psi) \mapsto a(\eta,\psi) $ 
 is analytic from 
$B^{\ts}(r)\times \dot H^{\ts}(\T) \to H^{\ts-2}(\T)$ for any $ \ts > \frac92 $.
To conclude the proof it is sufficient to 
express 
$(\eta, \psi)$ as  functions of $U$,
and prove that this mapping is analytic. First note 
that the good unknown map $(\eta, \psi) \mapsto \cG(\eta, \psi)$  in \eqref{an:omega} 
is, for any $ \ts > \tfrac72 $  locally invertible around zero with an analytic inverse  
$(\eta, \upomega) \mapsto (\eta, \psi) = \cG^{-1}(\eta, \upomega)$ because  $\di \cG(0,0) = \uno$.   
Hence, cf. \eqref{u0}, \eqref{omega0},
$$
 H^\s_\R(\T,\C^2) \to H^{\ts-\frac14}(\T,\R)\times H^{\ts - \frac14}(\T,\R) \ , \quad 
U= \vect{u}{\bar u} \mapsto (\eta, \psi) = \cG^{-1}\Big( \frac{|D|^{\frac14} [u + \bar u]}{\sqrt{2}} \, , \ \frac{ |D|^{-\frac14}[u - \bar u]}{\im \sqrt{2}} \Big)
$$ 
is analytic 
 being  composition of analytic maps.
As a result, by  composition with the analytic 
map in \eqref{aUdot} 
we deduce 
\eqref{defm-1U}. 
\end{proof}

{\sc Step 2: Block diagonalization at negative orders.} 
We  now  block-diagonalize the matrix of symbols $A^{(0)}$ in \eqref{lambdazero} using  the flow of paradifferential operators with   pluri-homogeneous symbols. 
Given a symbol $a(U;  x, \xi)$ in $ \Sigma \Gamma_{K, K', p}^m[r,N]$   of the form \eqref{espsymbol} 
we denote, for $p\leq q \leq \tN$, the projections on the pluri-homogeneous symbols
$$
\cP_{\leq q}[ a(U;  x, \xi)] :=  \sum_{l=p}^{q} a_l (U; x, \xi) \, ,
 \quad \text{and} \quad 
 \cP_{\geq q+1}[ a(U;  x, \xi)] := a(U;  x, \xi) - \cP_{\leq q}[ a(U;  x, \xi)] \, . 
 $$

\begin{lem}\label{lem:indud}
For $ j = 0, \ldots, 2 \vr $, there are $s_0 >0$ and \\
$ \bullet $ 
paradifferential operators of the form  
 \begin{align}
\mathcal{Y}^{(j)}(U) :=
\opbw \Big( \sm{d(U;x,\xi)}{ 0}{ 0}{  \bar{d(U;x,-\xi)}}\Big)
+\opbw (A^{(j)} ) \label{sist2 j-th}
\end{align}
where  $d(U;x,\x)$ is the symbol  defined in \eqref{sistemainiziale3},
$A^{(j)}$ is a  matrix of symbols of the form
\begin{equation}\label{bjbjbj}
A^{(j)} =\left(
\begin{matrix}
 c_j(U;x,\xi)& b_j(U;x,\xi)\\
 \bar{b_j(U;x,-\xi)} & \bar{c_j(U;x,-\xi)}
\end{matrix}
\right), \;\; c_j\in \Sigma\Gamma^{-\frac{1}{2}}_{K,j+2,1},\;\;  b_j\in \Sigma\Gamma^{-\frac{j}{2}}_{K,j+2,1} \, , 
\end{equation}
$ \bullet  $  
a real-to-real  matrix of smoothing operators $R^{(j)} (U) $  in  
$\Sigma\mathcal{R}^{-\vr}_{K,j+2,1}\otimes\mathcal{M}_2(\mathbb{C})$,\\
$ \bullet  $  
a real-to-real  matrix of operators $B^{(j)} (U) $ satisfying
for any $s \in \R$, for any $ V \in \dot H^s (\T,\C^2) $, 
\be\label{estB(U)j}
\|B^{(j)}(U)V\|_{\dot H^s} \leq C_s \norm{U}_{j+1, s_0}^3 
\norm{V}_{\dot H^s}
\ee
such that, if $ W_j $, $ j = 0, \ldots, 2 \vr - 1 $, solves 
\be\label{sist1 j-th}
\pa_{t}W_j=\big(\mathcal{Y}^{(j)}(U)+R^{(j)}(U) + B^{(j)}(U)\big)W_j, \quad  W_j:=\begin{pmatrix}w_j \\ \bar w_j \end{pmatrix} \, , 
\ee
then 
\begin{equation}\label{nuovavarjth}
W_{j+1}:=(\Psi_{j}^{\theta}(U)W_{j})_{|_{\theta=1}} \, , 
\end{equation}
where $\Psi_{j}^{\theta}(U)$ is the flow at time $\theta\in [0,1]$ of 
\be\label{flow-jth}
\partial_{\theta} \Psi^{\theta}_{j} (U) = \im  \opbw{( M^{(j)}(U; x,\xi) )} \Psi_{j}^{\theta}(U) \, ,
 \quad \Psi_{j}^{0}(U) = {\rm Id} \, ,  
\ee
with
\begin{equation}\label{generatore-jth}
\!\! \small
M^{(j)} (U;x,\xi):=
\begin{pmatrix} 0 & - \im  m_{j}(U;x,\x) \\ - \im \,  \bar{m_{j}(U;x,-\x)} & 0 \end{pmatrix}\,,
\,\;\;
m_j := \cP_{\leq 2} \Big( \frac{-\chi(\xi)b_{j}(U;x,\x)}{2\im(1+a^{(0)}(U;x))|\xi|^{\frac{1}{2}}} \Big) \in 
\sum_{1}^2\wt \Gamma^{-\frac{j+1}{2}}_{p} \, ,
\end{equation}
and  $\chi$ is defined in \eqref{cutoff11},
satisfies a system of the form \eqref{sist1 j-th} with $ j + 1 $ instead of $ j $.\\
{\sc Lipschitz property:} 
The operator $\Psi_{j}^\theta(U) $ satisfies a Lipschitz property as 
\eqref{est.diff.Phi} and $\Psi_{j}^{\theta}(0) = \uno$.
\end{lem}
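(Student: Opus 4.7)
The proof will proceed by the standard paradifferential Egorov-type conjugation argument, adapted to preserve the new quartic bounded remainder $B^{(j)}(U)$. The flow $\Psi^\theta_j(U)$ in \eqref{flow-jth} is generated by a purely off-diagonal, self-adjoint (real-to-real) matrix of paradifferential operators with symbol $M^{(j)}(U;x,\xi)$ of order $-(j+1)/2$, so Proposition~\ref{prop:action} guarantees the boundedness $\opbw(M^{(j)}):\dot H^s\to \dot H^{s+(j+1)/2}$, and Lemma~\ref{lem:flow.ad}(ii)-(iii) yields a well-defined flow satisfying a tame bound as in \eqref{flusso.est}. Differentiating the conjugated variable $W_{j+1}=\Psi^1_j(U)W_j$ in $t$ and using that $W_j$ solves \eqref{sist1 j-th}, I would obtain
\begin{equation}\label{plan:conj}
\pa_t W_{j+1}=\bigl[\Psi^1_j(U)\,\bigl(\cY^{(j)}(U)+R^{(j)}(U)+B^{(j)}(U)\bigr)\,(\Psi^1_j(U))^{-1}+(\pa_t\Psi^1_j(U))(\Psi^1_j(U))^{-1}\bigr]W_{j+1}.
\end{equation}
The analysis then amounts to expanding each conjugation in \eqref{plan:conj} in a (finite) Lie series, identifying what has to be cancelled and what can be absorbed into each of the three classes $A^{(j+1)}$, $R^{(j+1)}$, $B^{(j+1)}$.

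The key computation is the conjugation of the principal symbol $d(U;x,\xi)$ in \eqref{sistemainiziale3}. By the symbolic calculus for paradifferential Weyl quantization, the commutator of $\opbw(d)$ with the off-diagonal operator $i\opbw(M^{(j)})$ produces, at leading order, the Moyal bracket $i\{d,M^{(j)}\}_W$, which on the off-diagonal block reads $-2(1+a^{(0)}(U;x))|\xi|^{1/2}\, m_j(U;x,\xi)$ up to symbols of strictly lower order; solving this Bony homological equation against $\cP_{\leq 2}b_j$ is exactly the content of the choice \eqref{generatore-jth}, which is well defined because $a^{(0)}(U;\cdot)$ is small (so $1+a^{(0)}$ is bounded below) and the cutoff $\chi(\xi)$ in \eqref{cutoff11} removes the harmless singularity at $\xi=0$. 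As $\cP_{\leq 2}b_j\in \sum_1^2\widetilde\Gamma^{-j/2}_p$ and the denominator is elliptic of order $1/2$, the symbol $m_j$ belongs to $\sum_1^2\widetilde\Gamma^{-(j+1)/2}_p$, consistently with \eqref{generatore-jth}.

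The terms arising from \eqref{plan:conj} would then be sorted as follows. The iterated commutators $[\opbw(d),\opbw(M^{(j)})],\ [[\opbw(d),\opbw(M^{(j)})],\opbw(M^{(j)})],\ldots$ produce, by symbolic calculus, either (a) diagonal symbols of order $-1/2$ that go into $c_{j+1}\in \Sigma\Gamma^{-1/2}_{K,j+3,1}$, (b) new off-diagonal symbols of order $-(j+1)/2$ that go into $b_{j+1}\in \Sigma\Gamma^{-(j+1)/2}_{K,j+3,1}$, or (c) smoothing remainders of order $\leq -\varrho$ going into $R^{(j+1)}$. The residual off-diagonal piece $\cP_{\geq 3}b_j$ of $A^{(j)}$, which by construction is \emph{not} cancelled by the choice of $m_j$, is a paradifferential operator of negative order whose symbol vanishes at cubic degree in $U$: by Proposition~\ref{prop:action} it is bounded on $\dot H^s$ with operator norm $\lesssim \norm{U}^3_{j+1,s_0}$, and therefore enters $B^{(j+1)}(U)$ with the estimate \eqref{estB(U)j}. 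The conjugates $\Psi^1_j(U)R^{(j)}(U)(\Psi^1_j(U))^{-1}$ stay in $\Sigma\cR^{-\varrho}_{K,j+3,1}\otimes\cM_2(\C)$ by Lemma~\ref{flow.s.ad} and standard composition lemmata, while $\Psi^1_j(U)B^{(j)}(U)(\Psi^1_j(U))^{-1}$ keeps the bound \eqref{estB(U)j} because $\Psi^{\pm 1}_j(U)$ are $\dot H^s$-bounded by $(1+C\norm U_{\dot H^{s_0}})$. Finally, the time-derivative term $(\pa_t\Psi^1_j)(\Psi^1_j)^{-1}$ is handled as in \cite{BFP} by expanding $\pa_t M^{(j)}(U;x,\xi)$: because $M^{(j)}$ is pluri-homogeneous in $U$ of degrees $1,2$, each $\pa_t$ costs one factor of $U$ in the norm counting (hence the index $K'=j+3$ is respected) and the output is a $\varrho$-smoothing operator of the required degree.

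The Lipschitz statement on $\Psi_j^\theta(U)$, with $\Psi_j^\theta(0)=\mathrm{Id}$, follows directly from Lemma~\ref{lem:flow.ad}(iii): the generator $m_j(U;\cdot)$ is polynomial (of degrees $1,2$) in $U$ with coefficients depending analytically on the smooth function $a^{(0)}(U;x)$, whose Lipschitz estimate in $U$ was proved in the course of Lemma~\ref{lem:step12} via the analyticity \eqref{defm-1U}, so $m_j$ itself is Lipschitz in the sense required by Lemma~\ref{lem:flow.ad}. The main obstacle I anticipate is bookkeeping: one must verify that at \emph{every} iteration $j=0,\ldots,2\varrho-1$ the residual off-diagonal degree-$\geq 3$ terms only contribute to $B^{(j+1)}$ with the sharp quartic estimate \eqref{estB(U)j} and do not leak into lower-order paradifferential symbols, which would have destroyed the order-by-order diagonalization. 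This relies crucially on the fact that $\cP_{\geq 3}$ of a symbol of negative order is itself a bounded operator with a cubic-in-$U$ norm bound, so it never enters the symbolic hierarchy at positive or zero order; this is the point where the present scheme diverges from \cite{BFP} (which removed symbols at all homogeneities) and is what ultimately enables the Lipschitz property \eqref{lipB} of the full normal form map.
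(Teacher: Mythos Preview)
Your approach is correct and essentially identical to the paper's: inductive conjugation by the pluri-homogeneous flow \eqref{flow-jth}, with the residual degree-$\geq 3$ off-diagonal piece (precisely $q_j=(1+a^{(0)})\,\cP_{\geq 3}\bigl(\chi(\xi)b_j/(1+a^{(0)})\bigr)\in\Gamma^0_{K,j+2,3}$, not simply $\cP_{\geq 3}b_j$) absorbed into $B^{(j+1)}$, and the Lipschitz property obtained directly from the pluri-homogeneity of $m_j$ via Lemma~\ref{lem:flow.ad}(iii) (no appeal to the analyticity of $a^{(0)}$ is needed here). One small correction: the term $(\pa_t\Psi^1_j)(\Psi^1_j)^{-1}$ is not purely $\varrho$-smoothing but a paradifferential operator of order $-(j+1)/2$ plus a smoothing remainder, so it contributes to $A^{(j+1)}$ (specifically to $b_{j+1}$) and not only to $R^{(j+1)}$.
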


\begin{proof}
The  Lipschitz property of  $\Psi_{j}^\theta(U) $  follows by Lemma \ref{lem:flow.ad} $(iii)$ since the symbol $ M^{(j)} (U) $ in \eqref{generatore-jth} is pluri-homogeneous.
The rest of the proof proceeds by induction. 
\\[1mm]
{\bf Inizialization.} System \eqref{NuovoParaprod} is \eqref{sist1 j-th} for $ j = 0 $ where  the 
paradifferential operator $ \mathcal{Y}^{(0)}(U)  $ has the form   \eqref{sist2 j-th} with  the matrix of symbols 
$ A^{(0)}$ defined in  \eqref{lambdazero} and $B^{(0)} = 0$. 
\\[1mm]
{\bf Iteration.} 
We  argue by induction. Suppose that $ W_j $ solves system \eqref{sist1 j-th} with operators
$ \mathcal{Y}^{(j)}(U)  $ of  the form   \eqref{sist2 j-th}-\eqref{bjbjbj},  smoothing operators 
$R^{(j)} (U) $  in   $\Sigma\mathcal{R}^{-\vr}_{K,j+2,1}\otimes\mathcal{M}_2(\mathbb{C})$ and
a matrix of real-to-real operators $B^{(j)}(U)$ satisfying 
\eqref{estB(U)j}.
By 
formula \cite[(A.2)]{BFP} the conjugated system has the form 
\be\label{sis:j+1}
\pa_{t}W_{j+1} =   \big( (\pa_t  \Psi_{j}^{1}(U)) \Psi_{j}^{-1}(U) +
 \Psi_{j}^{1}(U)  \mathcal{Y}^{(j)} (U)   \Psi_{j}^{-1}(U) 
 + \Psi_{j}^{1}(U)  B^{(j)} (U)   \Psi_{j}^{-1}(U) 
  \big) W_{j+1}
\ee
up to a smoothing operator in  $\Sigma\mathcal{R}^{-\vr}_{K,j+2,1}\otimes\mathcal{M}_2(\mathbb{C})$. 

By the proof of \cite[Lemma 3.12]{BFP}, 
$(\pa_t  \Psi_{j}^{1}(U)) \Psi_{j}^{-1}(U)$
is a 
 paradifferential operator with symbol in $ \Sigma\Gamma^{-\frac{j+1}{2}}_{K,j+3,1}\otimes\mathcal{M}_2(\C)$ plus a 
 smoothing operator in $\Sigma\mathcal{R}^{-\vr}_{K,j+3,1}\otimes\mathcal{M}_2(\mathbb{C})$, and  
 \begin{align}  \label{eq:428}
  \Psi_{j}^{1}(U)  & \mathcal{Y}^{(j)}(U)  \Psi_{j}^{-1}(U) 
  = \opbw\Big( \sm{d(U;x,\x)}{ 0}{0}{\bar{d(U;x,-\x)}}
  +\sm{c_j(U;x,\x)}{ q_j(U; x , \xi)}{ \bar{q_j(U; x, -\xi)}}{\bar{c_j(U;x,-\x)}}
  \Big) 
  \ , \\
& q_{j}(U;x,\x):= \chi(\x) b_{j}(U;x,\x) +2\im m_{j}(U;x,\xi)(1+a^{(0)}(U;x))|\xi|^{\frac{1}{2}} \, ,  
\label{equa-jthquatuor}
\end{align} 
plus a paradifferential operator with symbol in $ \Sigma\Gamma^{-\frac{j+1}{2}}_{K,j+2,1} \otimes\mathcal{M}_2(\C) $ 
and a 
 smoothing operator belonging to 
 $\Sigma\mathcal{R}^{-\vr}_{K,j+2,1}\otimes\mathcal{M}_2(\mathbb{C})$. 
 
Consider now the off-diagonal symbol 
$q_{j}(U;x,\x)$ in \eqref{eq:428}.
By \eqref{equa-jthquatuor} and 
  the choice of $m_{j}(U;x,\x)$ in \eqref{generatore-jth} we have that
  \begin{align} \label{tildebj}
  q_j(U;x, \xi)
& = (1+a^{(0)}(U;x)) \, 
\cP_{\geq 3} \left( \frac{\chi(\xi)b_{j}(U;x,\x)}{1+a^{(0)}(U;x)} \right) 
\in \Gamma^0_{K, j+2, 3}[r]
  \, .
  \end{align}
We conclude that \eqref{nuovavarjth} solves a system of the form 
\eqref{sist2 j-th}-\eqref{sist1 j-th}
 with $j\rightsquigarrow j+1$ and
 \begin{align*}
 B^{(j+1)}(U) & := \opbw \Big({ \sm{ 0}{ q_{j}(U;x,\x) }{ \bar{q_{j}(U;x,-\x)}}{ 0}} \Big)+ \Psi_{j}^{1}(U)  B^{(j)} (U)   \Psi_{j}^{-1}(U)    
 \end{align*}
 which satisfies, by  \eqref{tildebj},  \cite[Proposition 3.8]{BD}, $ \Psi_j^\theta (U) $ satisfies 
 \eqref{flusso.est},  \eqref{estB(U)j}, the bound 
$ \|B^{(j+1)}(U)V\|_{\dot H^s} \lesssim_s 
 \norm{U}_{j+2, s_0}^3 \norm{V}_{\dot H^s} $,  
namely \eqref{estB(U)j} with $j \leadsto j+1$.
\end{proof}

\begin{proof}[Proof of Proposition \ref{teodiagonal}] The proof follows combining Lemmata \ref{lem:step12} and  \ref{lem:indud}, defining
$$
\Psi_{diag}^\theta(U):= \Psi_{2\vr-1}^\theta(U) \circ \cdots \Psi_{0}^\theta(U) \circ \Psi_{-1}^\theta(U) 
$$ 
where the maps $\Psi_{j}^\theta(U)$, $j=0, \ldots, 2\vr -1$, resp. $\Psi_{-1}^\theta(U) $, are defined in \eqref{nuovavarjth}, resp.  \eqref{TRA5}. 
Then $\Psi_{diag}^\theta(U)$ satisfies a Lipschitz property as \eqref{est.diff.Phi} since all the maps $\Psi_j^\theta(U)$, $j = 0, \cdots, 2\vr -1$ and $\Psi_{-1}^\theta(U)$ satisfy estimates
\eqref{flusso.est} and \eqref{est.diff.Phi}. Finally $\Psi_{diag}^\theta(0) = \uno$ as well as all the maps 
$\Psi_{j}^\theta(0) $ and $ \Psi_{-1}^\theta(0) $.
\end{proof}

\smallskip
\noindent{\bf Reductions to constant coefficients and Poincar\'e-Birkhoff normal form.}
  At this point the proof is analogous to  \cite[Propositions 4.4, 5.2, and 6.2]{BFP}.
  \begin{prop}\label{cor:BNF}
 There exists $ \vr_0 > 0 $ such that, for all $ \vr \geq \vr_0 $, $ K \geq K' = 2 \vr + 2 $, 
there exists $s_0>0$ such that, 
for any $s\geq s_0$, for all $0<r \leq r_0(s)$ small enough,  
and  any solution $U\in B^K_s(I;r)$ of the water waves system \eqref{ww.paraco}, there is a
real-to-real, bounded and invertible operator 
$\mathfrak{C}^{\theta}(U)$, 
 $ \theta \in [0,1] $, 
such that 
$Z :=  \vect{z}{\bar{z}} = \mathfrak{C}^{1}(U)[U]$ 
solves
\begin{equation}\label{sistemaBNF1000}
 \pa_{t}Z= -\im{\bf \Omega} Z + X_{H^{(4)}_{ZD}} + \mathcal{X}_{\geq 4 }(U,Z) 
\end{equation}
where:

\begin{itemize}
\item ${\bf \Omega}:= \begin{pmatrix}
    |D|^{\frac12} & 0 \\ 0 & - |D|^{\frac12}
\end{pmatrix} $ and $X_{H^{(4)}_{ZD}}$ is the Hamiltonian vector field of the Hamiltonian $H_{ZD}^{(4)}$ in \eqref{theoBirH};

\item $ \mathcal{X}_{\geq 4}(U,Z) $ has the form 
 \begin{equation}\label{Stimaenergy100}
 \mathcal{X}_{\geq 4}(U,Z)= 
 \Opbw{ \sm{{{H}}_{\geq3}(U; x, \xi )}{0}{0}{\bar{{{H}}_{\geq3}(U; x, -\xi )}} }Z +\mathfrak{R}_{\geq 3}(U) Z + B_{\geq 3}(U) Z
 \end{equation}
where 
\begin{equation}
    H_{\geq 3}(U; x, \xi):= \im \alpha_{\geq 3}(U; x) \xi + \im \beta_{\geq 3}(U;x) |\xi|^{\frac12} + \gamma_{\geq 3}(U; x, \xi) \in  \Gamma^{1}_{K,K',3}\otimes\mathcal{M}_2(\C)
\end{equation}
with real valued functions $\alpha_{\geq 3}(U;x), \beta_{\geq 3}(U;x)$ in $\cF^\R_{K, K', 3}$ and a symbol 
$\gamma_{\geq 3}(U; x, \xi)$ in $\Gamma^0_{K, K', 3}$, 
$\mathfrak{R}_{\geq 3}(U)$ is a matrix of real-to-real smoothing operators in 
 $\mathcal{R}^{-(\vr - \vr_0)}_{K,K',3}\otimes\mathcal{M}_2(\C)$
 and $B_{\geq 3}(U)$ a matrix of operators satisfying
\be\label{estB3}
\norm{B_{\geq 3}(U)V}_{\dot H^s} \leq 
C_s \norm{V}_{\dot H^s} \norm{U}_{K', s_0}^3  
+ C_s \norm{V}_{\dot H^{s_0}} \norm{U}_{K',s}  \norm{U}_{K',s_0}^3  \, . 
\ee
\end{itemize}
Furthermore the map $\mathfrak{C}^{\theta}(U)$ satisfies the following properties:
\\[1mm]
(i) there is a constant $C$  depending on $s$, $r$ and $K$, such that for  any $s \geq s_0$ and any $V\in \dot{H}^{s}
(\T,\C^2)$
\begin{equation}\label{stimafinaleFINFRAK}
 \| \mathfrak{C}^{\theta}(U)[V]\|_{\dot{H}^{s}}+
\|  (\mathfrak{C}^{\theta}(U))^{-1}[V]\|_{\dot{H}^{s}} 
\leq \|V\|_{\dot H^s}(1+C\|U\|_{K,s_0}) 
+ C\|V\|_{\dot H^{s_0}}\|U\|_{K,s} \,,
\end{equation}
uniformly in $\theta\in [0,1]$;
\\[1mm]
{\sc Lipschitz property:} 
for any $s \geq s_0$, there is $ r =  r(s), C_{s,r}>0$ so that for   any   $ \norm{U_i}_{\dot H^{s}} \leq r$, $i=1,2$, 
any  $ V \in \dot H^{s+1} (\T, \C^2)$,  one has
 \begin{equation}\label{diffC}
 \norm{ \mathfrak{C}^{1}(U_1)V-\mathfrak{C}^{1}(U_2)V }_{\dot H^{s}} \leq C_s
\norm{U_1 - U_2}_{\dot H^{s}} \norm{V}_{\dot H^{s+1}} \, .
 \end{equation}
 Finally  $\mathfrak{C}^{\theta}(0) = \uno$.
\end{prop}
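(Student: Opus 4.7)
The plan is to build $\mathfrak{C}^{\theta}(U)$ as the composition of the diagonalizing map $\Psi^\theta_{diag}(U)$ from Proposition \ref{teodiagonal} with a sequence of further conjugations that (a) reduce the symbols of positive order to constant coefficients, and (b) remove non-resonant cubic contributions by a Poincar\'e--Birkhoff procedure. Following the strategy of \cite[Prop.~4.4, 5.2, 6.2]{BFP}, one starts from the block-diagonal system \eqref{sistemaDiag} and straightens the transport term $-\im V(U;x)\xi$ through a diffeomorphism of the torus of the form $x\mapsto x+\beta(U;x)$ with $\beta$ pluri-homogeneous of degree $\leq 2$ (obtained by solving a cohomological equation at each homogeneity level). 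Then, for the Fourier-multiplier part $-\im(1+a^{(0)}(U;x))|\xi|^{1/2}$, one applies paradifferential flows of the form \eqref{flussoG} with generators $\Opbw{\beta_{-j/2}(U;x,\xi)}$ whose symbols are pluri-homogeneous of degree $\leq 2$, chosen to kill the $x$-dependence of $a^{(0)}$ order by order up to $\xi$-order $-\vr$. Crucially, unlike \cite{BFP} which iterates at every homogeneity, we truncate all generators at cubic degree; what is left at degree $\geq 3$ is stored in the paradifferential piece $H_{\geq 3}(U;x,\xi)$ of \eqref{Stimaenergy100}, where its $x$-dependence is harmless since energy estimates can still close.

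The second block is the standard Poincar\'e--Birkhoff normal form on the resulting constant-coefficient system: conjugate by the time-one flow of quadratic matrix smoothing operators $\Psi^\theta = \exp(\theta\,\opbw(\cdot))\circ\exp(\theta\, R_2(U))$, with $R_2\in\widetilde{\cR}^{-\vr'}_2\otimes\mathcal{M}_2(\C)$ chosen by solving homological equations; non-resonant monomials are eliminated, while the resonant part is identified with $X_{H^{(4)}_{ZD}}$ thanks to the uniqueness argument of \cite{BFP}, which compares the para-BNF with Zakharov--Dyachenko's formal computation \cite{ZakD}, forcing coincidence with $H^{(4)}_{ZD}$ in \eqref{theoBirH}. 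All smoothing remainders of degree $\geq 3$ land in $\mathfrak{R}_{\geq 3}(U)$; all bounded operators arising from compositions of the form $\Psi^1_j(U)B^{(j)}(U)(\Psi^1_j(U))^{-1}$ (with $B^{(j)}$ from Proposition \ref{teodiagonal}) end up inside $B_{\geq 3}(U)$, so that \eqref{estB3} follows iteratively using \eqref{estB(U)} and the boundedness of the flows \eqref{stimafinaleFINFRAK}.

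Estimate \eqref{stimafinaleFINFRAK} is proven, as in \cite{BFP}, by composing the individual bounds \eqref{flusso.est} from Lemma \ref{lem:flow.ad} for the paradifferential flows and \eqref{flusso.estR} from Lemma \ref{flow.s.ad} for the smoothing flows, together with the standard tame estimates coming from the change of variables on $\T$. The normalization $\mathfrak{C}^\theta(0)=\uno$ is automatic because every generator involved vanishes at $U=0$ (each one belongs to a class of symbols/smoothing operators that is at least linear in $U$), and all the individual flows satisfy $\Psi^\theta(0)=\uno$.

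The main new point, and the principal obstacle, is the Lipschitz estimate \eqref{diffC}. Each elementary conjugation contributes a Lipschitz bound through either Lemma \ref{lem:flow.ad} (item $(i)$ for the diffeomorphism, $(ii)$--$(iii)$ for the $|\xi|^{1/2}$-order and off-diagonal normal-form flows) or Lemma \ref{flow.s.ad} (for the cubic Birkhoff flows of smoothing generators). Since these lemmas apply only to generators that are pluri-homogeneous in $U$ (or depend Lipschitz-continuously on $U$ via \eqref{0509:1056}), it is essential that we truncated all symbols at degree $\leq 2$, so that their Lipschitz dependence on $U$ is explicit. The composition is then handled by writing, for any two data $U_1,U_2$,
\[
\mathfrak{C}^1(U_1)V-\mathfrak{C}^1(U_2)V=\sum_{k}\mathfrak{C}^1_{>k}(U_1)\bigl(\Phi_k(U_1)-\Phi_k(U_2)\bigr)\mathfrak{C}^1_{<k}(U_2)V,
\]
where $\Phi_k$ denotes the $k$-th elementary flow and $\mathfrak{C}^1_{\lessgtr k}$ the partial composition before/after it. Applying \eqref{est.diff.Phi}, \eqref{est.diff.Phi2} to each factor $\Phi_k(U_1)-\Phi_k(U_2)$, combined with the uniform bound \eqref{stimafinaleFINFRAK} for the surrounding factors, gives \eqref{diffC} with the loss of one derivative accounted for by the worst case $(i)$ of Lemma \ref{lem:flow.ad} (the straightening diffeomorphism). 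The most delicate verification in this composition is that the reductions to constant coefficients beyond degree $2$ are now done through the bounded remainder $B_{\geq 3}(U)$ rather than through additional flows; this is exactly what allows the Lipschitz constant in \eqref{diffC} to depend only on the low norm $\|U\|_{\dot H^s}$ and not on higher norms.
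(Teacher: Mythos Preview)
Your proposal is correct and follows essentially the same approach as the paper: both build $\mathfrak{C}^\theta(U)$ as the composition of $\Psi^\theta_{diag}(U)$ with the constant-coefficient reductions $\vec{\Phi}_j^\theta(U)$, the symbol-smoothing flows $\Upsilon^\theta_{fin}(U)$, and the Birkhoff flows $\cB^\theta_i(U)$ from \cite{BFP}, then derive \eqref{diffC} by a telescoping sum applying Lemmas~\ref{lem:flow.ad} and~\ref{flow.s.ad} to each factor, and obtain $B_{\geq 3}(U)$ by conjugating the bounded term $B(U)$ of Proposition~\ref{teodiagonal} through the remaining maps. One minor point of precision: the truncation-at-degree-$\leq 2$ modification relative to \cite{BFP} is actually implemented in the block-diagonalization step (Proposition~\ref{teodiagonal}, see \eqref{generatore-jth}), whereas the constant-coefficient and Birkhoff flows in \cite{BFP} were already generated by pluri-homogeneous symbols of degree $1$ or $2$; so in this proposition the structure of $\mathfrak{C}^\theta(U)$ is taken verbatim from \cite{BFP} and the only new content is checking \eqref{diffC} and \eqref{estB3}.
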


\begin{proof}
We only prove that $\mathfrak{C}^\theta(U)$ satisfies the Lipschitz estimate   
\eqref{diffC} and 
that the new term 
$ B_{\geq 3} $ satisfies  \eqref{estB3}.  
This requires to analyze the structure of the map $\mathfrak{C}^\theta(U)$ which is the same as \cite{BFP}. 
By {\em Proof of Proposition 5.2} \cite[pag. 1472]{BFP},
\be\label{3008:1842}
\mathfrak{C}^\theta(U) = \cB^\theta(U)\circ \mathfrak{F}^\theta(U) \ , 
\ee where by
\cite[formula (4.69)]{BFP}  and 
 \cite[Proof of Proposition 5.2, pag. 1472]{BFP}
\be\label{3008:1843}
\mathfrak{F}^\theta(U):=\Upsilon_{fin}^\theta(U)\circ \vec{\Phi}_5^\theta(U)\circ \ldots \circ \vec{\Phi}_1^\theta(U) \circ \Psi^\theta_{ diag}(U) \ , 
\qquad 
\cB^\theta(U) := \cB^\theta_2(U)\circ \cB^\theta_1(U) \, , 
\ee
where $\Psi^\theta_{diag}(U)$ is the map of Proposition \ref{teodiagonal} and \\
$\bullet$
 $\vec{ \Phi}_1^\theta(U)$ is given in \cite[(4.10), (4.13)]{BFP}, 
$\vec{ \Phi}_2^\theta(U)$ is given in \cite[(4.26), (4.27)]{BFP},
 $\vec{ \Phi}_3^\theta(U)$ is  given in \cite[(4.33), (4.34)]{BFP}, 
 $\vec{\Phi}_4^\theta(U)$ is given in \cite[(4.41), (4.42)]{BFP},
 and 
$\vec{ \Phi}_5^\theta(U)$ is given in \cite[(4.48), (4.49)]{BFP}.
In particular 
 $\vec{ \Phi}_1^\theta(U)$, 
$\vec{ \Phi}_2^\theta(U)$ 
and $\vec{\Phi}_4^\theta(U)$ are   time flows
 as \eqref{flussoG} with 
 $$
 G(\tau, U) =   \Opbw{\sm{ \frac{\beta(U;x)}{1+\tau \beta_x(U;x)}\im \xi}{0}{0}{\frac{\beta(U;x)}{1+\tau \beta_x(U;x)}\im \xi}} \,  , 
 \qquad \beta\in \wt \cF_i^\R, \ \  i  = 1, 2  \, , 
 $$
whereas  $\vec{ \Phi}_3^\theta(U)$ 
and $\vec{ \Phi}_5^\theta(U)$  are  
 time  flows as \eqref{flussoG} with 
 $$G(\tau, U) =  \Opbw{\sm{ \im f(U;x, \xi)}{0}{0}{-\im {f(U;x, -\xi)}}} \ , \ \ \ f\in \wt \Gamma_2^\vr \ , \mbox{ real valued } , \ \ \vr <1 \, , 
 $$ 
 (precisely $f=\beta(U;x) |\xi|^{\frac12}$  or 
 $f =\beta(U;x) \sign(\xi)$ with  $\beta \in \wt \cF^\R_2$). \\
$\bullet$ 
  $\Upsilon_{fin}^\theta(U)$  is  defined in \cite[Proof of Proposition 4.4, pag 1462]{BFP} as the composition 
$$
\Upsilon_{fin}^\theta(U):= \Upsilon_{2\vr}^\theta(U)\circ \cdots \circ \Upsilon_1^\theta(U)
$$
where each $\Upsilon_{j+1}^\theta(U)$, $j=0,\ldots, 2\vr -1$ is  defined in  \cite[(4.68)]{BFP} as
$$
\Upsilon_{j+1}^\theta(U):= \cA_{j+1,3}^\theta(U)\circ  \cA_{j+1,2}^\theta(U)\circ  
\cA_{j+1,1}^\theta(U) \, .
$$
The maps  $ \cA_{j+1,1}^\theta(U)$ in 
\cite[(4.54), (4.55)]{BFP}, $ \cA_{j+1,2}^\theta(U)$  
in \cite[(4.58), (4.59)]{BFP}, and
 $ \cA_{j+1,2}^\theta(U)$
in \cite[(4.63), (4.65)]{BFP}
are  time flows 
 as \eqref{flussoG} with 
 $$
 G(U) = \Opbw{\sm{ \im f(U;x, \xi)}{0}{0}{-\im \bar{f(U;x, -\xi)}}}  \ , \ \ \ f\in \wt \Gamma_p^\rho \, ,  \ \ \rho <0 \ , \,  p=1,2  \, . 
 $$
$\bullet$  $\cB^\theta_i(U)$, $i=1,2$, are   defined 
 resp. in \cite[(5.23), (5.35)]{BFP}, and are 
 time  flows as \eqref{flussoG} with $G(\tau, U)$  matrices of smoothing operators in $\wt\cR^{-\varrho}_p \otimes \cM_2(\C)$ for some $\varrho \geq 0$, $p\in \{1,2\}$.

Applying  repeatedly Lemma \ref{lem:flow.ad}, there exist $s_0,r>0$ such that
for any $ \norm{U_i}_{\dot H^{s_0}} \leq r$, $i=1,2$, any $s \in \R$ and 
any  $ V \in \dot H^s (\T, \C^2)$,  one has
 \begin{align}\label{1701:1153}
& \sup_{\theta \in [0,1]} \norm{ \Phi_j^\theta(U_1) V-\Phi_j^\theta(U_2) V}_{\dot{H}^{s-1}} \leq C_{s,r}\norm{U_1-U_2}_{\dot{H}^{s_0}} \norm{V}_{\dot{H}^{s}}  \, , \qquad j =1,\ldots, 5 \, , \\ 
&
\sup_{\theta \in [0,1]} \norm{ \cA_{j+1,k}^\theta(U_1)V- \cA_{j+1,k}^\theta(U_2) V}_{\dot{H}^{s}} \leq C_{s,r} \norm{U_1-U_2}_{\dot{H}^{s_0}} \norm{V}_{\dot{H}^{s}}  \, , \quad j =0,
\ldots, 2\vr-1 \, , \ \ k =1,2,3 \, , \notag 
\end{align}
whereas by \Cref{flow.s.ad}, there is $s_0>0$ and, for any $s \geq s_0$, there is $r = r(s), C_{s,r}>0$ so that for 
any  $ \norm{U}_{\dot H^{s}} \leq r$, any $ V \in \dot H^s (\T, \C^2)$, 
\be\label{1701:1154}
 \sup_{\theta \in [0,1]}\norm{\cB^{\theta}_i(U_1)V -  \cB^{\theta}_i(U_2)V}_{\dot H^s} \leq C_{s,r}
\norm{U_1 - U_2}_{\dot H^{s}} \norm{V}_{\dot H^{s}} \, , \qquad i=1,2 \, .
 \ee
 Then $\mathfrak{C}^\theta(U)$ satsifies \eqref{diffC}  in view of \cref{flusso.est,1701:1153,1701:1154,flusso.estR}, and $\mathfrak{C}^\theta(0)  = \uno$ as well as each map in its definition.

The vector field $\cX_{\geq 4}(U,Z)$ is the one in \cite[formula (5.11)]{BFP} with the addition of the operator  $ B_{\geq 3}(U)$ in \eqref{Stimaenergy100}, coming  from the conjugation of the  quartic bounded map
$B(U)$ in \eqref{sistemaDiag}. Explicitly  
 \begin{align*}
B_{\geq 3}(U):= \wt{\mathfrak{C}}(U)\, B(U) \, \wt{\mathfrak{C}}(U)^{-1} 
\qquad \text{where} \qquad 
 \wt{\mathfrak{C}}(U):=\cB^1_2(U)\circ \cB_1^1(U)\circ\Upsilon_{fin}^1(U)\circ \vec{\Phi}_5^1(U)\circ \ldots \circ \vec{\Phi}_1^1(U) 
 \end{align*}
and it satisfies \eqref{estB3} by applying repeatedly \eqref{flusso.est}, \eqref{flusso.estR} and estimate \eqref{estB(U)}.
\end{proof}

\smallskip
\noindent{\bf Proof of Theorem \ref{BNFtheorem}.}
We define 
$\mathfrak{B}(u)u:= [\mathfrak{C}^1(U)U]^+$ the $ u$-component of the map $\mathfrak{C}^1(U)U$ where $\mathfrak{C}^1(U)$ is defined in \Cref{cor:BNF}.
Then $z =\mathfrak{B}(u)u $ solves  \eqref{theoBireq} in view of \eqref{sistemaBNF1000}.
Estimate \eqref{Germe} follows  from \eqref{stimafinaleFINFRAK} and \Cref{lemma6.3}.
It implies the equivalence of norms 
$C_s^{-1}\norm{z}_{\dot H^s} \leq 
\norm{u}_{\dot H^s}
\leq
C_s\norm{z}_{\dot H^s}
$ for some $C_s >0$.
The first estimate in 
 \eqref{phi_est} follows from the Lipschitz estimate \eqref{est.diff.Phi2} for $\mathfrak{C}^1(U)$ putting  $U_2 = 0$, and the second estimate follows from the first one writing 
 $\mathfrak{B}(U)^{-1} - \uno = - \mathfrak{B}(U)^{-1} \circ(\mathfrak{B}(U) - \uno)$. 
The vector field $\mathcal{X}_{\geq 4}(U,Y)$ is defined in  \eqref{Stimaenergy100}. 
Its first two terms satisfy 
 the energy estimate   \eqref{theoBirR}  by \cite[Lemma 6.4]{BFP}.
 Also the third term $B_{\geq 3}(U)Z$ satisfies the same estimate by 
 \eqref{estB3},  \eqref{BISMA2} and the equivalence of norms $\norm{z}_{\dot H^s} \sim \norm{u}_{\dot H^s}$.
Now we prove estimate 
\eqref{X_4_1}.
By \cite[Proposition 3.8]{BD}, estimate \eqref{piove} for $\mathfrak{R}_{\geq 3}(U)$ and \eqref{estB3}  we get 
$$
\norm{{\mathcal X_{\geq 4}^+(U,Z)}}_{\dot{H}^{1}} \lesssim \norm{U}_{K, s_0}^3  \norm{z}_{\dot H^{s_0}} 
\stackrel{\eqref{Germe}, \eqref{BISMA2}}{\lesssim } 
\norm{u}_{\dot H^{s_0}}^4 \, . 
$$
Estimate \eqref{lipB}  follows because $\mathfrak{C}^\theta(U)$ satisfies  \cref{diffC,stimafinaleFINFRAK}.

\footnotesize

\vspace{1em}
\noindent{\bf Acknowledgments:}
The authors 
thank E. Vanden-Eijnden and Yu Deng for interesting comments. 
A. Maspero is  supported by   the European Union  ERC CONSOLIDATOR GRANT 2023 GUnDHam, Project Number: 101124921 and
PRIN 2022 (2022HSSYPN).
 Views and opinions expressed are however those of the authors only and do not necessarily reflect those of the European Union or the European Research Council. Neither the European Union nor the granting authority can be held responsible for them.
 G. Staffilani is supported by the Simons Foundation through the Collaboration Grant on Wave Turbulence, and  the NSF through  grants DMS-2052651 and DMS-2306378. R.G. was supported by GNAMPA-INdAM.

\bibliographystyle{amsplain}
\providecommand{\href}[2]{#2}

\end{document}